\newtheorem{teo}{Theorem}[section]
\newtheorem{oss}[teo]{Remark}
\newtheorem{Prop}[teo]{Proposition}
\newtheorem{lemma}[teo]{Lemma}
\newtheorem{Defi}[teo]{Definition}
\newtheorem{es}[teo]{Example}
\newtheorem{corollario}[teo]{Corollary}
\newtheorem{no}[teo]{Notation}
\newcommand{\cc}{_{^{_\HH}}}
\newcommand{\vv}{_{^{_\VV}}}
\newcommand{\vs}{_{^{_{\VS}}}}
\newcommand{\res}{\mathop{\hbox{\vrule height 7pt width .5pt depth 0pt
\vrule height .5pt width 6pt depth 0pt\,}}\nolimits}
\def \DH{\textsl{h}}
\newcommand{\LL}{\mathop{\hbox{\vrule height .5pt width 6pt depth
0pt \vrule height 7pt width .5pt depth 0pt\,}}\nolimits}
\newcommand{\rr}{_{^{_\mathit{R}}}}
\newcommand{\ngr}{_{^{_{\mathrm Gr}}}}
\newcommand{\ck}{_{^{_{\HH_k}}}}
\newcommand{\ci}{_{^{_{\HH_i}}}}
\newcommand{\ctr}{_{^{_{\HH_3}}}}
\newcommand{\cd}{_{^{_{\HH_2}}}}
\newcommand{\cu}{_{^{_{\HH_1}}}}
\newcommand{\ciss}{_{^{_{\HH_i\!S}}}}
\newcommand{\cidu}{_{^{_{\HH_i\UU}}}}
\def \cont{{\mathbf{C}}}
\def \cin{{\mathbf{C}^{\infty}}}
\def\dim {\mathrm{dim}}
\def\dc {d_{\bf cc}}
\def\ss{_{^{_{\HS}}}}
\def\ts{_{^{_{\TS}}}}
\def\x{x}
\def\g{\mathit{g}\cc}
\def\dg{\textit{grad}\cc}
\def\qq{\textit{grad}\ss}
\def \nis {\sigma^{n-2}\cc}
\def \mis {\sigma^{n-i}\cc}
\def \per {\sigma^{n-1}\cc}
\def \perh {\sigma^{2n}\cc}
\newcommand{\ce}{_{^{_E}}}
\def\HC{\mathcal{H}^Q_{\mathbf{cc}}}
\def\SC{{C^{\gg}}}
\def\UU{\mathcal{U}}
\def \nS{\nu\cc{S}}
\def \nn{\nu\cc}
\def \nt{\nu^t\cc}
\def \XH{\mathfrak{X}(\HH)}
\def \XX{\mathfrak{X}}
\def \MH{\overrightarrow{\mathcal{H}\cc}}
\def \MS{\mathcal{H}\cc}
\def \P{{\mathcal{P}}}
\def \PH{\P\cc}
\def \Om{\Omega}
\def \ee{\mathrm{e}}
\def \Omn{\sigma^n\rr}
\def \R{\mathbb{R}}
\def \Rn{\mathbb{R}^{\DN}}
\def \div{\mathit{div}}
\def \GG{\mathbb{G}}
\def \gg{\mathfrak{g}}
\def \pert{(\per)_t}
\def\divh{\div\cc}
\def\lh{\mathcal{D}\ss}
\def\lg{\overline{\mathcal{D}}\ss}
\def\grr{{{\mathtt{gr}}}}
\def\tsc{\nabla^{^{_{\TT\!{S}}}}}
\def\gs{\nabla^{_{\HS}}}
\def\gc{\nabla^{_{\HH}}}
\def\BB{\mathit{D}}
\def \Tor{{\textsc{T}}}
\def\UU{\mathcal{U}}
\def\UU{\mathcal{U}}
\def \nS{\nu_{_{\!\HH}}{S}}
\def \nn{\nu_{_{\!\HH}}}
\def \ee{\mathrm{e}}
\def \Om{\Omega}
\def \Rn{\mathbb{R}^{n}}
\def \R{\mathbb{R}}
\def \N{\mathbb{N}}
\def \cji {c_{j\,i}(x)}
\def \C { C(x):=[\cji]_{j,i},\,\, {j=1,\ldots,m \,,\, i=1,\ldots,n}}
\def \Qdim {Q:=\sum_{i=1}^{k}i\,\DH_i}
\def \X {X=(X_{1}, \ldots, X_{m_1})}
\def \X0 {X_{1}(0)\!=\!\partial_{x_{1}}, \ldots, X_{m_1}(0)\!=\!\partial_{x_{m_1}}}
\def \HG {\HH\GG}
\def \HS {\HH\!{S}}
\def \VS {\VV\!S}
\def \TG {\mathit{T}\GG}
\def \HH {\mathit{H}}
\def \VV {\mathit{V}}
\def \TT {\mathit{T}}
\def \TS {\mathit{T}\!S}
\def \BV{\mathit{B\!V}}
\def \BB{\mathcal{B}}
\def \grad{\textit{grad}}
\def \C0H{\mathbf{C}_{0}^{\infty}(U,\HG)}
\def \C00{\mathbf{C}_{0}^{\infty}(U)}
\def \C01{\mathbf{C}_{0}^{1}(U)}
\def \L1{d\,\mathcal{L}^n}
\def \H1{\mathcal{H}_{{\bf cc}}^{1}}
\def \Ar{\mathcal{H}^{n-1}_{\bf eu}}
\def \exp{\textsl{exp\,}}
\def \llog{\textsl{log\,}}
\def \esp{\textsl{exp\,}}
\def \exsp{{\textit{exp}\,}}
\def \Om{\Omega}
\def \Rn{\mathbb{R}^{n}}
\def \R{\mathbb{R}}
\def \N{\mathbb{N}}
\def \cji {c_{j\,i}(x)}
\def \C { C(x):=[\cji]_{j,i},\,\, {j=1,\ldots,m \,,\, i=1,\ldots,n}}
\def \GG{\mathbb{G}}
\def \gg{\mathfrak{g}}
\def \X {X=(X_{1}, \ldots, X_{m_1})}
\def \X0 {X_{1}(0)\!=\!\partial_{x_{1}}, \ldots, X_{m_1}(0)\!=\!\partial_{x_{m_1}}}
\def \HG {\mathit{H}}
\def \BVG {\HH\!\BV}
\def \BV{\mathit{B\!V}}
\def \BB{\mathcal{B}}
\def \C0H{\mathbf{C}_{0}^{\infty}(\Om,\HG)}
\def \C00{\mathbf{C}_{0}^{\infty}(\Om)}
\def \C01{\mathbf{C}_{0}^{1}(\Om)}
\def \exp{\textsl{exp\,}}
\def \llog{\textsl{log\,}}
\def \esp{\textsl{exp\,}}
\def\GG{\mathbb{G}}
\title{Isoperimetric, Sobolev and Poincar\'e inequalities on hypersurfaces in sub-Riemannian Carnot groups}
\author{Francescopaolo Montefalcone
\thanks{F. M. is partially supported by University of Bologna,
Italy, founds for selected research topics and by GNAMPA of INdAM,
Italy.} }
\begin{document}
\maketitle

\begin{abstract}
In this paper we shall study smooth submanifolds immersed in a
$k$-step Carnot group $\GG$ of homogeneous dimension $Q$. Among
other results,  we shall prove  an isoperimetric inequality for
the case of a $\cont^2$-smooth compact hypersurface $S$ with - or
without - boundary $\partial S$; $S$ and $\partial S$ are endowed
with their homogeneous measures $\per,\,\nis$, actually equivalent
to the intrinsic $(Q-1)$-dimensional and $(Q-2)$-dimensional
Hausdorff measures w.r.t. some homogeneous metric $\varrho$ on
$\GG$; see Section 5. This generalizes a classical inequality,
involving the mean curvature of the hypersurface, proven by
Michael and Simon, \cite{MS}, and Allard, \cite{Allard}. In
particular, from this result one may deduce some related
Sobolev-type inequalities; see Section 7. The strategy of the
proof is inspired by the classical one. In particular, we shall
begin by proving some linear isoperimetric inequalities. Once this
is proven, one can deduce a local monotonicity formula and then
conclude the proof by a covering argument. We stress however that
there are many differences, due to our different geometric
setting. Some of the tools which have been developed ad hoc in
this paper are, in order, a ``blow-up'' theorem, which also holds
for characteristic points, and a smooth Coarea Formula for the
$\HS$-gradient; see Section \ref{blow-up} and Section \ref{COAR}.
Other tools are the horizontal integration by parts formula and
the 1st variation of the $\HH$-perimeter $\per$ already developed
in \cite{Monteb, MonteSem}, and here generalized to hypersurfaces
having non-empty characteristic set. Some natural applications of
these results are in the study of minimal and constant horizontal
mean curvature hypersurfaces. Moreover we shall prove some purely
horizontal, local and global Poincar\'e-type inequalities as well
as some related facts and consequences; see Section 4 and Section
5.\\{\noindent \scriptsize \sc Key words and phrases:}
{\scriptsize{\textsf {Carnot groups; Sub-Riemannian Geometry;
Hypersurfaces; Isoperimetric Inequality; Sobolev and Poincar\'e
Inequalities; Blow-up; Coarea
Formula.}}}\\{\scriptsize\sc{\noindent Mathematics Subject
Classification:}}\,{\scriptsize \,49Q15, 46E35, 22E60.}
\end{abstract}

\tableofcontents

\date{}

\normalsize

\section{Introduction}

In the last  decades considerable efforts have been made to extend
to the general setting of metric spaces the methods of Analysis
and Geometric Measure Theory. This philosophy,  in a sense already
contained in Federer's treatise \cite{FE}, has been pursued, among
other authors, by Ambrosio \cite{A2}, Ambrosio and Kirchheim
\cite{AK1, AK2}, Capogna, Danielli and Garofalo \cite{CDG},
Cheeger \cite{Che}, Cheeger and Kleiner \cite{Cheeger1}, David and
Semmes \cite{DaSe}, De Giorgi \cite{DG}, Gromov \cite{Gr1, Gr2},
Franchi, Gallot and Wheeden \cite{FGW}, Franchi and Lanconelli
\cite{FLanc},  Franchi, Serapioni and Serra Cassano \cite{FSSC3,
FSSC6}, Garofalo and Nhieu \cite{GN}, Heinonen and Koskela
\cite{HaKo},  Jerison \cite{Jer}, Korany and Riemann \cite{KR},
Pansu \cite{P1, P2}, Rumin \cite{Rum}, but the list is far from
being complete.

In this respect, {\it sub-Riemannian} or {\it
Carnot-Carath\'eodory} geometries have become  a subject of great
interest also because of their connections with many different
areas of Mathematics and Physics, such as PDE's, Calculus of
Variations, Control Theory, Mechanics, Theoretical Computer
Science. For references, comments and perspectives, we refer the
reader to Montgomery's book \cite{Montgomery} and the surveys by
Gromov, \cite{Gr2} and Vershik and Gershkovich, \cite{Ver}. We
also mention, specifically for sub-Riemannian geometry,
 \cite{Stric} and \cite{P4}. More recently, the
 so-called Visual Geometry has also received new impulses from this
 field; see \cite{SCM}, \cite{CMS},  \cite{CM}  and references therein.

The setting of the sub-Riemannian geometry is that of a smooth
manifold $N$, endowed with a smooth non-integrable distribution
$\HH\subset\TT N$ of $\DH$-planes, or {\it horizontal subbundle}
($\DH\leq\dim N$). Such a distribution is endowed with a positive
definite metric $g\cc$, defined only on the subbundle $\HH$. The
manifold $N$ is said to be a {\it Carnot-Carath\'eodory space} or
{\it CC-space}  when one introduces the so-called {\it CC-metric}
$\dc$ (see Definition \ref{dccar}). With respect to such a metric
the only paths on the manifold which have finite length are
tangent to the distribution $\HH$ and therefore called {\it
horizontal}. Roughly speaking, in connecting two points we are
only allowed to follow horizontal paths joining them.

Throughout this paper we shall extensively study hypersurfaces
immersed in Carnot groups which, for this reason, form the
underlying ambient space. A $k$-{\it{step Carnot group}}
$(\GG,\bullet)$ is an $n$-dimensional, connected, simply
connected, nilpotent and stratified Lie group (with respect to the
group multiplication $\bullet$) whose Lie algebra $\gg\cong\Rn$
satisfies:\[ {\mathfrak{g}}={\HH}_1\oplus...\oplus {\HH}_k,\quad
 [{\HH}_1,{\HH}_{i-1}]={\HH}_{i}\quad(i=2,...,k),\,\,\,
 {\HH}_{k+1}=\{0\}.\]Any Carnot group $\GG$ on $\R^n$ is endowed with a one-parameter
family of dilations, adapted to the stratification, that makes it
a {\it homogeneous group}, in the sense of Stein's definition; see
\cite{Stein}.

Carnot groups are of special interest for many reasons and, in
particular, because they constitute a wide class of examples of
sub-Riemannian geometries.

Note that, by a well-know result due to Mitchell \cite{Mi} (see
also Montgomery's book \cite{Montgomery}), the {\it
Gromov-Hausdorff tangent cone} at the regular points of a
sub-Riemannian manifold is a Carnot group. This motivates the
interest towards the study of Carnot groups, which play, for
sub-Riemannian geometries, an analogous role to that of
 Euclidean spaces in Riemannian geometry.

The initial development of Analysis in this setting was motivated
by some works published in the first eighties. Among others, we
cite the paper by Fefferman and Phong \cite{FePh} about the
so-called ``subelliptic estimates'' and that of Franchi and
Lanconelli \cite{FLanc}, where a H\"{o}lder regularity theorem was
proven for a class of degenerate elliptic operators in divergence
form. Meanwhile, the beginning of Geometric Measure Theory was
perhaps an intrinsic isoperimetric inequality proven by Pansu in
his Thesis \cite{P1}, for the case of the {\it Heisenberg group}
$\mathbb{H}^1$. For more results about isoperimetric inequalities
on Lie groups and Carnot-Carath\'eodory spaces, see also
\cite{Varo}, \cite{Gr1}, \cite{P4}, \cite{GN}, \cite{CDG},
\cite{FGW}, \cite{HaKo}.

For results on these topics, and for more detailed bibliographic
references, we shall refer the reader to \cite{A2}, \cite{AR},
\cite{ASCV}, \cite{CDG}, \cite{FSSC4, FSSC5, FSSC6, FSSC7, FSSC8},
\cite{DanGarN8, gar, DGN3}, \cite{G}, \cite{GN}, \cite{Mag, Mag2},
\cite{Montea, Monteb}, \cite{HP2}, \cite{MontArFer}.

We would also like to quote \cite{vari}, \cite{G},
\cite{Pauls}, \cite{RR},  \cite{CCM, CCM2},   \cite{Bar}, \cite{MoSC2}, for
some recent results about minimal and constant mean-curvature hypersurfaces
immersed in the Heisenberg group.

In this paper we shall try to give some contribution to the study
of both analytic and differential-geometric properties of
hypersurfaces  immersed in Carnot groups, endowed with the
so-called $\HH$-perimeter measure $\per$. To this aim we will
develop some technical tools and among the others, in Section
\ref{blow-up}, a ``blow-up'' result, which also holds for
characteristic points and, in Section \ref{COAR},  a Coarea
Formula for functions supported on the given hypersurface.
Furthermore, we shall extend some previous results of \cite{Monte,
Monteb, MonteSem}, such as the integration by parts formulae and
the 1st-variation formula of $\per$, to hypersurfaces having
non-empty characteristic set. These tools are then used to
investigate the validity of some linear, local and global
Poincar\'e inequalities; see Section \ref{poincinsect} and Section
\ref{wlinisoineq}.  In Section \ref{mike}, we will extend to
Carnot groups a classical isoperimetric inequality, proven by
Michael and Simon in \cite{MS}, for a general setting including
Riemannian geometries, and, independently,
 by Allard in \cite{Allard}, for varifolds.
Finally, in Section \ref{sobineqg}, we shall deduce the related
Sobolev-type inequality, following a classical pattern by
Federer-Fleming \cite{FedererFleming} and Mazja \cite{MAZ}.

Very recently, some similar results in this direction have also
been obtained by Danielli, Garofalo and Nhieu in \cite{DGN3},
where a monotonicity estimate for the $\HH$-perimeter has been
proven for graphical strips in the Heisenberg group
$\mathbb{H}^1$.

Now we would like to make a short comment about a classical
isoperimetric inequality for compact hypersurfaces immersed in the
Euclidean space $\Rn$. This result, that will be
generalized to our Carnot setting in Section \ref{mike}, reads as follows:\\

 \noindent{\bf Theorem
(Isoperimetric Inequality; Case $S^{n-1}\subset\Rn$)}\, {\it Let
$S\subset\Rn\,(n>2)$ be a $\cont^2$-smooth compact hypersurface
with - or without - boundary. Then
\[\{\sigma^{n-1}\rr(S)\}^{\frac{n-2}{n-1}}\leq C_I\Big\{\int_S|\mathcal{H}\rr|\,\sigma^{n-1}\rr+\sigma^{n-2}\rr(\partial
S)\Big\}\]where $C_I>0$ is a dimensional constant.}\\

 Here
$\mathcal{H}\rr$ denotes the mean curvature of $S$ and
$\sigma^{n-1}\rr, \,\sigma^{n-2}\rr$ are, respectively, the
$(n-1)$-dimensional and the $(n-2)$-dimensional Riemannian
measures.
 The first step in the proof of this theorem is a linear inequality. More
 precisely, one proves that
\[\sigma^{n-1}\rr(S)\leq R\Big\{\int_S|\mathcal{H}\rr|\,\sigma^{n-1}\rr+\sigma^{n-2}\rr(\partial
S)\Big\}\]where $R$ is the radius of a Euclidean ball containing
$S$. From this linear inequality and the Coarea Formula one gets
the so-called {\it monotonicity inequality}, which says that, at
every ``density-point''\footnote{By definition, $x\in S$ is a {\it
density-point} if $\lim_{t\searrow
0^+}\frac{\sigma^{n-1}(S_t)\rr}{t^{n-1}}=\omega_{n-1}$ where
$\omega_n$ denotes the Lebesgue measure of the unit ball in
$\R^{n-1}$.} $x\in S$, one has

\[-\frac{d}{dt}\frac{\sigma^{n-1}\rr(S_t)}{t^{n-1}}\leq
\frac{1}{t^{n-1}}\bigg\{\int_{S_t}|\mathcal{H}\rr|\,\sigma^{n-1}\rr
+ \sigma^{n-2}\rr(\partial S\cap B(x,t))\bigg\}
\]for $\mathcal{L}^1$-a.e. $t>0$, where $S_t=S\cap B(x, t)$.
By the monotonicity inequality, via a contradiction argument,  one
deduces a calculus lemma
 which, together with a well-known
covering result of Vitali-type, allows to achieve the proof.

The importance of the monotonicity estimate can also be understood
through one of its immediate consequences, that is the following
asymptotic  exponential estimate: \[\sigma^{n-1}\rr(S_t)\geq
\omega_{n-1}\, t^{n-1} e^{-\mathcal{H}_0 t}\]for $t\rightarrow
0^+$, where $x\in S$ is a density-point and $\mathcal{H}_0$ is any
constant such that $|\mathcal{H}\rr|\leq\mathcal{H}_0$. Note that
for minimal hypersurfaces this immediately implies  that
\[\sigma^{n-1}\rr(S_t)\geq \omega_{n-1}\, t^{n-1}\qquad(t\rightarrow
0^+).\]

As already said, a great part of this paper is concerned about the
generalization of the above results to hypersurfaces
immersed in Carnot groups.\\

 Henceforth, we shall survey, in detail, the content of each
 section.\\

Section \ref{0prelcar} is largely devoted to introduce the subject
of Carnot groups and the study of immersed hypersurfaces (and,
more generally, submanifolds) of Carnot groups. In particular, we
shall introduce and describe the main geometric structures useful
in this setting from many points of views, including basic facts
about stratified and homogeneous Lie groups, Riemannian and
sub-Riemannian geometries, intrinsic measures and connections. The
presentation given here is, as much as possible, self-contained.
Moreover, Section \ref{Intbypart} and Section \ref{prvar} describe
two important tools in the study of hypersurfaces endowed with the
so-called $\HH$-perimeter $\per$ (see Definition \ref{sh}), i.e.
the horizontal divergence theorem  w.r.t $\per$, its related
integration by parts formula and the 1st variation of $\per$. A
great part of this material can also be found in \cite{Monte,
Monteb, MonteSem}. However, there is some novelty in the
presentation given here. Indeed, some new corollaries are added
and all these results are extended to hypersurfaces having
possibly non-empty characteristic set. Here we would like to state
the horizontal integration by parts formula because of its
importance in this context.

Let $\GG$ be a  $k$-step Carnot group and let $S\subset\GG$ be a
$\cont^2$-smooth immersed hypersurface with boundary $\partial S$
oriented by its unit normal vector $\eta$. Moreover, let $\HS$
denote the horizontal tangent bundle of $S$, which is the
$(\DH-1)$-dimensional subbundle of $\TS$ induced by the horizontal
$\DH$-dimensional bundle $\HH\subset\TG$. Then, for every $X\in
\cont^1(S,\HH)$ one has
\[ \int_{S}\big\{ \div\ss X + \langle C\cc\nn,
X\rangle\big\}\per =-\int_{S}\langle X, \MH\rangle\,\per +
\int_{\partial S}\langle X,\eta\ss\rangle\,{\nis},\]where $\per$
and $\nis$ denote, respectively, the ``natural'' intrinsic
homogeneous measures on smooth $(n-1)$-dimensional and
$(n-2)$-dimensional submanifolds of $\GG$; see Section \ref{dere}
and Section \ref{Intbypart}. In the above formula $\nn$ is the
unit horizontal normal along $S$, $\eta\ss$ is the unit horizontal
normal along $\partial S$ (see Remark \ref{measonfr}), $\MH=\MS
\nn$ is the horizontal mean curvature vector of $S$ (see
Definition \ref{curvmed})  and $C\cc$ is a linear operator acting
on horizontal vectors, only depending on the structural constants
of the Lie algebra $\gg=\TT_0\GG$ and on the
 hypersurface $S$, through its - Riemannian - unit normal; see Definition \ref{notazia}. The
 horizontal linear
 operator $C\cc$ plays an important role in the study of hypersurfaces
 and, in particular, it is connected with the
 skew-symmetric part of the horizontal second fundamental form; see
 Remark \ref{mariodecandia}.

The above formula enable us to define the following important 2nd
order differential operator:
\begin{eqnarray*}\lh\varphi:=\Delta\ss\varphi +\langle C\cc\nn, \qq\varphi\rangle
\end{eqnarray*} for every $\varphi\in\cin(S)$, where, by definition,
$\Delta\ss\varphi:=\div\ss(\qq\varphi)$; see Section \ref{hs} and
Definition \ref{DEFIEIGEN}. We note that the operator $\lh$
represents a sub-Riemannian generalization of the usual
Laplace-Beltrami operator; see Proposition \ref{Properties} for
its main features.

Section \ref{blow-up}  contains some ``blow-up'' results for the
horizontal perimeter $\per$ on any smooth hypersurface
$S\subset\GG$, or, in other words, the study of the following
limit:\[\lim_{R\rightarrow 0^+}\frac{\per \{S\cap
B_{\varrho}(x,R)\}}{R^{Q-1}},\]where $B_{\varrho}(x,R)$ is the
$\varrho$-ball of center $x\in S$ and radius $R$ and $\varrho$ is
a smooth homogeneous distance on $\GG$. Notice that this limit
represents the density of $\per$ at $x\in S$. More precisely,
after reminding a well-known blow-up procedure, valid for
non-characteristic points of any (smooth enough) hypersurface $S$
(see, for instance, \cite{FSSC3, FSSC4, FSSC5}, \cite{balogh},
\cite{Mag, Mag2}, \cite{CM2}), we shall generalize it, under some
regularity assumptions on $S$, also to the case of characteristic
points of $S$; see Theorem \ref{BUP}. We remind that the
characteristic set $C_S$ of $S$ is the set of all points at which
the horizontal projection of the unit normal vanishes, i.e.
$C_S=\{x\in S: |\PH\nu|=0\}$. More precisely, let $x\in C_S\cap S$
and assume that, locally around $x$,  $S$ can be represented as a
$\cont^i$-smooth $X_\alpha$-graph, for some ``vertical'' direction
$X_\alpha\in\VV:=\HH^\perp$. For the sake of simplicity, let
$x=0\in\GG.$ In such a case one has $S\cap
B_\varrho(x,r)\subset\exp\{(\zeta_1,...,\zeta_{\alpha-1},
\psi(\zeta),\zeta_{\alpha+1},...,\zeta_n )\in\gg\},$ where $\psi$
is a $\cont^i$-function satisfying:
\[\frac{\partial^{\scriptsize(l)}
\psi}{\partial\zeta_{j_1}...\partial\zeta_{j_l}}(0)=0\qquad\mbox{whenever}\quad{\rm
ord}(j_1)+...+{\rm ord}(j_l)< i \] for all $l\in\{1,...,i\}$.
Then, we shall show that\footnote{The symbol $\sim$ means
``asymptotic''.}
\[\per(S\cap B_\varrho(x,R))\sim
\kappa_\varrho(C_S(x)) R^{Q-1}\qquad\mbox{for}\quad R\rightarrow
0^+\] where the constant $\kappa_\varrho(C_S(x))$ can be
explicitly computed by integrating $\per$ along a polynomial
hypersurface of ``anisotropic'' order $i={\rm ord}(\alpha)$; see
Theorem \ref{BUP}, Case (b).

In Section \ref{COAR} we shall state and discuss another important
tool, i.e. the Coarea Formula for the $\HS$-gradient, that is an
equivalent for smooth functions of the classical {\it
Fleming-Rischel} formula. More precisely, let $S\subset\GG$ be a
$\mathbf{C}^2$-smooth hypersurface and let
$\varphi:S\longrightarrow\R$ be a $\mathbf{C}^1$-smooth function.
Then, one has
\[\int_{S}|\qq\varphi(x)|\per(x)=\int_{\R}\nis\{\varphi^{-1}[s]\cap
S \}ds.\]We stress that $\per, \nis$  are actually equivalent to
the $(Q-1)$-dimensional and $(Q-2)$-dimensional Hausdorff measures
associated to a given homogeneous distance $\varrho$ on $\GG$,
where $Q\,(> n)$ is a positive integer representing the
homogeneous dimension of $\GG$; see Section \ref{prelcar} and
Section \ref{dere}.

In Section \ref{poincinsect} we shall prove the validity of some
global Poincar\'e-type inequalities on smooth compact
hypersurfaces with - or without - boundary, immersed into $\GG$.
Here we just stress the relationship among these inequalities and
some eigenvalues problems related to the 2nd order differential
operator $\lh$ previously introduced; see Definition \ref{epr}.

To be more precise, we shall first define some suitable
isoperimetric constants, in purely geometric terms, and then show
that these constants equal the infimum of their associated
``horizontal'' Rayleigh's quotients, i.e.\[\frac{\int_S|\qq
\psi|\per}{\int_S|\psi|\per}.\] For instance, if $\partial S\neq
\emptyset$, let us set
$${\rm Isop}(S):=\inf\frac{\nis(N)}{\per(S_1)},$$where
$N\cap \partial S=\emptyset$ and $S_1$ is the unique
$(n-2)$-dimensional submanifold of $S$ such that  $N=\partial
S_1$. Then, we will prove that $${\rm
Isop}(S)=\inf\frac{\int_S|\qq \psi|\per}{\int_S|\psi|\per},$$
where the infimum is taken over all $\cont^1$-smooth functions on
$S$ such that $\psi|_{\partial S}=0$. This constant is related to
the first non-zero eigenvalue $\lambda_1$ of the following
Dirichlet-type problem:
\begin{displaymath}
{\rm(P_2)}\,\,\,\left\{%
\begin{array}{ll}
    \quad\,\,\quad\lh\psi=\lambda\, \psi\, \qquad(\lambda\in\R,\,\,\psi\in\mathbf{C}^2(S)); \\
    \quad\quad\,\,\,\,\,  \psi|_{\partial S} =0; \\
\end{array}%
\right.\end{displaymath}see Definition \ref{epr}. Indeed, it turns
out that \[\lambda_1\geq \frac{({\rm Isop}(S))^2}{4};\] see
Corollary \ref{555}. Furthermore we shall prove some results about
another isoperimetric constant; see, for fore more details,
Theorem \ref{zdaz} and Corollary \ref{1zdaz}. The results stated
in Section \ref{poincinsect} are direct consequences of the Coarea
Formula for the $\HS$-gradient. The proofs follow the scheme of
the Riemannian case, for which we refer the reader to a celebrated
paper by Yau \cite{Yau}; see also \cite{Ch1, Ch3}.

In Section \ref{wlinisoineq} we will try to generalize to our
Carnot setting some classical differential-geometric results; see,
for instance,  \cite{BuZa}
 and references therein, but also \cite{Yano} and \cite{Ch2}.
 The starting point is an integral formula very similar to the Euclidean  Minkowsky
 Formula; see also Corollary \ref{IPPH0} for a precise statement.  More precisely, we shall
 show that\[(\DH-1)\,\per(S)\leq
R\Big\{\int_S\big(|\MS|+ |C\cc\nn|\big)\per+ \nis(\partial
S)\Big\}\]whenever $S\subset\GG$ is a smooth compact hypersurface
with boundary and $R$ denotes the radius of a $\varrho$-ball
circumscribed about $S$.

 From the above linear isoperimetric inequality, we shall deduce a number of simple geometric
 consequences. Among other results, Section \ref{wmf} contains a weak monotonicity
 inequality for the $\HH$-perimeter $\per$; see Proposition \ref{in}. In Section
 \ref{wme} we shall develop some weak Poincar\'e-type
 inequalities,
 depending on the local geometry of the hypersurface $S$; see Theorem \ref{0celafo} and Theorem \ref{1celafo}.
For instance, let $S\subset\GG$ be a  $\cont^2$-smooth
hypersurface with bounded horizontal mean curvature $\MS$. Then,
we shall prove that for every $x\in S$ there exists $R_0\leq {\rm
dist}_\varrho(x,\partial S)$ such that
\[
\Big(\int_{S_R}|\psi|^p\per\Big)^{\frac{1}{p}}\leq C_p\,
R\Big(\int_{S_R}|\qq\psi|^p\per\Big)^{\frac{1}{p}} \qquad
p\in[1,+\infty[
\]for all
$\psi\in\cont_0^1(S_R)$ and all $R\leq R_0$, where $S_R:=S\cap
B_\varrho(x,R)$.

 These
 results are obtained by means of elementary ``linear'' estimates starting from the
  horizontal integration by parts formula, together with a
 careful analysis of the role played by the characteristic set $C_S$ of $S$.

Section \ref{mike} contains perhaps the main result of this paper,
i.e. an isoperimetric inequality for compact hypersurfaces with -
or without - boundary, depending on the horizontal mean curvature
$\MS$ of the hypersurface. As  already said in our introduction,
this generalizes a classical and well-known result by Michael and
Simon \cite{MS} and Allard \cite{Allard}.

To state the claimed inequality, let us first introduce some
notation. Let $\GG$ be a $k$-step Carnot group and let
$S\subset\GG$ be a $\cont^2$-smooth compact hypersurface with - at
least - $\cont^1$-smooth boundary $\partial S$. Moreover, let us
set (see, for more details,  Remark \ref{indbun}):
\begin{itemize}\item $\varpi:=\frac{\P\vv\nu}{|\PH\nu|}=\sum_{i=2}^k\varpi\ci$, where $\nu$ is the
Riemannian unit normal vector along $S$. Moreover,  $\PH, \P\ci$
and $\P\vv$ denote, respectively, the projection operators onto
$\HH$, $\HH_i\,(i=2,...,k)$ and
$\VV:=\HH^\perp={\HH}_2\oplus...\oplus {\HH}_k$.

\item $\chi:=\frac{\P\vs\eta}{|\P\ss\eta|}$, where $\eta$ is the
Riemannian unit normal vector along $\partial S$ and $\P\ss,
\P\vs$ denote, respectively, the projection operators onto $\HS$
and $\VS$, which are the subbundles on $\TS$ induced by the
decomposition $\gg=\HH\oplus \VV$.

\item $\chi\ciss:=\P\ciss\chi$, where, as above, $\P\ciss$ denote
the projection operators onto the subbundles $\HH_iS$ of
$\VS\subset\TS\,\,(i=2,...,k)$.
\end{itemize}

Furthermore, we shall assume that $|x\cc|\leq \varrho(x)$ and that
$c_i\in\R_+\,\,(i=2,...,k)$ be positive constants satisfying:
\[|x\ci|\leq c_i
\varrho^i(x)\qquad(i=2,...,k),\]where $\varrho$ is  a given smooth
homogeneous distance on $\GG$ and where we have set
$\varrho(x)=\varrho(0,x)$.\\

Our main result reads as follows:\\

 \noindent{\bf
Theorem\,(Isoperimetric inequality)}\,\,{\it Let $\GG$ be a
$k$-step Carnot group and let $S\subset\GG$ be a $\cont^2$-smooth
compact hypersurface with $\cont^1$-smooth boundary $\partial S$.
Then\[\big\{\per(S)\big\}^{\frac{Q-2}{Q-1}}\leq
C_I\bigg\{\int_{S}|\MS|\bigg(1+\sum_{i=2}^k
i\,c_i\varrho_S^{i-1}|\varpi\ci|\bigg)\per+ \int_{\partial
S}\Big(1+\sum_{i=2}^k
i\,c_i\varrho_S^{i-1}|\chi\ciss|\Big)\nis\bigg\},
\]where $C_I$ is a constant independent of
$S$; see Theorem \ref{isopineq}.}\\

In the above formula we have set $\varrho_S=\frac{\rm{diam}_\varrho(S)}{2}$.\\

\noindent If we further assume that $\dim\,C_{\partial S}< n-2$,
then we shall see that there exists $C'_I>0$ such that
\[\big\{\per(S)\big\}^{\frac{Q-2}{Q-1}}\leq
C'_I\bigg\{\int_{S}|\MS|\Big(1+\sum_{i=2}^k
i\,c_i\varrho_S^{i-1}|\varpi\ci|\Big)\per + \nis(\partial
S)\bigg\};
\]see Corollary \ref{isopineqcaso2}.
The proof of these results is heavily inspired from the classical
one, for which we refer the reader to the book by Burago and
Zalgaller \cite{BuZa}. A similar strategy was also fundamental in
proving isoperimetric and Sobolev inequalities in abstract metric
setting such as weighted Riemannian manifolds and graphs; see
\cite{CGY}.

 However, we have to say that there are still many non-trivial
modifications to be done, due to the sub-Riemannian setting.
Roughly speaking, the starting point will be again a linear
inequality, yet somewhat different from those, purely horizontal,
proven in Section \ref{wlinisoineq}; see , for a more precise
statement, Section \ref{wlineq}, Proposition \ref{l1am}. This
linear-type inequality is then used to obtain a monotonicity
estimate for the $\HH$-perimeter $\per$; see Proposition
\ref{rmonin}. As in the Euclidean (and/or Riemannian) case, the
{\it monotonicity inequality} is a differential inequality,
expressing the local behavior of the derivative of the quotient
$$\frac{\per \{S\cap B_{\varrho}(x,t)\}}{t^{Q-1}}\qquad(t>0)$$ for
$t\searrow 0^+$, whenever $x\in S\setminus C_S$ is
non-characteristic; see Section \ref{wlineq}.  We will also
discuss the possibility of performing such arguments for
characteristic points, at least in some special cases; see, for
instance, Lemma \ref{condcar77}.

 Section \ref{isopineq1} is then
devoted to the proof  of the aforementioned isometric inequality.

 In Section \ref{asintper}  we shall discuss a
straightforward application of the monotonicity estimate. More
precisely, let $S\subset\GG$ be a $\cont^2$-smooth hypersurface
that, for simplicity, is taken to be without boundary.
Furthermore, let us assume that the horizontal mean curvature
$\MS$ be bounded. Then, for every $x\in{S}\setminus C_{S}$, we
shall show that
\[\per({S}_t)\geq \kappa_{\varrho}(\nn(x))\,t^{Q-1} e^{-t
\{\MS^0(1+O(t))\}}\]for $t\rightarrow 0^+$, where
$\kappa_{\varrho}(\nn(x))$ denotes the ``density'' of $\per$ at
$x\in {S}\setminus C_{S}$, also called {\it metric factor}. Again,
we shall tract the case in which $x\in C_S$, where $S$ is immersed
in the Heisenberg group ${\mathbb{H}^n}$.

 In Section \ref{sobineqg} we
shall discuss the equivalent Sobolev-type inequalities which can
be deduced by the previous isoperimetric inequality, following a
well-known and classical argument by Federer-Fleming
\cite{FedererFleming} and Mazja \cite{MAZ}; see Theorem
\ref{sobolev}. Some corollaries will be proven, and among others,
we shall show the following:\\

\noindent{\bf Corollary\,(Sobolev-type inequality)}\,\,{\it Let
$\GG$ be a $k$-step Carnot group. Let $S\subset\GG$ be a
$\cont^2$-smooth hypersurface without boundary and let $\MS$ be
globally bounded along $S$. Furthermore, let us assume that for
every smooth $(n-2)$-dimensional submanifold $N\subset S$ one has
$\dim\, C_{N}<n-2.$ Then there exists $C'_1>0$, only depending on
the geometry of $S$, such that
\[\bigg\{\int_S|\psi|^{\frac{Q-1}{Q-2}}\per\bigg\}^{\frac{Q-2}{Q-1}}\leq
C'_1\,\int_{S}\big\{|\psi|+ |\qq\psi|\big\}\per
\]
for all $\psi\in\cont_0^1(S)$.}\\

See Theorem \ref{gsobin} for a more detailed statement.

\section{Carnot
groups, submanifolds and measures}\label{0prelcar}
\subsection{Sub-Riemannian Geometry of Carnot groups}\label{prelcar}
In this section, we will introduce the definitions and the main
features concerning the sub-Riemannian geometry of Carnot groups.
References for this large subject can be found, for instance, in
\cite{CDG}, \cite{GE1}, \cite{GN}, \cite{Gr1, Gr2}, \cite{Mag},
\cite{Mi}, \cite{Montgomery}, \cite{P1, P2, P4}, \cite{Stric}. Let
$N$  be a ${\cin}$-smooth connected $n$-dimensional manifold and
let $\HH\subset \TT N$ be an $\DH$-dimensional smooth subbundle of
$\TT N$. For any $x\in N$, let $\TT^{k}_x$ denote the vector
subspace of $\TT_x N$ spanned by a local basis of smooth vector
fields $X_{1}(x),...,X_{\DH}(x)$ for $\HH$ around $x$, together
with all commutators of these vector fields of order $\leq k$. The
subbundle $\HH$ is called {\it generic} if, for all $x\in N$,
$\dim \TT^{k}_x$ is independent of the point $x$ and {\it
horizontal} if $\TT^{k}_x = \TT N$, for some $k\in \N$. The pair
$(N,\HH)$ is a {\it $k$-step CC-space} if is generic and
horizontal and if $k:=\inf\{r: \TT^{r}_x = \TT N \}$. In this case
\begin{equation}\label{filtration}0=\TT^{0}\subset
\HH=\TT^{\,1}\subset\TT^{2}\subset...\subset \TT^{k}=\TT
N\end{equation} is a strictly increasing filtration of {\it
subbundles} of constant dimensions $n_i:=\dim
\TT^{i}\,(i=1,...,{k}).$ Setting $(\HH_i)_x:=\TT^i_x\setminus
\TT^{i-1}_x,$ then $\grr(\TT_x N):=\oplus_{i=1}^k (\HH_k)_x$ is
its associated {\it graded Lie algebra}, at the point $x\in N$,
with Lie product induced by $[\cdot,\cdot]$. We set $\DH_i:=\dim
{\HH}_{i}=n_i-n_{i-1}\,(n_0=\DH_0=0)$ and, for simplicity,
$\DH:=\DH_1=\dim\HH$. The $k$-vector
$\overline{\DH}=(\DH,...,\DH_{k})$ is the {\it growth vector} of
$\HH$.

\begin{Defi} $\underline{X}=\{X_1,...,X_n\}$
is a {\bf graded frame} for $N$  if $\{{X}_{i_j}(x):
n_{j-1}<i_j\leq n_j\}$ is a basis for ${\HH_j}_x$ for all $x\in N$
and all $j\in\{1,...,k\}$.\end{Defi}

\begin{Defi}\label{dccar} A {\bf sub-Riemannian metric} $g\cc=\langle\cdot,\cdot\rangle\cc$ on $N$ is a
symmetric positive bilinear form on $\HH$. If $(N,\HH)$ is a
{CC}-space, the {\bf {CC}-distance} $\dc(x,y)$ between $x, y\in N$
is given by
$$\dc(x,y):=\inf \int\sqrt{\langle
 \dot{\gamma},\dot{\gamma}\rangle\cc} dt,$$
where the infimum is taken over all piecewise-smooth horizontal
paths $\gamma$ joining $x$ to $y$.
\end{Defi}

In fact, Chow's Theorem implies that $\dc$ is a true metric on $N$
and that any two points can be joined with - at least one -
horizontal path. The topology induced on $N$ by the {CC}-metric is
equivalent to the standard manifold topology; see \cite{Gr1},
\cite{Montgomery}.

The general setting introduced above is the starting point of
sub-Riemannian geometry. A nice and very large class of examples
of these geometries is represented by {\it Carnot groups} which,
for many reasons, play  in sub-Riemannian geometry  an analogous
role to that of Euclidean spaces in Riemannian geometry. Below we
will introduce their main features. For the geometry of Lie groups
we refer the reader to Helgason's book \cite{Helgason} and
Milnor's paper \cite{3}, while, specifically for  sub-Riemannian
geometry, to Gromov, \cite{Gr1}, Pansu, \cite{P1, P4}, and
Montgomery, \cite{Montgomery}.

A $k$-{\it{step Carnot group}}  $(\GG,\bullet)$ is an
$n$-dimensional, connected, simply connected, nilpotent and
stratified Lie group (with respect to the multiplication
$\bullet$) whose Lie algebra $\gg(\cong\Rn)$
satisfies:\begin{equation} {\mathfrak{g}}={\HH}_1\oplus...\oplus
{\HH}_k,\quad
 [{\HH}_1,{\HH}_{i-1}]={\HH}_{i}\quad(i=2,...,k),\,\,\,
 {\HH}_{k+1}=\{0\}.\end{equation}
We denote by $0$ the identity on $\GG$ and so $\gg\cong\TT_0\GG$.
The smooth subbundle
  ${\HH}_1$ of the tangent bundle $\TG$ is said to be
{\it horizontal}  and henceforth denoted by $\HH$. We set
${\VV}:={\HH}_2\oplus...\oplus {\HH}_k$ and call ${\VV}$ the {\it
vertical subbundle} of $\TG$. Moreover, we set
$\DH_i=\dim{{\HH}_i}$ $(i=1,...,k)$.  $\HH$ is generated by a
frame $\underline{X\cc}:=\{X_1,...,X_{\DH}\}$ of left-invariant
vector fields which can be completed to a global
 graded and left-invariant frame,
  say $\underline{X}:=\{X_i:
 i=1,...,n\}$. We set
 $n_l:=\DH+...+\DH_l$ ($n_0=\DH_0:=0,\DH=\DH_1,\,n_k=n$) and
 $${\HH}_l={\mathrm{span}}_\R\big\{X_i:  n_{l-1}< i \leq
 n_{l}\big\}.$$
The standard basis $\{\ee_i:i=1,...,n\}$ of $\Rn\cong\gg$ can be
relabelled to be graded or {\it adapted to the stratification}.
Any left-invariant vector field of $\underline{X}$ is given by
${X_i}(x)={L_x}_\ast\ee_i\,(i=1,...,n)$, where ${L_x}_\ast$
denotes the differential of the left-translation at $x$.

\begin{no}\label{1notazione}In the sequel, we shall set $I\cc:=\{1,...,h_1\}$,
$I\cd:=\{h_1+1,...,n_2(=h_1+h_2)\}$,...,
$I\ck:=\{n_{k-1}+1,...,n_k(=n)\}$, and $I\vv:=\{h_1+1,...,n\}$.
Moreover, we will use Latin letters $i, j, k,...,$ for indices
belonging to $I\cc$ and Greek letters
 $\alpha, \beta, \gamma,...,$ for indices belonging to $I\vv$. Unless otherwise  specified, capital Latin letters $I, J,
 K,...,$ may denote any generic index. Finally, we define the
  function $\mathrm{ord}:\{1,...,n\}\longrightarrow\{1,...,k\}$
 by $\mathrm{ord}(I):= i $ if, and only if, $n_{i-1}<I\leq n_{i}$  $\,(i=1,...,k)$.
\end{no}

If $x\in\GG$ and $X\in\gg$ we set
${\gamma^{_X}_{x}}(t):=\esp[tX](x)\, (t\in\R)$, i.e.
${\gamma^{_X}_{x}}$ is the integral curve of $X$ starting from $x$
and it is a {\it{1-parameter subgroup}} of $\GG$. The {\it{Lie
group exponential map}} is then defined by
$$\exp:\gg\longmapsto\GG,\quad\exp(X):=\esp[X](1).$$ It turns out
that $\exp$ is an analytic diffeomorphism between $\gg$ and $\GG$
whose inverse will be denoted by $\llog.$ Moreover we have
$${\gamma^{_X}_{x}}(t)=x\bullet\exp(t
X)\quad\forall\,\,t\in\R.$$From now on we shall fix on $\GG$ the
so-called {\it exponential coordinates of 1st kind} which are the
coordinates associated to the map $\llog$.

As for any nilpotent Lie group, the {\it Baker-Campbell-Hausdorff
formula} (see \cite{Corvin}) uniquely determines the group
multiplication $\bullet$ of $\GG$, from the ``structure'' of its
own Lie algebra $\gg$. In fact, one has
$$\exp(X)\bullet\exp(Y)=\exp(X\star Y)\,\,(X,\,Y \in\gg),$$ where
${\star}:\gg \times \gg\longrightarrow \gg$ is the
 {\it Baker-Campbell-Hausdorff product} defined by \begin{eqnarray}\label{CBHformula}X\star Y= X +
Y+ \frac{1}{2}[X,Y] + \frac{1}{12} [X,[X,Y]] -
 \frac{1}{12} [Y,[X,Y]] + \mbox{ brackets of length} \geq 3.\end{eqnarray}

Using exponential coordinates, \eqref{CBHformula} implies that the
group multiplication $\bullet$ of $\GG$ is polynomial and
explicitly computable; see \cite{Corvin}. Moreover,
$0=\exp(0,...,0)$ and the inverse of ${p}\in\GG$
 $(x=\exp(x_1,...,x_{n}))$ is ${x}^{-1}=\exp(-{x}_1,...,-{x}_{n})$.

Whenever $\HH$ is endowed with a metric
$g\cc=\langle\cdot,\cdot\rangle\cc$, we say that $\GG$
 has a {\it sub-Riemannian structure}.  It is always possible to define a left-invariant Riemannian metric
  $g =\langle\cdot,\cdot\rangle$ on $\GG$ such that $\underline{X}$ is {\it
orthonormal} and $g_{|\HH}=\g$. Notice that if we fix a Euclidean
metric on $\gg=\TT_0\GG$ (which makes $\{\ee_i: i=1,...,n\}$ an
orthnormal basis), this metric naturally extends to the whole
tangent bundle, by left-translation.

Since Chow's Theorem holds true for Carnot groups, the {\it
Carnot-Carath\'eodory distance} $\dc$ associated with $g\cc$ can
be defined. The pair $(\GG,\dc)$ turns out to be a complete metric
space on which every couple of points can be joined by - at least
- one $\dc$-geodesic.

Carnot groups are {\it homogeneous groups}, i.e. they are equipped
with a 1-parameter group of automorphisms
$\delta_t:\GG\longrightarrow\GG$ $(t\geq 0)$; see \cite{Stein}. By
using exponential coordinates, it turns out that $\delta_t x
:=\exp(\sum_{j,i_j}t^j\,x_{i_j}\ee_{i_j})$, where
$x=\exp(\sum_{j,i_j}x_{i_j}\ee_{i_j})\in\GG.$\footnote{Here,
$j\in\{1,...,k\}$ and $i_j\in I{\!_{^{_{{\HH}_j}}}}=\{
n_{j-1}+1,..., n_{j}\}$.} Furthermore, we shall set
$\widehat{\delta}_t(\llog x):=\sum_{j,i_j}t^j\,x_{i_j}\ee_{i_j}$
to denote the induced dilations on the Lie algebra $\gg$. The {\it
homogeneous dimension} of $\GG$ is then the integer $\Qdim$,
coinciding with the {\it Hausdorff dimension} of $(\GG,\dc)$ as a
metric space; see \cite{Mi}, \cite{Montgomery}, \cite{Gr1}.

\begin{Defi}A continuous distance
$\varrho:\GG\times\GG\longrightarrow\R_+\cup\{0\}$ is called {\bf
homogenous} if one has
\begin{itemize}\item[{\bf(i)}]$\varrho(x,y)=\varrho(z\bullet x,z\bullet
y)$ for every $x,\,y,\,z\in\GG$;
\item[{\bf(ii)}]$\varrho(\delta_tx,\delta_ty)=t\varrho(x,y)$ for
all $t\geq 0$. \end{itemize}\end{Defi}

The CC-distance $\dc$ is the fundamental example of a homogeneous
distance on any Carnot group $\GG$. Another example of a
homogeneous distance for Carnot groups has been recently defined
by Franchi, Serapioni and Serra Cassano and can be found in the
Appendix of \cite{FSSC5}. We stress that {\it on every Carnot
group there exists a smooth, subadditive and homogeneous norm};
see \cite{HeSi}. This means that there exists a function
$\|\cdot\|_\varrho:\GG\times\GG\longrightarrow\R_+\cup \{0\}$ such
that:\begin{itemize}\item[{\bf(i)}]$\|x\bullet
y\|_\varrho\leq\|x\|_\varrho+\|y\|_\varrho$;
\item[{\bf(ii)}]$\|\delta_tx\|_\varrho=t\|x\|_\varrho\quad (t\geq
0)$;\item[{\bf(iii)}]$\|x\|_\varrho=0\Leftrightarrow x=0$;
\item[{\bf(iv)}]$\|x\|_\varrho=\|x^{-1}\|_\varrho$;
\item[{\bf(v)}]$\|\cdot\|_\varrho$ is continuous;
\item[{\bf(vi)}]$\|\cdot\|_\varrho$ is smooth on $\GG\setminus 0$.
\end{itemize}

For instance, a homogeneous norm $\varrho$ which is smooth on
$\GG\setminus \{0\}$ can be defined by
$$\|x\|_\varrho:=(|x\cc|^{\lambda}+|x\cd|^{\lambda/2}+|x\ctr|^{\lambda/3}+...+|x\ck|^{\lambda/k}
)^{1/\lambda},$$where $\lambda$ is a positive number evenly
divisible by $i$, for $i=1,...,k$. Here $|x\ci|$ denotes the
Euclidean norm of the projection of $x$ onto the i-th layer
$\HH_i$ of the stratification of $\gg\,\,(i=1,...,k)$.

Having a Riemannian metric, we define the left-invariant co-frame
$\underline{\omega}:=\{\omega_I:I=1,...,n\}$ dual to
$\underline{X}$. In particular, the {\it left-invariant 1-forms}
\footnote{That is, $L_p ^{\ast}\omega_I=\omega_I$ for every
$p\in\GG.$} $\omega_i$ are uniquely determined by the condition:
$$\omega_I(X_J)=\langle X_I,X_J\rangle=\delta_I^J\qquad (I,
J=1,...,n)$$ where $\delta_I^J$ denotes the Kronecker delta. We
remind that the {\it structural constants} of the Lie algebra
$\gg$ associated with the left invariant frame $\underline{X}$ are
defined by
$$\SC^R_{IJ}:=\langle [X_I,X_J],
 X_R\rangle\qquad(I,J, R=1,...,n)$$
\noindent {and satisfy:
\begin{itemize}\item [{\bf (i)}]\, $\SC^R_{IJ} +\SC^R_{JI}=0,$\,\,(skew-symmetry) \item
[{\bf(ii)}]\, $\sum_{J=1}^{n} \SC^I_{JL}\SC^{J}_{RM} +
\SC^I_{JM}\SC^{J}_{LR} + \SC^I_{JR}\SC^{J}_{ML}=0$\,\,(Jacobi's
identity).\end{itemize} \noindent The stratification hypothesis on
the Lie algebra implies the following important property:
\begin{equation} X_i\in {\HH}_{l},\, X_j \in
{\HH}_{m}\Longrightarrow [X_i,X_j]\in {\HH}_{l+m}.\end{equation}
Therefore, if $i\in I{\!_{^{_{{\HH}_s}}}}$ and $j\in
I{\!_{^{_{{\HH}_r}}}}$, one has:
\begin{equation}\SC^m_{ij}\neq 0 \Longrightarrow m \in I{\!_{^{_{{\HH}_{s+r}}}}}.\end{equation}

\begin{Defi}\label{nota}Throughout this paper we shall make
use of the following notation:\begin{itemize}\item[{\bf(i)}]
$C^\alpha\cc:=[\SC^\alpha_{ij}]_{i,j\in
I\cc}\in\mathcal{M}_{h_1\times h_1}(\R)\,\,\,\qquad(\alpha\in
I\cd)$; \item[{\bf(ii)}] $
C^\alpha:=[\SC^\alpha_{IJ}]_{I,J=1,...,n}\in \mathcal{M}_{n\times
n}(\R)\qquad(\alpha\in I\vv).$\end{itemize}The linear operators
associated with these matrices will be denoted in the same way.
\end{Defi}

\begin{Defi}\label{twist}
The $i$-{\bf{th curvature}} of the distribution $\HH$
$(i=1,...,{k})$ is the antisymmetric, bilinear map, given by
$$\Om\ci:\HH\otimes{\HH}_{i}\longrightarrow {\HH}_{i+1},\qquad
\Om\ci(X\otimes Y):=[X,Y] \mod {\TT}^{i} \qquad\forall\,
X\in\HH,\,\forall\,  Y\in{\HH}_{i}.$$
\end{Defi}From the very definition of
$k$-step Carnot group, it follows that $\Om\ck(\cdot,\cdot)=0$.
Note the 1st curvature $\Om\cc(\cdot,\cdot):=\Om\cu(\cdot,\cdot)$
coincides with the curvature of a distribution in the sense of
\cite{GE1}, \cite{Gr2}, \cite{Montgomery}.

\begin{no} If $\, Y\in\TG$, we denote by
$Y=(Y_1,...,Y_k)$  its canonical decomposition w.r.t. the grading
of the tangent space, i.e. $Y=\sum_{i=1}^k\P\ci(Y)$, where $\P\ci$
denotes the orthogonal projection onto $\HH_i\,\,(i=1,...,k)$. We
also set $\Om\vv(X,Y):=\sum_{i=1}^{k-1}\Om\ci(X,Y_i)$ for every
$X\in \HH,\, Y\in\TG$.\end{no} The next lemma relates the above
notions.
\begin{lemma}\label{twist1}Let $X\in \HH$ and $Y, Z\in \TG$. Then
we have\begin{itemize}\item[{\bf(i)}]
$\langle\Om\cc(X,Y),Z\rangle=-\sum_{\alpha\in I\cd}\langle
Z,X_\alpha\rangle\langle C^\alpha\cc X, Y\rangle;$
\item[{\bf(ii)}] $\langle\Om\vv(X,Y),Z\rangle=-\sum_{\alpha\in
I\vv}\langle Z,X_\alpha\rangle\langle C^\alpha X,
Y\rangle.$\end{itemize}
\end{lemma}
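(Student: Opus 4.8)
The plan is to prove both identities by expanding every vector in the graded left-invariant frame $\underline{X}=\{X_1,\dots,X_n\}$ and reducing each statement to the definition $\SC^\alpha_{IJ}=\langle[X_I,X_J],X_\alpha\rangle$ of the structural constants, together with their skew-symmetry and the stratification rule that $[X_I,X_J]$ lands exactly in the layer $\HH_{\mathrm{ord}(I)+\mathrm{ord}(J)}$. No estimate or limiting argument is involved, so the whole proof is a pointwise computation in the frame.

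For (i), I would first note that since $\Om\cu$ takes its second argument in $\HH=\HH_1$, the expression $\Om\cc(X,Y)$ for $Y\in\TG$ must be read as $\Om\cu(X,\P\cc Y)$; accordingly only the horizontal component of $Y$ enters, consistently with the right-hand side. Writing $X=\sum_{j\in I\cc}\langle X,X_j\rangle X_j$ and $\P\cc Y=\sum_{i\in I\cc}\langle Y,X_i\rangle X_i$ at the base point, and using that the curvature map is tensorial (bilinear over functions, hence computable on the frame pointwise), I would expand
$$\Om\cc(X,Y)=\sum_{i,j\in I\cc}\langle X,X_j\rangle\langle Y,X_i\rangle\,\P\cd[X_j,X_i].$$
Because $i,j\in I\cc$ forces $[X_j,X_i]\in\HH_2$, the projection is exact and $\P\cd[X_j,X_i]=\sum_{\alpha\in I\cd}\SC^\alpha_{ji}X_\alpha$. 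Pairing with $Z$ and invoking the skew-symmetry $\SC^\alpha_{ji}=-\SC^\alpha_{ij}$ turns the coefficient into $\langle C^\alpha\cc X,Y\rangle=\sum_{i,j\in I\cc}\SC^\alpha_{ij}\langle X,X_j\rangle\langle Y,X_i\rangle$, which yields precisely $-\sum_{\alpha\in I\cd}\langle Z,X_\alpha\rangle\langle C^\alpha\cc X,Y\rangle$.

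For (ii), I would run the same computation layer by layer. Using the definition $\Om\vv(X,Y)=\sum_{i=1}^{k-1}\Om\ci(X,\P\ci Y)$ together with $\Om\ci(X,\P\ci Y)=\P_{\HH_{i+1}}[X,\P\ci Y]$, I expand each bracket in the frame; the stratification property guarantees $[X_j,X_l]\in\HH_{i+1}$ for $j\in I\cc$ and $l\in I\ci$, so the $X_\alpha$-component ($\alpha\in I_{\HH_{i+1}}$) equals $\SC^\alpha_{jl}$, and summing over $i$ collects all $\alpha\in I\vv$. The only point beyond (i) is to check that replacing the layer-matrices by the full matrix $C^\alpha=[\SC^\alpha_{IJ}]_{I,J=1}^{n}$ introduces no spurious terms: since $X\in\HH$ annihilates every $\langle X,X_J\rangle$ with $J\notin I\cc$, and since $\SC^\alpha_{IJ}\ne0$ with $J\in I\cc$ and $\alpha\in I_{\HH_{i+1}}$ forces $I\in I\ci$ by the stratification rule, the double sum defining $\langle C^\alpha X,Y\rangle$ collapses exactly to the indices appearing in the curvature expansion. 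Skew-symmetry then gives $\sum_{j,l}\SC^\alpha_{jl}\langle X,X_j\rangle\langle Y,X_l\rangle=-\langle C^\alpha X,Y\rangle$, and pairing with $Z$ produces $-\sum_{\alpha\in I\vv}\langle Z,X_\alpha\rangle\langle C^\alpha X,Y\rangle$, as claimed.

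I expect the main obstacle to be purely notational rather than conceptual: one must justify that the curvature maps may be evaluated on the frame pointwise (tensoriality, which follows because the correction terms in $[fX,Y]$ and $[X,gY]$ lie in $\HH$, respectively $\HH_i$, and are killed by the relevant projection), and one must keep careful track of indices to see that the stratification rule confines the nonzero structural constants to precisely the layers making the layer-wise and full-matrix sums agree. Everything else is routine bookkeeping with $\langle\cdot,\cdot\rangle$ and the skew-symmetry of the $\SC^\alpha_{IJ}$.
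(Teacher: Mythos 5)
Your proof is correct and is essentially the paper's own argument: the paper disposes of this lemma with the one-line remark that it is a straightforward consequence of Definition \ref{twist} and Definition \ref{nota}, and your frame computation (tensoriality of the curvature maps, expansion in the graded left-invariant frame, skew-symmetry of the structural constants, and the stratification rule confining the nonzero $\SC^\alpha_{IJ}$ to the correct layers) is exactly the computation that remark leaves implicit. Nothing in your route differs from the intended one; you have simply supplied the bookkeeping the paper omits.
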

\begin{proof}The proof is a straightforward  consequence of Definition \ref{twist} and Definition
\ref{nota}.

\end{proof}

\begin{Defi}\label{parzconn}
We shall denote by $\nabla$ the - unique - {\bf left-invariant
Levi-Civita connection} on $\GG$ associated with $g$. Moreover, if
$X, Y\in\cin(\GG,\HH)(:=\XH)$, we shall set $\gc_X Y:=\PH(\nabla_X
Y)$. We stress that $\gc$ is an example of partial connection,
called {\bf horizontal $\HH$-connection}; see \cite{Monteb} and
references therein.
\end{Defi}

\begin{oss}\label{flatness}From
Definition \ref{parzconn}, using the properties of structural
constants of any Levi-Civita connection, we get that the
horizontal connection $\gc$ is {\bf flat}, i.e.
$\gc_{X_i}X_j=0\,\, (i,j\in I\cc).$ Note that  $\gc$ is {\bf
compatible  with the sub-Riemannian metric} $g\cc$, i.e.
$$X\langle Y, Z \rangle\cc=\langle \gc_X Y, Z \rangle\cc
+ \langle  Y, \gc_X Z \rangle\cc\qquad \forall\,\,X, Y, Z\in
\XH.$$This follows immediately from the very definition of $\gc$
and the corresponding properties of the Levi-Civita connection
$\nabla$ on $\GG$. Furthermore, $\gc$ is {\bf torsion-free}, i.e.
$$\gc_X Y - \gc_Y X-\PH[X,Y]=0\qquad \forall\,\,X, Y\in \XH.$$For the global left-invariant frame
$\underline{X}=\{X_1,...,X_{n}\}$ it turns out that
\begin{equation}\label{c2}\nabla_{X_I} X_J =
\frac{1}{2}\sum_{R=1}^n( \SC_{IJ}^R  - \SC_{JR}^I + \SC_{RI}^J)
X_R\qquad (I,\,J=1,...,n).\end{equation}
\end{oss}

\begin{Defi}
If $\psi\in\cin({\GG})$ we define the {\bf horizontal gradient} of
$\psi$,  $\dg \psi$, as the - unique - horizontal vector field
such that
$$\langle\dg \psi,X \rangle\cc= d \psi (X) = X
\psi\quad \forall \,X\in \XH.$$  The {\bf horizontal divergence}
of $X\in\XH$, $\divh X$, is defined, at each point $x\in \GG$, by
$$\divh X(x):= \mathrm{Trace}\big(Y\longrightarrow \gc_{Y} X
\big)(x)\quad(Y\in \HH_x).$$
\end{Defi}

\label{}

We end this section with some examples.
\begin{es}[Heisenberg group $\mathbb{H}^n$] \label{epocase}Let $\mathfrak{h}_n:=\TT_0\mathbb{H}^n=\R^{2n + 1}$
denote the Lie algebra of the Heisenberg group $\mathbb{H}^n$ that
is the most important example of $2$-step Carnot group. Its Lie
algebra $\mathfrak{h}_n$ is defined by the
rules\begin{equation}\label{poe}[\ee_{i},\ee_{i+1}]=\ee_{2n +
1}\end{equation}where $i=2k+1$ and $k=0,...,n-1$, and all other
commutators are 0. One has $\mathfrak{h}_n=\HH\oplus \R\ee_{2n+1}$
where $\HH={\rm span}_{\R}\{\ee_i:i=1,...,2n\}.$ In particular,
the 2nd layer of the grading $\R\ee_{2n+1}$ is the center of the
Lie algebra $\mathfrak{h}_n$. These conditions determine the group
law $\bullet$ via the Baker-Campbell-Hausdorff formula. More
precisely, if $x=\exp(\sum_{i=1}^{2n+1}x_iX_i),\,
y=\exp(\sum_{i=1}^{2n+1}y_i X_i)\in \mathbb{H}^n$, it turns out
that\begin{center}$x\bullet y =\exp \Big(x_1 + y_1,...,x_{2n} +
y_{2n}, x_{2n+1} + y_{2n+1} + \frac{1}{2}\sum_{k=1}^{n} (x_{2k-1}
y_{2k}- x_{2k} y_{2k-1})\Big).$\end{center}\end{es}

\begin{es}[Engel group $\mathbb{E}^1$]\label{e} The Engel group is
a simple but significant example of a $3$-step Carnot group. Its
Lie algebra $\mathfrak{e}$ is $4$-dimensional and can be defined
by the rules:
$$[\ee_{1},\ee_{2}]=\ee_{3},\,\,[\ee_{1},\ee_{3}]=[\ee_{2},\ee_{3}]=\ee_{4}.$$The other commutators vanish. One has
$\mathfrak{e}=\HH\oplus\R \ee_3\oplus\R\ee_4$,
 where $\HH=\rm{span}_{\R}\{\ee_1,\ee_2\}$. The center of $\mathfrak{e}$ is ${\R}\ee_4.$
\end{es}

\subsection{Hypersurfaces, $\HH$-regular submanifolds and
measures}\label{dere}

In the sequel, $\mathcal{H}^m_{\bf cc}$ and $\mathcal{S}^m_{\bf
cc}$ will denote, respectively, the usual and the spherical
Hausdorff measures associated with the CC-distance\footnote{We
remind that:\begin{itemize}\item[{\rm(i)}]
$\mathcal{H}_{{\bf{cc}}}^{m}({S})=\lim_{\delta\to 0^+}\mathcal
{H}_{{\bf{cc}},\delta}^{m}({S})$  where, up to a constant
multiple,
$${\mathcal{H}}_{{\bf{cc}},\delta}^{m}({S})=
\inf\Big\{\sum_i\Big(\mathrm{diam}\cc ({C}_i)\Big)^{m}:\;{S}
\subset\bigcup_i {C}_i;\;\mathrm{diam}\cc ({C}_i)<\delta\Big\}$$
and the infimum is taken w.r.t. any non-empty family of closed
subsets $\{{C}_i\}_i\subset\GG$;\item[{\rm(ii)}]
$\mathcal{S}_{{\bf{cc}}}^{m}({S})=\lim_{\delta\to 0^+}\mathcal
{S}_{{\bf{cc}},\delta}^{m}({S})$  where, up to a constant
multiple,
$${\mathcal{S}}_{{\bf{cc}},\delta}^{m}({S})=
\inf\Big\{\sum_i\Big(\mathrm{diam}\cc ({B}_i)\Big)^{m}:\;{S}
\subset\bigcup_i {B}_i;\;\mathrm{diam}\cc ({B}_i)<\delta\Big\}$$
and the infimum is taken w.r.t. closed $d\cc$-balls
${B}_i$.\end{itemize}}.

The - left-invariant - {\it Riemannian volume form} on $\GG$ is
defined as $\sigma^n\rr:=\Lambda_{i=1}^n\omega_i\in
\Lambda^n(\TG).$

\begin{oss} By integrating $\sigma^n\rr$ we obtain a measure $d{vol}^n\rr$,
which is the {\bf Haar measure} of $\GG$; see \cite{FE}. Since the
determinant of ${L_{x}}_{\ast}$ is equal to 1 $(x\in\GG)$, this
measure equals that induced on $\GG$ by the push-forward of the
$n$-dimensional Lebesgue measure $\mathcal{L}^n$ on $\Rn\cong\gg$.
Moreover, up to a constant multiple, $d{vol}^n\rr$ equals the {\it
$Q$-dimensional Hausdorff measure} $\HC$ on $\GG$. This is because
they are both Haar measures for $\GG$ and therefore equal, up to a
constant; see \cite{Montgomery}. We shall assume this constant is
equal to $1$.
\end{oss}

In the study of hypersurfaces of Carnot groups we need the notion
of {\it characteristic point}.
\begin{Defi}\label{caratt}
Let $S\subset\GG$ be a $\mathbf{C}^r$-smooth $(r=1,...,\infty)$
hypersurface. Then we say that $x\in S$ is a {\bf characteristic
point} of $S$  if $\dim\,\HH_x = \dim (\HH_x \cap \TT_x S)$ or,
equivalently, if $\HH_x\subset\TT_x S$. The {\bf characteristic
set} of $S$ is denoted by $C_S$. One has$$ C_S:=\{x\in S :
\dim\,\HH_x = \dim (\HH_x \cap \TT_x S)\}.$$
\end{Defi}
Thus, a hypersurface $S\subset\GG$, oriented by its unit normal
vector $\nu$, is {\it non-characteristic} if, and only if, the
horizontal subbundle $\HH$ is {\it transversal} to $S$
($\HH\pitchfork \TS$). We have then
$$\HH_p\pitchfork\TT_x S\Longleftrightarrow \PH \nu_x\neq 0\Longleftrightarrow\exists
  X\in\XH:
 \langle X_x, \nu_p\rangle \neq 0\qquad(x\in S),$$ where
 $\PH:\TG\longrightarrow\HH$ denotes the orthogonal projection onto
 $\HH$.

\begin{oss}[Hausdorff measure of $C_S$; see \cite{Mag}]
If $S\subset\GG$ is a $\mathbf{C}^1$-smooth hypersurface, then the
intrinsic  $(Q-1)$-dimensional Hausdorff measure of the
characteristic set $C_S$  vanishes, i.e. $\mathcal{H}_{\bf
cc}^{Q-1}(C_S)=0.$\end{oss}

\begin{oss}[Riemannian measure on hypersurfaces]
Let $S\subset\GG$ be a $\mathbf{C}^1$-smooth hypersurface and let
$\nu$ denote the unit normal vector along $S$. The
$(n-1)$-dimensional Riemannian measure along $S$ can be defined
by\begin{equation}\label{misup}\sigma^{n-1}\rr\res
S:=(\nu\LL\sigma^{n}\rr)|_{S},\end{equation} where  $\LL$ denotes
the contraction, or interior product, of a differential
 form\footnote{The linear map $\LL:
\Lambda^k(\TG)\rightarrow\Lambda^{k-1}(\TG)$ is defined, for
$X\in\TG$ and $\omega^k\in\Lambda^k(\TG)$, by $(X \LL \omega^k)
(Y_1,...,Y_{k-1}):=\omega^k (X,Y_1,...,Y_{k-1})$; see
\cite{Helgason}, \cite{FE}.}.\end{oss}

\begin{Defi}\label{9}Let $U\subseteq\GG$ be open and $f\in L^1(U).$
We say that $f$ has $\HH${\bf-bounded variation} on $U$ if the
following holds:
$$|{\gc}f|\cc(U):=\sup\Big\{ \int_{U} f\,\div\cc Y\,\L1:
Y\in{\mathbf C}_0^1({U},\HH), \; |Y|\cc\leq 1 \Big\}<\infty.$$ Let
$\BVG(U)$ denote the vector space of {\bf bounded $\HH$-variation
functions in ${U}$}. By Riesz' Theorem it turns out that
$|{\gc}f|\cc$ is a Radon measure on ${U}$ and that there exists a
horizontal $|{\gc} f|\cc$-measurable vector field $\nu_f$ such
that $|\nu_{f}|\cc=1$ for $|{\gc}f|\cc$-a.e. $p\in U$ and that
$$\int_{U} f\,\div\cc Y\,\L1 = \int_{U} \langle
Y,\nu_f\rangle\cc\,d\,|{\gc} f|\cc \quad \forall\,\,Y\in {\mathbf
C}_0^1({U},\HH).$$ We say that a measurable set $E\subset\GG$ has
finite $\HH$-perimeter in $U$ if ${\chi}_E\in\BVG(U)$. The
$\HH${\bf -perimeter} of $E$ in $U$ is the Radon measure
$|\partial E|\cc(U):= |{\gc}{\chi}_E|\cc(U)$. We call {\bf
generalized unit $\HH$-normal along $\partial E$} the Radon
$\R^{\DH}$-measure $\nu\ce:=-\nu_{{{\chi}}_E}$; see \cite{A2},
\cite{CDG}, \cite{FSSC3, FSSC4, FSSC5}, \cite{GN}.\end{Defi}

Just as in \cite{Monte, Monteb, MonteSem} (see also \cite{vari},
\cite{HP2}, \cite{RR}), since we are studying smooth
hypersurfaces, instead of the previous ``weak'' definition of
$\HH$-perimeter measure,  we shall define an $(n-1)$-differential
form which, by integration, coincides with the $\HH$-perimeter
measure.

\begin{Defi}[$\per$-measure on hypersurfaces]\label{sh}
Let $S\subset\GG$ be a $\mathbf{C}^1$-smooth non-characteristic
hypersurface and let us denote by $\nu$ its unit normal vector. We
will call {\bf unit $\HH$-normal} along $S$, the normalized
projection of $\nu$ onto $\HH$, i.e.$$\nn:
=\frac{\PH\nu}{|\PH\nu|}.$$ We then define the $(n-1)$-dimensional
measure $\per$ along ${S}$ to be the measure associated with the
$(n-1)$-differential form $\per\in\Lambda^{n-1}(\TS)$ given by
contraction of the volume form $\sigma^n\rr$ of $\GG$ with the
horizontal unit normal $\nn$, i.e.\begin{equation}\per \res
S:=(\nn \LL \sigma^n\rr)|_S.\end{equation} If we allow $S$ to have
characteristic points we may trivially extend the definition of
$\per$ by setting $\per\res C_{S}= 0$. Note that $\per \res S =
|\PH \nu |\cdot\sigma^{n-1}\rr\, \res S$.\end{Defi}

From the definition we immediately get that
$$\per \res S=\sum_{i\in I\cc} {\nn^i}\,(X_i \LL \Omn)|_S =\sum_{i\in I\cc}
{\nn^i}\,\ast\omega_i|_S= \sum_{i\in I\cc}(-1)^{i+1}{\nn^i}\,
(\omega_1\wedge...\wedge\widehat{\,\omega_i\,}\wedge...\wedge\omega_n
)|_{ S},$$ where ${\nn^i} :=\langle\nn, X_i\rangle\,\,(i\in
I\cc)$. The symbol $\ast$ denotes the {\it Hodge star} operation;
see \cite{Helgason}, \cite{Hicks}.
\begin{oss} We would like to point out that
\begin{equation}\label{IPREGCDG}\per(S\cap U)=\int_{S\cap U}\sqrt{ \langle
X_1,\textsl{n}_{\bf e}\rangle_{\Rn}^2+...+\langle
X_{m_1},\textsl{n}_{\bf e}\rangle_{\Rn}^2}\,d\,\Ar,
\end{equation}
where ${\textsl{n}_{\bf e}}$ denotes the unit Euclidean  normal
along $S$, and that its unit $\HH$-normal is given
by$${\nu\cc}=\frac{(\langle {X_1},{\textsl{n}_{\bf e}
}\rangle_{\Rn}, ...,\langle {X_{\DH}},{\textsl{n}_{\bf e}
}\rangle_{\Rn})}{\sqrt{\langle {X_1},{\textsl{n}_{\bf e}
}\rangle_{\Rn}^2+...+\langle {X_{\DH}},{\textsl{n}_{\bf e}
}\rangle_{\Rn}^2}}.$$Here, the Euclidean normal $\textsl{n}_{\bf
e}$ along $S$ and the vector fields
 $X_i\,(i\in I\cc)$ of the horizontal left-invariant frame $\underline{X\cc}$,
are thought of as vectors in $\Rn$, endowed with its canonical
inner product $\langle\cdot,\cdot\rangle_{\Rn}$. We note that the
- Riemannian -  unit normal $\nu$ along $S$ may be represented
 w.r.t. the global left-invariant frame
$\underline{X}$ for $\GG$, in terms of the Euclidean normal
${\textsl{n}_{\bf e} }$. One has$$\nu(x)=
\frac{{L_{x}}_{\ast}\textsl{n}_{\bf
e}(x)}{|{L_{x}}_{\ast}\textsl{n}_{\bf e}(x)|}\qquad (x\in
S),$$where
${L_{x}}_{\ast}(\cdot)=[X_1(\cdot),...,X_n(\cdot)]\in\mathcal{M}_{n\times
n}(\R)$.
\end{oss}

The comparison between different notions of measures on
submanifolds, is a basic problem of GMT in the setting of
Carnot-Carath\'eodory spaces; see \cite{balogh}, \cite{BTW},
\cite{Mag, Mag2}, \cite{FSSC3, FSSC4, FSSC5}. In the case of
smooth hypersurfaces in Carnot groups, one may compare the
$\HH$-perimeter measure with the $(Q-1)$-dimensional Hausdorff
measure associated with either the CC-distance $\dc$ or with any
homogeneous distance $\varrho$. In general, thanks to a remarkable
density estimate for $\HH$-perimeter measure proved in \cite{A2},
one can prove the following:
\begin{teo}\label{magnanoi}If ${S}\subset\GG$ is a $\mathbf{C}^1$-smooth hypersurface  which is locally
the boundary of an open set $E$ having locally finite
$\HH$-perimeter, then
 \begin{equation}\label{hausd}
 |\partial E|\cc(\BB)=k_\varrho(\nn)\,\mathcal{S}_{\bf cc}^{Q-1}\res
 ({S}\cap\BB)\quad \forall\,\,\BB\in \mathcal{B}or(\GG),
 \end{equation}where $k_{\varrho}(\nn)$  depends on $\nn$ and on the fixed homogeneous metric
$\varrho$ on $\GG$
\end{teo}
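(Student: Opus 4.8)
The plan is to exhibit $\mu:=|\partial E|\cc$ as an absolutely continuous measure with respect to $\mathcal{S}_{\bf cc}^{Q-1}\res S$ and to compute its Radon--Nikodym density, showing that this density equals $k_\varrho(\nn)$. Both objects are Radon measures concentrated on $S$: the measure $\mu$ by Riesz' representation (Definition \ref{9}), and $\mathcal{S}_{\bf cc}^{Q-1}\res S$ because $S$ is locally the boundary of a set of locally finite $\HH$-perimeter. The statement will then follow from the classical differentiation theorem for Radon measures in a metric space (Federer \cite{FE}), which guarantees that, whenever the density
\[
\Theta(x):=\lim_{R\to 0^+}\frac{\mu(B_\varrho(x,R))}{\mathcal{S}_{\bf cc}^{Q-1}(S\cap B_\varrho(x,R))}
\]
exists, is finite and strictly positive for $\mathcal{S}_{\bf cc}^{Q-1}$-a.e.\ $x\in S$, one has $\mu=\Theta\,\mathcal{S}_{\bf cc}^{Q-1}\res S$. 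Thus the whole argument reduces to the pointwise analysis of $\Theta$.

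First I would dispose of the characteristic set. By the result of Magnani recalled after Definition \ref{caratt} (see \cite{Mag}) one has $\mathcal{H}_{\bf cc}^{Q-1}(C_S)=0$, hence also $\mathcal{S}_{\bf cc}^{Q-1}(C_S)=0$, while $\mu\res C_S=0$ by construction (Definition \ref{sh}). Therefore both measures ignore $C_S$, and it suffices to compute $\Theta$ at non-characteristic points $x\in S\setminus C_S$, where the unit $\HH$-normal $\nn(x)=\PH\nu_x/|\PH\nu_x|$ is well defined and varies continuously.

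Next I would invoke the density estimate of Ambrosio \cite{A2}: there exist constants $0<c_1\le c_2<\infty$, locally uniform on $S\setminus C_S$, with $c_1\,R^{Q-1}\le \mu(B_\varrho(x,R))\le c_2\,R^{Q-1}$ for all sufficiently small $R>0$. This makes the upper and lower $(Q-1)$-densities of $\mu$ finite and positive, so that $\mu$ is asymptotically doubling along $S$ and the differentiation theorem above applies; in particular $\Theta(x)$ exists for $\mathcal{S}_{\bf cc}^{Q-1}$-a.e.\ $x$. To identify its value I would use the blow-up result, Theorem \ref{BUP}, Case (a): after left-translating $x$ to the origin and rescaling by $\delta_{1/R}$, the $\cont^1$-hypersurface $S$ converges to the vertical hyperplane orthogonal to $\nn(x)$, and the perimeter measure passes to the limit, giving
\[
\mu(B_\varrho(x,R))\sim k_\varrho(\nn(x))\,R^{Q-1}\qquad(R\to 0^+),
\]
where $k_\varrho(\nn(x))$ is the $\per$-measure of that limiting hyperplane inside the unit $\varrho$-ball. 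Since the same blow-up controls the denominator, one obtains $\Theta(x)=k_\varrho(\nn(x))$, a quantity depending only on $\nn(x)$ and on the fixed homogeneous distance $\varrho$.

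The main obstacle is precisely this blow-up identification: one must verify that the rescaled perimeter measures converge, with the correct normalisation, to the perimeter of the limiting vertical hyperplane, which is the content of Theorem \ref{BUP} and rests on the $\cont^1$-regularity of $S$ together with the homogeneity of $\varrho$ under $\delta_t$. Once the density is known, feeding $\Theta=k_\varrho(\nn)$ into Federer's differentiation theorem yields $\mu=k_\varrho(\nn)\,\mathcal{S}_{\bf cc}^{Q-1}\res S$ on $S\setminus C_S$, and since both sides vanish on $C_S$ the identity extends to every Borel set $\BB\in\mathcal{B}or(\GG)$, as claimed.
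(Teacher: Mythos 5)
Your overall architecture --- disposing of the characteristic set via Magnani's negligibility result, invoking Ambrosio's density estimate from \cite{A2}, identifying the density through the blow-up of Theorem \ref{BUP}, Case (a), and concluding by a differentiation theorem --- is exactly the skeleton of the proof the paper relies on; note that the paper does not prove Theorem \ref{magnanoi} at all, but cites \cite{Mag} for it and only develops the blow-up ingredient in Section \ref{blow-up}.

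However, your differentiation step has a genuine gap. You define $\Theta(x)$ as the limit of the ratio $|\partial E|\cc(B_\varrho(x,R))/\mathcal{S}_{\bf cc}^{Q-1}(S\cap B_\varrho(x,R))$ and then assert that ``the same blow-up controls the denominator''. It does not: Theorem \ref{BUP} gives the asymptotics of $\per(S\cap B_\varrho(x,R))$, i.e.\ of the perimeter measure itself, and says nothing about $\mathcal{S}_{\bf cc}^{Q-1}(S\cap B_\varrho(x,R))$. Spherical Hausdorff measure does not pass to the limit under the set convergence produced by the blow-up, and in fact knowing that $\mathcal{S}_{\bf cc}^{Q-1}(S\cap B_\varrho(x,R))$ is asymptotic to $R^{Q-1}$ is essentially equivalent to the theorem being proved, so this step is circular. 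Moreover, differentiating one Radon measure by another along $\varrho$-balls requires a Besicovitch or Vitali covering property for the base measure $\mathcal{S}_{\bf cc}^{Q-1}\res S$ (or for the metric $\varrho$), which is not available in general Carnot groups; the asymptotic doubling you obtain from \cite{A2} concerns $|\partial E|\cc$, not the measure you differentiate against, and even the local finiteness of $\mathcal{S}_{\bf cc}^{Q-1}\res S$ is not a priori but a consequence of the density bounds. The correct way to close the argument --- and the one used in the proof the paper cites --- is to differentiate against the gauge $R^{Q-1}$ instead of against $\mathcal{S}_{\bf cc}^{Q-1}\res S$: once the blow-up together with Ambrosio's estimate shows that $\lim_{R\to 0^+}|\partial E|\cc(B_\varrho(x,R))/R^{Q-1}=k_\varrho(\nn(x))$ exists and is positive and finite at every non-characteristic point, Federer's density theorems (2.10.17 and 2.10.19 in \cite{FE}) yield directly that $|\partial E|\cc$ coincides with $k_\varrho(\nn)\,\mathcal{S}_{\bf cc}^{Q-1}\res S$ on Borel sets, with no separate control of the denominator and no covering theorem for the ambient metric. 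With that replacement, the rest of your argument goes through.
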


This ``density'' is sometimes called {\it metric factor}; see
\cite{Mag}.  Because of regularity of $\partial E$, it turns out
that $$|\partial E|\cc(\BB)=\per\res (S\cap\BB)$$ for all $\BB\in
\mathcal{B}or(\GG)$. A proof of the previous theorem can be found
in \cite{Mag}. We shall extensively discuss the previous result
throughout Section \ref{blow-up}.

\begin{Defi}\label{carca}
If $\nn$ is the horizontal unit normal along $S$, at each regular
point $x\in S\setminus {\it C}_S$ one has that $\HH_x= (\nn)_x
\oplus \mathit{H}_x S$, where we have set $\mathit{H}_x
S:=\HH_x\cap\TT_x S; $ $\mathit{H}_x S$ is the horizontal tangent
space at $x$ along $S$. Moreover, we define in the obvious way the
associated subbundles $\HS (\subset \TS)$ and $\nn S$, called,
respectively, {\bf horizontal tangent bundle} and {\bf horizontal
normal  bundle} of $S$.
\end{Defi}

\begin{oss}[Induced stratification on $\TS$; see \cite{Gr1}]\label{indbun}The stratification of the Lie algebra $\gg$ induces a ``natural'' decomposition
of the tangent space of any smooth submanifold of $\GG$. Let us
analyze the case of a hypersurface $S\subset\GG$. More precisely,
let us intersect, at each point $x\in S$, the tangent spaces
$\TT_x S\subset\TT_x\GG$ with $\TT_x^i=\oplus_{j=1}^i(\HH_j)_x$.
We shall set $\TT^iS:=\TS\cap\TT^i\GG,$ $n'_i:=\dim\TT^iS$ and
$\HH_iS:=\TT^iS\setminus \TT^{i-1}S$. Note that $\HS=\HH_1S$. From
the previous construction it follows that
$\TS:=\oplus_{i=1}^k\HH_iS$ and that $\sum_{i=1}^kn'_i=n-1.$
Henceforth, we shall set $\VS:=\oplus_{i=2}^k\HH_iS$. One can show
that the Hausdorff dimension of any smooth hypersurface $S$,
w.r.t. the CC-distance $\dc$, is given by the number
$Q-1=\sum_{i=1}^k i\,n'_i$; see \cite{Gr1}, \cite{P4},
\cite{FSSC4, FSSC5}, \cite{Mag, Mag2, Mag3, Mag4}, \cite{HP2}. We
would like to stress that, if at each point $x\in S$, the
horizontal tangent bundle $\HS$ is generic and horizontal, then
the couple $(S, \HS)$ is a $k$-step CC-space; see Section
\ref{prelcar}. More generally, we stress that, at the
characteristic set of a smooth hypersurface $S\subset\GG$, the
dimension of the horizontal tangent bundle $\HS$ coincides with
that of $\HH$, i.e. $n'_1=\DH.$ This, in particular, implies that
only  a non-characteristic domain\footnote{Domain means open and
connected.} $\UU\subseteq S$ can be generic at each point.

\end{oss}

\begin{es}Let us consider the case of a smooth hypersurface $S\subset\mathbb{H}^n$ where  $\mathbb{H}^n$
denotes the $(2n+1)$-dimensional Heisenberg group. If $n=1$, then
the horizontal tangent bundle $\HS$ of $S$ cannot be a $2$-step
CC-space because $\HS$ is $1$-dimensional. Nevertheless, if $n>1$,
this is no longer true. Indeed, each non-characteristic domain
$\UU\subseteq S$ turns out to be generic and horizontal.
\end{es}

If we consider an immersed submanifold $S^{n-i}\subset\GG$ of
codimension $i>1$, what above can be generalized in the following
way:

\begin{Defi}\label{iuoi} We say that an  $i$-codimensional submanifold  $S^{n-i}$
of $\GG$ is {\bf geometrically $\HH$-regular} at $x\in S$ if there
exist linearly independent vectors $\nn^1,...,\nn^i\in\HH_x$
transversal to $S$ at $x$. Without loss of generality, these
vectors may also be assumed to be orthonormal at $p$. The
horizontal tangent space at $x$ is defined by
$$\HH_x S:=\HH_x\cap\TT_x S.$$ If this condition is independent of the point $x\in S$, we say that $S$ is
geometrically $\HH$-regular. In such a case we may define the
associated vector bundles $\HS(\subset \TS)$ and $\nn S$, called,
respectively, {\bf horizontal tangent bundle} and {\bf
 horizontal normal bundle}. One has
$$\HH_x:=\HH_x S\oplus\R\nn^1\oplus...\oplus\R\nn^i.$$
\end{Defi}

\begin{Defi}[Characteristic set of $S^{n-i}$]\label{carsetgen}
The {\bf characteristic set} $C_S$ of any  $\mathbf{C}^1$-smooth
 $i$-codimensional submanifold $S^{n-i}\subset\GG$ is defined by$$
C_S:=\{x\in S : \dim\,\HH_x -\dim (\HH_x \cap \TT_x S)\leq
i-1\}.$$
\end{Defi}

\begin{oss}[Hausdorff measure of $C_{S^{n-i}}$]\label{carsetgen}

The above definition of $C_S$ has been used in \cite{Mag2}, where
it was shown that the $(Q-i)$-dimensional Hausdorff measure
-w.r.t. $\dc$- of any $\mathbf{C}^1$-smooth submanifold
$S^{n-i}\subset\GG$ vanishes, i.e. $\mathcal{H}_{\bf
cc}^{Q-i}(C_S)=0.$\end{oss}\begin{Defi}[$\mis$-measure]\label{dens}
Let $S^{n-i}\subset\GG$ be an $i$-codimensional, geometrically
$\HH$-regular submanifold. Let $\nn^1,...,\nn^i\in\nn S$ and
assume they are everywhere orthonormal. We shall set
$$\nn:=\nn^1\wedge...\wedge\nn^i\in\Lambda_i(\TG),$$ and
define the $(n-i)$-dimensional measure $\mis$ along ${S}$ to be
the measure associated with the $(n-i)$-differential form
$\mis\in\Lambda^{n-i}(\TS)$ given by the interior product of the
volume form of $\GG$ with the
$i$-vector $\nn$, i.e.\footnote{For the general definition of the operation $\LL$ see \cite{FE}, Ch.1.} \begin{equation}\mis\res S:=(\nn \LL \sigma^n\rr)|_S.\\
\end{equation}
\end{Defi}
\begin{oss}\label{mourigno}The measure $\mis$ is
$(Q-i)$-homogeneous, w.r.t. Carnot dilations $\{\delta_t\}_{t>0}$,
i.e. $\delta_t^{\ast}\mis=t^{Q-i}\mis$. This easily follows from
the definitions. Moreover, it can be shown that the measure $\mis$
equals, up to a normalization constant, the $Q-i$-dimensional
Hausdorff
 measure associated to a
homogeneous distance $\varrho$ on $\GG$; see \cite{Mag3}.
\end{oss}

\subsection{Geometry of immersed
hypersurfaces }\label{hs}

We now introduce some notions, useful in the study of {\it
non-characteristic hypersurfaces}; see \cite{Monte, Monteb,
MonteSem} \cite{vari}, \cite{G,Pauls}, \cite{HP, HP2},
\cite{DanGarN8, gar}.

Let $S\subset\GG$ be a $\cont^2$-smooth hypersurface. We stress
that, if $\tsc$ is the connection induced  on $S$ from the
Levi-Civita connection $\nabla$ on $\GG$\footnote{Therefore,
$\tsc$ is the Levi-Civita connection on $S$; see \cite{Ch1}.},
then $\tsc$ induces a partial connection $\gs$, associated with
the subbundle $\HS\subset\TT{S}$, which is defined by\footnote{The
map $\P\ss:\TT{S}\longrightarrow\HS$ denotes the orthogonal
projection of $\TT{S}$ onto $\HS$.}
$$\gs_XY:=\P\ss(\tsc_XY)\quad\,(X,Y\in\HS).$$
 Starting from the orthogonal decomposition
 $\HH=\HS\oplus\nn S$
(see Definition \ref{carca}), we could also define
$\nabla^{_{\HS}}$ by making use of the classical definition of
``connection on submanifolds''; see  \cite{Ch1}. Actually, it
turns out that$$\gs_XY=\gc_X Y-\langle\gc_X
Y,\nn\rangle\cc\nn\quad\,(X, Y\in\HS).$$

\begin{Defi}
We call $\HS$-{\bf gradient} of $\psi\in \cin({S})$ the unique
horizontal tangent vector fields of $\HS$, $\qq\psi$, satisfying
$$\langle\qq\psi,X \rangle\ss= d \psi (X) = X
\psi\quad\forall\,\, X\in \HS.$$We denote by $\div\ss$ the
divergence operator on $\HS$, i.e. if $X\in\HS$ and $x\in {S}$,
then
$$\div\ss X (x) := \mathrm{Trace}\big(Y\longrightarrow
\gs_Y X \big)(x)\quad\,(Y\in \HH_xS).$$Finally, $\Delta_{_{\HS}}$
denotes the {\bf $\HS$-Laplacian}, i.e. the differential operator
given by
\begin{eqnarray} \label{Ciarr}
\Delta\ss\psi := \div\ss(\qq\psi)\quad(\psi\in
\cin({S})).\end{eqnarray}
\end{Defi}

\begin{Defi}\label{curvmed}
The sub-Riemannian {\bf horizontal \bf $\bf II^{nd}$ fundamental
form} of ${S}$ is the map ${\overline{B}\cc}: \HS\times\HS
\longrightarrow\nS$ given by
$$\overline{{B}}\cc(X,Y):=\langle\gc_X Y, \nn\rangle\cc
\,\nn\quad(X,\,Y\in\HS).$$ Moreover $\MH\in\nS$ is the {\bf
horizontal mean curvature vector of} $S$, defined as the trace of
${\overline{B}\cc}$, i.e. $\MH=\mathrm{Tr}{\overline{B}\cc}.$ The
{\bf horizontal - scalar - mean curvature} of $S$, denoted by
$\MS$, is defined by $\MS:=\langle\MH,\nn\rangle\cc$. We
set$${B\cc}(X,Y):=\langle\gc_X Y, \nn\rangle\cc
\quad(X,\,Y\in\HS).$$The {\bf torsion} $\Tor\ss$ of the partial
$\HS$-connection $\gs$ is defined
by\begin{eqnarray*}\Tor\ss(X,Y):=\gs_XY-\gs_YX-\PH[X,Y]\quad (X,
Y\in\HS).\end{eqnarray*}

\end{Defi}

The trace ${\rm Tr}$ is computed w.r.t. the 1st sub-Riemannian
fundamental form $g\ss=\langle\cdot,\cdot\rangle\ss$, which is the
restriction to $S$ of the metric $g\cc$, i.e.
$g\ss:=g\cc|_{\HS}=g|_{\HS}$. ${B\cc}(X,Y)$ is a {\it
$\cin(S)$-bilinear form in $X$ and $Y$.} We stress that the
torsion has been defined because, in general, $B\cc$ {\it is not
symmetric}; see \cite{Monteb}.

We now remind the notion of {\it adapted frame}; see
\cite{Monteb}. Roughly speaking, we shall ``adapt'' in the usual
Riemannian way (see, for example, \cite{Spiv}) an orthonormal
frame to the horizontal tangent space of a hypersurface. If
$U\subset\GG$ is open, we set $\UU:=U\cap S$ and, for a while, we
shall assume that $\UU$ be non-characteristic.

\begin{Defi}\label{movadafr}An {\bf adapted frame to $\UU$ on $U$} is an orthonormal frame
$\underline{\tau}:=\{\tau_1,...,\tau_n\}$ on $U$   such that:
\begin{eqnarray*}\mbox{\bf{(i)}}\,\,\tau_1|_\UU:=\nn;\,\,\qquad
\mbox{\bf{(ii)}}\,\,\HH_p\UU=\mathrm{span}\{(\tau_2)_p,...,(\tau_{{\DH}})_p\}\quad(p\in\UU);\,\,\qquad
\mbox{\bf{(iii)}}\,\,\tau_\alpha:=
X_\alpha.\end{eqnarray*}\end{Defi}

\begin{no}Remind that
 $I\cc=\{1,2,...,\DH\}$ and that $I\vv=\{\DH+1,...,n=\dim\,\GG\}$.
 In the sequel,
  we shall also set
 $I\ss:=\{2,3,...,\DH\}$ and we shall assume that
 $$\underline{\tau}=\{\underbrace{\tau_1}_{=\nn},
 \underbrace{\tau_2,...,\tau_{\DH}}_{\mbox{\tiny{o.n. basis of}}
 \,\HS},\underbrace{\tau_{\DH+1},...,\tau_n}_{\mbox{\tiny o.n. basis of}\,\VV}\}.$$For any
  adapted frame $\underline{\tau}$, we
shall denote by $\underline{\phi}:=(\phi_1,...,\phi_n)$, its dual
co-frame. This means that $\phi_I(\tau_J)=\delta_I^J\,\,\mbox{\rm
(Kronecker)}\,\,(I, J=1,...,n).$
\end{no}

Note that every adapted orthonormal frame to a hypersurface is a
graded frame.

 Throughout this paper, we shall frequently use the
following notation:
\begin{Defi}\label{notazia} Let $S\subset\GG$ be a $\cont^2$-smooth hypersurace
oriented by its - Riemannian - unit normal $\nu$. Then we shall set
\begin{itemize}\item[{\bf(i)}]$\varpi_\alpha:=\frac{\nu_\alpha}{|\PH\nu|}\qquad (\alpha\in
I\vv)$;\item[{\bf(ii)}]$\varpi:=\sum_{\alpha\in
I\vv}\varpi_\alpha\tau_\alpha$;\item[{\bf(iii)}]
$C\cc:=\sum_{\alpha\in {I\cd}} \varpi_\alpha\,C^\alpha\cc;$
\item[{\bf(iv)}] $C:=\sum_{\alpha\in {I\vv}}
\varpi_\alpha\,C^\alpha.$\end{itemize}Moreover, for any $\alpha\in
I\cd$, we shall set $C^\alpha\ss:=C^\alpha\cc|_{\HS}$ to stress
that the linear operator $C^\alpha\ss$ only acts on horizontal
tangent vectors, i.e. $(C^\alpha\ss)_{ij}:=\langle
C^\alpha\cc\tau_j,\tau_i\rangle$ for $i, j\in I\ss$. Accordingly,
$C\ss:=\sum_{\alpha\in I\cd}\varpi_\alpha C^\alpha\ss$.
\end{Defi}

\begin{oss}\label{mariodecandia}The horizontal second fundamental form ${{B}\cc}$ is  {\bf not symmetric}, in general.
Thus $B\cc$ can be regarded as the sum of two matrices, one
symmetric and the other skew-symmetric, i.e. $B\cc= S\cc + A\cc,$
where the skew-symmetric matrix
 $A\cc$ is explicitly given by $A\cc=\frac{1}{2}\,C\ss$; see
 \cite{Monteb}. In this regard, we would like to stress that, for every $X,\,Y\in\HS$,
 the following chain of identities holds true:
 \begin{eqnarray*}
 \Tor\ss(X,Y)&=&\overline{B}\cc(Y,X)-\overline{B}\cc(X,Y)
\\&=&\langle\PH[Y,X],\nn\rangle\nn
\\&=&\langle[X,Y],\varpi\rangle\nn\\&=&\langle\Omega\cc(X,Y),\varpi\rangle
\\&=&-\langle C\ss X, Y\rangle.\end{eqnarray*}
\end{oss}

\subsection{Integration by parts on hypersurfaces}\label{Intbypart}
In this section  we shall discuss horizontal integration by parts
formulas for smooth hypersurfaces, immersed in a $k$-step Carnot
group $\GG$, endowed with the $\HH$-perimeter measure $\per$; see, for instance,
\cite{Monte, Monteb, MonteSem}, but also \cite{DanGarN8, gar}.

Let $S\subset\GG$ be a $\cont^2$-smooth hypersurface and let
$\UU\subset S$ be a relatively compact open set with
$\cont^1$-smooth boundary (or, smooth enough for the application
of Stokes' Theorem). If $X\in\mathfrak{X}(S)$, by definition of
$\per$, using the {\it Riemannian Divergence Formula} (see
\cite{Spiv}), one gets
 \begin{eqnarray*}d(X\LL\per)|_\UU&=& d(|\PH \nu|\,X\,\LL \sigma ^{n-1})
 = \div\ts(|\PH \nu|\,X)\,\sigma\rr^{n-1}\\ &=& \Big(\div\ts X + \Big\langle X,
\frac{\grad\ts{|\PH\nu|}}{|\PH\nu|}\Big\rangle\Big)\,\per\res\UU,\end{eqnarray*}where
$\grad\ts$ and $\div\ts$ are, respectively, the tangential
{gradient} and the tangential {divergence} operators on
$\UU\subset S$. However this formula is not so ``explicit'' from a
sub-Riemannian point of view. The notion of adapted frame was
introduced to bypass this inconvenience by allowing us to make
explicit computations.

Let $\underline{\tau}$ be an adapted frame to $\UU\subset S$ on
the open set $U$ and let $\underline{\phi}:=\{\phi_1,...,\phi_n\}$
be its {\it dual co-frame}, obtained by means of the metric $g$.
It is immediate to see that the $\HH$-perimeter $\per$ on $\UU$ is
given by
\begin{eqnarray*}\per\res\UU&=&(\nn\LL\sigma^n)|_\UU=\ast\phi_1|_\UU
=(\phi_2\wedge...\wedge\phi_n)|_\UU
\\&=&(-1)^{\alpha+1}({\varpi_\alpha})^{-1}
\phi_1\wedge...\wedge\widehat{\phi_\alpha}
\wedge...\wedge\phi_n)|_{\UU}\quad(\alpha\in
I\vv),\end{eqnarray*}where the last identity makes sense only if
$\nu_\alpha\neq 0$\footnote{With respect to the adapted frame
$\underline{\tau}$, the Riemannian unit normal $\nu$ is given by
$\nu=\nu_1\tau_1 +\sum_{\alpha\in I\vv}\nu_\alpha\tau_\alpha$,
where $\tau_1:=\nn$, $\nu_1:=|\PH\nu|$ and $\tau_\alpha=X_\alpha$.
Remind also that $\varpi_\alpha:=\frac{\nu_\alpha}{\nu_1}$.}. By
direct computations based on the 1st Cartan's structure equation for
 $\underline{\phi}$, one obtains divergence-type formulas and
 some easy but useful corollaries; see \cite{Monte, Monteb, MonteSem}.

\begin{oss}[Measure on the boundary $\partial\UU$]\label{measonfr} Before stating these results we would like to
make a preliminary comment on the topological boundary
$\partial\UU$ of $\UU$. Firstly,  let us assume that $\partial\UU$ is
a $(n-2)$-dimensional manifold, oriented by its unit normal vector
$\eta$.
 Let us denote by $\sigma^{n-2}\rr$ the usual Riemannian measure on $\partial\UU$, which can
 be written out as $$\sigma^{n-2}\rr\res{\partial\UU}=(\eta\LL\sigma^{n-1}\rr)|_{\partial\UU}.$$This means that
 if $X\in\XX(\overline{\UU})$, then $$(X\LL\per)|_{\partial\UU}=\langle X, \eta\rangle
  |\PH\nu|\, \sigma^{n-2}\rr\res{\partial\UU}.$$ Let us further assume that  $\partial\UU$ be geometrically
 $\HH$-regular. This is equivalent
 to require that the projection onto $\HS$ of the unit  normal $\eta$ along
$\partial\UU$ be non-singular, i.e. $|\P\ss\eta|\neq 0$ for every
$p\in\partial\UU$.  We shall denote by $C_{\partial\UU}$ the characteristic
set of ${\partial\UU}$, which turns out to be given by
$$C_{\partial\UU}=\{p\in{\partial\UU}: |\P\ss\eta|=0\}.$$From
Definition \ref{dens} it follows that
 $${\nis}\res{\partial\UU}=
 \Big(\frac{\P\ss\eta}{|\P\ss\eta|}\LL\per\Big)\Big|_{\partial\UU},$$
or, equivalently, that ${\nis}\res{\partial\UU}=
|\PH\nu|\cdot|\P\ss\eta|\sigma^{n-2}\rr\res{\partial\UU}.$ Setting
$\eta\ss:=\frac{\P\ss\eta}{|\P\ss\eta|}$, we will call $\eta\ss$
the {\bf unit  horizontal normal } along $\partial\UU$. We
get then
$$(X\LL\per)|_{\partial\UU}=\langle X, \eta\ss\rangle
  {\nis}\res{\partial\UU}\qquad \forall\,\,X\in\cont^1(S,\HS).$$
 \end{oss}

The proof of the next results can be found in \cite{Monteb}.
\begin{teo}[Horizontal Divergence Theorem ]\label{GD} Let $\GG$
be a $k$-step Carnot group. Let $S\subset\GG$ be an immersed
hypersurface and $\UU\subset S\setminus C_S$ be a
non-characteristic relatively compact open set. Assume that
$\partial\UU$ is  a smooth, $(n-2)$-dimensional manifold oriented
by its unit normal vector $\eta$. Then, for every $X\in
\cont^1(S,\HS)$ one has
\[ \int_{\UU}\big\{ \div\ss X + \langle C\cc\nn,
X\rangle\big\}\per = \int_{\partial\UU}\langle
X,\eta\ss\rangle\,{\nis}.\]
\end{teo}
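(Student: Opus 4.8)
The plan is to start from the basic differential-geometric identity for the $\HH$-perimeter form that was established just before the statement, namely
\[
d(X\LL\per)|_\UU = \Big(\div\ts X + \Big\langle X, \tfrac{\grad\ts|\PH\nu|}{|\PH\nu|}\Big\rangle\Big)\,\per\res\UU \qquad (X\in\mathfrak{X}(S)),
\]
and then apply Stokes' Theorem on the compact manifold-with-boundary $\UU$. Since $\UU$ is relatively compact with $\cont^1$-smooth boundary $\partial\UU$, Stokes gives $\int_\UU d(X\LL\per)=\int_{\partial\UU}(X\LL\per)$, and by Remark \ref{measonfr} the boundary integrand equals $\langle X,\eta\ss\rangle\,\nis$ whenever $X\in\cont^1(S,\HS)$. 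So the whole content of the theorem reduces to rewriting the interior integrand $\div\ts X + \langle X, \grad\ts|\PH\nu|/|\PH\nu|\rangle$ in the explicit sub-Riemannian form $\div\ss X + \langle C\cc\nn, X\rangle$.

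The heart of the argument, therefore, is the passage from the tangential (Riemannian-on-$S$) operators to the intrinsic horizontal operators. Here I would exploit the adapted frame $\underline{\tau}=\{\tau_1,\dots,\tau_n\}$ with $\tau_1=\nn$, $\{\tau_2,\dots,\tau_{\DH}\}$ spanning $\HS$, and $\tau_\alpha=X_\alpha$ for $\alpha\in I\vv$, together with its dual co-frame $\underline{\phi}$. The idea is to compute $d(X\LL\per)$ directly using the expression $\per\res\UU=(\phi_2\wedge\dots\wedge\phi_n)|_\UU$, expanding via Cartan's first structure equations $d\phi_I=-\sum_J\omega^I_J\wedge\phi_J$, where the connection forms $\omega^I_J$ are read off from the structural constants $\SC^R_{IJ}$ of $\gg$ through formula \eqref{c2}. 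For $X=\sum_{i\in I\ss}X^i\tau_i\in\cont^1(S,\HS)$, the leading term produces $\div\ss X$ (the trace of the partial $\HS$-connection $\gs$), while the terms coming from the non-vanishing brackets $[\tau_i,\tau_j]$ that have a component along $\tau_1=\nn$ in the second layer assemble precisely into $\langle C\cc\nn, X\rangle$, by the very definition of $C\cc=\sum_{\alpha\in I\cd}\varpi_\alpha C^\alpha\cc$ in Definition \ref{notazia} and the bracket identities in Lemma \ref{twist1}.

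The main obstacle I expect is the bookkeeping that identifies the ``curvature'' correction term. The operator $\div\ts$ on $S$ is not the same as $\div\ss$: it also sees the component of vector fields along the horizontal normal $\nn$ and along the vertical directions, and the discrepancy $\div\ts X - \div\ss X$, together with the gradient term $\langle X,\grad\ts|\PH\nu|/|\PH\nu|\rangle$, must be shown to collapse to $\langle C\cc\nn, X\rangle$. Tracking this requires care because $X\LL\per$, as an $(n-2)$-form, must be evaluated on $\partial\UU$ and differentiated in the interior using that $X$ is horizontal-tangent ($X^1=0$ and $X^\alpha=0$ for $\alpha\in I\vv$), and one must verify that the torsion/skew-symmetry phenomenon recorded in Remark \ref{mariodecandia} (where $\Tor\ss(X,Y)=-\langle C\ss X,Y\rangle$) is exactly what feeds the $C\cc\nn$ term rather than spurious contributions. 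Once the integrand identity
\[
\div\ts X + \Big\langle X, \tfrac{\grad\ts|\PH\nu|}{|\PH\nu|}\Big\rangle = \div\ss X + \langle C\cc\nn, X\rangle
\]
is established for horizontal-tangent $X$, the theorem follows immediately by combining it with Stokes' Theorem and the boundary formula of Remark \ref{measonfr}.
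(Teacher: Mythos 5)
Your proposal follows exactly the paper's own route: the paper derives the same identity $d(X\LL\per)|_\UU=\big(\div\ts X+\langle X,\grad\ts|\PH\nu|/|\PH\nu|\rangle\big)\,\per\res\UU$ from the Riemannian divergence formula, uses Remark \ref{measonfr} for the boundary term $\langle X,\eta\ss\rangle\,\nis$, and reduces the statement to the adapted-frame/Cartan structure-equation computation identifying the integrand with $\div\ss X+\langle C\cc\nn,X\rangle$ (a computation the paper defers to \cite{Monteb}, just as you flag it as the remaining bookkeeping). So your approach is correct and essentially identical to the paper's.
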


\begin{corollario}[Horizontal integration by parts]\label{IPPH}Under the hypotheses Theorem \ref{GD},
for every $X\in \cont^1(S,\HH)$ one has \[ \int_{\UU}\big\{ \div\ss
X + \langle C\cc\nn, X\rangle\big\}\per =-\int_{\UU}\langle X,
\MH\rangle\,\per + \int_{\partial\UU}\langle
X,\eta\ss\rangle\,{\nis}.\]
\end{corollario}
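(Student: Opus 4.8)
The plan is to deduce the Corollary from the Horizontal Divergence Theorem (Theorem \ref{GD}) by splitting an arbitrary horizontal field $X\in\cont^1(S,\HH)$ along the pointwise orthogonal decomposition $\HH=\HS\oplus\nn S$ of Definition \ref{carca}. Setting $f:=\langle X,\nn\rangle\cc$ and
\[ X=X^\top+f\,\nn,\qquad X^\top:=\P\ss X\in\cont^1(S,\HS), \]
Theorem \ref{GD} applies verbatim to the tangential part $X^\top$. It then remains to check that replacing $X^\top$ by $X$ changes the left-hand side only by the bulk term $-\int_\UU\langle X,\MH\rangle\,\per$, while leaving both the $C\cc$-term and the boundary term untouched.

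I would dispose of the two easy reductions first. Since $\eta\ss=\frac{\P\ss\eta}{|\P\ss\eta|}\in\HS$ is orthogonal to $\nn$, one has $\langle X,\eta\ss\rangle=\langle X^\top,\eta\ss\rangle$, so the boundary integrand is insensitive to the normal part. For the zero-order term, the operator $C\cc=\sum_{\alpha\in I\cd}\varpi_\alpha C^\alpha\cc$ is skew-symmetric, because the structural constants satisfy $\SC^\alpha_{ij}+\SC^\alpha_{ji}=0$; skew-symmetry gives $\langle C\cc\nn,\nn\rangle\cc=0$, whence $\langle C\cc\nn,X\rangle=\langle C\cc\nn,X^\top\rangle$.

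The substantive point is the divergence term, and it is the one place where a genuine computation is needed. Interpreting $\div\ss$ on a horizontal field through the trace $\mathrm{Tr}(Y\mapsto\gs_Y X)$, $Y\in\HS$ (which agrees with the definition when $X\in\HS$, since $\gs=\P\ss\circ\gc$), in an adapted frame this reads $\div\ss X=\sum_{i\in I\ss}\langle\gc_{\tau_i}X,\tau_i\rangle$. Expanding by the Leibniz rule and discarding the term $(\tau_i f)\langle\nn,\tau_i\rangle=0$, I get
\[ \div\ss X=\div\ss X^\top+f\sum_{i\in I\ss}\langle\gc_{\tau_i}\nn,\tau_i\rangle. \]
Differentiating $\langle\tau_i,\nn\rangle\cc=0$ and using that $\gc$ is metric-compatible (Remark \ref{flatness}) yields $\langle\gc_{\tau_i}\nn,\tau_i\rangle=-\langle\gc_{\tau_i}\tau_i,\nn\rangle\cc$; summing over $i\in I\ss$ and recalling $\MS=\sum_{i\in I\ss}\langle\gc_{\tau_i}\tau_i,\nn\rangle\cc$ together with $\MH=\MS\,\nn$ gives
\[ \div\ss X=\div\ss X^\top-f\,\MS=\div\ss X^\top-\langle X,\MH\rangle\cc. \]
Feeding the three reductions back in, applying Theorem \ref{GD} to $X^\top$, and collecting terms reproduces the asserted identity. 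I expect the main obstacle to be exactly this bookkeeping of the normal component inside the divergence, which is what manufactures the mean-curvature term $-\langle X,\MH\rangle\cc$; the rest is orthogonality and the algebraic skew-symmetry of $C\cc$.
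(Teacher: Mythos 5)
Your proof is correct and is essentially the paper's own argument: the paper's one-line proof (``It follows by Theorem \ref{GD} and Definition \ref{curvmed}'') is exactly the decomposition $X=\P\ss X+\langle X,\nn\rangle\cc\,\nn$, with Theorem \ref{GD} applied to the tangential part and the normal part producing $-\langle X,\MH\rangle$ via metric compatibility of $\gc$ and the definition of $\MH$. Your two reductions (skew-symmetry of $C\cc$ killing $\langle C\cc\nn,\nn\rangle$, and orthogonality $\eta\ss\perp\nn$ for the boundary term) are precisely the details the paper leaves implicit, and your reading of $\div\ss$ on non-tangential horizontal fields as the trace of $Y\mapsto\gc_Y X$ over $\HS$ is the intended one.
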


\begin{proof}It follows by Theorem \ref{GD} and Definition
\ref{curvmed}.\end{proof}

A simple consequence of Corollary \ref{IPPH} is the following:

\begin{corollario}[Integral Minkowsky-type formula]\label{IPPH0}Under the hypotheses of Theorem
\ref{GD}, let  $\x\cc:=\sum_{i\in I\cc}x_i X_i$ be the
{``horizontal position vector''} and let $\g$ denote its
component along the $\HH$-normal $\nn$, i.e. $\g:=\langle
\x\cc,\nn\rangle$.\footnote{This function could be called
``horizontal support function'' of $\x\cc$.} Then
\[ \int_{\UU}\big\{ (\DH-1)+ \g\MS +  \langle C\cc\nn,
\x\ss\rangle\big\}\per = \int_{\partial\UU}\langle
\x\cc,\eta\ss\rangle\,\sigma^{n-2}\ss.\]
\end{corollario}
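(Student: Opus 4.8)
The plan is to apply the horizontal integration by parts formula of Corollary \ref{IPPH} to the horizontal position vector field $X:=\x\cc=\sum_{i\in I\cc}x_iX_i\in\cont^1(S,\HH)$ and then to identify the three terms it produces. Two of these are immediate. Since $\MH=\MS\,\nn$ (Definition \ref{curvmed}), we get $\langle\x\cc,\MH\rangle=\MS\langle\x\cc,\nn\rangle\cc=\g\,\MS$, so the mean-curvature term on the right of Corollary \ref{IPPH} passes to the left as $+\g\MS$; and the boundary integral $\int_{\partial\UU}\langle\x\cc,\eta\ss\rangle\,\nis$ is left untouched and furnishes the right-hand side of the asserted formula. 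It therefore remains to establish the two identities $\div\ss\x\cc=\DH-1$ and $\langle C\cc\nn,\x\cc\rangle=\langle C\cc\nn,\x\ss\rangle$.

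The crux is the computation $\div\ss\x\cc=\DH-1$, which I would carry out in an adapted frame $\underline{\tau}$ (Definition \ref{movadafr}), so that $\{\tau_j:j\in I\ss\}$ is an orthonormal basis of $\HS$ and $\div\ss\x\cc=\sum_{j\in I\ss}\langle\gc_{\tau_j}\x\cc,\tau_j\rangle\cc$. Flatness of the horizontal connection (Remark \ref{flatness}) gives $\gc_{\tau_j}X_i=0$, whence $\gc_{\tau_j}\x\cc=\sum_{i\in I\cc}(\tau_j x_i)\,X_i$. In the exponential coordinates of the first kind each left-invariant horizontal field has the triangular form $X_k=\partial_{x_k}+\sum_{\alpha\in I\vv}a_k^\alpha\,\partial_{x_\alpha}$, so for first-layer indices $i,k\in I\cc$ one has $X_kx_i=\delta_{ki}$; writing $\tau_j=\sum_{k\in I\cc}\tau_j^k X_k$ with $\tau_j^k:=\langle\tau_j,X_k\rangle\cc$ this yields $\tau_j x_i=\tau_j^i$. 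Hence $\langle\gc_{\tau_j}\x\cc,\tau_j\rangle\cc=\sum_{i\in I\cc}(\tau_j^i)^2=1$, because $\tau_j$ is a unit horizontal vector, and summing over the $\DH-1$ indices $j\in I\ss=\{2,\dots,\DH\}$ gives $\div\ss\x\cc=\DH-1$.

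For the second identity I would use the orthogonal splitting $\x\cc=\g\,\nn+\x\ss$ along $\HH=\nn S\oplus\HS$ (Definition \ref{carca}), which gives $\langle C\cc\nn,\x\cc\rangle=\g\,\langle C\cc\nn,\nn\rangle+\langle C\cc\nn,\x\ss\rangle$. Now $C\cc=\sum_{\alpha\in I\cd}\varpi_\alpha C^\alpha\cc$, and each $C^\alpha\cc=[\SC^\alpha_{ij}]_{i,j\in I\cc}$ is skew-symmetric by the antisymmetry of the structural constants (Definition \ref{nota}, property (i)); hence $C\cc$ is a skew-symmetric operator on $\HH$ and $\langle C\cc\nn,\nn\rangle=0$. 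This proves $\langle C\cc\nn,\x\cc\rangle=\langle C\cc\nn,\x\ss\rangle$. Substituting the three evaluations into Corollary \ref{IPPH} and rearranging yields precisely the stated Minkowski-type formula.

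I expect the only delicate point to be the bookkeeping in the second paragraph: verifying $X_kx_i=\delta_{ki}$ from the precise triangular form of the left-invariant horizontal frame in exponential coordinates, and keeping the index set $I\ss$ (of cardinality $\DH-1$) carefully distinct from $I\cc$ (of cardinality $\DH$). Everything else is a direct substitution into the already-established integration by parts identity.
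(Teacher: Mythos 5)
Your proposal is correct and follows exactly the paper's route: the paper's own proof consists of the single instruction to apply Corollary \ref{IPPH} to the horizontal position vector $\x\cc$, leaving implicit precisely the three identifications you verify ($\div\ss\x\cc=\DH-1$ via flatness of $\gc$ and the triangular form of the left-invariant frame, $\langle\x\cc,\MH\rangle=\g\MS$, and $\langle C\cc\nn,\x\cc\rangle=\langle C\cc\nn,\x\ss\rangle$ by skew-symmetry of $C\cc$). Your write-up simply supplies the details the paper omits, and they are all accurate.
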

\begin{proof} Apply Corollary \ref{IPPH0} to the position vector field $\x\cc$.\end{proof}

\begin{oss}\label{cpoints}Let $S\subset\GG$ be a compact $\cont^2$-smooth hypersurface with boundary and let $\UU_\epsilon\,(\epsilon>0)$ be a family of open subsets
of $S$ with  piecewise $\cont^2$-smooth boundary such that
\footnote{As shown in \cite{HP2} (see Lemma 5.11) such a family
exists and can be constructed by covering the characteristic set
$C_S$ (that is a closed subset of $S$ having - at most -
$(n-2)$-dimensional Riemannian Hausdorff measure) with Riemannian
balls of ``small'' radii.
 We would like to stress that
(iii) can easily be proved by using Stokes' Theorem.
Indeed one
has\begin{eqnarray}\label{sti}d\big(|\PH\nu|\sigma^{n-2}\rr\big)\big|_{\UU_\epsilon}&=&\big(d|\PH\nu|\wedge\sigma^{n-2}\rr
+|\PH\nu|\wedge
d\sigma^{n-2}\rr\big)\big|_{\UU_\epsilon}\\\nonumber&=&\Big(\frac{\partial|\PH\nu|}{\partial\widetilde{\eta}}+
|\PH\nu|\div\ts\widetilde{\eta}\Big)\sigma^{n-1}\rr\res\UU_\epsilon,
\end{eqnarray}where $\widetilde{\eta}$ is any smooth enough vector
field on $\UU_\epsilon$ extending the unit normal vector $\eta$
along $\partial\UU_\epsilon$ which is, by construction, piecewise
$\cont^2$-smooth. Since $|\PH\nu|$ is a bounded Lipschitz function
and $\widetilde{\eta}$ is smooth, the function
$\big(\frac{\partial|\PH\nu|}{\partial\widetilde{\eta}}+
|\PH\nu|\div\ts\widetilde{\eta}\big)$ is bounded and (iii) follows
by integrating both sides of \eqref{sti} along $\UU_\epsilon$,
using Stokes' Theorem and (ii).}:\begin{itemize}\item[{\rm(i)}]
$C_S\subset\UU_\epsilon$ for every $\epsilon>0$; \item[{\rm(ii)}]
$\sigma^{n-1}\rr(\UU_\epsilon)\longrightarrow 0$ for
$\epsilon\rightarrow
0^+$;\item[{\rm(iii)}]$\int_{\UU_\epsilon}|\PH\nu|\sigma^{n-2}\rr\longrightarrow
0$ for $\epsilon\rightarrow 0^+$.\end{itemize}Incidentally, we
stress that ${\rm(iii)}$ immediately implies that
$\nis(\partial\UU_\epsilon)\longrightarrow 0$ for
$\epsilon\rightarrow 0^+$. By using this family, one can extend
the previous formulae to the case of hypersurfaces having a
non-empty characteristic set. Indeed, let us apply  Theorem
\ref{GD} to the non-characteristic set $S\setminus \UU_\epsilon$,
for some (small enough) $\epsilon>0$. It turns out that
\begin{eqnarray*} \int_{S\setminus\UU_\epsilon}\big\{\div\ss X +
\langle C\cc\nn, X\rangle\big\}\per = \int_{\partial S}\langle
X,\eta\ss\rangle\,{\nis}-\int_{\partial\UU_\epsilon}\langle
X,\eta\ss\rangle\,{\nis}.\end{eqnarray*}Since  $C_S$ is a
0-measure set w.r.t. the $\per$-measure, by letting
$\epsilon\rightarrow 0^+$, we easily get that $$\lim_{\epsilon\rightarrow 0^+}\int_{S\setminus\UU_\epsilon}\big\{\div\ss X +
\langle C\cc\nn, X\rangle\big\}\per=\int_S\big( \div\ss X + \langle C\cc\nn,
X\rangle\big)\per.$$ Furthermore, by using ${\rm(iii)}$, one gets
that the third integral vanishes. {\bf Therefore,  all the results
previously stated can be applied also to the case of
hypersurfaces having non-empty characteristic set}.
\end{oss}

In the sequel we shall first introduce a natural 2nd-order
tangential operator which acts on functions defined along any
smooth non-characteristic hypersurface of the group.
\begin{Defi}\label{DEFIEIGEN} Let $S\subset\GG$ be a smooth non-characteristic hypersurface. Then we denote by
$\lh$ the differential operator defined by
\begin{eqnarray*}\lh\varphi:=\Delta\ss\varphi +\langle C\cc\nn, \qq\varphi\rangle
\qquad\mbox{for
every}\,\,\varphi\in\cin(S).\end{eqnarray*}Moreover, we shall
denote by $\lg$ the differential operator
$\lg:\XX(\HS)\longrightarrow\R$ given by
\begin{eqnarray*}\lg(X):=\div\ss X + \langle C\cc\nn,
X\rangle\qquad\mbox{for every}\,\,X\in\XX(\HS).\end{eqnarray*}Note
that $\lh \varphi=\lg(\qq \varphi).$
\end{Defi}We stress that the previous results and, in particular, Remark \ref{cpoints}, allow us to immediately deduce
the next:

\begin{Prop}[Some properties of $\lh$ and $\lg$]\label{Properties}
Let $S\subset\mathbb\GG$ be a $\cont^2$-smooth hyper-surface with
- or without - boundary $\partial S$. With the previous notation we
have:\begin{itemize} \item[{\bf(i)}] $\int_{S}\lh \varphi\,\per=0$
for every $\varphi\in\mathbf{C}_0^\infty(S)$; \item[
{\bf(ii)}]$\int_{S}\psi\,\lh
\varphi\,\per=\int_{S}\varphi\,\lh\psi \,\per$  for every
$\varphi,\,\psi\in\mathbf{C}_0^\infty(S)$;\item[
{\bf(iii)}]$\lg(\varphi X)=\varphi\lg X + \langle\qq \varphi,
X\rangle$   for every $X\in\XX(\HS),\, \varphi\in\cin(S)$;
\item[{\bf(iv)}]$\int_{S}\lh \varphi\,\per=\int_{\partial
S}{\partial\varphi}/{\partial\eta\ss}\,\nis$ for every
$\varphi\in\mathbf{C}^\infty(S)$; \item[
{\bf(v)}]$\int_{S}\{\psi\,\lh \varphi-\varphi\,\lh\psi\}
\,\per=\int_{\partial
S}\{\psi{\partial\varphi}/{\partial\eta\ss}-\varphi{\partial\psi}/{\partial\eta\ss}\}\,\nis$
for every $\varphi,\,\psi\in\cin(S)$;\item[
{\bf(vi)}]$\int_{S}\psi\lh
\varphi\,\per+\int_{S}\langle\qq\varphi,\qq\psi\rangle\,\per=
\int_{\partial S}\psi{\partial\varphi}/{\partial\eta\ss}\,\nis$
for every $\varphi, \psi\in\cin(S)$.
\end{itemize}
\end{Prop}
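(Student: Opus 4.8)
The plan is to treat part (iii) as a pointwise algebraic identity and then deduce the five integral statements from the single analytic ingredient furnished by the Horizontal Divergence Theorem (Theorem~\ref{GD}), in the form extended to hypersurfaces with non-empty characteristic set through Remark~\ref{cpoints}. For (iii) I would expand $\lg(\varphi X)=\div\ss(\varphi X)+\langle C\cc\nn,\varphi X\rangle$ and apply the Leibniz rule $\div\ss(\varphi X)=\varphi\,\div\ss X+\langle\qq\varphi,X\rangle$ for the $\HS$-divergence; collecting terms immediately gives $\lg(\varphi X)=\varphi\,\lg X+\langle\qq\varphi,X\rangle$, with no integration involved.

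The central identity is (vi), and everything else is a specialization of it. I would apply the extended divergence theorem to the horizontal tangent field $X=\psi\,\qq\varphi\in\cont^1(S,\HS)$, obtaining $\int_S\lg(\psi\,\qq\varphi)\,\per=\int_{\partial S}\langle\psi\,\qq\varphi,\eta\ss\rangle\,\nis$. Rewriting the integrand on the left by means of (iii) as $\lg(\psi\,\qq\varphi)=\psi\,\lg(\qq\varphi)+\langle\qq\psi,\qq\varphi\rangle=\psi\,\lh\varphi+\langle\qq\varphi,\qq\psi\rangle$, and using the defining property of the $\HS$-gradient, $\langle\qq\varphi,\eta\ss\rangle=\eta\ss\varphi=\partial\varphi/\partial\eta\ss$, yields (vi) at once. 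Setting $\psi\equiv 1$ annihilates the gradient term and produces (iv). Antisymmetrizing (vi) in the pair $(\varphi,\psi)$ --- equivalently, feeding the field $X=\psi\,\qq\varphi-\varphi\,\qq\psi$ into the divergence theorem --- cancels the symmetric term $\langle\qq\varphi,\qq\psi\rangle$ and delivers Green's second identity (v).

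Parts (i) and (ii) are then the compactly supported cases. When $\varphi\in\mathbf{C}_0^\infty(S)$ the boundary term in (iv) vanishes, giving (i); when $\varphi,\psi\in\mathbf{C}_0^\infty(S)$ the right-hand side of (v) vanishes, giving the self-adjointness (ii). Alternatively, reading (vi) with $\psi$ of compact support records $\int_S\psi\,\lh\varphi\,\per=-\int_S\langle\qq\varphi,\qq\psi\rangle\,\per$, whose right-hand side is visibly symmetric in $\varphi$ and $\psi$, which is (ii) again.

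The only genuine subtlety, and the step I expect to require care, is that $\lh$, $\lg$, $\qq$ and the operator $C\cc$ are defined a priori only on the non-characteristic part $S\setminus C_S$, whereas the Proposition concerns an arbitrary compact $\cont^2$ hypersurface, possibly carrying a non-empty characteristic set. This is exactly the situation resolved by Remark~\ref{cpoints}: applying Theorem~\ref{GD} on the exhausting non-characteristic domains $S\setminus\UU_\epsilon$ and letting $\epsilon\to 0^+$, the interior integrals converge because $C_S$ is $\per$-negligible, while the boundary contributions over $\partial\UU_\epsilon$ drop out since $\nis(\partial\UU_\epsilon)\to 0$. Thus the extended divergence theorem is legitimately available for the test fields $\psi\,\qq\varphi$ and $\psi\,\qq\varphi-\varphi\,\qq\psi$, and I would simply invoke this extension rather than re-prove it, so that the six identities hold on all of $S$.
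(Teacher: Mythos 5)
Your proposal is correct and follows essentially the same route as the paper, which proves the proposition precisely by invoking the Horizontal Divergence Theorem (Theorem \ref{GD}) together with the extension to non-empty characteristic set in Remark \ref{cpoints}; you have simply written out the details the paper leaves implicit (the Leibniz rule for (iii), the test field $\psi\,\qq\varphi$ yielding (vi), and the specializations giving (i), (ii), (iv), (v)). Your closing observation that $\int_S\psi\,\lh\varphi\,\per=-\int_S\langle\qq\varphi,\qq\psi\rangle\,\per$ for compactly supported $\psi$ also recovers the paper's remark that (ii) holds when only one of the two functions has compact support.
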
\begin{proof}All these properties easily follow from the
results previously stated in this section and, in particular,
Theorem \ref{GD} and Remark \ref{cpoints}. Note also that (ii)
holds true even if only one of the two functions is compactly supported on
$S$.\end{proof}

\begin{Defi}[The eigenvalue problems for the operator  $\lh$]\label{epr} Let $S\subset\GG$ be a compact hypersurface
without boundary. Then we look for $\cont^2$-smooth solutions of
the problem:
\begin{displaymath}
{\rm(P_1)}\,\,\,\left\{%
\begin{array}{ll}
    \quad\,\quad\lh\psi=\lambda \, \psi\, \qquad(\lambda\in\R,\,\,\psi\in\mathbf{C}^2(S)); \\
    \,\,\,  \int_S\psi\,\per  =0. \\
\end{array}%
\right.\end{displaymath}If $\partial S\neq {\emptyset}$ we look
for $\cont^2$-smooth solutions of the following problems:
\begin{displaymath}
{\rm(P_2)}\,\,\,\left\{%
\begin{array}{ll}
    \quad\,\,\quad\lh\psi=\lambda \, \psi\, \qquad(\lambda\in\R,\,\,\psi\in\mathbf{C}^2(S)); \\
    \quad\quad\,\,\,\,\,  \psi|_{\partial S} =0; \\
\end{array}%
\right.\end{displaymath}

\begin{displaymath}
{\rm(P_3)}\,\,\,\left\{%
\begin{array}{ll}
    \quad\,\,\,\,\,\,\lh\psi=\lambda \,\psi\, \qquad(\lambda\in\R,\,\,\psi\in\mathbf{C}^2(S)); \\
    \quad \,\frac{\partial\psi}{\partial\eta\ss}\big|_{\partial S} =0,
\end{array}%
\right.\end{displaymath} where we explicitly note that
$\frac{\partial\psi}{\partial\eta\ss}=\langle\qq\psi,\eta\ss\rangle$.

\end{Defi}

The problems (P$_1$), (P$_2$) and (P$_3$) generalize to our
context the classical {\it closed, Dirichlet and Neumann
eigenvalue problems} for the Laplace-Beltrami operator on
Riemannian manifolds.

\subsection{1st  variation of $\per$}\label{prvar}In this section, we recall and explain the 1st variation formula
of $\per$. The presentation given here closely follows  that given
in \cite{Monteb}, but provides a further analysis of the case of
hypersurfaces having non-empty characteristic set; see also
\cite{DanGarN8, gar}, \cite{Monte}, \cite{vari}, \cite{RR},
\cite{HP, HP2}. As references, for the Riemannian case, we mention
Spivak's book \cite{Spiv} and also the paper by Hermann \cite{2}.

Let $\GG$ be a $k$-step  Carnot group and let $S\subset\GG$ be a
$\cont^2$-smooth hypersurface oriented by its unit normal vector
$\nu$. Moreover, let $\UU\subset S\setminus C_S$ be a {\it
non-characteristic} relatively compact open set and let us assume
that $\partial\UU$ is a $(n-2)$-dimensional $\cont^1$-smooth
submanifold oriented by its outward unit normal vector $\eta$.

\begin{Defi} \label{leibniz}Let $\imath:\UU\rightarrow\GG$ denote the inclusion of $\UU$ in $\GG$
and let
 $\vartheta: (-\epsilon,\epsilon)\times \UU
\rightarrow \GG$ be a smooth map. Then $\vartheta$ is a  {\bf
smooth variation} of $\imath$ if:
\begin{itemize}
\item[{\bf(i)}] every
$\vartheta_t:=\vartheta(t,\cdot):\UU\rightarrow\GG$ is an
immersion;\item[{\bf(ii)}] $\vartheta_0=\imath$.
\end{itemize}
Moreover, we say that the variation $\vartheta$ {\bf keeps the
boundary $\partial\UU$ fixed} if:\begin{itemize}
\item[{\bf(iii)}]$\vartheta_t|_{\partial \UU}=\imath|_{\partial
\mathcal{U}}$ for every $t\in (-\epsilon,\epsilon)$.
\end{itemize}
The {\bf variation vector} of $\vartheta$, is defined by
$W:=\frac{\partial \vartheta}{\partial
t}\big|_{t=0}=\vartheta_{\ast}\frac{\partial}{\partial
t}\big|_{t=0}.$
\end{Defi}
For any $t\in(-\epsilon,\epsilon)$ let ${\nu}^t$ be the unit
normal vector along $\UU_{t}:=\vartheta_t(\UU)$ and let
$(\sigma^{n-1}\rr)_t$ be the Riemannian measure on $\UU_t$. Let us
define the differential $(n-1)$-form
 $\pert$ along $\UU_{t}$, by
$$\pert|_{\UU_{\, t}}= (\nt \LL \Omn)|_{\UU_{\, t}}
\in\Lambda^{n-1}(\TT\UU_{\,t})\qquad t\in(-\epsilon,
 \epsilon)$$ where
$\nt:=\frac{\PH {\nu}^t }{|\PH {\nu}^t|}.$ Moreover, let us set
$\Gamma(t):=\vartheta_t^\ast\pert\in\Lambda^{n-1} (\TT\UU),\,\,
t\in(-\epsilon,\epsilon).$ The 1st variation $I_\UU(W,\per)$ of
$\per$ is given
by\begin{equation}\label{nome}I_\UU(W,\per)=\frac{d}{dt}\Big(\int_{\UU}\Gamma(t)\Big)\Big|_{t=0}=
\int_{\UU}\dot{\Gamma}(0).\end{equation}
\begin{teo}[1st variation of $\per$] \label{1vg} Under the previous
hypotheses we have\begin{equation}\label{fv}I_\UU(W,\per) = -
\int_{\UU}\MS \Big\langle W,\frac{\nu}{|\P\cc\nu|}
\Big\rangle\,\per  +  \int_{\partial\UU}
  \langle W, \eta \rangle\, |\PH{\nu}|\,\sigma^{n-2}\rr.\end{equation}
\end{teo}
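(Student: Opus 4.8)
The plan is to compute $I_\UU(W,\per)=\int_\UU\dot\Gamma(0)$ by a single application of Cartan's magic formula, exploiting the two features that make $\per$ tractable: that $\per\res S=|\PH\nu|\,\sigma^{n-1}\rr\res S$ (Definition \ref{sh}), and that $\per$ is the restriction to $S$ of a form assembled from the \emph{global} left-invariant fields, $\per=\sum_{i\in I\cc}\nn^i\,(X_i\LL\Omn)|_S$. First I would observe that the value of $\frac{d}{dt}\big|_{0}\int_{\UU_t}\pert$ depends only on $W=\frac{\partial\vartheta}{\partial t}\big|_{t=0}$, so that to first order the immersions $\vartheta_t$ may be replaced by the flow $\Phi_t$ of an ambient extension $\WW$ of $W$. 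Splitting $W=\langle W,\nu\rangle\,\nu+W^{\top}$, a purely tangential $W^{\top}$ only reparametrises $\UU$ and produces no bulk contribution, so I may treat the normal and the boundary-moving parts separately.

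For the bulk I would choose $\WW$ so that, for small $t$, the $\UU_t=\Phi_t(\UU)$ foliate a neighbourhood with unit normal $\widetilde\nu$, and set $\widetilde\omega:=\widetilde{\nn}\LL\Omn$ with $\widetilde{\nn}:=\PH\widetilde\nu/|\PH\widetilde\nu|$. The crucial point is that restricting $\alpha\LL\Omn$ to a hypersurface only detects the component of $\alpha$ along the normal; hence $\widetilde\omega|_{\UU_t}=\pert$ \emph{exactly}, so $\widetilde\omega$ is a genuine fixed ambient form and
\[
\int_{\UU_t}\pert=\int_\UU\Phi_t^{\ast}\widetilde\omega,\qquad
\dot\Gamma(0)=\big(\Lie_{\WW}\widetilde\omega\big)\big|_\UU=\big(d\,\iu_{\WW}\widetilde\omega+\iu_{\WW}\,d\widetilde\omega\big)\big|_\UU .
\]
By Stokes' Theorem the exact part integrates to $\int_{\partial\UU}\iu_{\WW}\widetilde\omega=\int_{\partial\UU}|\PH\nu|\,\langle W,\eta\rangle\,\sigma^{n-2}\rr$ (using Remark \ref{measonfr}), which is exactly the claimed boundary term and is where the boundary-moving part of $W$ enters.

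It then remains to identify $\int_\UU\iu_{\WW}\,d\widetilde\omega$ with the mean-curvature term. Writing $\widetilde\omega=\sum_{i\in I\cc}\widetilde{\nn}^{\,i}\,\Theta_i$ with $\Theta_i:=X_i\LL\Omn$, I would first record that each $\Theta_i$ is \emph{closed}: since $\GG$ is nilpotent, hence unimodular, the Riemannian divergence of each left-invariant field vanishes (a direct consequence of \eqref{c2} and the skew-symmetry of the $\SC^R_{IJ}$), so $d\Theta_i=\Lie_{X_i}\Omn=0$. Consequently $d\widetilde\omega=\sum_i d\widetilde{\nn}^{\,i}\wedge\Theta_i$, and contracting with $\WW$ leaves, after restriction to $\UU$, a density in which the derivatives $\WW\,\widetilde{\nn}^{\,i}$ of the horizontal normal assemble—through the trace over an adapted frame—into the horizontal second fundamental form $\overline{B}\cc$ of Definition \ref{curvmed}. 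Taking the trace produces $\MH=\mathrm{Tr}\,\overline{B}\cc$ and hence the coefficient $-\MS\langle W,\nu/|\PH\nu|\rangle$ against $\per$.

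The main obstacle is precisely this last identification, and it is genuinely sub-Riemannian. Unlike the Riemannian first variation of area, whose integrand is the constant $1$, here the weight $|\PH\nu|$ is \emph{anisotropic}—it depends on the normal direction—so differentiating the moving normal $\widetilde\nn$ is unavoidable and a priori yields the anisotropic mean curvature of a generic parametric integrand. I expect that the extra, frame-dependent terms coming from the $d\widetilde{\nn}^{\,i}$ reorganise, via the torsion identity $\Tor\ss(X,Y)=-\langle C\ss X,Y\rangle$ of Remark \ref{mariodecandia} together with the horizontal divergence structure of Theorem \ref{GD}, so that the skew operator $C\cc$ drops out of the trace and only the scalar horizontal mean curvature $\MS$ remains. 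Exhibiting this cancellation explicitly, rather than the Stokes bookkeeping, is the crux of the argument.
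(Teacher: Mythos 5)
Your strategy is essentially the one the paper itself relies on: Theorem \ref{1vg} is not proved in the text but deferred to \cite{Monteb}, and Remark \ref{cpoints1} records exactly this Lie-derivative/Cartan computation, so your plan is the right kind of plan, and two of your observations are correct and genuinely useful (the ambient form $\widetilde\omega=\widetilde{\nn}\LL\Omn$ built from a foliating extension restricts to $\pert$ on every leaf, and each $X_i\LL\Omn$ is closed because left-invariant fields on a stratified group are divergence-free). The first genuine defect is in the Stokes step. Remark \ref{measonfr} computes $(X\LL\per)|_{\partial\UU}$ for the \emph{intrinsic} form $\per$ living on $S$; contracting the \emph{ambient} form $\widetilde\omega=\iu_{\widetilde{\nn}}\Omn$ with a field possessing a normal component produces an extra term. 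Decomposing $\nn=|\PH\nu|\,\nu+\nn^{\top}$, one finds
\begin{equation*}
(\iu_{\WW}\widetilde\omega)\big|_{\partial\UU}
=\Big[\,|\PH\nu|\,\langle W,\eta\rangle-\langle \nn,\eta\rangle\,\langle W,\nu\rangle\,\Big]\,\sigma^{n-2}\rr,
\end{equation*}
and the second summand does not vanish in general: $\eta\perp\nu$, but $\eta\not\perp\nn$ precisely because $\nn\neq\nu$ has a nontrivial tangential part $\nn^{\top}$. (In the Riemannian case $\nn=\nu$ and this term disappears, which is why the classical argument you are transplanting is clean; here the anisotropy bites the boundary term too, not only the bulk.) No choice of extension can remove it, since $\iu_{\WW}\widetilde\omega|_{\partial\UU}$ depends only on boundary values; the theorem can only be true because a compensating boundary term is hidden inside $\int_{\UU}\iu_{\WW}\,d\widetilde\omega$, arising from the $\div\ts\nn^{\top}$ part of $\div\,\widetilde{\nn}$ after a tangential integration by parts. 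So your identification ``exact part $=$ claimed boundary term, remainder $=$ curvature term'' is false as a splitting.

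The second defect is that the actual content of the theorem is left unproved, by your own admission. Since $d\widetilde\omega=(\div\,\widetilde{\nn})\,\Omn$, your bulk term is $\int_{\UU}(\div\,\widetilde{\nn})\,\langle W,\nu\rangle\,\sigma^{n-1}\rr$, and $\div\,\widetilde{\nn}$ along $S$ contains the transverse derivative $\langle\nabla_{\nu}\widetilde{\nn},\nu\rangle$, which depends on the chosen foliation, hence on $W$ itself: it encodes the first-order tilt of the leaves' normals, involving $\grad\ts\langle W,\nu\rangle$. Consequently the bulk density is not a pointwise, variation-independent object that a frame computation can identify with $-\MS\langle W,\nu/|\PH\nu|\rangle\,\per$; what is actually required is this tilt formula together with the identity $\MS=|\PH\nu|\,\mathcal{H}\rr-\frac{\partial|\PH\nu|}{\partial\nu}$, which is exactly the interior density $\big\{\frac{\partial|\PH\nu|}{\partial\nu}-|\PH\nu|\,\mathcal{H}\rr\big\}$ recorded in Remark \ref{cpoints1}, and it is also where the compensating boundary term above is generated. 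Moreover, the cancellation mechanism you hope for is misdirected: the operator $C\cc$ and the torsion identity belong to the tangential-divergence form of the variation (Theorem \ref{GD}, Corollary \ref{IPPH}); they do not enter the normal form \eqref{fv} at all. As it stands, your text is a plausible outline whose two nontrivial steps are, respectively, incorrect and missing.
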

For a proof see, for instance, \cite{Monteb}. It is
 clear that, if we allow the variation vector to be
horizontal, then (\ref{fv}) becomes more ``intrinsic''.
\begin{teo}[Horizontal 1st variation of $\per$]Under the previous hypotheses,
let us assume that the variation vector $W$ of $\vartheta$ be
horizontal. Then
\begin{equation}\label{fv3}I_\UU(W,\per) =
- \int_{\UU}\langle \MH, W \rangle\,\per +
\int_{\partial\UU}\langle
W,\eta\ss\rangle\,{\nis}.\end{equation}\end{teo}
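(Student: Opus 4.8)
The plan is to deduce the horizontal formula \eqref{fv3} directly from the general 1st variation formula \eqref{fv} of Theorem \ref{1vg}, by specializing the two terms on its right-hand side under the additional assumption $W\in\HH$. No new variational computation is needed: the entire content reduces to a pair of elementary orthogonal-projection identities together with the measure relations already collected in Remark \ref{measonfr}. So I would begin by simply writing down \eqref{fv} and then transform its interior and boundary integrands separately.

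For the interior term I would use that $W$, being horizontal, is orthogonal to $\VV=\HH^\perp$, so the vertical part $\P\vv\nu$ of the Riemannian unit normal $\nu$ drops out of the pairing and
\[\langle W,\nu\rangle=\langle W,\PH\nu\rangle=|\PH\nu|\,\langle W,\nn\rangle,\]
since $\PH\nu=|\PH\nu|\,\nn$ by the definition of the unit $\HH$-normal $\nn$. Dividing by $|\PH\nu|$ and multiplying by the scalar horizontal mean curvature then gives $\MS\big\langle W,\tfrac{\nu}{|\PH\nu|}\big\rangle=\MS\langle W,\nn\rangle=\langle\MH,W\rangle$, where I invoke $\MH=\MS\nn$ from Definition \ref{curvmed}. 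Hence the first integral in \eqref{fv} becomes $-\int_\UU\langle\MH,W\rangle\,\per$, exactly the interior term of \eqref{fv3}.

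For the boundary term the decisive observation is the induced splitting $\TS=\HS\oplus\VS$ of Remark \ref{indbun}, in which $\HS\subset\HH$ while $\VS\subset\VV$. Therefore, for the outward unit normal $\eta$ of $\partial\UU$ in $S$, its horizontal part coincides with its $\HS$-component, $\PH\eta=\P\ss\eta$, the remaining piece $\P\vs\eta$ being vertical and thus annihilated by $W$. This yields
\[\langle W,\eta\rangle=\langle W,\PH\eta\rangle=\langle W,\P\ss\eta\rangle=|\P\ss\eta|\,\langle W,\eta\ss\rangle,\]
with $\eta\ss=\P\ss\eta/|\P\ss\eta|$ the unit horizontal normal along $\partial\UU$. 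Substituting this into the boundary integrand of \eqref{fv} and inserting the measure identity $\nis\res\partial\UU=|\PH\nu|\,|\P\ss\eta|\,\sigma^{n-2}\rr\res\partial\UU$ from Remark \ref{measonfr}, the factor $|\PH\nu|\,|\P\ss\eta|$ recombines precisely into $\nis$, so that $\langle W,\eta\rangle\,|\PH\nu|\,\sigma^{n-2}\rr=\langle W,\eta\ss\rangle\,\nis$ and the boundary integral turns into $\int_{\partial\UU}\langle W,\eta\ss\rangle\,\nis$.

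I do not anticipate a genuine obstacle: granted Theorem \ref{1vg}, the argument is purely algebraic. The two points I would be careful about are that $\eta\ss$ be well defined, which is ensured by the standing hypothesis that $\partial\UU$ is geometrically $\HH$-regular (so $|\P\ss\eta|\neq0$), and the bookkeeping in the boundary term, where the factor $|\P\ss\eta|$ produced by rewriting $\langle W,\eta\rangle$ must be absorbed, together with $|\PH\nu|$, into the homogeneous measure $\nis$ rather than counted twice. Assembling the two specialized terms then gives \eqref{fv3}.
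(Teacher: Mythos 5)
Your reduction of the interior term is correct: for horizontal $W$ one has $\langle W,\nu\rangle=\langle W,\PH\nu\rangle=|\PH\nu|\langle W,\nn\rangle$, hence $\MS\big\langle W,\nu/|\PH\nu|\big\rangle=\langle\MH,W\rangle$. The boundary step, however, rests on a false claim: the inclusion $\VS\subset\VV$, and with it the identity $\PH\eta=\P\ss\eta$, fail in general. By Remark \ref{indbun}, $\VS$ is the orthogonal complement of $\HS$ inside $\TS$, built from the filtration $\TT^iS=\TS\cap\TT^i\GG$; its elements need not be vertical and in general have a nonzero component along $\nn$. For instance, in $\mathbb{H}^1$ take $S=\exp\{x_3=0\}$ and $p=\exp(\ee_1)$: then $\HH_pS=\mathrm{span}\{X_1\}$, $\TT_pS=\mathrm{span}\{X_1,\,X_2-\tfrac12X_3\}$, so $\VS$ at $p$ is $\mathrm{span}\{X_2-\tfrac12X_3\}$, which is not contained in $\VV_p=\mathrm{span}\{X_3\}$ (and indeed $\nn(p)=X_2$ is not orthogonal to it). Consequently, for horizontal $W$,
\begin{equation*}
\langle W,\eta\rangle-\langle W,\P\ss\eta\rangle=\langle W,\nn\rangle\,\langle\nn,\eta\rangle,
\end{equation*}
which is generically nonzero (it vanishes only when $W$ has no $\nn$-component or $\eta$ no $\VS$-component), so the middle equality in your chain $\langle W,\eta\rangle=\langle W,\PH\eta\rangle=\langle W,\P\ss\eta\rangle$ is wrong.

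This cannot be repaired within your strategy, because the two boundary integrals genuinely differ: taking \eqref{fv} at face value they disagree by $\int_{\partial\UU}\langle W,\nn\rangle\langle\nn,\eta\rangle|\PH\nu|\,\sigma^{n-2}\rr$, and a direct computation (e.g. $S=\exp\{x_3=0\}\subset\mathbb{H}^1$, $\UU$ a coordinate rectangle, $W=X_1$) shows that it is $\int_{\partial\UU}\langle W,\eta\ss\rangle\,\nis$, not $\int_{\partial\UU}\langle W,\eta\rangle|\PH\nu|\,\sigma^{n-2}\rr$, that reproduces $I_\UU(W,\per)$. The idea you are missing is the contraction structure of the Stokes term in the derivation of the first variation (cf. Remark \ref{cpoints1}): the boundary contribution is $\int_{\partial\UU}\imath^\ast(W\LL\per)$, and since $\per=\nn\LL\sigma^n\rr$ one has $\nn\LL\per=0$; thus, writing $W=\langle W,\nn\rangle\nn+\P\ss W$, the $\nn$-component of $W$ is annihilated and $W\LL\per=(\P\ss W)\LL\per$. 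To the horizontal \emph{tangent} field $\P\ss W\in\cont^1(S,\HS)$ Remark \ref{measonfr} applies verbatim, yielding $(\P\ss W\LL\per)|_{\partial\UU}=\langle\P\ss W,\eta\ss\rangle\nis=\langle W,\eta\ss\rangle\nis$, because $\langle\nn,\eta\ss\rangle=0$. Note that the expression $\langle W,\eta\rangle|\PH\nu|\,\sigma^{n-2}\rr$ used in \eqref{fv} coincides with $\imath^\ast(W\LL\per)$ only when $W$ is tangent to $S$ along $\partial\UU$; in general the contraction carries the extra term $-\langle W,\nu\rangle\langle\nn,\eta\rangle\,\sigma^{n-2}\rr$, and it is precisely this term that cancels the spurious contribution your pointwise substitution leaves behind.
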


\begin{proof} Use Theorem \ref{1vg} and Remark \ref{measonfr}.
\end{proof}

Therefore, in the case of horizontal variations, remembering
Corollary \ref{IPPH}, we get
$$I_\UU(W,\per)=\int_{\UU}\big\{\div\ss W + \langle
C\cc\nn, W\rangle\big\}\per.$$

We stress that the horizontal 1st variation formula \eqref{fv3} is
the sum of two terms, the first of whose only depends on the
horizontal normal component of $W$, while the second one, only
depends on its horizontal tangential component.

The previous formulae provide the 1st variation of $\per$ on
regular non-characteristic subsets of $S$ containing ${\rm spt}
W$. In the following remark we explain how one can extend the
previous results  to include the case in which the hypersurface
has a possibly non-empty characteristic set $C_S$.  A similar
remark in the case of the Heisenberg group $\mathbb{H}^1$ was done
in a recent work by Ritor\'e and Rosales \cite{RR}; see also
\cite{HP2}.

\begin{oss}[1st variation: case $C_S\neq{\emptyset}$]\label{cpoints1}Let $S\subset\GG$ be a $\mathbf{C}^2$-smooth  hypersurface
and let $W\in\cont^1(S,\TG)$ be the variation vector field.
Moreover, let us assume that $\MS\in L^1(S)$. First, we note that
the function $|\P\cc\nu|$ is Lipschitz continuous  and that
$|\P\cc\nu|$ vanishes along $C_S$. Now let
$\UU_\epsilon\,(\epsilon>0)$ be the family of open subsets of $S$
defined in Remark \ref{cpoints}. For every $\epsilon>0$ one
computes
\begin{equation}\label{seer}I_S(W, \per)=I_{S\setminus\UU_\epsilon}(W, \per) + I_{\UU_\epsilon}(W, \per).\end{equation}
The first term is then given by the previous Theorem \ref{1vg}:
\[I_{S\setminus\UU_\epsilon}(W, \per) = -
\int_{S\setminus\UU_\epsilon}\MS \Big\langle
W,\frac{\nu}{|\P\cc\nu|} \Big\rangle\,\per  +  \int_{\partial S}
  \langle W, \eta \rangle\, |\PH{\nu}|\,\sigma^{n-2}\rr + \int_{\partial\UU_\epsilon}
  \langle W, \eta^- \rangle\, |\PH{\nu}|\,\sigma^{n-2}\rr,\]where
  $\eta^-$ denotes the outward unit normal of $S\setminus\UU_\epsilon$ along
  $\partial\UU_\epsilon$. The second term in \eqref{seer} can be computed as follows:
\[I_{\UU_\epsilon}(W, \per)=\frac{d}{dt}\Big(\int_{\UU_\epsilon}\pert\Big)\Big|_{t=0}
=\int_{\UU_\epsilon}\frac{d}{dt}\pert\big|_{t=0}.\]Note
that\begin{equation}\label{serer}\frac{d}{dt}\pert\big|_{t=0}=
\frac{d}{dt}|\P\cc\nu^t|\big|_{t=0}\sigma^{n-1}\rr + |\P\cc\nu|
\frac{d}{dt}(\sigma^{n-1}\rr)_t\big|_{t=0}.\end{equation} Clearly,
the first addend is bounded, since the function $|\P\cc\nu|$ is
Lipschitz, while the second one, up to the bounded function
$|\P\cc\nu|$, is just the $(n-1)$-form
$\frac{d}{dt}(\sigma^{n-1}\rr)_t|_{t=0}$, which expresses the
``infinitesimal'' 1st variation formula of the - Riemannian -
$(n-1)$-dimensional measure; see \cite{2}, \cite{Spiv}. Actually,
the previous formula can also be rewritten in terms of Lie
derivatives; see \cite{Monte, Monteb, MonteSem}. More precisely,
it turns out that
$$\frac{d}{dt}\pert\big|_{t=0}=\imath_{\UU_\epsilon}^\ast\mathcal{L}_W\pert.$$From
this formula, Cartan's identity and a simple computation, it
follows that

\[I_{\UU_\epsilon}(W, \per)=\underbrace{\int_{\UU_\epsilon}\Big\{\frac{\partial|\P\cc\nu|}{\partial\nu}-|\P\cc\nu|\mathcal{H}\rr\Big\}\Big\langle W,\frac{\nu}{|\P\cc\nu|}
\Big\rangle\per}_{I^{Int.}_{\UU_\epsilon}(W,
\per)}+\underbrace{\int_{\partial\UU_\epsilon}\langle W,{\eta^+}
\rangle|\P\cc\nu|\sigma^{n-2}\rr}_{I^{Bound.}_{\UU_\epsilon}(W,
\per)},\]where
  $\eta^+$ denotes the outward unit normal of $\UU_\epsilon$ along
  $\partial\UU_\epsilon$. Since
$\MS\in L^1(S)$ and since $\mathcal{H}\rr$ is locally
bounded\footnote{Since $S$ is $\cont^2$, the Riemannian mean
curvature $\mathcal{H}\rr$ is continuous along $S$.}, by using
${\rm(ii)}$ of Remark \ref{cpoints}, we easily get
$I^{Int.}_{\UU_\epsilon}(W, \per)\longrightarrow 0$ for
$\epsilon\rightarrow 0^+$. Moreover, since\footnote{We stress that
$\partial\UU_\epsilon$ is the common boundary of $\UU_\epsilon$
and $S\setminus\UU_\epsilon$.} $\eta^+=-\eta^-$ along
$\partial\UU_\epsilon$, it follows that

\[I_{S}(W, \per) = -
\int_{S\setminus\UU_\epsilon}\MS \Big\langle
W,\frac{\nu}{|\P\cc\nu|} \Big\rangle\,\per
+I^{Int.}_{\UU_\epsilon}(W, \per) +\int_{\partial S}
  \langle W, \eta \rangle\, |\PH{\nu}|\,\sigma^{n-2}\rr\]for every
  $\epsilon>0$. Therefore, by letting $\epsilon\rightarrow 0^+$, we
  finally obtain
\begin{equation}\label{1wwvarcp}I_S(W, \per) =
- \int_{S}\MS\Big\langle W,\frac{\nu}{|\P\cc\nu|}
\Big\rangle\,\per  +  \int_{\partial S}
  \langle W,
  \eta\rangle\,{|\PH\nu|\,\sigma^{n-2}\rr},\end{equation}which coincides
  formally with \eqref{fv}; compare with \cite{RR}, \cite{HP2}.
\end{oss}

The previous Remark \ref{cpoints1} enables us to state the
following:
\begin{corollario}[1st variation of $\per$] \label{22vg} Let $S\subset\GG$ be a $\cont^2$-smooth hypersurface
having possibly non-empty characteristic set $C_S$. Then, the
1st variation formula \eqref{fv} holds true.\\
\end{corollario}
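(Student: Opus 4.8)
The plan is to obtain the corollary directly from the analysis already carried out in Remark \ref{cpoints1}, which contains every ingredient; what remains is only to organize the limiting argument and to isolate the hypotheses that make it work. I would begin by fixing the $\cont^2$-smooth hypersurface $S$ and the variation vector field $W\in\cont^1(S,\TG)$, assuming $\MS\in L^1(S)$, and recalling from Remark \ref{cpoints} the family of open neighborhoods $\UU_\epsilon$ of the characteristic set, satisfying $C_S\subset\UU_\epsilon$, $\sigma^{n-1}\rr(\UU_\epsilon)\to 0$, and $\int_{\partial\UU_\epsilon}|\PH\nu|\,\sigma^{n-2}\rr\to 0$ as $\epsilon\to 0^+$. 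On the relatively compact, non-characteristic set $S\setminus\UU_\epsilon$ the hypotheses of Theorem \ref{1vg} are met, so the first variation formula \eqref{fv} is available there.

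The next step is the additive decomposition \eqref{seer}, $I_S(W,\per)=I_{S\setminus\UU_\epsilon}(W,\per)+I_{\UU_\epsilon}(W,\per)$. For the first summand I would simply invoke Theorem \ref{1vg}, which contributes the genuine interior integral of $\MS\langle W,\nu/|\P\cc\nu|\rangle$ over $S\setminus\UU_\epsilon$, the boundary integral over $\partial S$, and an extra boundary integral over $\partial\UU_\epsilon$ carrying the outward normal $\eta^-$. For the second summand, the ``characteristic'' piece, I would pass to the Lie-derivative form $\frac{d}{dt}\pert|_{t=0}=\imath_{\UU_\epsilon}^\ast\mathcal{L}_W\pert$ and apply Cartan's identity, as in \eqref{serer}, splitting it into an interior term $I^{Int.}_{\UU_\epsilon}(W,\per)$ and a boundary term over $\partial\UU_\epsilon$ with outward normal $\eta^+$.

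The crux, and the main obstacle, is the behavior near $C_S$ as $\epsilon\to 0^+$. For the interior term I would use that, since $S$ is $\cont^2$, both the Lipschitz function $|\P\cc\nu|$ (hence its normal derivative) and the Riemannian mean curvature $\mathcal{H}\rr$ are bounded on compacta; together with $\MS\in L^1(S)$ and property (ii) of Remark \ref{cpoints} this forces $I^{Int.}_{\UU_\epsilon}(W,\per)\to 0$. For the two boundary integrals over the common boundary $\partial\UU_\epsilon$ of $\UU_\epsilon$ and $S\setminus\UU_\epsilon$, I would observe that the outward normals are opposite, $\eta^+=-\eta^-$, so these contributions cancel \emph{exactly}, with no estimate needed. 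Thus after cancellation only the interior integral over $S\setminus\UU_\epsilon$, the term $I^{Int.}_{\UU_\epsilon}$, and the boundary integral over $\partial S$ remain.

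Finally, letting $\epsilon\to 0^+$ and using that $C_S$ is negligible for the $\per$-measure, the interior integral over $S\setminus\UU_\epsilon$ converges to the corresponding integral over $S$, while $I^{Int.}_{\UU_\epsilon}\to 0$. This yields exactly \eqref{1wwvarcp}, which is formally identical to \eqref{fv} with $S$ and $\partial S$ in place of $\UU$ and $\partial\UU$. Hence the first variation formula \eqref{fv} remains valid for hypersurfaces with possibly non-empty characteristic set, which is the assertion of the corollary.
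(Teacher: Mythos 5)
Your proposal is correct and follows essentially the same route as the paper: Corollary \ref{22vg} is in fact justified there by the argument of Remark \ref{cpoints1}, which is exactly what you reproduce — the decomposition \eqref{seer}, Theorem \ref{1vg} on the non-characteristic part $S\setminus\UU_\epsilon$, the Lie-derivative/Cartan computation of $I_{\UU_\epsilon}$, the vanishing of the interior term via property {\rm(ii)} of Remark \ref{cpoints} together with $\MS\in L^1(S)$ and local boundedness of $\mathcal{H}\rr$, the exact cancellation of the two $\partial\UU_\epsilon$ boundary integrals since $\eta^+=-\eta^-$, and the passage to the limit $\epsilon\rightarrow 0^+$. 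You also correctly make explicit the hypothesis $\MS\in L^1(S)$, which the paper uses implicitly through that remark.
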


\begin{oss}
Let us consider the case of a $\cont^2$-smooth surface
$S\subset\mathbb{H}^1$, where $\mathbb{H}^1$ denotes the 1st
Heisenberg group on $\R^3$. Moreover, let us choose, as a vector
variation $W$ for $S$, the following  ``singular'' vector field
$W:=\frac{w\,\nn}{|\PH\nu|},\,\, w\in\cont_0^2(S).$ Actually, $W$
degenerates at $C_S$, since $|\PH\nu|=0$ along $C_S$. So let us
introduce the following partition of  $S$, $S=(S\setminus
\UU_\epsilon)\cup \UU_\epsilon,$ where $\UU_\epsilon$ denotes an
element of the family of open subsets of $S$ defined in Remark
\ref{cpoints}. By using formula \eqref{fv} for the
non-characteristic set $S\setminus\UU_\epsilon$, the very
definition of $W$ and the fact that $W|_{\partial S}=0$, we get
that
\begin{eqnarray*}I_{S\setminus\UU\epsilon}(W, \per) &=& -
\int_{S\setminus\UU_\epsilon}\MS w\,\sigma^{2}\rr + \int_{\partial
\UU_\epsilon}
  \langle W,
  \eta^-\rangle\,{|\PH\nu|\,\sigma^{1}\rr}\\&=&-
\int_{S\setminus\UU_\epsilon}\MS w\,\sigma^{2}\rr +\int_{\partial
\UU_\epsilon}
 w\,\langle \nn,
  \eta^-\rangle\,\sigma^{1}\rr\\&=&
\int_{S\setminus\UU_\epsilon}\{\div\ts(w \nn^{\ts})-\MS w\}
\sigma^{2}\rr,\end{eqnarray*}where
  $\eta^-$ is the outward unit normal of $S\setminus\UU_\epsilon$ along
  $\partial\UU_\epsilon$ and $\nn^{\ts}$ denotes
   the tangential component of $\nn$ along $\TT S$. Note that the last identity is just the
  Riemannian Divergence Theorem. If $\MS\in L^1(S,
  \sigma^{n-1}\rr)$, by Dominate Convergence Theorem one finally obtains
\begin{eqnarray*}I_{S}(W, \per)&=&\lim_{\epsilon\rightarrow
0^+}I_{S\setminus\UU\epsilon}(W, \per) \\&=&
\lim_{\epsilon\rightarrow
0^+}\int_{S\setminus\UU_\epsilon}\{\div\ts(w \nn^{\ts})-\MS w\}
\sigma^{2}\rr\\&=&-\int_S\MS w\,\sigma^{2}\rr +
\lim_{\epsilon\rightarrow
0^+}\int_{S\setminus\UU_\epsilon}\div\ts(w
\nn^{\ts})\sigma^{2}\rr;\end{eqnarray*}compare with Lemma 4.3 in
\cite{RR}.\end{oss}

\section{Blow-up of the horizontal perimeter $\per$}\label{blow-up} Let
$S\subset\GG$ be a smooth\footnote{See later on, for a more
``precise'' requirement. Indeed,  as will be clear, the regularity
assumptions will be fundamental for the rest of this section.}
hypersurface. In this section we shall discuss the behavior of the
horizontal perimeter $\per$ near any point $x\in{\rm Int}(S)$.
More precisely, we shall analyze the following
limit:\begin{equation}\label{limit}\kappa_\varrho(\nn(x)):=\lim_{R\rightarrow
0^+}\frac{\per \{S\cap
B_{\varrho}(x,R)\}}{R^{Q-1}},\end{equation}where
$B_{\varrho}(x,R)$ is the - homogeneous - $\varrho$-ball of center
$x$ and radius $R$. Here, the point $x\in{\rm Int} S$ is {\it not
necessarily non-characteristic}. This is the main difference with
some previous results in literature; compare, for instance, with
the results obtained by Magnani in \cite{Mag, Mag2}; see also
\cite{balogh}, \cite{FSSC3, FSSC5}, \cite{CM2}.

 However we shall briefly
discuss, to the sake of completeness, the non-characteristic
case, to better understand the differences. So let us state the
following:

\begin{teo}\label{BUP}Let $\GG$ be  a $k$-step Carnot group.

\begin{itemize}

\item [{\bf Case (a)}]\,\,Let $S$ be a $\cont^1$-smooth
hypersurface and  $x\in S\setminus C_S$;
 then \begin{eqnarray}\label{BUP1}\per(S\cap B_\varrho(x,R))\sim
\kappa_\varrho(\nn(x)) R^{Q-1}\qquad\mbox{for}\quad R\rightarrow
0^+,\end{eqnarray}where the constant $\kappa_\varrho(\nn(x))$ is
that of Theorem \ref{magnanoi} and is given by
$$\kappa_\varrho(\nn(x))=\per(\mathcal{I}(\nn(x))\cap B_\varrho(x,1)),$$where $\mathcal{I}(\nn(x))$
 denotes the vertical hyperplane
orthogonal to $\nn(x)$.\footnote{Notice that $\mathcal{I}(\nn(x))$
corresponds to an ideal of the Lie algebra $\gg$.}.

\item [{\bf Case (b)}]\,\, Let $x\in C_S$ and let us assume that,
locally around $x$, there exists $\alpha\in I\vv$, ${\rm
ord}(\alpha)=i$, such that $S$ can be represented as the
exponential image of a $\cont^i$-smooth $X_\alpha$-graph. For sake
of simplicity and without loss of generality, let $x=0\in\GG.$ In
such a case one has$$S\cap
B_\varrho(x,r)\subset\exp\big\{\big(\zeta_1,...,\zeta_{\alpha-1},
\psi(\zeta),\zeta_{\alpha+1},...,\zeta_n \big)\, \big|\,
\zeta:=(\zeta_1,...,\zeta_{\alpha-1},
0,\zeta_{\alpha+1},...,\zeta_n )\in \ee_\alpha^\perp\big\},$$
where $\psi:\ee_\alpha^{\perp}\cong\R^{n-1}\rightarrow\R$ is a
$\cont^i$ function. If $\psi$ satisfies
\begin{equation}\label{0dercond}\frac{\partial^{\scriptsize(l)}
\psi}{\partial\zeta_{j_1}...\partial\zeta_{j_l}}(0)=0\qquad\mbox{whenever}\quad{\rm
ord}(j_1)+...+{\rm ord}(j_l)< i
\end{equation} for
every $l\in\{1,...,i\}$, then it follows that
\begin{eqnarray}\label{BUPcarcase}\per(S\cap B_\varrho(x,R))\sim
\kappa_\varrho(C_S(x)) R^{Q-1}\qquad\mbox{for}\quad R\rightarrow
0^+\end{eqnarray}where $\kappa_\varrho(C_S(x))$ can be computed by
integrating $\per$ along a polynomial hypersurface of anisotropic
order $i={\rm ord}(\alpha)$ depending on the Taylor's expansion up
to order $j \leq i$ of $\psi$ at $0\in\R^{n-1}$. More precisely,
it turns out that$$\kappa_\varrho(C_S(x))=\per(\Psi_\infty\cap
B_\varrho(x,1)),$$where the  limit-set  $\Psi_\infty$ is given by
\begin{equation*}\Psi_\infty=
\big\{\big(\zeta_1,...,\zeta_{\alpha-1},\widetilde{\psi}
(\zeta),\zeta_{\alpha+1},...,\zeta_n\big)\big|
\zeta\in\ee_\alpha^\perp\big\}\end{equation*}and
$\widetilde{\psi}$ is the homogeneous polynomial function of
degree $l$ and anisotropic order $i={\rm ord}(\alpha)$ defined by
\begin{eqnarray}\nonumber\widetilde{\psi}(\zeta)=\sum_{\stackrel{{j_1}}{\scriptsize{\rm
ord}(j_1)=i}}\frac{\partial\psi}{\partial\zeta_{j_1}}(0)\zeta_{j_1}+
\sum_{\stackrel{{j_1, j_2}}{\scriptsize{{\rm ord}(j_1)+{\rm
ord}(j_2)=i}}}\frac{\partial^{\scriptsize(2)}\psi}{\partial\zeta_{j_1}\partial\zeta_{j_2}}(0)\zeta_{j_1}\zeta_{j_2}\\\label{jhhjas}+
\sum_{\stackrel{{j_1,...,j_l}}{\scriptsize{{\rm ord}(j_1)+...+{\rm
ord}(j_l)=i}}}\frac{\partial^{\scriptsize(l)}\psi}{\partial\zeta_{j_1}...
\partial\zeta_{j_l}}(0)\zeta_{j_1}\cdot...\cdot\zeta_{j_l}.\end{eqnarray}
Finally, if \eqref{0dercond} does not hold, then
\begin{eqnarray}\label{Bse}\per(S\cap B_\varrho(x,R))\longrightarrow
0\qquad\mbox{for}\quad R\rightarrow
0^+.\end{eqnarray}\end{itemize}
\end{teo}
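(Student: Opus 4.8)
The plan is a blow-up via the intrinsic dilations $\{\delta_t\}_{t>0}$, exploiting that $\per=\sigma^{n-1}\cc$ is $(Q-1)$-homogeneous (Remark \ref{mourigno}), i.e. $\delta_t^\ast\per=t^{Q-1}\per$, so that $\per(\delta_t A)=t^{Q-1}\per(A)$. Assume $x=0$. Since $\delta_R B_\varrho(0,1)=B_\varrho(0,R)$ by homogeneity of $\varrho$ and $\delta_R(\delta_{1/R}S)=S$, one has the scaling identity
\[\per(S\cap B_\varrho(0,R))=R^{Q-1}\,\per\big(\delta_{1/R}S\cap B_\varrho(0,1)\big)\qquad(R>0).\]
Everything reduces to computing $\lim_{R\to0^+}\per(\delta_{1/R}S\cap B_\varrho(0,1))$. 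In Case (a) this is the classical non-characteristic blow-up: $\delta_{1/R}S$ converges in $\cont^1_{loc}$ to the vertical hyperplane $\mathcal{I}(\nn(0))$ tangent to $S$ at $0$, and the limit equals $\per(\mathcal{I}(\nn(0))\cap B_\varrho(0,1))=\kappa_\varrho(\nn(0))$; I would simply recall this argument from \cite{Mag, Mag2}, \cite{FSSC3, FSSC5}. The rest of the work is Case (b).

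For Case (b) I write $S$ near $0$ as the graph $\zeta_\alpha=\psi(\zeta)$, $\zeta\in\ee_\alpha^\perp$, and compute $\delta_{1/R}S$ explicitly. Since $\delta_{1/R}$ scales the $j$-th coordinate by $R^{-{\rm ord}(j)}$ (so $(\delta_R\xi)_j=R^{{\rm ord}(j)}\xi_j$) and ${\rm ord}(\alpha)=i$, the set $\delta_{1/R}S$ is again an $X_\alpha$-graph $\xi_\alpha=\psi_R(\xi)$ with
\[\psi_R(\xi):=R^{-i}\,\psi(\delta_R\xi).\]
The heart of the proof is the claim that $\psi_R\to\widetilde\psi$ in $\cont^1_{loc}(\ee_\alpha^\perp)$, where $\widetilde\psi$ is the anisotropic-homogeneous polynomial \eqref{jhhjas}. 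I would expand $\psi$ by Taylor's formula to (ordinary) order $i$ at $0$ and regroup the monomials by their anisotropic order ${\rm ord}(j_1)+\dots+{\rm ord}(j_l)$. Hypothesis \eqref{0dercond} kills every monomial of anisotropic order $<i$; a monomial of anisotropic order $m$ contributes the factor $R^{m-i}$ after applying $R^{-i}\psi(\delta_R\cdot)$, so the order-$i$ monomials survive (producing exactly $\widetilde\psi$) while those of order $>i$ vanish as $R\to0^+$. The $\cont^i$-Taylor remainder, being $o$ of the order-$i$ monomials, likewise vanishes after rescaling, uniformly on compact sets; differentiating once and repeating the bookkeeping for $\partial_{\zeta_j}\psi$ (whose anisotropic order is lowered by ${\rm ord}(j)$) gives convergence of the gradients as well.

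With $\cont^1_{loc}$ convergence in hand I pass to the limit in the perimeter. Using the graph form of \eqref{IPREGCDG}, the Euclidean area factor $\sqrt{1+|\nabla\psi_R|^2}$ cancels, giving $\per(\delta_{1/R}S\cap B_\varrho(0,1))=\int_{D_R}\Phi\big(\xi,\psi_R(\xi),\nabla\psi_R(\xi)\big)\,d\xi$, where $\Phi(\xi,u,p):=\big(\sum_{j\in I\cc}\langle X_j(\xi,u),\,\ee_\alpha-p\rangle_{\Rn}^2\big)^{1/2}$ is continuous (here $\ee_\alpha-p$ is the unnormalized Euclidean normal of the graph and $X_j$ is evaluated at the graph point), and $D_R$ is the projection onto $\ee_\alpha^\perp$ of $(\delta_{1/R}S)\cap B_\varrho(0,1)$. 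Since $\nabla\psi_R\to\nabla\widetilde\psi$ locally uniformly and $\mathbf{1}_{D_R}\to\mathbf{1}_{D_\infty}$ a.e. (the boundary of the limiting domain is negligible because $\Psi_\infty$ is smooth and meets $\partial B_\varrho$ transversally), dominated convergence yields $\lim_R\per(\delta_{1/R}S\cap B_\varrho(0,1))=\int_{D_\infty}\Phi(\xi,\widetilde\psi,\nabla\widetilde\psi)\,d\xi=\per(\Psi_\infty\cap B_\varrho(0,1))=\kappa_\varrho(C_S(x))$, which is \eqref{BUPcarcase}. Finally, if \eqref{0dercond} fails there is a monomial of anisotropic order $m<i$, so $\psi_R$ contains a term $R^{m-i}(\cdot)\to\infty$: the rescaled graph escapes to infinity in the $\ee_\alpha$-direction, forcing $\per(\delta_{1/R}S\cap B_\varrho(0,1))\to0$ and hence both $\kappa_\varrho(C_S(x))=0$ and \eqref{Bse}. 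The main obstacle throughout is the anisotropic Taylor analysis under merely $\cont^i$-regularity, namely controlling the remainder and its first derivatives uniformly on compacta after the non-isotropic rescaling.
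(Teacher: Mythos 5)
Your proposal follows essentially the same route as the paper's own proof: reduction to $x=0$ by left-invariance, the $(Q-1)$-homogeneity scaling identity $\per(S\cap B_\varrho(0,R))=R^{Q-1}\per(\delta_{1/R}S\cap B_\varrho(0,1))$, and the anisotropic Taylor bookkeeping under \eqref{0dercond} identifying the limit graph $\widetilde{\psi}$ --- and even your handling of the degenerate case (a surviving monomial of anisotropic order $<i$ makes the rescaled graph escape, forcing the density to vanish) is the same brief argument the paper gives. The only difference is one of detail, not of route: where the paper compresses the passage from convergence of the rescaled graphs to convergence of the perimeters into ``the thesis easily follows,'' you spell it out via the graph integrand of \eqref{IPREGCDG} and dominated convergence, which is a welcome refinement rather than a new approach.
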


\begin{proof}

Let us preliminarily note that the limit above can be calculated
at the identity $0\in\GG$, by using a suitable left translation of $S$.
More precisely, by left-invariance of $\per$, one computes
$${\per \{S\cap B_{\varrho}(x,R)\}}={\per \{x^{-1}\bullet(S\cap
B_{\varrho}(x,R))\}}={\per \{(x^{-1}\bullet S)\cap
B_{\varrho}(0,R)\}}\quad(x\in{\rm Int}(S)),$$where the second
equality easily follows from the additivity of the group
multiplication
$\bullet$.\\
 Now let us set $S_R(x):=S\cap B_{\varrho}(x,R)$ and
$$S_R:=x^{-1}\bullet S_R(x)=(x^{-1}\bullet S)\cap
B_{\varrho}(0,R).$$Furthermore remind that, by the homogeneity of
the metric $\varrho$ and by invariance of $\per$ under (positive)
Carnot dilations\footnote{More precisely, one has
$\delta_t^\ast\per=t^{Q-1}\per$,  $\delta_t\, (t\in \R_+)$; see
Section \ref{prelcar}.} it follows that
$$\per\{S_R(x)\}=\per\{\delta_R(\delta_{1/R}S)\cap B_\varrho(x,R)\}=R^{Q-1}\per\{(\delta_{1/R}S)\cap B_\varrho(x,1)\}
\qquad( R\geq 0).$$ With these preliminaries, the blow-up
procedure can easily be done. One has$$\frac{\per\{
S_R(x)\}}{R^{Q-1}}=\per\{(\delta_{1/R}S)\cap
B_\varrho(x,1)\}=\per\{(\delta_{1/R}S)_1(x)\}.$$Hence the limit we
have to compute is the following:$$\lim_{R\rightarrow
0^+}\per\{(\delta_{1/R}S)_1(x)\}.$$By left-translating at
$0\in\GG$ the set $(\delta_{1/R}S)_1(x)$, one gets
$$x^{-1}\bullet\{(\delta_{1/R}S)_1(x)\}=\delta_{1/R}\{(x^{-1}\bullet
S)_1\},$$and, by left-invariance of $\per$,
 one has $$\per\{(\delta_{1/R}S)_1(x)\}=\per\{(\delta_{1/R} (x^{-1}\bullet
S))_1\},$$for all $R\geq 0.$ Now, let us begin the analysis of the non-characteristic case.\\
\\ \noindent{\bf Case (a). Blow-up at non-characteristic points.} {\it Let us assume that $S\subset\GG$
is a $\cont^1$-smooth hypersurface and that $x\in S$ is
non-characteristic}. In such a case, at $x$, the hypersurface $S$
is oriented by the horizontal unit normal vector $\nn(x)$, i.e.
$\nn(x)$ is transversal to $S$ at $x$. Thus, at least locally
around $x$, we may think of $S$ as the (exponential image of the)
graph of a $\cont^1$-function w.r.t. the horizontal direction
$\nn(x)$. Moreover, at the level of the Lie algebra
$\gg\cong\TT_0\GG$, one may perform an orthonormal change of
coordinates in such a way that
$$\ee_1=X_1(0)=(L_{x^{-1}})_\ast\nn(x).$$ By the classical Implicit Function Theorem, we may write
$S_r=x^{-1}\bullet S_r(x)$, for some (small enough) $r>0$, as the
exponential image in $\GG$ of a $\cont^1$-smooth graph\footnote{Actually,
since the argument is local, $\psi$ may be defined just on a
suitable neighborhood of $0\in \ee_1^\perp\cong \R^{n-1}$.}
$$\Psi=\{(\psi(\xi),\xi)| \xi\in\R^{n-1}\}\subset\gg,$$ where
$\psi:\ee_1^{\perp}\cong\R^{n-1}\longrightarrow\R$ is a
$\cont^1$-function satisfying:

\begin{itemize}\item[${\bf(a_1)}$]$\psi(0)=0$;
\item[${\bf(a_2)}$]$\grad_{\R^{n-1}} \psi (0)=0.$
\end{itemize}By this way we get that $S_r=\exp\Psi\cap
B_\varrho(0,r),$ for all (small enough) $r>0$. Clearly, this
remark can be used to compute
\begin{equation}\label{dens1}\lim_{R\rightarrow 0^+}\per\{(\delta_{1/R} (x^{-1}\bullet
S))_1\}.\end{equation} So let us us fix a positive $r$ satisfying
the previous hypotheses and let $0\leq R\leq r$. We have
\begin{eqnarray}\nonumber(\delta_{1/R} (x^{-1}\bullet
S))_1&=&\delta_{1/R} (x^{-1}\bullet S_r(x))\cap
B_\varrho(0,1)\\\nonumber&=&\delta_{1/R}(S_r)\cap
B_\varrho(0,1)\\\nonumber&=&\delta_{1/R}(\exp\Psi\cap
B_\varrho(0,r))\cap
B_\varrho(0,1)\\\nonumber&=&\exp(\delta_{1/R}\Psi\cap
B_\varrho(0,r/R))\cap
B_\varrho(0,1)\\\label{dens2}&=&\exp(\widehat{\delta}_{1/R}\Psi)\cap
B_\varrho(0,1),\end{eqnarray}where $\widehat{\delta}_{t}$ are the
Carnot dilations induced on the Lie algebra $\gg$, i.e.
$\delta_t=\exp\circ\widehat{\delta}_{t}\, (t\in\R_+)$, or
equivalently, $\widehat{\delta}_{t}=\llog\circ\delta_t\,
(t\in\R_+)$. Furthermore we have
$$\widehat{\delta}_{1/R}\Psi=\widehat{\delta}_{1/R}\{(\psi(\xi),\xi)| \xi\in\R^{n-1}\}
=\Big\{\Big(\frac{\psi(\xi)}{R},(\widehat{\delta}_{1/R})\big|_{\ee_1^{\perp}}(\xi)\Big)\big|
\xi\in\R^{n-1}\Big\}.$$By using the change of variables
$\zeta:=(\widehat{\delta}_{1/R})|_{\ee_1^{\perp}}(\xi)$, we
therefore get that$$\widehat{\delta}_{1/R}\Psi=
\Big\{\Big(\frac{\psi\big((\widehat{\delta}_{R})|_{\ee_1^{\perp}}(\zeta)\big)}{R},\zeta\Big)\big|
\zeta\in\R^{n-1}\Big\}.$$Remind that $\psi\in \cont^1(U_0)$ where
$U_0$ is a suitable open neighborhood of $0\in\R^{n-1}$. From
$(a_1)$ and $(a_2)$ we obtain
$$\psi(\xi)=\psi(0) + \langle\grad_{\R^{n-1}}\psi(0), \xi \rangle_{\R^{n-1}} + {\rm o}(\|\xi\|_{\R^{n-1}})
={\rm o}(\|\xi\|_{\R^{n-1}}),$$for $\xi\rightarrow 0\in \R^{n-1}$.
Finally, since
$\|(\widehat{\delta}_{R})|_{\ee_1^{\perp}}(\zeta)\|_{\R^{n-1}}\longrightarrow
0$ as long as $R\rightarrow 0^+$, it follows that
\begin{equation}\label{dens3}\Psi_\infty=\lim_{R\rightarrow
0^+}\widehat{\delta}_{1/R}\Psi=\exp(\ee_1^\perp)=\mathcal{I}(X_1(0)).\end{equation}
We stress that $\mathcal{I}(X_1(0))$ is the vertical hyperplane
through the identity $0\in\GG$ and orthogonal to the horizontal
direction $X_1(0)$. Hence, the limit \eqref{dens1} can be
computed by using \eqref{dens2} and \eqref{dens3}. More precisely,
we have

\begin{equation}\label{dens4}\lim_{R\rightarrow 0^+}\frac{\per \{S\cap
B_{\varrho}(x, R)\}}{R^{Q-1}}=\lim_{R\rightarrow
0^+}\per\{(\delta_{1/R} (x^{-1}\bullet
S))_1\}=\per(\mathcal{I}(X_1(0))\cap B_\varrho(0,1))\end{equation}
By the previous change of variable, it is also clear that

$$\per(\mathcal{I}(X_1(0))\cap B_\varrho(0,1))=\per(\mathcal{I}(\nn(x))\cap B_\varrho(x,1)).$$
As it can easily be seen, the horizontal perimeter $\per$
restricted to any vertical hyperplane coincides with the Euclidean
Hausdorff measure $\Ar$ on the hyperplane $\mathcal{I}(\nn(x))$,
i.e.
$$\per\res\mathcal{B}\cap\mathcal{I}(\nn(x))=\Ar\res\mathcal{B}\cap\mathcal{I}(\nn(x))\qquad \forall\,\,\mathcal{B}
\in\mathcal{B}or(\mathcal{I}(\nn(x))).$$Therefore$$\lim_{R\rightarrow
0^+}\frac{\per \{S\cap B_{\varrho}(x,
R)\}}{R^{Q-1}}=\kappa_\varrho(\nn(x)),$$which was to be
proven.\[\]

Now we continue the study of the characteristic case.\\
\\ \noindent{\bf Case (b). Blow-up at the characteristic set.}
{\it We are now assuming that $S\subset\GG$ is a $\cont^i$-smooth
hypersurface $(i\geq 2)$ and that $x\in C_S$, i.e. is a
characteristic point}. In such a case the hypersurface $S$ is
oriented, at the point $x$, by some vertical vector. Hence, at
least locally around $x$, we may think of $S$ as the (exponential
image of the) graph of a $\cont^i$-function w.r.t. some given
vertical direction  $X_\alpha\,(\alpha\in I\vv)$ which is
transversal to $S$ at $x$, i.e. $\langle X_\alpha(x),
\nu(x)\rangle\neq 0$, where $\nu$ is the Riemannian unit normal
vector to $S$ at  $x$. Here $X_\alpha$ is a vertical
left-invariant vector field of the fixed left-invariant frame
$\underline{X}=\{X_1,...,X_n\}$ on $\GG$ and $\alpha\in I\vv$ is
any index of ``vertical'' type; see Section \ref{prelcar},
Notation \ref{1notazione}. {\it Furthermore,  we are assuming
that}
$$\mathrm{ord}(\alpha):= i\qquad
  (i\in\{2,..,k\}).$$

To the sake of simplicity, as in the non-characteristic case, we
left-translate the hypersurface in such a way that $x$ will
coincide with the identity $0\in\GG$. To this end, it suffices to
replace $S$ by $x^{-1}\bullet S$.

At the level of the Lie algebra $\gg\cong\TT_0\GG$, we introduce
the hyperplane $\ee_\alpha^\perp$ through the origin
$0\in\R^{n}\cong\gg$ and orthogonal to $\ee_\alpha=X_{\alpha}(0)$.
In the sequel, the exponential image in $\GG$ of
$\ee_\alpha^\perp$ will be denoted by $\mathcal{I}(X_\alpha(0))$,
i.e. $\mathcal{I}(X_\alpha(0))=\exp(\ee_\alpha^\perp)$. We stress
that $\ee_\alpha^\perp$ is the ``natural'' domain of a graph along
the direction $\ee_\alpha$. By the classical Implicit Function
Theorem, for some (small enough) $r>0$, we may write
$S_r=x^{-1}\bullet S_r(x)$ as the exponential image in $\GG$ of
the $\cont^i$-graph
$$\Psi=\Big\{\big(\xi_1,...,\xi_{\alpha-1}\underbrace{, \psi(\xi),}_{\scriptsize{\alpha-th\, place}}\xi_{\alpha+1},...,\xi_n \big)\,
\big|\, \xi:=(\xi_1,...,\xi_{\alpha-1}, 0,\xi_{\alpha+1},...,\xi_n
)\in \ee_\alpha^\perp\cong \R^{n-1}\Big\}\subset\gg,$$ where
$\psi:\ee_\alpha^{\perp}\cong\R^{n-1}\longrightarrow\R$ is a
$\cont^i$-function satisfying:

\begin{itemize}\item[${\bf(b_1)}$]$\psi(0)=0$;
\item[${\bf(b_2)}$]${\partial\psi}/{\partial\xi_j} (0)=0$ for
every $j=1,...,\DH\,(=\dim\HH)$.
\end{itemize}Thus we get that $S_r=\exp\Psi\cap
B_\varrho(0,r),$ for every (small enough) $r>0$. Now we  apply the
previous considerations to compute
$$\lim_{R\rightarrow 0^+}\per\{(\delta_{1/R} (x^{-1}\bullet
S))_1\}.$$By arguing exactly as in the non-characteristic case, we
get that $$(\delta_{1/R} (x^{-1}\bullet
S))_1=\exp(\widehat{\delta}_{1/R}\Psi)\cap B_\varrho(0,1),$$where
$\widehat{\delta}_{t}$ are the Carnot dilations induced on the Lie
algebra $\gg$.
Furthermore\begin{eqnarray*}\widehat{\delta}_{1/R}\Psi&=&
\widehat{\delta}_{1/R}\big\{\big(\xi_1,...,\xi_{\alpha-1},
\psi(\xi),\xi_{\alpha+1},...,\xi_n \big)\, |\, \xi\in
\ee_\alpha^\perp\big\}\\
&=&\Big\{\Big(\frac{{\xi_1}}{R},...,\frac{\xi_{\alpha-1}}{R^{{\rm
ord}(\alpha-1)}},
\frac{\psi(\xi)}{R^i},\frac{\xi_{\alpha+1}}{R^{{\rm
ord}(\alpha+1)}},...,\frac{\xi_n}{R^k} \Big)\,\big|\, \xi\in
\ee_\alpha^\perp\Big\}.\end{eqnarray*}By setting
$$\zeta:=(\widehat{\delta}_{1/R})\big|_{\ee_\alpha^{\perp}}
\xi=\Big(\frac{{\xi_1}}{R},...,\frac{\xi_{\alpha-1}}{R^{{\rm
ord}(\alpha-1)}}, 0,\frac{\xi_{\alpha+1}}{R^{{\rm
ord}(\alpha+1)}},...,\frac{\xi_n}{R^k} \Big),$$where
$\zeta=(\zeta_1,...,\zeta_{\alpha-1},0,\zeta_{\alpha+1},...,\zeta_n)\in\ee_\alpha^\perp$,
we therefore get
$$\widehat{\delta}_{1/R}\Psi=
\Big\{\Big(\zeta_1,...,\zeta_{\alpha-1},
\frac{\psi\big(\widehat{\delta}_{R}(\zeta)|_{\ee_\alpha^{\perp}}\big)}{R^i},\zeta_{\alpha+1},...,\zeta_n\Big)\big|
\zeta\in\ee_\alpha^\perp\Big\}.$$Remind that $\psi\in
\cont^i(U_0)$ where $U_0$ is an open neighborhood of
$0\in\ee_\alpha^{\perp}\cong\R^{n-1}$. Moreover, it is immediate
to see that
$$\widehat{\delta}_{R}(\zeta)|_{\ee_\alpha^{\perp}}\longrightarrow
0$$as long as $R\rightarrow 0^+$. So we have to evaluate the limit
\begin{equation}\label{lim}\widetilde{\psi}(\zeta):=\lim_{R\rightarrow
0^+}\frac{\psi\big(\widehat{\delta}_{R}(\zeta)|_{\ee_\alpha^{\perp}}\big)}{R^i}.\end{equation}The
first remark is that, if this limit equals $+\infty$, one
immediately obtains
$$\lim_{R\rightarrow
0^+}\frac{\per\{ S_R(x)\}}{R^{Q-1}}=\lim_{R\rightarrow
0^+}\per\{\exp(\widehat{\delta}_{1/R}\Psi)\cap
B_\varrho(0,1)\}=0,$$ since $\widehat{\delta}_{1/R}\Psi\cap
B_\varrho(0,1)\rightarrow{\emptyset}$, for $R\rightarrow 0^+$.

Now we make use of a Taylor's expansion of $\psi$ up to the $l$-th
- Euclidean - order ($l\leq i$) together with $(b_1)$ and $(b_2)$.
We thus get

\begin{eqnarray*}\psi\big(\widehat{\delta}_{R}(\zeta)|_{\ee_\alpha^{\perp}}\big)&=
&\psi(0) +\sum_{j_1}R^{{\rm
ord}(j_1)}\frac{\partial\psi}{\partial\zeta_{j_1}}(0)\zeta_{j_1}
+\sum_{j_1, j_2}R^{{\rm ord}(j_1)+{\rm
ord}(j_2)}\frac{\partial^{\scriptsize(2)}\psi}{\partial\zeta_{j_1}\partial\zeta_{j_2}}(0)\zeta_{j_1}\zeta_{j_2}\\&&+...+
\sum_{j_1,..., j_i}R^{{\rm ord}(j_1)+...+{\rm
ord}(j_i)}\frac{\partial^{\scriptsize(i)}\psi}{\partial\zeta_{j_1}...
\partial\zeta_{j_i}}(0)\zeta_{j_1}\cdot...\cdot\zeta_{j_i}+{\rm
o}\big(\|\widehat{\delta}_{R}(\zeta)|_{\ee_\alpha^{\perp}}\|^i_{\R^{n-1}}\big)\\&=&
\sum_{j_1}R^{{\rm
ord}(j_1)}\frac{\partial\psi}{\partial\zeta_{j_1}}(0)\zeta_{j_1}
+\sum_{j_1, j_2}R^{{\rm ord}(j_1)+{\rm
ord}(j_2)}\frac{\partial^{\scriptsize(2)}\psi}{\partial\zeta_{j_1}\partial\zeta_{j_2}}(0)\zeta_{j_1}\zeta_{j_2}\\&&+...+
\sum_{j_1,..., j_i}R^{{\rm ord}(j_1)+...+{\rm
ord}(j_l)}\frac{\partial^{\scriptsize(l)}\psi}{\partial\zeta_{j_1}...
\partial\zeta_{j_i}}(0)\zeta_{j_1}\cdot...\cdot\zeta_{j_l}+{\rm
o}\big(R^{{\rm ord}(j_1)+...+{\rm ord}(j_l)}\big),\end{eqnarray*}
where $j_l\in\{1,...,(\alpha-1),(\alpha+1),...,n\}$. Thus

\begin{eqnarray*}\frac{\psi\big(\widehat{\delta}_{R}(\zeta)|_{\ee_\alpha^{\perp}}\big)}{R^i}&=
& \sum_{j_1}R^{{\rm
ord}(j_1)-i}\frac{\partial\psi}{\partial\zeta_{j_1}}(0)\zeta_{j_1}
+\sum_{j_1, j_2}R^{{\rm ord}(j_1)+{\rm
ord}(j_2)-i}\frac{\partial^{\scriptsize(2)}\psi}{\partial\zeta_{j_1}\partial\zeta_{j_2}}(0)
\zeta_{j_1}\zeta_{j_2}\\&&+...+ \sum_{j_1,..., j_l}R^{{\rm
ord}(j_1)+...+{\rm
ord}(j_l)-i}\frac{\partial^{\scriptsize(l)}\psi}{\partial\zeta_{j_1}...
\partial\zeta_{j_l}}(0)\zeta_{j_1}\cdot...\cdot\zeta_{j_l}+{\rm
o}\big(R^{{\rm ord}(j_1)+...+{\rm
ord}(j_l)-i}\big).\end{eqnarray*}By hypothesis
\begin{equation}\label{dercond}\frac{\partial^{\scriptsize(l)}
\psi}{\partial\zeta_{j_1}...\partial\zeta_{j_l}}(0)=0\qquad\mbox{whenever}\qquad{\rm
ord}(j_1)+...+{\rm ord}(j_l)<i\qquad(1\leq l\leq i).
\end{equation}This implies that the limit
\eqref{lim} exists. Therefore
\begin{equation*}\Psi_\infty=\lim_{R\rightarrow
0^+}\widehat{\delta}_{1/R}\Psi=
\big\{\big(\zeta_1,...,\zeta_{\alpha-1},\widetilde{\psi}
(\zeta),\zeta_{\alpha+1},...,\zeta_n\big)\big|
\zeta\in\ee_\alpha^\perp\big\},\end{equation*}where
$\widetilde{\psi}$ is the homogeneous polynomial function of
degree $l$ and - anisotropic - order $i={\rm ord}(\alpha)$ given
by
\begin{eqnarray}\nonumber\widetilde{\psi}(\zeta)=\sum_{\stackrel{{j_1}}{\scriptsize{\rm
ord}(j_1)=i}}\frac{\partial\psi}{\partial\zeta_{j_1}}(0)\zeta_{j_1}+
\sum_{\stackrel{{j_1, j_2}}{\scriptsize{{\rm ord}(j_1)+{\rm
ord}(j_2)=i}}}\frac{\partial^{\scriptsize(2)}\psi}{\partial\zeta_{j_1}\partial\zeta_{j_2}}(0)\zeta_{j_1}\zeta_{j_2}\\
\label{jhhjas}+ \sum_{\stackrel{{j_1,...,j_l}}{\scriptsize{{\rm
ord}(j_1)+...+{\rm
ord}(j_l)=i}}}\frac{\partial^{\scriptsize(l)}\psi}{\partial\zeta_{j_1}...
\partial\zeta_{j_l}}(0)\zeta_{j_1}\cdot...\cdot\zeta_{j_l}.\end{eqnarray}The
thesis easily follows.

\end{proof}

\section{A technical lemma: Coarea Formula for the
$\HS$-gradient}\label{COAR} Let $S\subset\GG$ be a $\mathbf{C}^2$
hypersurface and $\varphi:S\longrightarrow\R$ be a
$\mathbf{C}^1$-smooth function. As it is well-known, $x\in S$ is a
{\it regular point} of $\varphi$ when
$|\grad\,\varphi|\neq 0$ and a {\it critical point} otherwise. In
other words, $x\in S$ is a regular point if and only if the
differential $d\varphi|_x:T_xS\longrightarrow T_{\varphi(x)}\R$ is
surjective. Furthermore, $s\in\R$ is a {\it regular value} of
$\varphi$ if and only if every point of $\varphi^{-1}[s]$ is  a
regular point of $\varphi$ (and by convention $s\in \R$ is a
regular value if $\varphi^{-1}[s]=\emptyset$) and is a {\it
critical value} if it is not a regular value.

\begin{oss}\label{prea}Clearly, any $(n-2)$-dimensional $\cont^1$-smooth submanifold $\Sigma$ of $S$
can be thought, at least locally, as the level-set of a
$\cont^1$-function defined on $S$. Furthermore, from a purely
``geometric'' point of view, the characteristic set of
$\Sigma\subset S$ is the kernel of the projection onto $\HS$ of
the unit normal vector $\eta$ along $\Sigma$, i.e.
$C_\Sigma=\{x\in \Sigma\big| \P\ss\eta(x)=0\}$.

Let $\varphi\in\cont^1(S)$ and, for any regular value $s\in\R$ of
$\varphi$, let us consider the level set of $\varphi$, i.e.
$\varphi^{-1}[s]$. It is clear that $\varphi^{-1}[s]$ is a
$(n-2)$-dimensional $\cont^1$-smooth closed submanifold of $S$.
Note that $\varphi^{-1}[s]$ is geometrically $\HH$-regular if, and
only if, $|\qq\varphi|\neq 0$ everywhere along $\varphi^{-1}[s]$;
see Definition \ref{iuoi}. As an application of Theorem 2.16 in
\cite{Mag2}, the characteristic set $C_{\varphi^{-1}[s]}$ of
$\varphi^{-1}[s]$ is a 0-measure set w.r.t. the intrinsic
Hausdorff measure, i.e. $\mathcal{H}_{\bf
cc}^{Q-2}(C_{\varphi^{-1}[s]})=0$; see also Remark
\ref{carsetgen}. Hence $\nis(C_{\varphi^{-1}[s]})=0$. It is clear
that $C_{\varphi^{-1}[s]}$ is a closed subset of
${\varphi^{-1}[s]}$ - under the relative topology - and that
$$C_{\varphi^{-1}[s]}=\big\{x\in\varphi^{-1}[s]\big|
|\qq\varphi(x)|=0\big\}.$$
\end{oss}

As it is well-known, the classical {\it Sard's Theorem} (see, for
instance, \cite{NAR}) states  that {\it the set of critical values
of $\varphi$ is a 0-measure set} (w.r.t. the Lebesgue measure on
$\R$) and so, for $\mathcal{L}^1$-a.e. $s\in \R$, the set
$\varphi^{-1}[s]\subset S$ is a $\cont^1$-smooth - closed -
submanifold of $S$. For any fixed regular value $s$ of $\varphi$,
we shall consider the open subset
$(\varphi^{-1}[s])^\ast:=\varphi^{-1}[s]\setminus
C_{\varphi^{-1}[s]}$ of $\varphi^{-1}[s]$. At each point of
$(\varphi^{-1}[s])^\ast$ the horizontal unit normal vector is
well-defined and it can be written - up to the choice of an
orientation - as $\eta\ss=\frac{\qq\varphi}{|\qq\varphi|}$.

\begin{teo}\label{TCOAR}Let $S\subset\GG$ be a $\mathbf{C}^2$-smooth hypersurface and
let $\varphi\in\cont^1(S)$. Then
\begin{equation}\label{1coar}\int_{S}|\qq\varphi(x)|\per(x)=\int_{\R}\nis\{\varphi^{-1}[s]\cap
S \}ds\end{equation}and
\begin{equation}\label{2coar}\int_{S}\psi(x)|\qq\varphi(x)|\per(x)=\int_{\R}ds \int_{\varphi^{-1}[s]\cap S}\psi(y)\nis(y)\end{equation}
for every $\psi\in L^1(S,\per)$.\end{teo}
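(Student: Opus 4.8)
The plan is to deduce both identities from the classical (smooth) Coarea Formula on the Riemannian manifold $(S,g\ss)$, by translating every intrinsic quantity into Riemannian terms. Recall from Definition \ref{sh} that $\per\res S=|\PH\nu|\,\sigma^{n-1}\rr\res S$, and from Remark \ref{measonfr} that for any geometrically $\HH$-regular $(n-2)$-dimensional submanifold $\Sigma\subset S$, oriented by its unit normal $\eta$ inside $\TS$, one has $\nis\res\Sigma=|\PH\nu|\,|\P\ss\eta|\,\sigma^{n-2}\rr\res\Sigma$. Moreover, writing $\grad\ts\varphi$ for the tangential Riemannian gradient of $\varphi$ on $S$, the defining property of the $\HS$-gradient gives at once $\qq\varphi=\P\ss(\grad\ts\varphi)$, since for $X\in\HS$ one has $\langle\grad\ts\varphi,X\rangle=X\varphi=\langle\qq\varphi,X\rangle$ and $g\ss=g|_{\HS}$. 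In particular $|\qq\varphi|\leq|\grad\ts\varphi|$.

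The key pointwise identity is read off on the level sets. By Sard's Theorem almost every $s\in\R$ is a regular value; fix such an $s$ and work on the open set $(\varphi^{-1}[s])^\ast=\varphi^{-1}[s]\setminus C_{\varphi^{-1}[s]}$, recalling from Remark \ref{prea} that the excluded characteristic part is $\nis$-null and hence harmless. There the unit normal to the level set inside $\TS$ is $\eta=\grad\ts\varphi/|\grad\ts\varphi|$, so $\P\ss\eta=\qq\varphi/|\grad\ts\varphi|$ and therefore
\[|\P\ss\eta|=\frac{|\qq\varphi|}{|\grad\ts\varphi|}.\]
Substituting into the expression for $\nis$ yields, on $(\varphi^{-1}[s])^\ast$,
\[\nis\res\varphi^{-1}[s]=|\PH\nu|\,\frac{|\qq\varphi|}{|\grad\ts\varphi|}\,\sigma^{n-2}\rr.\]

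To conclude I would integrate in $s$ and invoke the Riemannian Coarea Formula. Define the Borel weight $u:=|\PH\nu|\,|\qq\varphi|/|\grad\ts\varphi|$ on the set of regular points of $\varphi$, and $u:=0$ on the critical set $\{\grad\ts\varphi=0\}$; this is measurable (and bounded), since $|\PH\nu|$ is Lipschitz and $\qq\varphi,\grad\ts\varphi$ are continuous. The crucial factorization is that $u\,|\grad\ts\varphi|=|\PH\nu|\,|\qq\varphi|$ holds on all of $S$: on regular points by definition, and on critical points because there $\qq\varphi=\P\ss(\grad\ts\varphi)=0$ makes both sides vanish. Applying the classical smooth Coarea Formula for the $\cont^1$ function $\varphi$ on $(S,g\ss)$ (see \cite{FE}) with integrand $u$ gives
\[\int_{\R}\Big(\int_{\varphi^{-1}[s]}u\,\sigma^{n-2}\rr\Big)\,ds=\int_S u\,|\grad\ts\varphi|\,\sigma^{n-1}\rr=\int_S|\PH\nu|\,|\qq\varphi|\,\sigma^{n-1}\rr=\int_S|\qq\varphi|\,\per,\]
while for a.e.\ regular $s$ the inner integral equals $\nis(\varphi^{-1}[s]\cap S)$ by the previous step. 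This proves \eqref{1coar}. Formula \eqref{2coar} follows verbatim with the weight $\psi\,u$ in place of $u$: for $\psi\geq 0$ one repeats the computation, and the general case $\psi\in L^1(S,\per)$ is obtained by writing $\psi=\psi^+-\psi^-$ together with a standard approximation by simple functions and monotone convergence. The only genuinely delicate points are the critical set of $\varphi$ (where the quotient $|\qq\varphi|/|\grad\ts\varphi|$ is a priori undefined) and the characteristic set $C_S$; both are circumvented by the factorization $u\,|\grad\ts\varphi|=|\PH\nu|\,|\qq\varphi|$, together with the vanishing of $\qq\varphi$ wherever $\grad\ts\varphi$ vanishes and the presence of the factor $|\PH\nu|$ in every measure involved, which annihilates the contribution of $C_S$.
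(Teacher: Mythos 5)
Your proof is correct, but it is not the route the paper takes for Theorem \ref{TCOAR} itself. The paper argues intrinsically: on $(\varphi^{-1}[s])^\ast$ it builds an adapted orthonormal coframe with $\tau_1=\nn$, $\tau_2=\eta\ss=\qq\varphi/|\qq\varphi|$, writes $\per=\phi_2\wedge\nis$ and $\varphi^\ast ds=(d\ss\varphi+d\vv\varphi)|_S$, deduces the pointwise ``infinitesimal'' coarea identity $\varphi^\ast ds\wedge\nis\,|_{\varphi^{-1}[s]}=|\qq\varphi|\,\per$, and then integrates it via the lemma on fiber integration (Fubini for submersions), excising $C_S$ and the characteristic sets of the fibers, with a final approximation step for general $\psi$. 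Your argument instead reduces everything to the classical Riemannian Coarea Formula on $S$ through the weight $u=|\PH\nu|\,|\qq\varphi|/|\grad\ts\varphi|$ and the factorization $u\,|\grad\ts\varphi|=|\PH\nu|\,|\qq\varphi|$; this is in substance exactly the alternative derivation the author records in the remark immediately following the theorem, where $\phi=\psi\,\frac{|\grad\ss\varphi|}{|\grad\ts\varphi|}\,|\P\cc\nu|$ is plugged into \eqref{Riecoar}. Your version is in one respect more careful: extending $u$ by $0$ across the critical set $\{\grad\ts\varphi=0\}$, where $\qq\varphi=\P\ss(\grad\ts\varphi)$ also vanishes, makes the two sides match everywhere and renders the appeal to Sard's theorem essentially harmless --- a genuine point, since Sard for merely $\cont^1$ functions on an $(n-1)$-dimensional manifold is delicate, and both the paper's proof and its remark invoke it only to identify a.e.\ fiber as a submanifold so that $\nis$ makes sense. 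What the paper's route buys is the intrinsic identity between differential forms, independent of any Riemannian comparison and of use elsewhere in the paper; what yours buys is economy: the analytic content is outsourced to a classical theorem, and both the critical set of $\varphi$ and the characteristic sets are killed by a purely pointwise algebraic identity rather than by excision arguments.
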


\begin{proof}We begin by assuming that $S$ be non-characteristic.
So let us fix any regular value $s$ of $\varphi$ and consider the
set $(\varphi^{-1}[s])^\ast\subset S$. At each point of
$(\varphi^{-1}[s])^\ast$ we set
$\eta\ss=\frac{\qq\varphi}{|\qq\varphi|}$. Thus we may choose an
oriented orthonormal basis $\tau_3,...,\tau_h$ of the orthogonal
complement of $\eta\ss$ in $\HS$, i.e.
$\eta\ss^{\perp\ss}:=\rm{span}_{\R}\{\tau_3,...,\tau_h\}$, where
the notation is that used for an adapted orthonormal frame
$\underline{\tau}=\{\tau_1, \tau_2,...,\tau_n\}$ to $S$. More
precisely, for such a frame, at each point of
$(\varphi^{-1}[s])^\ast,$  we have  $\tau_1=\nn, \tau_2=\eta\ss$
and $\eta\ss^{\perp\ss}=\rm{span}_{\R}\{\tau_3,...,\tau_h\}$. Note
that $\HH=\nn S\oplus\HS$ and that $\HS=\eta\ss \oplus
\eta\ss^{\perp\ss}$. We set
$$S^\ast:=\bigcup_{s \in \{\mbox{\tiny Regular Values of\,} \varphi\}}(\varphi^{-1}[s])^\ast.$$
Now let us consider the following differential forms on $S$:
$$\phi_2,\quad \nis=\phi_3\wedge...\wedge\phi_n|_S,$$where
$\phi_I$ is the 1-form dual of $\tau_I$, i.e.
$\phi_I^\ast(\tau_J)=\delta_I^J$ for every $I, J=2,...,n$
($\delta_I^J$ denotes the Kronecker delta). It is clear that
$\phi_2=\eta\ss^\ast=\frac{d\ss\varphi}{|\qq\varphi|}$ and that
$\per=\phi_2\wedge\nis$. The natural volume form on $\R$ can be
represented by the differential 1-form $ds$. Therefore, one gets
$\varphi^\ast ds=d\varphi=(d\ss\varphi + d\vv\varphi)|_S$. Note
that,  at each point of $S$, $d\vv\varphi|_S$ is a linear
combination of the
 1-forms $\phi_\alpha|_S\,(\alpha\in I\vv)$. Thus
 $\varphi^\ast
 ds=(|\qq\varphi|\phi_2+d\vv\varphi)|_S$ and it turns out that
 $$\varphi^\ast
 ds\wedge\nis|_{\varphi^{-1}[s]}=|\qq\varphi|\per|_S.$$ The last one can be regarded as
  an ``infinitesimal'' Coarea Formula for any non-characteristic
  hypersurface.  Now we consider the general case in which $S$
  can have a non-empty characteristic set $C_S$. The
  above arguments can be repeated verbatim, after replacing $S$
 with $S\setminus C_S$.

  Here is worth to note that
  both members of this formula  vanish in the following three cases: (i) {\it $x\in C_S$, i.e. $x$ is a characteristic
   point of $S$;} (ii) {\it $s\in \R$
  is a critical value of $\varphi$}; (iii) {\it $s\in \R$ is a regular value of $\varphi$
  but  $y\in\varphi^{-1}[s]\cap
  C_{\varphi^{-1}[s]}$, i.e. $y$ is a characteristic point of
  $\varphi^{-1}[s]$}. Therefore, in proving the theorem, we may first replace $S$ by $S\setminus C_S$ and then
  consider
  the set
  $(S\setminus C_S)^\ast$. In fact, the following identities
  hold true:
  \begin{eqnarray*}\int_{S}\psi(x)|\qq\varphi(x)|\per(x)
  &=&\int_{S\setminus C_S}\psi(x)|\qq\varphi(x)|\per(x)\\&=&
  \int_{(S\setminus C_S)^\ast}\psi(x)|\qq\varphi(x)|\per(x)\end{eqnarray*}
  and
$$\int_{\R}dt
\int_{\varphi^{-1}[s]\cap S}\psi(y) \nis(y)=\int_{\R}ds
\int_{\varphi^{-1}[s]\cap (S\setminus C_S)^\ast}\psi(y) \nis(y).$$
The classical {\it Lemma on fiber integration}\footnote{The lemma
on fiber integration can be stated as follows:\begin{lemma}Let
$f:M^{n+m}\longrightarrow N^n$ be a submersion of $\cont^1$-smooth
manifolds, $\alpha$ an $m$-form on $M$ and $\beta$ an $n$-form on
$N$. Then for every $g\in\cont_0(M)$, one has
$$\int_N\Big(\int_{f^{-1}[y]}g\alpha\Big)\beta(y)=\int g \alpha\wedge f^\ast\beta.$$
\end{lemma}}, an equivalent of Fubini's Theorem for
submersions, yields \eqref{1coar} and \eqref{2coar} in the case
where $\psi$ is continuous and compactly supported on $S$. Finally,
the general case  follows by a standard approximation argument.
\end{proof}

\begin{oss} We would like to point out that the previous result can also be
deduced by using the Riemannian Coarea Formula as we shall see
below. Indeed, let $S\subset\GG$ be a $\mathbf{C}^2$-smooth
hypersurface and $\varphi:S\longrightarrow\R$ be a piecewise
$\cont^1$-smooth function. Then
\begin{equation}\label{Riecoar}\int_{S}\phi(x)|
\grad\ts\varphi(x)|\sigma^{n-1}\rr(x)=\int_{\R}ds
\int_{\varphi^{-1}[s]\cap
S}\phi(y)\sigma^{n-2}\rr(y)\end{equation} for every $\psi\in
L^1(S,\sigma^{n-1}\rr)$. This is the Riemannian Coarea Formula;
see
 \cite{BuZa}, \cite{Ch2, Ch3}, \cite{FE}.  Now, by choosing
 $$\phi=\psi\frac{|\grad\ss\varphi|}{|\grad\ts\varphi|}|\P\cc\nu|$$for some $\psi\in
 L^1(S,\per)$, we get that
\begin{eqnarray*}\int_{S}\phi|
\grad\ts\varphi|\sigma^{n-1}\rr=\int_{S}\psi\frac{|\grad\ss\varphi|}{|\grad\ts\varphi|}
\underbrace{|\P\cc\nu|\sigma^{n-1}\rr}_{=\per}=\int_{S}|\qq\varphi|\per.
\end{eqnarray*}Along $\varphi^{-1}[s]$ one has
$\eta=\frac{\grad\ts\varphi}{|\grad\ts\varphi|}$ and this implies
$|\P\ss\eta|=\frac{|\grad\ss\varphi|}{|\grad\ts\varphi|}$.
Therefore
\begin{eqnarray*}\int_{\R}ds
\int_{\varphi^{-1}[s]\cap S}\phi(y)\sigma^{n-2}\rr&=&\int_{\R}ds
\int_{\varphi^{-1}[s]\cap
S}\psi\frac{|\grad\ss\varphi|}{|\grad\ts\varphi|}|\P\cc\nu|\sigma^{n-2}\rr\\&=&\int_{\varphi^{-1}[s]\cap
S}\psi\underbrace{|\P\ss\eta||\P\cc\nu|\sigma^{n-2}\rr}_{=\nis}\\&=&\int_{\varphi^{-1}[s]\cap
S}\psi\nis.\end{eqnarray*}
\end{oss}

\section{Poincar\'e inequalities on compact hypersurfaces}\label{poincinsect}
Let $S\subset\GG$ be a $\cont^1$-smooth compact hypersurface with
- or without - boundary. As in the Riemannian setting (see
\cite{Cheeger}) one may give the following: \begin{Defi}The {\it
isoperimetric constant} ${\rm Isop}(S)$ of $S$ is defined as
follows:\begin{itemize} \item if $\partial S=\emptyset$ we set
$${\rm Isop}(S):=\inf\frac{\nis(N)}{\min\{\per(S_1),\per(S_2)\}},$$where
the infimum is taken over all smooth $(n-2)$-dimensional
submanifolds $N$ of $S$ which divide $S$ into two hyperurfaces
$S_1, S_2$ with common boundary $N=\partial S_1=\partial S_2$;
\item if $\partial S\neq \emptyset$ we set
$${\rm Isop}(S):=\inf\frac{\nis(N)}{\per(S_1)},$$where
$N\cap \partial S=\emptyset$ and $S_1$ is the unique
$(n-2)$-dimensional submanifold of $S$ such that  $N=\partial
S_1$.

\end{itemize}

\end{Defi}

\begin{oss}\label{xaz}It is important to say that, in general, the isoperimetric
constant ${\rm Isop}(S)$ might vanish. Examples of this fact can
be constructed, for instance, in the case of the 1st Heisenberg
group $\mathbb{H}^1$ on $\R^3$; see Example \ref{hjk} below.
Nevertheless, if we make some further assumptions on the
underlying geometry, this phenomenon cannot occur. More precisely,
let us assume that:
\begin{itemize}\item[${\bf (H)}$]For every smooth
$(n-2)$-dimensional submanifold $N\subset S$, one has \[\dim\,
C_{N}<n-2.\]
\end{itemize}

\end{oss}

\begin{oss}\label{pcar}Let $S\subset\GG$ be $\cont^2$-smooth and
non-characteristic. Moreover let us assume that the horizontal
tangent bundle $\HS$ be generic and horizontal.  For example, this
is the case of any $\cont^2$-smooth non-characteristic
hypersurface in the Heisenberg group $\mathbb{H}^n$ for $n>1$. In
other words, we are assuming that $(S, \HS)$ is a CC-space; see
Section \ref{prelcar} and Remark \ref{indbun}. Then we claim that
${\bf (H)}$ is automatically satisfied. Indeed, let $\UU\subset S$
be any relatively compact open set with $\cont^1$-smooth boundary.
Note that $\partial \UU$ can be regarded as a smooth hypersurface
in $S$, oriented by its unit normal $\eta$. Then, as an
application of Frobenius' Theorem, one infers that
$C_{\partial\UU}$ cannot have a non-empty interior and this
immediately implies that ${\bf (H)}$ holds true\footnote{More
generally, let $(N^n, \HH)$ be  an $n$-dimensional CC-space and let
$M^{n-1}\subset N$ be a $\cont^2$-smooth immersed hypersurface.
Moreover, let us define the characteristic set $C_M$ of $M$
exactly as in the case of Carnot groups, i.e. $C_M:=\{x\in M:
|\P\cc\nu\rr(x)|=0\}$. Then, as an application of Frobenius'
Theorem, one infers that $C_M$ cannot have a non-empty interior.}.

\end{oss}

\begin{es}\label{hjk}Let $S$ be a compact surface $S\subset\mathbb{H}^1$ without
boundary. Moreover, let us assume that there exists a regular,
simple, closed, horizontal curve $\gamma:[0,
1]\longrightarrow\mathbb{H}^1$ such that ${\rm Im}\,\gamma\subset
S$. Then, it follows immediately that ${\rm Isop}(S)=0$.
\end{es}

As a consequence of the Coarea Formula \eqref{1coar} we may
generalize to our Carnot setting some classical results about
isoperimetric constants and global Poincar\'e inequalities which
can be found in the books \cite{Ch1, Ch2, Ch3}; see also
\cite{Cheeger}, \cite{Yau}.

\begin{teo}\label{zaz}Let $S\subset\GG$ be a $\cont^1$-smooth compact hypersurface. \begin{itemize} \item[{\bf (a)}]
If $\partial S=\emptyset$, then
$${\rm Isop}(S)=\inf\frac{\int_S|\qq \psi|\per}{\int_S|\psi|\per},$$where
the infimum is taken over all $\cont^1$-smooth functions on $S$
such that $\int_S\psi\per=0$. \item[{\bf (b)}] If $\partial S\neq
\emptyset$, then $${\rm Isop}(S)=\inf\frac{\int_S|\qq
\psi|\per}{\int_S|\psi|\per},$$ where the infimum is taken over
all $\cont^1$-smooth functions on $S$ such that $\psi|_{\partial
S}=0$.
\end{itemize}
\end{teo}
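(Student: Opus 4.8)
The plan is to prove the two inequalities ``$\leq$'' and ``$\geq$'' separately, in both cases, using the Coarea Formula of Theorem \ref{TCOAR} together with the layer--cake (Cavalieri) identity $\int_S|\psi|\per=\int_0^{+\infty}\per(\{|\psi|>s\})\,ds$, which holds because $\per$ is a Radon measure on $S$. I would treat the boundary case (b) first, since it is the clean one, and then adapt to the closed case (a).

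First I would establish the lower bound on the Rayleigh infimum, namely that it dominates ${\rm Isop}(S)$. Fix an admissible $\cont^1$ test function $\psi$. Applying Theorem \ref{TCOAR} to $\psi$ gives $\int_S|\qq\psi|\per=\int_{\R}\nis(\psi^{-1}[s]\cap S)\,ds$. By Sard's Theorem almost every $s$ is a regular value, so for such $s\neq 0$ the level set $N_s:=\psi^{-1}[s]$ is a $\cont^1$--smooth closed $(n-2)$--submanifold which, by Remark \ref{prea}, is geometrically $\HH$--regular off a $\nis$--null characteristic set $C_{N_s}$; moreover $N_s$ is precisely the boundary inside $S$ of the region $\Omega_s:=\{\psi>s\}$ for $s>0$ and $\Omega_s:=\{\psi<s\}$ for $s<0$. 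In case (b) the constraint $\psi|_{\partial S}=0$ forces $N_s\cap\partial S=\emptyset$ and makes $\Omega_s$ the unique region with $\partial\Omega_s=N_s$ avoiding $\partial S$, so $\Omega_s$ is an admissible competitor and $\nis(N_s)\geq {\rm Isop}(S)\,\per(\Omega_s)$. Integrating this pointwise estimate in $s$ and recombining the super-- and sub--level contributions through the layer--cake identity yields exactly $\int_S|\qq\psi|\per\geq {\rm Isop}(S)\int_S|\psi|\per$, which gives ``$\geq$'' in case (b). In case (a) the same computation, with the $\min$ selecting the smaller of the two pieces, produces the bound $\int_S|\qq\psi|\per\geq {\rm Isop}(S)\int_S|\psi-m|\per$ after centering $\psi$ at a median $m$.

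For the reverse inequality I would test the quotient with mollified indicators. Given an admissible dividing submanifold $N$ with associated region $S_1$, let $\psi_\epsilon\in\cont^1(S)$ interpolate from $1$ on $S_1$ (outside an $\epsilon$--collar of $N$) to $0$ on the complementary region; in case (b) this $\psi_\epsilon$ vanishes near $\partial S$ since $S_1$ avoids $\partial S$, while in case (a) one subtracts the constant $\per(S_1)/\per(S)$ to enforce the zero--mean condition. Applying Theorem \ref{TCOAR}, whose level sets foliate the collar by hypersurfaces converging to $N$, gives $\int_S|\qq\psi_\epsilon|\per\to\nis(N)$, whereas $\psi_\epsilon\to\chi_{S_1}$ boundedly controls the denominator. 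Letting $\epsilon\to 0^+$ and optimizing over $N$ yields ${\rm Isop}(S)\geq\inf\frac{\int_S|\qq\psi|\per}{\int_S|\psi|\per}$ in both cases; in case (a) one checks that the centered indicator has quotient bounded above by $\nis(N)/\min\{\per(S_1),\per(S_2)\}$, using $\max\{\per(S_1),\per(S_2)\}\geq\per(S)/2$.

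The main obstacle is the normalization in the closed case (a): the Coarea/isoperimetric step intrinsically produces the bound with $\int_S|\psi-m|\per$ for a median $m$, and one must reconcile the median centering with the prescribed zero--mean constraint, since the smaller-piece selection in ${\rm Isop}(S)$ is governed by the median rather than the mean. Following the scheme in \cite{Yau} and \cite{Ch1, Ch3}, the device is to reduce to median--centered test functions, for which the geometric constant and the Rayleigh infimum match; verifying this reduction, and confirming that the characteristic sets $C_{N_s}$ and the critical values of $\psi$ contribute neither to $\int_S|\qq\psi|\per$ nor to the level--set measures (Remark \ref{prea} and Sard's Theorem), so that the Coarea Formula applies cleanly, is the technical heart of the argument. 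The boundary case (b) is comparatively routine once Theorem \ref{TCOAR} and the layer--cake identity are in place.
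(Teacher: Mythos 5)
Your strategy is the same as the paper's: the inequality ${\rm Isop}(S)\leq\inf(\cdot)$ is obtained from the Coarea Formula (Theorem \ref{TCOAR}) together with Cavalieri's principle, using regular level sets of $\psi$ as competitors, and the reverse inequality comes from ramp functions concentrated in an $\epsilon$-collar of $N$, normalized to be admissible and degenerating to indicators as $\epsilon\rightarrow 0^+$. Your case (b), and the reverse inequality in both cases, are correct and coincide with the paper's proof up to an affine rescaling of the test functions (one small repair: subtracting the fixed constant $\per(S_1)/\per(S)$ makes $\int_S\psi_\epsilon\,\per=0$ only up to collar errors of order $\epsilon$; you should subtract the exact mean of $\psi_\epsilon$, which is what the paper does by choosing $\alpha=\alpha(\epsilon)$ in \eqref{ODER1}, so that the competitor is genuinely admissible).

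The gap is precisely the step you defer in case (a), and it cannot be closed in the form you propose. The Coarea/competitor argument yields $\int_S|\qq\psi|\,\per\geq{\rm Isop}(S)\int_S|\psi-m|\,\per$ with $m$ a median of $\psi$, as you correctly state; but since the median minimizes $\beta\mapsto\int_S|\psi-\beta|\,\per$, one has $\int_S|\psi-m|\,\per\leq\int_S|\psi|\,\per$, which is the wrong direction, and in fact the zero-mean inequality itself can fail. Concretely, already in the abelian group $\GG=\R^3$ (which the theorem covers, with $\per$ the area measure and $\qq$ the full tangential gradient): let $S$ be a dumbbell surface with two lobes of area $0.9$ and $0.1$ joined by a thin tube whose cross-sections have length $a$, and let $\psi$ be a smoothing of the function equal to $1$ on the large lobe and $-9$ on the small one. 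Then $\int_S\psi\,\per=0$, $\int_S|\qq\psi|\,\per\approx 10a$, $\int_S|\psi|\,\per\approx 1.8$, while ${\rm Isop}(S)\approx a/0.1=10a$, so the claimed inequality would force $10a\geq 18a$. Hence this method proves the statement only when $0$ is a median of $\psi$, equivalently with denominator $\inf_{\beta\in\R}\int_S|\psi-\beta|\,\per$ as in Theorem \ref{zdaz}; for merely zero-mean $\psi$ one only obtains the inequality with the degraded constant ${\rm Isop}(S)/2$, via $\int_S|\psi|\,\per\leq 2\int_S|\psi-m|\,\per$. You should also be aware that the paper's own proof of (a) elides exactly this point: in bounding $\int_0^{+\infty}\nis\{\psi^\pm=t\}\,dt$ from below by ${\rm Isop}(S)\int_S|\psi^\pm|\,\per$ it tacitly assumes that $\{\psi^\pm>t\}$ is the smaller of the two pieces cut out by the level set, which follows from median centering but not from $\int_S\psi\,\per=0$. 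So you located the technical heart accurately, but the "reduction to median-centered test functions" you appeal to does not exist, and case (a) as stated cannot be proved along these lines.
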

\begin{proof}The proof repeats almost verbatim the arguments of Theorem 1 in
\cite{Yau}. We just prove the theorem for $\partial S=\emptyset$
since the other case is analogous. First, let us prove the
inequality

$${\rm Isop}(S)\leq\inf\frac{\int_S|\qq
\psi|\per}{\int_S|\psi|\per}$$where $\psi\in\cont^1(S)$ and
$\int_S\psi\per=0$. To prove this inequality let us consider the
auxiliary functions $\psi^{+}=\max\{0, \psi\},\, \psi^{-}=\max\{0,
-\psi\}$. By applying the Coarea Formula \eqref{1coar}, the very
definition of ${\rm Isop}(S)$
 and {\it Cavalieri's principle} \footnote{\label{CavPrin}The following lemma, also known as {\it
Cavalieri's principle}, is an easy consequence of Fubini's
Theorem:
\begin{lemma}Let X be an abstract space, $\mu$ a measure on $X$,
$\alpha>0$, $\varphi\geq 0$ and $A_t=\{x\in X: \varphi>t\}$. Then
$$\int_0^{+\infty}t^{\alpha-1}\mu(A_t)\,dt=\frac{1}{\alpha}\int_{A_0}\varphi^\alpha\,d\mu.$$\end{lemma}}
one gets$$\int_{S}|\qq\psi^{\pm}|\per=\int_0^{+\infty}\nis\{x\in
S: \psi^\pm=t\}\,dt\geq{\rm Isop}(S)\int_S|\psi^\pm|\per.$$Now we
shall prove the reversed inequality. So let us assume that
$\per(S_1)\leq\per(S_2)$ and let $\epsilon>0$. We now define  the
 function

\begin{eqnarray}\label{ODER1}\psi_{\epsilon}(x)|_{S_1}:=
\left\{\begin{array}{ll} \frac{\varrho(x, N)}{\epsilon}
\,\,\,\mbox{if }\,\, \varrho(x, N)\leq \epsilon
\\\\
\,\,\,\, 1\qquad\mbox{if } \, \varrho(x, N)> \epsilon.\end{array}
\right.\qquad\psi_{\epsilon}(x)|_{S_2}:= \left\{\begin{array}{ll}
-\alpha\frac{\varrho(x, N)}{\epsilon} \,\,\,\mbox{if }\,\,
\varrho(x, N)\leq \epsilon
\\\\
\,\,\,\,\,   -\alpha  \,\,\qquad\mbox{if } \,\,\varrho(x, N)>
\epsilon\end{array} \right.\end{eqnarray}where the constant
$\alpha$ depends on $\epsilon$ and is chosen in such a way that
$\int_S\psi_\epsilon\per=0.$ Obviously
$$\lim_{\epsilon\rightarrow
0}\alpha=\frac{\per(S_1)}{\per(S_2)}.$$ Since
\begin{eqnarray*}\int_{S}|\qq\psi_\epsilon|\per&=&\frac{1+\alpha}
{\epsilon}\int_{N_\epsilon:=\{x\in S:\varrho(x,
N)\leq\epsilon\}}|\qq\varrho(x,N)|\per\\&=&\frac{1+\alpha}
{\epsilon}\int_{0}^{\epsilon}\nis\{x\in N_\epsilon: \varrho(x,
N)=t\}\,dt,\end{eqnarray*}one gets $$\lim_{\epsilon\rightarrow
0}\int_{S}|\qq\psi_\epsilon|\per=(1+\alpha)\nis(N).$$Moreover
$\lim_{\epsilon\rightarrow
0}\int_{S}|\psi_\epsilon|\per=\per(S_1)+\alpha\per(S_2)$. Putting
all together we finally get that $$\lim_{\epsilon\rightarrow
0}\frac{\int_S|\qq
\psi_\epsilon|\per}{\int_S|\psi_\epsilon|\per}\leq {\rm Isop}(S).
$$

\end{proof}

\begin{corollario}\label{555}Let $\lambda_1$ be the first non-zero eigenvalue of either the closed eigenvalue problem
 or the Dirichlet  eigenvalue problem; see Definition \ref{DEFIEIGEN}.
  Then $\lambda_1\geq \frac{({\rm Isop}(S))^2}{4} $. \end{corollario}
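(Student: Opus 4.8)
The plan is to adapt Cheeger's classical argument, relating the eigenvalue to the isoperimetric constant through the Coarea characterization of ${\rm Isop}(S)$ established in Theorem \ref{zaz}. First I would record the variational description of $\lambda_1$. Let $u$ be a $\cont^2$-smooth eigenfunction associated with $\lambda_1$, solving (P$_1$) or (P$_2$) of Definition \ref{epr}. Applying property (vi) of Proposition \ref{Properties} with $\varphi=\psi=u$ (the boundary term drops in the closed case since $\partial S=\emptyset$, and in the Dirichlet case since $u|_{\partial S}=0$), together with the eigenvalue equation $\lh u=\lambda u$, yields
$$\lambda_1=\frac{\int_S|\qq u|^2\per}{\int_S u^2\per},$$
where I adopt the sign normalization making the right-hand side nonnegative, the quadratic form $\int_S|\qq u|^2\per$ being the relevant positive quantity. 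This also identifies $\lambda_1$ with the infimum of the Rayleigh quotient over admissible test functions.

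For the Dirichlet problem I would take the test function $f:=u^2$, which is $\cont^1$ on $S$ and vanishes on $\partial S$. By Theorem \ref{zaz}(b) applied to $f$, and since $\qq f=2u\,\qq u$, one gets
$$2\int_S|u|\,|\qq u|\,\per=\int_S|\qq f|\,\per\geq {\rm Isop}(S)\int_S u^2\,\per.$$
Estimating the left-hand side by the Cauchy--Schwarz inequality,
$$2\int_S|u|\,|\qq u|\,\per\leq 2\Big(\int_S u^2\,\per\Big)^{1/2}\Big(\int_S|\qq u|^2\,\per\Big)^{1/2},$$
then dividing by $\big(\int_S u^2\per\big)^{1/2}$ and squaring gives $4\int_S|\qq u|^2\per\geq({\rm Isop}(S))^2\int_S u^2\per$, which is precisely $\lambda_1\geq({\rm Isop}(S))^2/4$.

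The closed case is the main obstacle, because $f=u^2$ no longer has vanishing mean while Theorem \ref{zaz}(a) requires test functions with $\int_S\psi\per=0$. Here I would instead run the nodal-domain version of the argument: after possibly replacing $u$ by $-u$ I may assume $\per\{u>0\}\leq\tfrac12\per(S)$, and set $f:=(u^+)^2$ with $u^+:=\max\{u,0\}$. Then $f\in\cont^1(S)$, and its superlevel sets satisfy $\per\{f>t\}\leq\per\{u>0\}\leq\tfrac12\per(S)$, so the dividing submanifold $\partial\{f>t\}$ realizes the minimum in the definition of ${\rm Isop}(S)$; combining the Coarea Formula \eqref{1coar} with Cavalieri's principle then produces the same linear bound $\int_S|\qq f|\per\geq{\rm Isop}(S)\int_S f\per$ as before. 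Applying property (vi) on the nodal domain $\{u>0\}$ (where $u^+=u$ and the boundary integral over $\{u=0\}$ vanishes) shows $\int_S|\qq u^+|^2\per=\lambda_1\int_S(u^+)^2\per$, and the Cauchy--Schwarz step closes the estimate exactly as in the Dirichlet case. The delicate points to verify are the $\cont^1$-regularity of $(u^+)^2$ across the nodal set and the negligibility, in the Coarea step, of the characteristic sets $C_{\varphi^{-1}[s]}$, both of which are controlled by Remark \ref{prea}.
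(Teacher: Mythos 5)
Your proposal is correct, and for the Dirichlet problem it coincides with the paper's own proof: there too one applies Theorem \ref{zaz} to the squared eigenfunction (the factor $2$ in the paper's inequality $\int_S|\psi|^2\per\leq \frac{2}{{\rm Isop}(S)}\|\psi\|_{L^2(S;\per)}\|\qq\psi\|_{L^2(S;\per)}$ comes precisely from $|\qq\psi^2|=2|\psi|\,|\qq\psi|$), then Cauchy--Schwarz, then Proposition \ref{Properties}(vi) to identify the Rayleigh quotient of the eigenfunction with $\lambda_1$. Where you genuinely depart from the paper is the closed case, and your extra care there is not pedantry: the paper's proof treats both problems by the same two lines, which means it tacitly applies Theorem \ref{zaz}(a) to $\psi^2$; but admissibility in Theorem \ref{zaz}(a) requires zero $\per$-mean, which $\psi^2$ never has (only the eigenfunction itself satisfies $\int_S\psi\,\per=0$, by the constraint in problem (P$_1$)). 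Your nodal-domain substitute --- normalize so that $\per\{u>0\}\le\frac{1}{2}\per(S)$, take $f=(u^+)^2$, re-run the coarea formula plus Cavalieri's principle using that the superlevel sets of $f$ realize the minimum in the definition of ${\rm Isop}(S)$, and apply the Green identity (vi) on the nodal domain $\{u>0\}$ --- is the classical Cheeger argument and closes exactly this gap, at the cost of re-deriving the linear bound instead of quoting Theorem \ref{zaz} as a black box. The delicate points you flag ($\cont^1$-regularity of $(u^+)^2$ across the nodal set, negligibility of the characteristic sets in the coarea step) are the right ones, and are controlled by Remark \ref{prea}. Finally, both you and the paper pass over the same sign convention: by (vi), $\int_S\psi\,\lh\psi\,\per=-\int_S|\qq\psi|^2\per$ once the boundary term vanishes, so $\lambda_1$ must be read as the modulus of the first nonzero eigenvalue; your explicit normalization remark handles this at least as cleanly as the paper does.
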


\begin{proof}From Theorem \ref{zaz},  by  Cauchy-Schwartz Inequality, one gets$$
\int_S|\psi|^2\per\leq \frac{2}{{\rm Isop}(S)}\|\psi\|_{L^2(S;
\per)}\|\qq\psi\|_{L^2(S; \per)},$$i.e. $\|\psi\|^2_{L^2(S;
\per)}\leq \frac{4}{({\rm Isop}(S))^2}\|\qq\psi\|^2_{L^2(S;
\per)}$. By using (vi) of Proposition \ref{Properties} with
$\psi=\varphi$ and the hypotheses one gets
$$\lambda_1\int_S|\psi|^2\per=\int_S\psi\,\lh\psi\,\per=\int_S|\qq\psi|^2\per.$$
\end{proof}

Now let us generalize to our Carnot setting another isoperimetric
constant, which is well-known  for compact Riemannian manifolds;
see, for instance, \cite{Yau}.
\begin{Defi} The {\it isoperimetric constant}
${\rm Isop}_0(S)$ of a $\cont^1$-smooth  compact hypersurface -
with boundary - $S\subset\GG$ is given by
$${\rm Isop}_0(S):=\inf\bigg\{\frac{\nis(\partial S_1\cap \partial S_2)}{\min\{\per(S_1),\per(S_2)\}}\bigg\},$$where
the infimum is taken over all decompositions $S=S_1\cup S_2$ such
that $\per (S_1\cap S_2)=0$.

\end{Defi}

\begin{teo}\label{zdaz}Let $S\subset\GG$ be a $\cont^1$-smooth compact hypersurface with boundary.
Then one has
$${\rm Isop}_0(S)=\inf\bigg\{\frac{\int_S|\qq \psi|\per}{\inf_{\beta\in \R}\int_S|\psi-\beta|\per}\bigg\},$$where
the $\inf$ is taken over all $\cont^1$-functions defined on $S$.
\end{teo}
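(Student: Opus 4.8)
My plan is to follow the two-sided scheme already used for Theorem \ref{zaz}, the whole analytic content coming from the Coarea Formula \eqref{1coar} and Cavalieri's principle. Writing $Q(\psi)$ for the quotient on the right-hand side, I would establish the two inequalities $\inf_\psi Q(\psi)\ge{\rm Isop}_0(S)$ and $\inf_\psi Q(\psi)\le{\rm Isop}_0(S)$ separately.

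For the lower bound I would fix $\psi\in\cont^1(S)$ and choose a \emph{median} $\beta_0\in\R$ of $\psi$ with respect to $\per$, so that both $\per\{\psi>\beta_0\}$ and $\per\{\psi<\beta_0\}$ do not exceed $\tfrac12\per(S)$. By Sard's theorem, for a.e. $s\in\R$ the level set $\{\psi=s\}$ is a geometrically $\HH$-regular submanifold of $S$ (Remark \ref{prea}) separating $S$ into $\{\psi>s\}$ and $\{\psi<s\}$; when $s>\beta_0$ the first piece has $\per$-measure $\le\tfrac12\per(S)$, so the definition of ${\rm Isop}_0(S)$ yields $\nis(\{\psi=s\})\ge{\rm Isop}_0(S)\,\per\{\psi>s\}$, and symmetrically $\nis(\{\psi=s\})\ge{\rm Isop}_0(S)\,\per\{\psi<s\}$ for $s<\beta_0$. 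Applying \eqref{1coar} to the $\cont^1$ function $\psi$ and splitting the integral at $\beta_0$, the estimate to carry out is
\[
\int_S|\qq\psi|\per=\int_\R\nis(\{\psi=s\})\,ds\ge{\rm Isop}_0(S)\Big(\int_{\beta_0}^{+\infty}\per\{\psi>s\}\,ds+\int_{-\infty}^{\beta_0}\per\{\psi<s\}\,ds\Big).
\]
Cavalieri's principle identifies the bracket with $\int_S|\psi-\beta_0|\per$, whence $\int_S|\qq\psi|\per\ge{\rm Isop}_0(S)\inf_\beta\int_S|\psi-\beta|\per$, i.e. $Q(\psi)\ge{\rm Isop}_0(S)$. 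Note that applying the Coarea Formula to $\psi$ itself (rather than to its positive and negative parts) keeps everything within the stated $\cont^1$ class.

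For the upper bound I would mimic the test functions \eqref{ODER1}. Given an admissible decomposition $S=S_1\cup S_2$ with $\per(S_1\cap S_2)=0$, set $N:=\partial S_1\cap\partial S_2$ and assume $\per(S_1)\le\per(S_2)$; then take $\psi_\epsilon|_{S_1}:=\min\{1,\varrho(x,N)/\epsilon\}$ and $\psi_\epsilon|_{S_2}:=0$. Exactly as in Theorem \ref{zaz}, the $\HS$-gradient of $\psi_\epsilon$ is supported on the collar $\{x\in S_1:\varrho(x,N)\le\epsilon\}$, and \eqref{1coar} gives $\int_S|\qq\psi_\epsilon|\per=\tfrac1\epsilon\int_0^\epsilon\nis\{x\in S_1:\varrho(x,N)=t\}\,dt\to\nis(N)$ as $\epsilon\to 0^+$. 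Since $\psi_\epsilon\to\mathbf 1_{S_1}$ in $L^1(S,\per)$, a one-line minimization over $\beta$ shows $\inf_\beta\int_S|\psi_\epsilon-\beta|\per\to\min\{\per(S_1),\per(S_2)\}$, so $Q(\psi_\epsilon)\to\nis(N)/\min\{\per(S_1),\per(S_2)\}$; taking the infimum over all decompositions delivers $\inf_\psi Q(\psi)\le{\rm Isop}_0(S)$, and the two bounds together give the asserted identity.

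The step I expect to be the main obstacle is the lower bound, for two reasons. First, one must handle the characteristic points of the level sets $\{\psi=s\}$: this is precisely where Remark \ref{prea} is needed, since it guarantees $\nis(C_{\varphi^{-1}[s]})=0$ so that the Coarea identity \eqref{1coar} is not corrupted and the definition of ${\rm Isop}_0(S)$ may legitimately be applied to each level set. Second, the median choice of $\beta_0$ is essential rather than cosmetic: it is what forces each superlevel (resp. sublevel) set to be the \emph{smaller} of the two pieces, so that the factor $\min\{\per(S_1),\per(S_2)\}$ in the definition of ${\rm Isop}_0$ is controlled. A comparatively minor technical point in the upper bound is that $\psi_\epsilon$ is only Lipschitz; as in Theorem \ref{zaz} this is harmless, since such functions may be approximated by $\cont^1$ functions without altering the limiting value of $Q$.
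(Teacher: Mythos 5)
Your proof is correct and follows essentially the same route as the paper's: a lower bound obtained by cutting $\psi$ at a median value and combining the Coarea Formula with Cavalieri's principle and the definition of ${\rm Isop}_0(S)$ applied to the super/sublevel decompositions, plus an upper bound via collar test functions converging to $\mathbf{1}_{S_1}$. The only (harmless) deviations from the paper are that you apply the Coarea Formula directly to $\psi$, splitting the integral at the median, instead of to the Lipschitz truncations $\psi^{\pm}$ as the paper does — which in fact keeps the argument inside the stated $\cont^1$ class — and that your collar lies in $S_1$ rather than $S_2$.
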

\begin{proof}The proof is analogous to that of Theorem 6 in
\cite{Yau}. First, let us prove the inequality

$${\rm Isop}(S)\leq\inf\frac{\int_S|\qq
\psi|\per}{\int_S|\psi|\per}.$$To this purpuse let us define the
functions $\psi^{+}:=\max\{0, \psi-k\},\, \psi^{-}:=-\min\{0,
\psi-k\}$, where $k\in\R$ is any constant such that:

\begin{eqnarray*}\per\{x\in S: \psi^{+}>0\}&\leq& \frac{1}{2}\per(S),\\
\per\{x\in S: \psi^{-}>0\}&\leq
&\frac{1}{2}\per(S).\end{eqnarray*} Again, by applying Coarea
Formula \eqref{1coar}, the very definition of ${\rm Isop}_0(S)$
and Cavalieri's principle one infers
that$$\int_{S}|\qq\psi^{\pm}|\per=\int_0^{+\infty}\nis\{x\in S:
\psi^\pm=t\}\,dt\geq{\rm Isop}(S)\int_S|\psi^\pm|\per.$$Now we
prove the other inequality. Assuming $\per(S_1)\leq\per(S_2)$ and
$\epsilon>0$, we define  the
 function

\begin{eqnarray}\label{ODER2}\psi_{\epsilon}(x)|_{S_1}:=1\qquad\psi_{\epsilon}(x)|_{S_2}:=
 \left\{\begin{array}{ll}
1-\frac{\varrho(x,\partial S_1\cap
\partial S_2)}{\epsilon}\,\,\,\mbox{if }\,\, \varrho(x,\partial S_1\cap
\partial S_2))\leq
\epsilon
\\\\
\,\,\,\qquad   0 \,\qquad\qquad\mbox{if } \,\,\varrho(x,\partial
S_1\cap
\partial S_2))>
\epsilon.\end{array} \right.\end{eqnarray}We also stress that one
can find a constant $k(\epsilon)$ satisfying
$$\int_{S}|\psi_\epsilon-k(\epsilon)|\per=\inf_{\beta\in\R}\int_S|\psi_\epsilon-\beta|\per$$and
such that $k(\epsilon)\longrightarrow 0$ for $\epsilon\rightarrow
0^+$. But then $$\lim_{\epsilon\rightarrow
0}\bigg\{\frac{\int_S|\qq \psi_\epsilon|\per}{\inf_{\beta\in
\R}\int_S|\psi_\epsilon-\beta|\per}\bigg\}\leq\frac{\nis(\partial
S_1\cap
\partial S_2)}{\min\{\per(S_1),\per(S_2)\}}.$$

\end{proof}

\begin{corollario} \label{1zdaz}Let $S\subset\GG$ be a $\cont^1$-smooth compact hypersurface. Then
\begin{eqnarray}\label{gay}\int_S|\psi-k|^2\per\leq \frac{4}{({\rm
Isop}_0(S))^2}\int_S|\qq\psi|^2\per
\end{eqnarray}for every $\psi\in\cont^1(S)$ and every $k\in\R$
such that\begin{eqnarray*}\per\{x\in S: \psi\geq k\}&\geq& \frac{1}{2}\per(S),\\
\per\{x\in S: \psi\leq k\}&\geq
&\frac{1}{2}\per(S).\end{eqnarray*}Furthermore, if
$\psi\in\cont^1(S)$ and $\int_S\psi\per=0$,
then\begin{eqnarray}\label{ga0}\int_S|\psi|^2\per\leq
\frac{4}{({\rm Isop}_0(S))^2}\int_S|\qq\psi|^2\per.
\end{eqnarray}
\end{corollario}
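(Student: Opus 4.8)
The plan is to mirror the derivation of Corollary \ref{555}, replacing Theorem \ref{zaz} by Theorem \ref{zdaz} and inserting a median argument to handle the recentering constant $k$. First I would fix $\psi\in\cont^1(S)$ together with a constant $k$ satisfying the two measure hypotheses, and introduce the auxiliary function $f:=(\psi-k)\,|\psi-k|$. Since the scalar map $t\mapsto t|t|$ is $\cont^1$ with derivative $2|t|$, the function $f$ is $\cont^1$ on $S$ and satisfies $\qq f=2|\psi-k|\,\qq\psi$, hence $|\qq f|=2|\psi-k|\,|\qq\psi|$.

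The crucial observation is that $0$ is a \emph{median} of $f$ with respect to the measure $\per$. Indeed, $\{f\geq 0\}=\{\psi\geq k\}$ and $\{f\leq 0\}=\{\psi\leq k\}$, and by the standing hypotheses both of these sets have $\per$-measure at least $\tfrac12\per(S)$. Since $\beta\mapsto\int_S|f-\beta|\per$ is convex and attains its infimum at any median of $f$, I would conclude that $\inf_{\beta\in\R}\int_S|f-\beta|\per=\int_S|f|\per=\int_S|\psi-k|^2\per$. Applying Theorem \ref{zdaz} to $f$ and then Cauchy--Schwarz yields
\[
{\rm Isop}_0(S)\int_S|\psi-k|^2\per\leq\int_S|\qq f|\per=2\int_S|\psi-k|\,|\qq\psi|\per\leq 2\Big(\int_S|\psi-k|^2\per\Big)^{1/2}\Big(\int_S|\qq\psi|^2\per\Big)^{1/2}.
\]
Assuming $\int_S|\psi-k|^2\per\neq 0$ (the inequality being trivial otherwise), I would divide by $\big(\int_S|\psi-k|^2\per\big)^{1/2}$ and square to obtain precisely \eqref{gay}.

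For the final assertion I would choose $k$ to be any median of $\psi$, which exists since $\psi$ is continuous on a compact hypersurface; then \eqref{gay} holds for this $k$. When $\int_S\psi\per=0$, the constant $0$ is the $L^2$-minimizing recentering, because $\int_S(\psi-c)^2\per=\int_S\psi^2\per+c^2\per(S)$, so that $\int_S|\psi|^2\per\leq\int_S|\psi-k|^2\per$ for every $k$. Chaining this with \eqref{gay} gives \eqref{ga0}. The only delicate point, and the step I expect to require the most care, is the identity $\inf_{\beta}\int_S|f-\beta|\per=\int_S|f|\per$: this is exactly the statement that $0$ is $L^1$-minimizing for $f$, which is where the two measure hypotheses on $k$ are genuinely used, the remaining steps being routine applications of Theorem \ref{zdaz}, Cauchy--Schwarz, and the variance decomposition.
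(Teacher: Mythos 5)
Your proof is correct, but it takes a genuinely different route from the paper's. The paper splits $\psi-k$ into its positive and negative parts $\psi^{\pm}$ (which have disjoint supports) and applies, to each of the nonnegative functions $(\psi^{\pm})^2$, the linear estimate ${\rm Isop}_0(S)\int_S g\,\per\leq\int_S|\qq g|\per$ obtained by re-running the Coarea/Cavalieri argument from inside the proof of Theorem \ref{zdaz}; the half-measure hypotheses on $k$ enter there, guaranteeing that the superlevel sets of $(\psi^{\pm})^2$ have $\per$-measure at most $\frac{1}{2}\per(S)$, and the conclusion then follows by the same Cauchy--Schwarz step you use. You instead package the two parts into the single signed $\cont^1$ function $f=(\psi-k)|\psi-k|$ (note $f=(\psi^{+})^2-(\psi^{-})^2$), invoke the classical fact that medians minimize $\beta\mapsto\int_S|f-\beta|\per$ to identify $\inf_{\beta}\int_S|f-\beta|\per=\int_S|\psi-k|^2\per$, and then apply Theorem \ref{zdaz} as a black box. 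What your route buys: it uses the \emph{statement} of Theorem \ref{zdaz} rather than its internal machinery, it isolates exactly where the hypotheses on $k$ are used (the median property), and you correctly dispose of the degenerate case $\int_S|\psi-k|^2\per=0$ before dividing, a point the paper glosses over. What the paper's route buys: it stays self-contained at the level of the Coarea Formula and the definition of ${\rm Isop}_0(S)$, never forming the quotient of Theorem \ref{zdaz} and so never needing the median-minimization lemma. The treatments of \eqref{ga0} coincide: both rest on the identity $\int_S(\psi-c)^2\per=\int_S\psi^2\per+c^2\per(S)$ under $\int_S\psi\per=0$, combined with \eqref{gay} for a median $k$ of $\psi$, whose existence you, unlike the paper, make explicit.
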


\begin{proof}One obviously has
$\int_S(\psi^+\cdot\psi^-)\,\per=0$, where the functions
$\psi^\pm$ are defined as in the proof of the previous Theorem
\ref{zdaz}. Moreover, by using once more the Coarea Formula, we
get

\begin{eqnarray*}\int_S|\psi-k|^2\per&=&\int_S|\psi^+ +\psi^-|^2\per\\&\leq&
\int_S|\psi^+|^2\per+\int_S|\psi^-|^2\per\\& \leq&\frac{1}{{\rm
Isop}_0(S)}\Big(\int_S|\qq(\psi^+)^2|\per+\int_S|\qq(\psi^-)^2|\per\Big)\\&\leq&
\frac{2}{{\rm Isop}_0(S)}\int_S(\psi^+
+\psi^-)|\qq\psi|\per\\&\leq&\frac{2}{{\rm
Isop}_0(S)}\|\psi^++\psi^-\|_{L^2(S; \per)}\|\qq\psi\|_{L^2(S;
\per)}.\end{eqnarray*}This proves inequality \eqref{gay}. To prove
\eqref{ga0} we note that the hypothesis $\int_S\psi\per=0$
actually implies that
\[\int_S\psi^2\per=\inf_{k\in\R}\int_S(\psi-k)^2\per.\] This,
together with \eqref{gay}, implies the claim.

\end{proof}

\section{Some Linear Isoperimetric inequalities}\label{wlinisoineq}
To the sake of simplicity, let us begin by considering the {\it
non-characteristic case}. So let $S\subset\GG$ be a
$\cont^2$-smooth hypersurface, with or without boundary, and let
$\UU\subset S$ be a non-characteristic relatively compact open set
with boundary $\partial\UU$, smooth enough for the validity of the
Riemannian Divergence Theorem. First of all, we shall apply
Corollary \ref{IPPH0} which claims that
\begin{equation}\label{jkjlkloko} \int_{\UU}\big\{(\DH-1)+ \g\MS +  \langle C\cc\nn,
\x\ss\rangle\big\}\per = \int_{\partial\UU}\langle
\x\cc,\eta\ss\rangle\,\nis.\end{equation} Hence

\begin{eqnarray}\label{bla}(\DH-1)\per(\UU)\leq \int_\UU\big\{|\g||\MS| +  |\langle C\cc\nn,
\x\ss\rangle|\big\}\per + \int_{\partial\UU}|\langle
\x\cc,\eta\ss\rangle|\nis,\end{eqnarray}where we remind that
$\g=\langle \x\cc,\nn\rangle$ is the {\it horizontal support
function} for $\UU$.

\begin{oss}\label{iponhomnor} Let $\varrho:\GG\times \GG\longrightarrow\R_+$
be any  homogeneous distance. In this section we shall assume that
the following conditions hold true:
\begin{itemize}\item[{\bf(i)}]$\varrho$ is - at least - piecewise $\cont^1$-smooth;\item[{\bf(ii)}]$|\grad\cc\varrho|\leq 1$ at each regular point of $\varrho$;
\item[{\bf(iii)}]$\frac{|x\cc|}{\varrho(x,0)}\leq 1$.
\end{itemize}
\end{oss}

Henceforth, we shall set $\varrho(x):=\varrho(x,0)=\|x\|_\varrho$.

\begin{es}\label{Kor}It can be proved that the  CC-distance $\dc$ satisfies all the previous assumptions; see \cite{MontArFer}.
Another example can be found for the case of the Heisenberg group
$\mathbb{H}^n$; see Section \ref{prelcar}. Indeed,  the Korany
norm, defined by
$$\|x\|_\varrho:=\varrho(x)=\sqrt[4]{|x\cc|^4+16t^2}\qquad(x=\exp(x\cc,t)\in\mathbb{H}^n)$$ is
homogeneous and
  $\cin$-smooth  out of the identity
$0\in\mathbb{H}^n$. Moreover, it turns out that $\varrho$
satisfies (ii) and (iii). The last example can easily be
generalized for those 2-step Carnot groups which satisfy the
following assumption on the structural constants:
$$C^\alpha\cc C^\beta\cd=-\mathbf{1}\ci\delta_{\alpha}^{\beta}\qquad (\alpha, \beta\in I\cd).$$Actually, in such
a case one can show that the homogeneous norm
$\|\cdot\|_\varrho$, defined by
$$\|x\|_\varrho:=\sqrt[4]{|x\cc|^4+16|x\cd|^2}\qquad (x=\exp(x\cc, x\cd)),$$satisfies
all the above requirements. We left to the reader the proof of
these claims which are based on elementary computations.
\end{es}
 Let now $R$ denote the radius of the $\varrho$-ball $B_\varrho(0,R)$, centered at the identity $0$ of $\GG$ and circumscribed
about $\UU$, and let us estimate the right-hand side of
\eqref{bla}. To this aim we stress that $\g\leq
|\x\cc|\leq\|x\|_\varrho$. We have
\begin{equation}\label{1pisoplin}(\DH-1)\per(\UU)\leq
R\Big\{\int_\UU\big(|\MS|+ |C\cc\nn|\big)\per+
\nis(\partial\UU)\Big\}.\end{equation}Obviously, if $\UU$ is
minimal, i.e. $\MS=0$, then it follows that

\begin{equation}\label{2isoplin}(\DH-1)\per(\UU)\leq
R\Big\{\int_\UU |C\cc\nn| \per+
\nis(\partial\UU)\Big\}.\end{equation}Furthermore, if
$\MS^0:=\max\{\MS(x)|x\in\UU\}$, one gets
\begin{equation}\label{3isoplin}\per(\UU)\big\{(\DH-1)-R\MS^0\big\}\leq
R\Big\{\int_\UU |C\cc\nn| \per+
\nis(\partial\UU)\Big\}.\end{equation}Equivalently, one
obtains\begin{equation}\label{4isoplin}R\geq\frac{(\DH-1)\per(\UU)}{\MS^0\per(\UU)+\big\{\int_\UU
|C\cc\nn| \per+ \nis(\partial\UU)\big\}}.
\end{equation}Clearly, by assuming $R\MS^0<\DH-1$, we also get that
\begin{equation}\label{5isoplin}\per(\UU)\leq
\frac{R\big\{\int_\UU |C\cc\nn| \per+
\nis(\partial\UU)\big\}}{(\DH-1)-R\MS^0}.
\end{equation}

We would like to remark that all the previous formulae have been
proved for the case of relatively compact non-characteristic
hypersurfaces with boundary. \\

 {\it Now we turn our attention
to the more general case of hypersurfaces having a possibly
non-empty characteristic set.} As in Remark \ref{cpoints},  let
$\UU_\epsilon\,(\epsilon>0)$ be a family of open subsets of $\UU$
with  piecewise $\cont^2$-smooth boundary such
that:\begin{itemize}\item[{\rm(i)}] $C_\UU\subset\UU_\epsilon$ for
every $\epsilon>0$; \item[{\rm(ii)}]
$\sigma^{n-1}\rr(\UU_\epsilon)\longrightarrow 0$ for
$\epsilon\rightarrow
0^+$;\item[{\rm(iii)}]$\int_{\UU_\epsilon}|\PH\nu|\sigma^{n-2}\rr\longrightarrow
0$ for $\epsilon\rightarrow 0^+$.\end{itemize}Thus, in particular,
$\per(\UU_\epsilon)\longrightarrow 0$ and
$\nis(\partial\UU_\epsilon)\longrightarrow 0$ for
$\epsilon\rightarrow 0^+$. Next, by applying Corollary \ref{IPPH0}
to $\UU\setminus\UU_\epsilon$ we get

\begin{equation}\label{scpas}\int_{\UU\setminus\UU_\epsilon}\big\{ (\DH-1)+ \g\MS +  \langle C\cc\nn,
\x\ss\rangle\big\}\per = \int_{\partial\UU}\langle
\x\cc,\eta\ss\rangle\,\nis+
 \int_{\partial\UU_\epsilon}
  \langle x\cc, \eta\ss\rangle\, \nis.\end{equation}Therefore, by
  letting $\epsilon\rightarrow 0^+$, {\it one
infers that \eqref{jkjlkloko} also holds true even for the case of
a non-empty characteristic set. Thus, it is clear that
\eqref{1pisoplin}, \eqref{2isoplin}, \eqref{3isoplin},
\eqref{4isoplin},
 \eqref{5isoplin} also hold for the
 characteristic case.}
Finally, we may consider the case of a smooth compact
 hypersurface $S$ without boundary. Note that, in this case, one cannot expect that $C_S=\emptyset$. Since the boundary term in
 \eqref{1pisoplin} vanishes, we still get that
 \begin{equation}\label{6isoplin}(\DH-1)\,\per(\UU)\leq
R\int_\UU\big(|\MS|+ |C\cc\nn|\big)\per.\end{equation}

\subsection{A weak monotonicity formula}\label{wmf}

As before, we shall set $\UU_t=\UU\cap {B_\varrho}(x,t)$. The
``natural'' monotonicity formula which can be deduced from the
linear inequality \eqref{1pisoplin} is contained in the next:

\begin{Prop}\label{in}The following inequality
holds\begin{equation}\label{diffin}-\frac{d}{dt}\frac{\per(\UU_t)}{t^{\DH-1}}\leq
\frac{1}{t^{\DH-1}}\bigg\{\int_{\UU_t}\big(|\MS|+|C\cc\nn|\big)\per
+ \nis(\partial\UU\cap B_\varrho(x,t))\bigg\}
\end{equation}for $\mathcal{L}^1$-a.e. $t>0$.
\end{Prop}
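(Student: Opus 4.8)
The plan is to feed the linear isoperimetric inequality \eqref{1pisoplin} into the Coarea Formula of Theorem \ref{TCOAR}, exactly mimicking the classical passage from the linear estimate to monotonicity. Write $\varphi:=\varrho(x,\cdot)$ and $g(t):=\per(\UU_t)$, so that $\UU_t=\{y\in\UU:\varphi(y)<t\}$; being nondecreasing, $g$ is differentiable for $\mathcal{L}^1$-a.e. $t>0$. Since $\UU_t\subset B_\varrho(x,t)$, the ball $B_\varrho(x,t)$ is circumscribed about $\UU_t$, and for $\mathcal{L}^1$-a.e.\ $t$ (a regular value of $\varphi$) the set $\UU_t$ has piecewise-smooth boundary, with
\[
\nis(\partial\UU_t)=\nis\big(\partial\UU\cap B_\varrho(x,t)\big)+\nis\big(\UU\cap\partial B_\varrho(x,t)\big)
\]
up to $\nis$-null sets. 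Applying \eqref{1pisoplin} to $\UU_t$ with $R=t$ then gives
\[
(\DH-1)\,g(t)\leq t\Big\{\int_{\UU_t}\big(|\MS|+|C\cc\nn|\big)\per+\nis\big(\partial\UU\cap B_\varrho(x,t)\big)+\nis\big(\UU\cap\partial B_\varrho(x,t)\big)\Big\}.
\]

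The next step is to dominate the ``spherical'' boundary term $\nis(\UU\cap\partial B_\varrho(x,t))$ by $g'(t)$. Choosing $\psi=\mathbf{1}_{\UU_t}$ in the Coarea Formula \eqref{2coar} yields
\[
\int_{\UU_t}|\qq\varphi|\,\per=\int_0^t\nis\big(\UU\cap\partial B_\varrho(x,s)\big)\,ds .
\]
Now $\qq\varphi=\P\ss(\grad\cc\varphi)$, so $|\qq\varphi|\leq|\grad\cc\varphi|=|\grad\cc\varrho|\leq 1$ by Remark \ref{iponhomnor}; hence $|\qq\varphi|\,\per\res\UU\leq\per\res\UU$ as measures. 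Pushing both forward under $\varphi$, the push-forward $\varphi_\ast(\per\res\UU)$ dominates the absolutely continuous measure whose density is $s\mapsto\nis(\UU\cap\partial B_\varrho(x,s))$. Since $g$ is the distribution function of $\varphi_\ast(\per\res\UU)$, and the pointwise a.e.\ derivative of a monotone function equals the density of the absolutely continuous part of its associated measure, comparing absolutely continuous parts gives
\[
\nis\big(\UU\cap\partial B_\varrho(x,t)\big)\leq g'(t)\qquad\text{for }\mathcal{L}^1\text{-a.e. }t>0.
\]

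Combining the two estimates, the spherical term on the right of the first display may be bounded by $t\,g'(t)$; moving it to the left yields
\[
(\DH-1)\,g(t)-t\,g'(t)\leq t\Big\{\int_{\UU_t}\big(|\MS|+|C\cc\nn|\big)\per+\nis\big(\partial\UU\cap B_\varrho(x,t)\big)\Big\}.
\]
Dividing by $t^{\DH}$ and observing that
\[
-\frac{d}{dt}\frac{g(t)}{t^{\DH-1}}=\frac{(\DH-1)\,g(t)-t\,g'(t)}{t^{\DH}}
\]
gives precisely \eqref{diffin}. The delicate point is the inequality $\nis(\UU\cap\partial B_\varrho(x,t))\leq g'(t)$: because the homogeneous distance need not satisfy $|\qq\varphi|=1$ along its level spheres, one cannot identify $g'(t)$ with the spherical measure as in the Riemannian case, and must instead argue through the Lebesgue decomposition of $\varphi_\ast(\per\res\UU)$, using that $|\qq\varphi|\leq 1$ forces domination of the absolutely continuous parts. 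One should also record that for $\mathcal{L}^1$-a.e.\ $t$ the value $t$ is regular for $\varphi$, so that $\UU_t$ is an admissible domain for \eqref{1pisoplin}.
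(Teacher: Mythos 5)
Your proposal is correct and follows essentially the same route as the paper: apply the linear inequality \eqref{1pisoplin} to $\UU_t$ (after left-translation, with $R=t$), split $\partial\UU_t$ into the fixed and spherical parts, and use the Coarea Formula together with $|\qq\varrho_x|\leq|\grad\cc\varrho_x|\leq 1$ to dominate the spherical term by $\frac{d}{dt}\per(\UU_t)$ for a.e. $t$. The only difference is cosmetic: the paper gets this last bound by integrating the coarea identity over $[t,t_1]$ and differentiating the resulting increment inequality, whereas you phrase the same fact through the Lebesgue decomposition of the push-forward measure $\varphi_\ast(\per\res\UU)$; both arguments are equivalent.
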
\begin{proof}Since we are assuming that the homogeneous distance $\varrho$ is smooth (at least
piecewise $\cont^1$), by applying the classical Sard's Theorem we
get that $\UU_t$ is a $\cont^2$-smooth manifold with boundary for
$\mathcal{L}^1$-a.e. $t>0$. Equivalently, note that the previous
claim follows by intersecting $\UU$ with the boundary of a
$\varrho$-ball $B_{\varrho}(x,t)$ centered at $x$ and of radius
$t$. Indeed, by the regularity assumption on $\varrho$, it follows
that $\partial B_{\varrho}(x,t)$ is - at least - piecewise
$\cont^1$-smooth. Therefore, after noting that \eqref{jkjlkloko}
is invariant under left translations of the group, we may use them
for the set $\UU_t$\footnote{Remind that \eqref{jkjlkloko} can
also be used if $\UU_t$ has a non-empty characteristic set, as we
have proved in Section \ref{wlinisoineq}.}. So we have
\[(\DH-1)\per(\UU_t)\leq t\Big\{\int_{\UU_t}\big(|\MS|+
|C\cc\nn|\big)\per+ \nis(\partial\UU_t)\Big\},\]where $t$ is the
radius of a $\varrho$-ball centered at $x$ and intersecting $\UU$.
Since
$$\partial\UU_t=\{\partial\UU\cap B_\varrho(x,t)\}\cup\{\partial
B_\varrho(x,t)\cap \UU\}$$ we get
\begin{equation}\label{li}(\DH-1)\,\per(\UU_t)\leq
t\Big\{\underbrace{\int_{\UU_t}\big(|\MS|
+|C\cc\nn|\big)\per}_{:=\mathcal{A}(t)} +
\underbrace{\nis(\partial\UU\cap
B_\varrho(x,t))}_{:=\mathcal{B}(t)} +\nis(\partial
B_\varrho(x,t)\cap \UU)\Big\}.\end{equation}Now let us consider
the function $\psi(y):=\|y-x\|_\varrho\,(y\in \UU)$. By hypothesis
$\psi$ is a $\cont^1$-smooth function -at least piecewise-
satisfying $|\grad\cc\psi|\leq 1;$ see Remark \ref{iponhomnor}. So
we may apply the Coarea Formula to this function; see Section
\ref{COAR}. Since $|\qq\psi|\leq|\grad\cc\psi|$,  we easily get
that
\begin{eqnarray*}\per(\UU_{{t_1}})-\per(\UU_{t})&\geq&
\int_{\UU_{{t_1}}\setminus\UU_t}|\qq\psi|\per\\&=&\int_{t}^{{t_1}}\nis\{\psi^{-1}[s]\cap
\UU\}\,ds\\&=&\int_{t}^{t_1}\nis(\partial B_\varrho(x,s)\cap
\UU)\,ds.\end{eqnarray*}From the last inequality we infer that
$$\frac{d}{dt}\per(\UU_t)\geq\nis(\partial B_\varrho(x,t)\cap
\UU)$$ for every $t>0$ for $\mathcal{L}^1$-a.e. $t>0$. Hence, from
this inequality and \eqref{li}, we obtain
$$(\DH-1)\,\per(\UU_t)\leq t\Big\{ \mathcal{A}(t)+\mathcal{B}(t)+\frac{d}{dt}\per(\UU_t)\Big\},$$which is an equivalent form of
\eqref{diffin}.\end{proof}

We have to notice however that, in order to prove an intrinsic
isoperimetric inequality,
 the number $\DH-1$ in the previous differential inequality
 {\it is not the correct one}, which is $Q-1$.
 This fact motivates the results of Section \ref{mike}; see, in particular, Section \ref{wlineq}.

\subsection{Some weak inequalities on hypersurfaces}\label{wme}
With an elementary technique analogous to that used at Section
\ref{wlinisoineq}, we now state some local Poincar\'e-type
inequalities for smooth functions compactly supported on bounded
domains (i.e. open and connected) with ``small size''.
\begin{Defi}Let $S \subset\GG$ be a $\cont^2$-smooth hypersurface and let $\UU\subseteq S$ be open. We say that $\UU$ is {\bf
uniformly non-characteristic} if
$$\sup_{x\in\UU}|\varpi(x)|=\sup_{x\in\UU}\frac{|\P\vv\nu(x)|}{|\P\cc\nu(x)|}<+\infty.$$\end{Defi}
We stress that
\begin{eqnarray}\label{biotsav}|C\cc\nn|=\big|\sum_{\alpha\in I\vv}\omega_\alpha C^\alpha\cc\nn\big|\leq
\sum_{\alpha\in I\vv}|\omega_\alpha| \|C^\alpha\cc\|\ngr\leq
C|\varpi|,\end{eqnarray}where $C$ is a constant depending on the
structural constants of $\gg$. Moreover, let us set
$$R_\UU:=\frac{1}{2\big(\|\MS\|_{L^{\!\infty}(\UU)} + C\|\varpi\|_{L^{\!\infty}(\UU)}\big)
}.$$

\begin{teo}\label{0celafo}Let $\UU\subset S$ be a  $\cont^2$-smooth uniformly non-characteristic
open set with bounded horizontal mean curvature $\MS$. Then, for
all $x\in \UU$ and all $R\leq \min\{{\rm
dist}_\varrho(x,\partial\UU), R_\UU\},$ the following
holds\begin{eqnarray}\label{bibino}
\Big(\int_{\UU_R}|\psi|^p\per\Big)^{\frac{1}{p}}\leq C_p\,
R\Big(\int_{\UU_R}|\qq\psi|^p\per\Big)^{\frac{1}{p}} \qquad
p\in[1,+\infty[
\end{eqnarray}for every $\psi\in\cont_0^1(\UU_R)$. More generally, let $\widetilde{\UU}\subset\UU$
 be a bounded open subset of $\UU$ with smooth boundary such that ${\rm diam
}_\varrho(\widetilde{\UU})\leq 2 \min\{{\rm
dist}_\varrho(x,\partial\UU), R_\UU\}.$  Then
\begin{eqnarray}\label{bibino2}
\Big(\int_{\widetilde{\UU}}|\psi|^p\per\Big)^{\frac{1}{p}}\leq
C_p\, {\rm diam
}_\varrho(\widetilde{\UU})\Big(\int_{\widetilde{\UU}}|\qq\psi|^p\per\Big)^{\frac{1}{p}}
\qquad p\in[1,+\infty[
\end{eqnarray}for every $\psi\in\cont_0^1(\widetilde{\UU})$.
\end{teo}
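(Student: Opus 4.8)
The plan is to run the device of Section \ref{wlinisoineq} once more, but feeding into the horizontal integration-by-parts formula a vector field that already contains the test function, exactly as in the classical proof of the Poincar\'e inequality on Euclidean balls (where one integrates $\div(|\psi|^p x)$). The one structural input needed is that the horizontal position vector has constant $\HS$-divergence: comparing Corollary \ref{IPPH} for $X=\x\cc$ with Corollary \ref{IPPH0} one reads off $\div\ss\x\cc=\DH-1$, the sub-Riemannian analogue of $\div\,x=n$. Since $\UU$ is assumed uniformly non-characteristic we have $C_\UU=\emptyset$, so Corollary \ref{IPPH} applies directly on the support of $\psi$ with no need for the characteristic exhaustion of Remark \ref{cpoints}.

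First I would left-translate the centre $x$ to $0\in\GG$, which changes nothing by left-invariance of $\per$ and $\qq$, so that on $\UU_R$ the hypothesis $|y\cc|\le\varrho(y)\le R$ of Remark \ref{iponhomnor} yields $|\x\cc|\le R$. Then, for $\psi\in\cont_0^1(\UU_R)$ and $p\ge1$, I would apply Corollary \ref{IPPH} to $X:=|\psi|^p\,\x\cc$. The boundary term vanishes by compact support; the elementary product rule $\div\ss(uX)=u\,\div\ss X+\langle\qq u,\P\ss X\rangle$, the identity $\div\ss\x\cc=\DH-1$, the skew-symmetry of each $C^\alpha\cc$ (which gives $C\cc\nn\perp\nn$, hence $\langle C\cc\nn,\x\cc\rangle=\langle C\cc\nn,\x\ss\rangle$), and $\MH=\MS\nn$ turn the formula into
\[(\DH-1)\int_{\UU_R}|\psi|^p\per=-\int_{\UU_R}\langle\qq(|\psi|^p),\x\ss\rangle\per-\int_{\UU_R}|\psi|^p\langle C\cc\nn,\x\ss\rangle\per-\int_{\UU_R}|\psi|^p\,\g\,\MS\per.\]

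Next I would bound the three right-hand integrals. Using $|\x\ss|\le|\x\cc|\le R$, $|\g|\le|\x\cc|\le R$, the estimate $|C\cc\nn|\le C|\varpi|\le C\|\varpi\|_{L^\infty(\UU)}$ from \eqref{biotsav}, and $|\qq(|\psi|^p)|=p|\psi|^{p-1}|\qq\psi|$, I obtain
\[(\DH-1)\int_{\UU_R}|\psi|^p\per\le pR\int_{\UU_R}|\psi|^{p-1}|\qq\psi|\per+R\big(\|\MS\|_{L^\infty(\UU)}+C\|\varpi\|_{L^\infty(\UU)}\big)\int_{\UU_R}|\psi|^p\per.\]
By the definition of $R_\UU$ the constraint $R\le R_\UU$ forces the last coefficient to be at most $\tfrac12$, so that term is absorbed into the left-hand side and leaves $(\DH-\tfrac32)\int_{\UU_R}|\psi|^p\per\le pR\int_{\UU_R}|\psi|^{p-1}|\qq\psi|\per$. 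For $p>1$ a single H\"older inequality with exponents $\tfrac{p}{p-1},p$ applied to the right-hand integral, followed by division by $\big(\int_{\UU_R}|\psi|^p\per\big)^{(p-1)/p}$, produces \eqref{bibino} with $C_p=\tfrac{p}{\DH-3/2}$ (the case $p=1$ being immediate without H\"older). The inequality \eqref{bibino2} follows by the identical computation performed on $\widetilde\UU$, taking the position vector based at a point of $\widetilde\UU$ so that $|\x\cc|\le{\rm diam}_\varrho(\widetilde\UU)$ and invoking the diameter hypothesis to absorb the lower-order term.

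The only genuine difficulty is the borderline regularity of $u=|\psi|^p$: for $p=1$ it is merely Lipschitz, and for every $p\ge1$ the chain rule for $\qq(|\psi|^p)$ is delicate on the zero set of $\psi$, so both the product rule and the very applicability of Corollary \ref{IPPH} to $|\psi|^p\,\x\cc$ must be justified. I would handle this in the standard way, replacing $t\mapsto|t|^p$ by a smooth convex approximation (e.g.\ $(\psi^2+\epsilon^2)^{p/2}$) and letting $\epsilon\to0^+$ by dominated convergence, the bound $|\qq\cdot|\le|\qq\psi|$ being uniform in $\epsilon$. All other manipulations are elementary, so this regularization is the sole technical point requiring care.
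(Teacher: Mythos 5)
Your proposal is correct and follows essentially the same route as the paper's own proof: both feed $u\,\x\cc$ into Corollary \ref{IPPH} (with the regularization $\sqrt{\varepsilon^2+\psi^2}$, so that the boundary term is $O(\varepsilon)$ rather than exactly zero and only disappears in the limit), absorb the curvature terms via $R\le R_\UU$ and the bound $|C\cc\nn|\le C\|\varpi\|_{L^\infty(\UU)}$, and obtain the case $p>1$ by H\"older with the same constant $C_p=\tfrac{2p}{2\DH-3}$. The only (immaterial) difference is bookkeeping: the paper first proves the $p=1$ inequality and then substitutes $|\psi|^p$ into it, whereas you regularize $|\psi|^p$ directly and absorb at the level of the $p$-th power integral.
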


In the above theorem one can take $C_p=\frac{2p }{2\DH-3}$.

\begin{proof}
 Let
us set
${\psi}_\varepsilon:=\sqrt{\varepsilon^2+\psi^2},\,(\varepsilon\geq
0)$. By applying Corollary \ref{IPPH} with $X={\psi}_\varepsilon
x\cc$ we get

\begin{equation*} \int_{\UU}\big\{{\psi}_\varepsilon\,\big( (\DH-1)+ \g\MS +  \langle C\cc\nn,
\x\ss\rangle\big)+
\langle\qq{\psi}_\varepsilon,\x\cc\rangle\big\}\per =
\int_{\partial\UU_R}{\psi}_\varepsilon\langle
x\cc,\eta\ss\rangle\nis,\end{equation*}and so
\begin{eqnarray*}(\DH-1)\int_{\UU_R}{\psi}_\varepsilon\per&\leq&
R\Big(\int_{\UU_R}\big\{{\psi}_\varepsilon\big(|\MS|+
|C\cc\nn|\big) +
|\qq{\psi}_\varepsilon|\big\}\per+\int_{\partial\UU_R}{\psi}_\varepsilon
\nis\Big)\\&\leq&R\bigg\{\big(\|\MS\|_\infty + C\|\varpi\|_\infty
\big)\int_{\UU_R}{\psi}_\varepsilon\per +
\Big(\int_{\UU_R}|\qq{\psi}_\varepsilon|\per
+\int_{\partial\UU_R}{\psi}_\varepsilon \nis\Big)\bigg\}.
\end{eqnarray*}By using Fatou's Lemma and the
estimate $R\leq R_\UU$ we get that

\begin{eqnarray*}(\DH-1)\int_{\UU_R}|\psi|\per&\leq&(\DH-1)\,
\liminf_{\varepsilon\rightarrow
0^+}\int_{\UU_R}{\psi}_\varepsilon\per
\\&\leq&\frac{1}{2}\lim_{\varepsilon\rightarrow
0^+}\int_{\UU_R}{\psi}_\varepsilon\per +
R\lim_{\varepsilon\rightarrow
0^+}\Big(\int_{\UU_R}|\qq{\psi}_\varepsilon|\per
+\int_{\partial\UU_R}{\psi}_\varepsilon \nis\Big).
\end{eqnarray*}Obviously $\psi_\varepsilon\longrightarrow|\psi|$ and
$|\qq\psi_\varepsilon|\longrightarrow |\qq\psi|$ as long as
$\varepsilon\rightarrow 0$; moreover $|\psi|=0$ along
${\partial\UU_R}$.  Now since, as it is well-known,
$|\qq|\psi||\leq|\qq\psi|,$ we easily  get the claim by Lebesgue's
Dominate Convergence Theorem. So we have shown that
$$\int_{\UU_R}|\psi|\per\leq \frac{2R}{2\DH-3}
\int_{\UU_R}|\qq\psi|\per$$ for every $\psi\in \cont_0^1(\UU_R)$.
Finally, the general case follows by H\"{o}lder's inequality. More
precisely, let us use the last inequality with $|\psi|$ replaced
by $|\psi|^p$. This implies
\begin{eqnarray*}\int_{\UU_R}|\psi|^p\per&\leq& \frac{2R}{(2\DH-3)}
\int_{\UU_R}p\,|\psi|^{p-1}|\qq\psi|\per\\&\leq&\frac{2pR}{(2\DH-3)}\,
\Big(\int_{\UU_R}|\psi|^{(p-1)q}\per\Big)^{\frac{1}{q}}\Big(\int_{\UU_R}|\qq\psi|^{p}\per\Big)^{\frac{1}{p}},
\end{eqnarray*}where $\frac{1}{p}+\frac{1}{q}=1$. This achieves the proof of \eqref{bibino}. Clearly, \eqref{bibino2} can be proved by repeating
the same arguments as above, just by replacing $R$ with ${\rm
diam}(\widetilde{\UU})$.

\end{proof}

With some extra hypotheses one can show that \eqref{bibino} still
holds on the characteristic set.

\begin{teo}\label{1celafo}Let $S\subset\GG$ be a  $\cont^2$-smooth hypersurface with bounded horizontal mean curvature $\MS$. Then, for
every $x\in S$ there exists $R_0\leq {\rm dist}_\varrho(x,\partial
S)$ such that
\begin{eqnarray}\label{bibino1}
\Big(\int_{S_R}|\psi|^p\per\Big)^{\frac{1}{p}}\leq C_p\,
R\Big(\int_{S_R}|\qq\psi|^p\per\Big)^{\frac{1}{p}} \qquad
p\in[1,+\infty[
\end{eqnarray}for every
$\psi\in\cont_0^1(S_R)$ and every $R\leq R_0$.
 More precisely, let $\UU_\epsilon$ denotes the family of open
neighborhoods of $C_S$ introduced in Remark \ref{cpoints}. Then,
for every $\epsilon>0$ there exists $R_0=R_0(\epsilon)$ such that
\eqref{bibino1} holds for every $\psi\in\cont_0^1(S_R)$ and every
$R\leq R_0$, where
$$R_0(\epsilon):=\min\bigg\{{\rm dist_\varrho}(x,\partial S),\,\,
\frac{1}{2\big[C\big(1+
\|\varpi\|_{L^{\!\infty}(S_R\setminus\UU_{\epsilon})} \big) +
\|\MS\|_{L^{\!\infty}(S_R)}\big]}\bigg\}.$$
\end{teo}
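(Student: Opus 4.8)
The plan is to adapt, essentially line by line, the argument that proves Theorem \ref{0celafo}, the only new feature being that here the horizontal mean curvature $\MS$ is globally bounded while the hypersurface is allowed to carry characteristic points. First I would apply Corollary \ref{IPPH} to the vector field $X=\psi_\varepsilon\,\x\cc$, where $\psi_\varepsilon:=\sqrt{\varepsilon^2+\psi^2}$ and $\x\cc$ is the horizontal position vector of Corollary \ref{IPPH0}. Using property (iii) of Proposition \ref{Properties} and the fact that $\div\ss\x\cc=(\DH-1)$, this yields, on any non-characteristic domain $\UU$, the identity
\[\int_{\UU}\big\{\psi_\varepsilon\big[(\DH-1)+\g\MS+\langle C\cc\nn,\x\ss\rangle\big]+\langle\qq\psi_\varepsilon,\x\cc\rangle\big\}\per=\int_{\partial\UU}\psi_\varepsilon\langle\x\cc,\eta\ss\rangle\,\nis,\]
which is exactly the relation on which the estimate in Theorem \ref{0celafo} rests. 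Since $\psi\in\cont_0^1(S_R)$ vanishes near $S\cap\partial B_\varrho(x,R)$ and $R\leq{\rm dist}_\varrho(x,\partial S)$, the boundary integral over that part of $\partial S_R$ will contribute only an $O(\varepsilon)$ term.

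The genuinely new difficulty is that, unlike the uniformly non-characteristic setting of Theorem \ref{0celafo}, the field $\varpi=\P\vv\nu/|\PH\nu|$ — and therefore $\|C\cc\nn\|$, which is dominated by $C|\varpi|$ through \eqref{biotsav} — blows up as one approaches $C_S$, so no global $L^\infty$ bound on $\varpi$ is available and the curvature term $R\int\psi_\varepsilon\langle C\cc\nn,\x\ss\rangle\per$ cannot be absorbed directly. My plan is to excise the neighbourhood $\UU_\epsilon$ of $C_S$ supplied by Remark \ref{cpoints} and to carry out the absorption only on the non-characteristic piece $S_R\setminus\UU_\epsilon$, where $|\varpi|\leq\|\varpi\|_{L^{\!\infty}(S_R\setminus\UU_\epsilon)}$; combined with $|\x\cc|\leq R$, the restriction $R\leq R_0(\epsilon)$ forces $R\big[C(1+\|\varpi\|_{L^{\!\infty}(S_R\setminus\UU_\epsilon)})+\|\MS\|_{L^{\!\infty}(S_R)}\big]\leq\tfrac12$, which reabsorbs the curvature and mean-curvature contributions into $\tfrac12(\DH-1)\int\psi_\varepsilon\per$ and leaves the factor $\tfrac{2\DH-3}{2}$ exactly as before.

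The main obstacle, and the step I expect to demand the most care, is to verify that the two contributions produced by the excised region do not spoil the inequality: a bulk term over $\UU_\epsilon$ and the extra boundary term over $\partial\UU_\epsilon$. For the bulk term I would use the crucial observation that $|C\cc\nn|\,\per\leq C|\P\vv\nu|\,\sigma^{n-1}\rr\leq C\,\sigma^{n-1}\rr$ remains integrable across $C_S$, so that its size is governed by $\sigma^{n-1}\rr(\UU_\epsilon)$; it is precisely here that the extra summand $C\cdot 1$ appearing inside $R_0(\epsilon)$ provides the slack needed to reabsorb this residual contribution. For the boundary term I would use $|\langle\x\cc,\eta\ss\rangle|\leq R$ together with $\nis(\partial\UU_\epsilon)\to 0$. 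The three defining properties of the family $\UU_\epsilon$ in Remark \ref{cpoints}, namely $C_S\subset\UU_\epsilon$, $\sigma^{n-1}\rr(\UU_\epsilon)\to 0$ and $\nis(\partial\UU_\epsilon)\to 0$ (whence also $\per(\UU_\epsilon)\to 0$), are exactly what make these errors negligible and permit the passage from the estimate on $S_R\setminus\UU_\epsilon$ to the full estimate on $S_R$ — the same mechanism by which Remark \ref{cpoints} extends the integration-by-parts formulae to the characteristic case.

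Once the characteristic region is under control, I would close the argument as in Theorem \ref{0celafo}: letting $\varepsilon\to 0^+$, so that $\psi_\varepsilon\to|\psi|$ and $|\qq\psi_\varepsilon|\to|\qq\psi|$ (using $|\qq|\psi||\leq|\qq\psi|$), and invoking Fatou's Lemma together with Lebesgue's dominated convergence theorem, yields the case $p=1$ of \eqref{bibino1}. The passage to general $p\in[1,+\infty[$ is then routine, replacing $|\psi|$ by $|\psi|^p$ and applying H\"older's inequality with conjugate exponents exactly as at the end of the proof of Theorem \ref{0celafo}, which produces the constant $C_p=\tfrac{2p}{2\DH-3}$.
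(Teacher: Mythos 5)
Your proposal is correct and follows essentially the same route as the paper's own proof: the same test field $X=\psi_\varepsilon\,\x\cc$ in Corollary \ref{IPPH}, the same excision of the characteristic set by the family $\UU_\epsilon$ of Remark \ref{cpoints}, the same key integrability bound $|C\cc\nn|\,\per\leq C\,\sigma^{n-1}\rr$ on the excised region (this is exactly the footnote estimate the paper uses), the same absorption under $R\leq R_0(\epsilon)$ leaving the factor $\tfrac{2\DH-3}{2}$, and the same passage $\varepsilon\to 0^+$ via Fatou and dominated convergence followed by H\"older's inequality for general $p$, giving $C_p=\tfrac{2p}{2\DH-3}$. The only cosmetic difference is that you perform the integration by parts on $S_R\setminus\UU_\epsilon$ and control the resulting bulk and boundary error terms directly, whereas the paper applies Corollary \ref{IPPH} on all of $S_R$ (already extended to the characteristic case by Remark \ref{cpoints}) and uses the excision only to split the estimate of $\int_{S_R}\psi_\varepsilon|C\cc\nn|\,\per$; these are the same argument organized differently.
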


\begin{proof}Let us set
$\psi_\varepsilon:=\sqrt{\varepsilon^2+\psi^2}\,(0\leq\varepsilon<1).$
We shall first prove the theorem for $p=1$. The  general case will
follow by H\"{o}lder's inequality. As in Remark \ref{cpoints}, let
$\UU_\epsilon\,(\epsilon>0)$ be a family of open subsets of $S$
with  piecewise $\cont^2$-smooth boundary, such
that:\begin{itemize}\item[{\rm(i)}] $C_S\subset\UU_\epsilon$ for
every $\epsilon>0$; \item[{\rm(ii)}]
$\sigma^{n-1}\rr(\UU_\epsilon)\longrightarrow 0$ for
$\epsilon\rightarrow
0^+$;\item[{\rm(iii)}]$\int_{\UU_\epsilon}|\PH\nu|\sigma^{n-2}\rr\longrightarrow
0$ for $\epsilon\rightarrow 0^+$.\end{itemize}Fix $\epsilon_0>0$.
For every $\epsilon\leq \epsilon_0$ one has\footnote{Indeed
\eqref{biotsav} implies that $|C\cc\nn|\leq C\max_{\alpha\in
I\vv}|\varpi_\alpha|$. Moreover
$\int_B|\varpi_\alpha|\per=\int_B|\nu_\alpha|\sigma^{n-1}\rr\leq\sigma^{n-1}\rr(B)$
for every Borel set $B\subseteq S$.}
$$\int_{\UU_{\epsilon}}\psi_\varepsilon|C\cc\nn|\per\leq 2  C\,\|\psi\|_{L^{\!\infty}(\UU_{\epsilon_0})}\sigma^{n-1}\rr(\UU_\epsilon).$$
Furthermore (ii) implies that for every $\delta>0$ there exists
$\epsilon_\delta>0$ such that
$\sigma^{n-1}\rr(\UU_\epsilon)<\delta$ whenever
$\epsilon<\epsilon_\delta$. By putting $\widetilde{\delta}\leq
\frac{\int_{S_R}\psi_\varepsilon\per}{\|\psi\|_{L^{\!\infty}
(\UU_{\epsilon_{0}}) }},$ we therefore get that
$$\int_{\UU_{\epsilon}}\psi_\varepsilon|C\cc\nn|\per\leq
C\int_{S_R}\psi_\varepsilon\per$$ for every $\epsilon\leq \min\{
\epsilon_{\widetilde{\delta}}, \epsilon_0\}$, where $C$ only
depends on the structural constants of $\gg$. Moreover, for any
given $\epsilon\in]0,\min\{ \epsilon_{\widetilde{\delta}},
\epsilon_0\}[$, we have
 $$\int_{S_R\setminus\UU_{\epsilon}}\psi_\varepsilon|C\cc\nn|\per\leq C\|\varpi\|_{L^{\!\infty}
(S_R\setminus\UU_{\epsilon})}\int_{S_R}\psi_\varepsilon\per.$$
From what above, it follows that
$$\int_{S_R}\psi_\varepsilon|C\cc\nn|\per\leq C\big(1+ \|\varpi\|_{L^{\!\infty}
(S_R\setminus\UU_{\epsilon})}
\big)\int_{S_R}\psi_\varepsilon\per.$$ Since, by hypothesis, the
horizontal mean curvature is bounded along $S$ we have
$$\int_{S_R}\psi_\varepsilon|\MS|\per\leq \|\MS\|_{L^{\!\infty}
(S_R)}\int_{S_R}\psi_\varepsilon\per.$$ By applying Corollary
\ref{IPPH} with $X=\psi_\varepsilon x\cc$ one gets
\begin{eqnarray*}(\DH-1)\int_{S_R}{\psi}_\varepsilon\per&\leq&
R\Big(\int_{S_R}\big\{{\psi}_\varepsilon\big(|\MS|+ |C\cc\nn|\big)
+ |\qq{\psi}_\varepsilon|\big\}\per+\int_{\partial
S_R}{\psi}_\varepsilon \nis\Big)\\&\leq&R\big[C\big(1+
\|\varpi\|_{L^{\!\infty}(S_R\setminus\UU_{\epsilon})} \big) +
\|\MS\|_{L^{\!\infty}(S_R)}\big]\int_{S_R}\psi_\varepsilon\per \\
&+& R\Big(\int_{S_R}|\qq{\psi}_\varepsilon| \per+\int_{\partial
S_R}\psi_{\varepsilon}\nis\Big)\end{eqnarray*}and if $$R\leq
\min\Bigg\{{\rm dist_\varrho}(x,\partial S),\,\,
\frac{1}{2\big[C\big(1+
\|\varpi\|_{L^{\!\infty}(S_R\setminus\UU_{\epsilon})} \big) +
\|\MS\|_{L^{\!\infty}(S_R)}\big]}\Bigg\}$$one gets
$$\int_{S_R}{\psi}_\varepsilon\per\leq \frac{2R}{2\DH-3}\Big(\int_{S_R}|\qq\psi_\varepsilon|\per
+ \int_{\partial S_R}\psi_{\varepsilon}\nis\Big).$$ Clearly
$\psi_\varepsilon\longrightarrow|\psi|$ and
$|\qq\psi_\varepsilon|\longrightarrow |\qq\psi|$ as long as
$\varepsilon\rightarrow 0$ and  $|\psi|=0$ along ${\partial S_R}$.
Since $|\qq|\psi||\leq|\qq\psi|,$ the thesis follows by making use
of Fatou's Lemma and of Lebesgue's Dominate Convergence Theorem.
\end{proof}

\section{Isoperimetric Inequality on hypersurfaces}\label{mike}
Let $S\subset\GG$ be a $\cont^2$-smooth hypersurface with boundary
$\partial S$. As usual, $\varpi=\frac{\P\vv\nu}{|\PH\nu|}$.
Moreover, we shall consider the following
objects:$$\varpi\ci:=\P\ci\varpi=\sum_{\alpha\in
I\ci}\varpi_\alpha X_\alpha\quad i=2,...,k.$$ We shall also denote
by $\eta$ the unit normal vector $\eta$ along $\partial S$
\footnote{We stress that, at each point $x\in\partial S$,
$\eta(x)\in\TT_xS$}.

\begin{Defi}
In the sequel, we shall set
\begin{itemize}
\item[{\bf(i)}]$\chi:=\frac{\P\vs\eta}{|\P\ss\eta|}$;\item[{\bf(ii)}]$\chi\ciss:=\P\ciss\chi\qquad
(i=2,...,k);$
\end{itemize}see Remark \ref{indbun} for the very definitions of $\VS$ and $\HH_iS\,(i=2,...,k)$.\end{Defi}

\begin{Defi}\label{berlu}Let us fix a
point $x\in\GG$ and let us consider the Carnot homothety centered
at $x$, i.e. $\vartheta^x(t,y):=x\bullet\delta_t (x^{-1}\bullet
y)$. The variational vector field related to
$\vartheta^x_t(y):=\vartheta^x(t,y)$ at $t=1$ is given by
$$Z_x:=\frac{\partial \vartheta^x_t}{\partial
t}\Big|_{t=1}.$$\end{Defi}With no loss of generality, by using
group translations, we may put $x=0\in\GG$. In this case
$$\vartheta_t^0(y)=\esp(t\,y\cc,t^2y\cd,
t^3y\ctr,...,t^iy\ci,...,t^ky\ck)\qquad (t\in \R)$$ where
$y\ci=\sum_{j_i\in I\ci} y_{j_i}X_{j_i}$ and  $\esp$ denotes the
Carnot exponential mapping; see Section \ref{prelcar}. Thus it
turns out that $$Z_0=y\cc+ 2y\cd+...+ky\ck.$$\[\]

\begin{oss}\label{2iponhomnor1} As in Section \ref{wlinisoineq}, let $\varrho:\GG\times \GG\longrightarrow\R_+$
be a homogeneous distance such that:
\begin{itemize}\item[{\bf(i)}]$\varrho$ is - at least - piecewise $\cont^1$-smooth;\item[{\bf(ii)}]$|\grad\cc\varrho|\leq 1$ at each regular point of $\varrho$;
\item[{\bf(iii)}]$\frac{|x\cc|}{\varrho(x)}\leq 1$.
\end{itemize}Furthermore, we shall assume  that $c_i\in\R_+\,\,(i=2,...,k)$ are positive constants satisfying:
\begin{equation}\label{lis1am}|x\ci|\leq c_i
\varrho^i(x)\qquad(i=2,...,k),\end{equation}where we have set
$\varrho(x)=\varrho(0,x)=\|x\|_\varrho$.
\end{oss}

\[\]

\begin{Defi}Let $\GG$ be a $k$-step Carnot group and $S\subset\GG$ be a $\cont^2$-smooth
hypersurface with - at least - piecewise $\cont^1$-smooth boundary
$\partial S$. Let us set $S_R:=S\cap B_\varrho(x,R)$, where
$B_\varrho(x,R)$ is the open $\varrho$-ball centered at $x\in S$
and of radius $R>0$. We shall set
\begin{eqnarray*}\mathcal{A}_\infty(R)&:=&\int_{S_R}|\MS|\bigg(1+\sum_{i=2}^k
i\,c_i\varrho_x^{i-1}|\varpi\ci|\bigg)\per,\\\mathcal{B}(R)&:=&\int_{\partial
S\cap B_\varrho(x,R)}\frac{1}{\varrho_x}\Big|\Big\langle
Z_x,\frac{\eta}{|\P\ss\eta|}\Big\rangle\Big|\nis,\\\mathcal{B}_\infty(R)&:=&\int_{\partial
S\cap B_\varrho(x,R)}\Big(1+\sum_{i=2}^k
i\,c_i\varrho_x^{i-1}|\chi\ciss|\Big)\nis,\end{eqnarray*}where
$\varrho_x(y):=\varrho(x,y)\,(y\in S)$ denotes the
$\varrho$-distance from the ``fixed'' point $x\in S$. Furthermore,
we shall define the following integrals:
\begin{eqnarray*}\mathcal{A}_\infty(S)&:=&\int_{S}|\MS|\bigg(1+\sum_{i=2}^k
i\,c_i\varrho_S^{i-1}|\varpi\ci|\bigg)\per,\\\mathcal{B}(S)&:=&\int_{\partial
S}\frac{1}{\varrho_x}\Big|\Big\langle
Z_x,\frac{\eta}{|\P\ss\eta|}\Big\rangle\Big|\nis,\\\mathcal{B}_\infty(S)&:=&\int_{\partial
S}\Big(1+\sum_{i=2}^k i\,c_i\varrho_S^{i-1}|\chi\ciss|\Big)\nis,
\end{eqnarray*}where we have set$$\varrho_S:=\frac{\rm{diam}_\varrho(S)}{2}.$$
\end{Defi}
One immediately sees that $\mathcal{B}(R)\leq
\mathcal{B}_\infty(R)$ for every $R>0$ and that
$\mathcal{B}(S)\leq \mathcal{B}_\infty(S)$; see, for more details,
Section \ref{wlineq}.\\

We are now in a position to state the claimed isoperimetric
inequality.

\begin{teo}\label{isopineq}Let $\GG$ be a $k$-step Carnot group and $S\subset\GG$ be a $\cont^2$-smooth compact
hypersurface with $\cont^1$-smooth boundary $\partial S$.
boundary. Then\begin{equation}\label{isq0}\per(S)\leq
C_I\big\{\mathcal{A}_\infty(S)+
\mathcal{B}_\infty(S)\big\}^{\frac{Q-1}{Q-2}},
\end{equation}where $C_I$ is a constant independent of
$S$.
\end{teo}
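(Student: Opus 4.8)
The plan is to mimic the classical Michael--Simon/Allard strategy described in the introduction, adapted to the Carnot setting via the homogeneous dimension $Q-1$ in place of $n-1$. First I would establish the key \emph{linear isoperimetric inequality} announced in Section \ref{wlineq} (Proposition \ref{l1am}). The starting point is the integration-by-parts formula of Corollary \ref{IPPH} applied to the homothety vector field $Z_x$ of Definition \ref{berlu}, rather than to the purely horizontal position vector $\x\cc$ used in Section \ref{wlinisoineq}. The crucial difference is that $Z_x=y\cc+2y\cd+\cdots+ky\ck$ contributes a factor of weight $i$ on the $i$-th layer, which is precisely what produces the correct exponent $Q-1=\sum_i i\,\DH_i$ rather than the ``wrong'' $\DH-1$ that appeared in Proposition \ref{in}. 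Using $|x\ci|\leq c_i\varrho^i(x)$ from Remark \ref{2iponhomnor1}, one bounds the boundary and curvature terms to obtain an estimate of the form
\[
(Q-1)\,\per(S_R)\leq R\,\big\{\mathcal{A}_\infty(R)+\mathcal{B}_\infty(R)\big\}+R\,\nis(\partial B_\varrho(x,R)\cap S).
\]

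Next I would convert this linear inequality into a \emph{monotonicity estimate} (Proposition \ref{rmonin}), exactly as in the Euclidean proof sketched in the introduction. The mechanism is the Coarea Formula for the $\HS$-gradient (Theorem \ref{TCOAR}) applied to the distance function $\psi(y)=\varrho(x,y)$, which (since $|\qq\psi|\leq|\grad\cc\varrho|\leq 1$) gives $\tfrac{d}{dt}\per(S_t)\geq \nis(\partial B_\varrho(x,t)\cap S)$ for $\mathcal{L}^1$-a.e.\ $t$. Substituting this into the linear inequality and dividing by $t^{Q-1}$ yields a differential inequality controlling $-\tfrac{d}{dt}\big(t^{1-Q}\per(S_t)\big)$ by the curvature and boundary densities; this is the local monotonicity formula at a density point. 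The analysis here is for non-characteristic $x\in S\setminus C_S$; the characteristic case is handled either by the density/approximation argument of Remark \ref{cpoints} (so that the characteristic set, having vanishing $\per$-measure, does not contribute) or, where needed, by Lemma \ref{condcar77}.

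From the monotonicity estimate I would extract a \emph{calculus lemma} by a contradiction argument: if $\per(S)$ were large relative to the right-hand side $\mathcal{A}_\infty(S)+\mathcal{B}_\infty(S)$, then at a suitable density point the quotient $\per(S_t)/t^{Q-1}$ would be bounded below uniformly over a range of radii, forcing a lower bound on the density of $\per$ on a controlled scale. Combining these local lower density bounds with a \emph{Vitali-type covering argument} over $S$ then sums the local contributions into the global inequality. Raising to the power $\tfrac{Q-2}{Q-1}$ and absorbing the dimensional constants into $C_I$ gives \eqref{isq0}, and the stated form follows upon noting $\varrho_x\leq\varrho_S$ and $\mathcal{B}(R)\leq\mathcal{B}_\infty(R)$.

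The main obstacle I anticipate is the monotonicity step at characteristic points and the uniformity of the density lower bound across the covering. In the Riemannian case the density $\omega_{n-1}$ is a fixed constant, but here the analogue is the metric factor $\kappa_\varrho(\nn(x))$ of Theorem \ref{magnanoi}, which depends on the horizontal normal and, at characteristic points, is governed instead by the anisotropic-order blow-up of Theorem \ref{BUP}, Case (b). Ensuring that the density is bounded \emph{below} uniformly (so that the covering argument produces a constant $C_I$ genuinely independent of $S$) requires controlling $\kappa_\varrho$ away from zero and treating the set $C_S$, whose $\per$-measure vanishes but which can still interfere with the scale on which the linear inequality is applied. I expect this interplay between the characteristic set and the covering---rather than the integration-by-parts or Coarea steps, which are essentially mechanical given the tools already developed---to be where the genuinely new difficulties of the sub-Riemannian setting concentrate.
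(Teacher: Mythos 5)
Your overall architecture --- a strong linear inequality via the homothety field $Z_x$, then a monotonicity estimate, then a contradiction lemma and a Vitali covering with uniform two-sided bounds on the metric factor $\kappa_\varrho(\nn)$ --- is exactly the paper's, and your final assembly is correct in outline. But two of your steps, as stated, would fail. The lesser one: Corollary \ref{IPPH} cannot be applied to $Z_x$, since that formula is stated and proved only for horizontal fields $X\in\cont^1(S,\HH)$, whereas $Z_x$ has components in every layer. The paper instead derives the linear inequality (Proposition \ref{correctdimin}) from the first variation formula \eqref{fv} (Theorem \ref{1vg}, Corollary \ref{22vg}), taking $Z_x$ as the variation field of the Carnot homothety; the factor $Q-1$ does not come from a layer-weighted divergence computation but from dilation homogeneity of the measure, $\frac{d}{dt}\delta_t^\ast\per\big|_{t=1}=(Q-1)\,\per(S)$. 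This substitution is easy to make, so it is a repairable inaccuracy.

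The second gap is substantive. The linear inequality one actually obtains does not have $\nis(\partial B_\varrho(x,t)\cap S)$ as its spherical boundary term, as in your display; it has the weighted term $\widetilde{\mathcal{B}}(t)=\int_{\partial B_\varrho(x,t)\cap S}\frac{1}{\varrho_x}\big|\big\langle Z_x,\frac{\eta}{|\P\ss\eta|}\big\rangle\big|\,\nis$, whose weight is unbounded (e.g.\ near characteristic points of the slice, where $|\P\ss\eta|\to 0$). Your proposed absorption --- plain coarea for $\varrho_x$ giving $\frac{d}{dt}\per(S_t)\geq\nis(\partial B_\varrho(x,t)\cap S)$ --- therefore does not control $\widetilde{\mathcal{B}}(t)$; and estimating the weight by $1+\sum_i i\,c_i\varrho^{i-1}|\chi\ciss|$ produces terms on the slice that do not telescope into $\frac{d}{dt}\per(S_t)$ either. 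This is precisely where the paper's new work lies: Lemma \ref{cardcaz} (the homogeneity identity $\langle Z_x,\grad\,\varrho_x\rangle=\varrho_x$), the blow-up Lemma \ref{00key} showing $\int_{S_R}\frac{1}{\varrho_x}|\langle Z_x,\grad\ts\varrho_x\rangle|\,\per\leq\per(S_R)$ for $R\leq R(x)$, and then the weighted coarea formula \eqref{2coar}, which identifies $\widetilde{\mathcal{B}}(t)$ with the derivative of this weighted interior integral and hence bounds it by $\frac{d}{dt}\per(S_t)$ for a.e.\ small $t$. In the Euclidean case the analogous bound is the trivial pointwise inequality for the position vector; in a Carnot group it is false in general --- this is the ``convexity condition'' \eqref{0key}, which the paper explicitly declines to assume --- and holds only locally via blow-up, which is also why the radius $R(x)$ appears and why the covering lemma (Lemma \ref{lem}) must be formulated with $R_0(x)$ and restricted to centers in $S\setminus C_S$. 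Without this ingredient your monotonicity step, and hence the whole proof, does not close.
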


The previous theorem generalizes to Carnot groups a well-known
result, proven by Michael and Simon \cite{MS} and, independently,
by Allard \cite{Allard}. The proof of this result will follow, as
much as possible,  the classical pattern which can be found, for
instance, in the nice book by Burago and Zalgaller \cite{BuZa}.
However, there are many non-trivial differences in doing this and
many Euclidean - and/or Riemannian - tools have been adapted to
our sub-Riemannian setting.

\begin{oss}\label{ghy}Let $S\subset\GG$ be a $\cont^2$-smooth hypersurface with $\cont^1$-smooth boundary
$\partial S$. Moreover, let us assume that $\dim\,C_{\partial S}<
n-2.$ This implies that $\sigma^{n-2}\rr(C_{\partial S})=0$. So
let $\mathcal{V}_\epsilon\subset\partial S$ be an open
neighborhood - in the relative topology of $\partial S$ - of
$C_{\partial S}$ such that
$\sigma^{n-2}\rr(\mathcal{V}_\epsilon)\longrightarrow 0$ for
$\epsilon\rightarrow 0^+$. Since
\[\chi=\sum_{i=2}^k\chi\ciss\in L^1(\partial S, \nis),\]it
follows that
$$\int_{\mathcal{V}_{\epsilon}}|\chi\ciss|\nis\leq
\nis(\partial S)$$ for all (small enough)
$\epsilon>0\,\,(i=2,...,k)$. Furthermore, one has
 $$\int_{\partial S\setminus\mathcal{V}_{\epsilon}}|\chi\ciss|\nis\leq \|\chi\|_{L^{\!\infty}
(\partial S\setminus\mathcal{V}_{\epsilon})}\,\nis(\partial
S)$$for all $\epsilon< \epsilon_{{\delta}}\,\,(i=2,...,k)$. Hence,
there exists a constant $C'>0$ such that
$$\mathcal{B}_\infty(S)=\int_{\partial S}\Big(1+\sum_{i=2}^k i\,c_i\varrho_S^{i-1}|\chi\ciss|\Big)\nis\leq
C'\,\nis({\partial S}).$$
\end{oss}

By applying Remark \ref{ghy} we immediately obtain the following:

\begin{corollario}\label{isopineqcaso2}
Let $\GG$ be a $k$-step Carnot group and $S\subset\GG$ be a
$\cont^2$-smooth compact hypersurface with $\cont^1$-smooth
boundary $\partial S$. Moreover, let us assume that
$\dim\,C_{\partial S}< n-2.$ Then there exists $C'_I>0$ such that
\begin{equation}\label{isq}\per(S)\leq
C'_I\big\{\mathcal{A}_\infty(S)+ \nis(\partial
S)\big\}^{\frac{Q-1}{Q-2}}.
\end{equation}\\
\end{corollario}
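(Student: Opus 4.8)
The plan is to obtain \eqref{isq} directly from the general isoperimetric inequality of Theorem \ref{isopineq}, whose right-hand side reads $C_I\{\mathcal{A}_\infty(S)+\mathcal{B}_\infty(S)\}^{(Q-1)/(Q-2)}$, by replacing the boundary functional $\mathcal{B}_\infty(S)$ with the plain homogeneous boundary measure $\nis(\partial S)$. All the analytic content of the inequality is already carried by Theorem \ref{isopineq}, so the corollary is a purely formal consequence of one additional estimate: under the hypothesis $\dim\,C_{\partial S}<n-2$ one has $\mathcal{B}_\infty(S)\leq C'\,\nis(\partial S)$ for some $C'>0$ depending only on the geometry of $S$ and on the constants $c_i$.

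This estimate is precisely the content of Remark \ref{ghy}, and I would establish it as follows. The hypothesis $\dim\,C_{\partial S}<n-2$ forces $\sigma^{n-2}\rr(C_{\partial S})=0$, so one may choose a shrinking family $\mathcal{V}_\epsilon\subset\partial S$ of relative neighborhoods of $C_{\partial S}$ with $\sigma^{n-2}\rr(\mathcal{V}_\epsilon)\to 0$ as $\epsilon\to 0^+$. Each summand $\int_{\partial S} i\,c_i\varrho_S^{i-1}|\chi\ciss|\,\nis$ then splits into an integral over $\mathcal{V}_\epsilon$ and one over $\partial S\setminus\mathcal{V}_\epsilon$. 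The first is made negligible using the integrability $\chi=\sum_{i=2}^k\chi\ciss\in L^1(\partial S,\nis)$, which holds because $\partial S$ is compact and $\nis$ is finite on it; the second is controlled by $\|\chi\|_{L^\infty(\partial S\setminus\mathcal{V}_\epsilon)}\,\nis(\partial S)$, since away from the characteristic set the normal $\eta$ stays uniformly transversal to $\HS$ and hence $|\chi\ciss|$ is bounded there. Summing over $i=2,\dots,k$ and adding the constant term $\int_{\partial S}1\,\nis=\nis(\partial S)$ yields $\mathcal{B}_\infty(S)\leq C'\,\nis(\partial S)$.

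It then remains only to combine the two facts. Since $Q>2$, the exponent $\frac{Q-1}{Q-2}$ is strictly larger than $1$ and the map $t\mapsto t^{(Q-1)/(Q-2)}$ is increasing on $[0,+\infty)$, so that
$$\per(S)\leq C_I\big\{\mathcal{A}_\infty(S)+\mathcal{B}_\infty(S)\big\}^{\frac{Q-1}{Q-2}}\leq C_I\big\{\mathcal{A}_\infty(S)+C'\,\nis(\partial S)\big\}^{\frac{Q-1}{Q-2}}.$$
Factoring $\max\{1,C'\}$ out of the braces and absorbing its power into the constant gives \eqref{isq} with $C'_I:=C_I\big(\max\{1,C'\}\big)^{(Q-1)/(Q-2)}$. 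The only point requiring attention — rather than a genuine obstacle — is that the dimensional hypothesis is placed on $C_{\partial S}$, the characteristic set of the \emph{boundary}, and not on $C_S$; one must therefore perform the neighborhood construction inside the relative topology of $\partial S$ and rely on the $\nis$-integrability of $\chi$ on $\partial S$, both of which are available here. No further difficulty arises, since Theorem \ref{isopineq} already supplies the full strength of the inequality.
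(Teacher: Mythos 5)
Your proof is correct and follows the paper's own route exactly: the paper obtains the corollary by combining Theorem \ref{isopineq} with Remark \ref{ghy}, whose splitting argument (a shrinking neighborhood $\mathcal{V}_\epsilon$ of $C_{\partial S}$ versus its complement, with $L^1$ control near the characteristic set and $L^\infty$ control away from it) you reproduce essentially verbatim before absorbing the constant into $C'_I$. The only imprecision is your justification of $\chi\in L^1(\partial S,\nis)$: this does not follow merely from compactness of $\partial S$ and finiteness of $\nis$ (since $\chi$ blows up at $C_{\partial S}$), but rather from the cancellation $|\chi|\,\nis=|\P\vs\eta|\,|\PH\nu|\,\sigma^{n-2}\rr\leq\sigma^{n-2}\rr$, whose total mass on the compact boundary is finite.
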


\subsection{
A strong linear inequality and its related monotonicity
formula}\label{wlineq} In this section we shall prove another
``linear'' isoperimetric inequality somewhat stronger than those
previously obtained in Section \ref{wlinisoineq}.

Some similar results have been independently obtained in a
interesting preprint by Danielli, Garofalo and Nhieu \cite{DGN3},
where a monotonicity estimate for the $\HH$-perimeter has been
proven for graphical strips in the Heisenberg group
$\mathbb{H}^1$.

Let $S\subset\GG$ be a $\cont^2$-smooth hypersurface  with
boundary $\partial{S}$ smooth enough for the validity of the
Riemannian Divergence Theorem.

We shall apply the 1st variation formula of $\per$, i.e. Theorem
\ref{1vg} (see also Corollary \ref{22vg}), with a ``special''
choice of the variational vector field. More precisely, let us fix
a point $x\in\GG$ and let us consider the Carnot homothety
centered at $x$, i.e. $\vartheta^x(t,y):=x\bullet\delta_t
(x^{-1}\bullet y)$. Without loss of generality, by using group
translations, we may choose $x=0\in\GG$. So
$$\vartheta^0(t,y):=\esp(ty\cc,t^2y\cd,
t^3y\ctr,...,t^iy\ci,...,t^ky\ck)\qquad (t\in \R)$$where
$y\ci=\sum_{j_i\in I\ci} y_{j_i}\ee_{j_i}$ and  $\esp$ denotes the
Carnot exponential mapping; see Section \ref{prelcar}. Thus the
variational vector field related to
$\vartheta^0_t(y):=\vartheta^0(t,y)=\delta_ty$, at $t=1$, is just
$$Z_0:=\frac{\partial \vartheta^0_t}{\partial
t}\Big|_{t=1}=\frac{\partial \delta_t}{\partial
t}\Big|_{t=1}=y\cc+ 2y\cd+...+ky\ck.$$ As it is well known, by
invariance of $\per$ under Carnot dilations, one
has$$\frac{d}{dt}\delta_t^\ast\per\Big|_{t=1}=(Q-1)\,\per({S}).$$Furthermore,
by using the 1st variation formula \eqref{fv} (see also Corollary
\ref{22vg}), one gets
\begin{equation}\label{prpas}(Q-1)\,\per({S})=-\int_{{S}}\MS\Big\langle Z_{0}, \frac{\nu}{|\PH\nn|}\Big\rangle\per +
\int_{\partial{S}} \Big\langle Z_{0},
\frac{\eta}{|\P\ss\eta|}\Big\rangle\,\underbrace{|\PH\nn|\cdot|\P\ss\eta|\sigma^{n-2}\rr}_{=\nis}.\end{equation}Note
that
$$\Big\langle Z_{0}, \frac{\nu}{|\PH\nn|}\Big\rangle=
\langle Z_{0}, (\nn+\varpi)\rangle=\langle
y\cc,\nn\rangle+\sum_{i=2}^k\langle
y\ci,\varpi\ci\rangle.$$Analogously, we have$$\Big\langle Z_{0},
\frac{\eta}{|\P\ss\eta|}\Big\rangle= \langle Z_{0},
(\eta\ss+\chi)\rangle=\langle
y\cc,\eta\ss\rangle+\sum_{i=2}^k\langle y\ci,\chi\ciss\rangle.$$
By Cauchy-Schwartz inequality we immediately get the following
estimates:\begin{itemize}\item $\Big|\Big\langle Z_{0},
\frac{\nu}{|\PH\nn|}\Big\rangle\Big|\leq |y\cc|+\sum_{i=2}^k
i\,|y\ci||\varpi\ci|$;\item$\Big|\Big\langle Z_{0},
\frac{\eta}{|\P\ss\eta|}\Big\rangle\Big|\leq |y\cc|+\sum_{i=2}^k
i\, |y\ci||\chi\ciss|.$ \end{itemize}Here above $|\cdot|$ denotes
the norm associated with the fixed Riemannian metric $\mathit{g}$
on $\GG$.\\According with Remark \ref{2iponhomnor1}, let
$c_i\in\R_+\,\,(i=1,...,k)$ be positive constants satisfying:
\begin{equation}\label{l1am}|y\ci|\leq c_i \varrho^i(y)\qquad(i=1,...,k).\end{equation}
The above argument implies that:\begin{itemize}\item
$\Big|\Big\langle Z_{0}, \frac{\nu}{|\PH\nn|}\Big\rangle\Big|\leq
\varrho\Big(\frac{|y\cc|}{\varrho}+\sum_{i=2}^k
i\,\frac{|y\ci|}{\varrho}|\varpi\ci|\Big)\leq
\varrho\Big(1+\sum_{i=2}^k i\,c_i\varrho^{i-1}|\varpi\ci|\Big)
$;\item $\Big|\Big\langle Z_{0},
\frac{\eta}{|\P\ss\eta|}\Big\rangle\Big|\leq
\varrho\Big(\frac{|y\cc|}{\varrho}+\sum_{i=2}^k
i\,\frac{|y\ci|}{\varrho}|\chi\ciss|\Big)\leq
\varrho\Big(1+\sum_{i=2}^k
i\,c_i\varrho^{i-1}|\chi\ciss|\Big).$\end{itemize}

\begin{Prop}\label{correctdimin}Let ${S}\subset\GG$ be a smooth compact hypersurface
 with  piecewise $\cont^1$-smooth boundary $\partial{S}$. Moreover, let
 $R$ denote the radius of a $\varrho$-ball centered at $x\in\GG$ and circumscribed about ${S}$. Then
\begin{equation}\label{1isopkklin}\per({S})\leq
\frac{R}{Q-1}\Bigg\{\int_{S}|\MS|\Big(1+\sum_{i=2}^k
i\,c_i\varrho_x^{i-1}|\varpi\ci|\Big)\per
+\int_{\partial{S}}\frac{1}{\varrho_x}\Big|\Big\langle Z_x,
\frac{\eta}{|\P\ss\eta|}\Big\rangle\Big|
 \nis\Bigg\}\end{equation}and
 \begin{equation}\label{1a}\per({S})\leq
\frac{R}{Q-1}\Bigg\{\int_{S}|\MS|\Big(1+\sum_{i=2}^k
i\,c_i\varrho_x^{i-1}|\varpi\ci|\Big)\per
+\int_{\partial{S}}\Big(1+\sum_{i=2}^k
i\,c_i\varrho_x^{i-1}|\chi\ciss|\Big)
\nis\Bigg\},\end{equation}where we have set
$\varrho_x(y)=\varrho(x,y)$.
\end{Prop}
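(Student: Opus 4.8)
The plan is to read both inequalities off the exact first-variation identity \eqref{prpas} that has just been derived, after re-centering the dilation at an arbitrary point $x$ and inserting the two Cauchy--Schwarz estimates displayed immediately above the statement. First I would record the master identity for a general center $x\in\GG$ rather than only $x=0$. The derivation for $x=0$ uses only two facts: that $\frac{d}{dt}\delta_t^\ast\per\big|_{t=1}=(Q-1)\per(S)$ by homogeneity of $\per$ under Carnot dilations, and the first-variation formula \eqref{fv} (equivalently Corollary \ref{22vg}, which also covers the characteristic case). Since the Carnot homothety $\vartheta^x(t,y)=x\bullet\delta_t(x^{-1}\bullet y)$ is just the left-translate by $x$ of $\delta_t$, and both $\per$ and $\nis$ are left-invariant, the identity transports verbatim with $Z_0$ replaced by the field $Z_x$ of Definition \ref{berlu}:
\[(Q-1)\per(S)=-\int_{S}\MS\Big\langle Z_x,\frac{\nu}{|\PH\nu|}\Big\rangle\per+\int_{\partial S}\Big\langle Z_x,\frac{\eta}{|\P\ss\eta|}\Big\rangle\nis.\]

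Next I would pass to absolute values by the triangle inequality and bound each integrand pointwise. The key geometric observation is that $\varrho_x(y)=\varrho(x,y)\le R$ for every $y\in S$ (and hence on $\partial S$), because $S$ is contained in the circumscribed $\varrho$-ball $B_\varrho(x,R)$. Combining this with the two estimates already established before the statement,
\[\Big|\Big\langle Z_x,\frac{\nu}{|\PH\nu|}\Big\rangle\Big|\le\varrho_x\Big(1+\sum_{i=2}^k i\,c_i\varrho_x^{i-1}|\varpi\ci|\Big),\qquad\Big|\Big\langle Z_x,\frac{\eta}{|\P\ss\eta|}\Big\rangle\Big|\le\varrho_x\Big(1+\sum_{i=2}^k i\,c_i\varrho_x^{i-1}|\chi\ciss|\Big),\]
I would bound \emph{only the leading factor} $\varrho_x$ by $R$, leaving the inner anisotropic powers $\varrho_x^{i-1}$ intact. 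For the interior term this produces $R\int_S|\MS|\big(1+\sum_{i=2}^k i\,c_i\varrho_x^{i-1}|\varpi\ci|\big)\per$, matching the $\mathcal{A}_\infty$-type expression. For the boundary term there are two routes: writing $\big|\big\langle Z_x,\eta/|\P\ss\eta|\big\rangle\big|=\varrho_x\cdot\frac1{\varrho_x}\big|\big\langle Z_x,\eta/|\P\ss\eta|\big\rangle\big|\le R\cdot\frac1{\varrho_x}\big|\big\langle Z_x,\eta/|\P\ss\eta|\big\rangle\big|$ gives the boundary integrand of \eqref{1isopkklin}, while inserting the second Cauchy--Schwarz estimate directly gives that of \eqref{1a}. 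Dividing through by $Q-1$ then yields both inequalities.

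I do not expect a serious obstacle, since the substantive content has been front-loaded into the two displayed estimates and into Corollary \ref{22vg}. The one point that genuinely requires care is that the factor replaced by $R$ must be exactly the \emph{outermost} $\varrho_x$, so that the weights $\varrho_x^{i-1}$ survive inside the integrands and reproduce the stated $\mathcal{A}_\infty$- and $\mathcal{B}_\infty$-type forms; bounding every occurrence of $\varrho_x$ by $R$ would both weaken the estimate and fail to match the claimed expressions. I would also remark explicitly that the piecewise-$\cont^1$ regularity of $\partial S$ is what legitimizes the Riemannian Divergence Theorem underlying \eqref{fv}, that compactness of $S$ guarantees the circumscribed radius $R$ is finite, and that Corollary \ref{22vg} is precisely what allows the argument to run even when the characteristic set $C_S$ is non-empty.
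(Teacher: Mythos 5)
Your proposal is correct and is essentially the paper's own argument: the paper's proof is the one-line remark ``Immediate by the previous discussion and by the invariance of $\per$ under left-translations,'' where the ``previous discussion'' consists precisely of the dilation identity $\frac{d}{dt}\delta_t^\ast\per\big|_{t=1}=(Q-1)\per(S)$, the first-variation identity \eqref{prpas} (extended to the characteristic case via Corollary \ref{22vg}), and the two Cauchy--Schwarz estimates you quote, followed by bounding the outer factor $\varrho_x$ by $R$. Your added care about left-translating the homothety to a general center $x$ and about leaving the inner weights $\varrho_x^{i-1}$ intact is exactly what the paper leaves implicit.
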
\begin{proof}Immediate by the previous discussion and by the invariance of $\per$ under left-translations.\end{proof}

In order to prove the claimed monotonicity inequality, we have to
 make some preliminary comments. Indeed, as will be clear in the
sequel, the monotonicity inequality (see Proposition \ref{rmonin})
could be proved, following a classical pattern, for instance by
assuming that:\begin{itemize}\item{\it there exists a smooth
homogeneous norm  $\varrho:\GG\times \GG\longrightarrow\R_+$ such
that:
\begin{equation}\label{0key}\frac{1}{\varrho_x}|\langle Z_x, \grad\ts\varrho_x\rangle|\leq 1
\end{equation}for every $x, y\in S$.}\end{itemize}

We stress that the above hypothesis, in a sense, would be
``natural'' in the Riemannian setting. In this regard, we would
like to quote the paper by Chung, Grigor'jan and Yau \cite{CGY},
where a similar hypothesis is one of the starting points of a
general theory about isoperimetric inequalities on weighted
Riemannian manifolds and graphs.

\begin{oss}Note that  \eqref{0key} turns out to be trivially true in the Euclidean setting. Indeed, in such a case, it
turns out that $Z_x(y)=y-x$ and
$\grad\,\varrho_x(y)=\frac{y-x}{|y-x|}$. Therefore,
\[\frac{1}{\varrho_x(y)}|\langle Z_x(y),
\grad\ts\varrho_x(y)\rangle|=1-\Big\langle\frac{y-x}{|y-x|},\textsl{n}_{\bf
e}\Big\rangle^2\leq 1,\]where $\textsl{n}_{\bf e}$ denotes the
Euclidean unit normal along $S$.
\end{oss}

\begin{es}Let us briefly discuss \eqref{0key} for
 the case of the Heisenberg group $\mathbb{H}^n$
endowed with the Korany norm; see Section \ref{prelcar} and
Example \ref{Kor}. In this case it turns out, after an explicit
computation, that
\begin{equation}\label{0kinawa}\grad\,\varrho_0=\frac{r^2\cc x\cc -4tx^\perp\cc +8t
T}{\varrho^3},\end{equation} and this implies that
$$\frac{1}{\varrho_0}\langle Z_0, \grad\,\varrho_0\rangle=1.$$Note
however that, from the last identity, we cannot conclude that
\eqref{0key} holds true independently of the hypersurface $S$. In
a sense \eqref{0key} is a kind of ``convexity condition'' about
the hypersurface $S$, w.r.t. the homogeneous metric $\varrho$. For
instance, it is very simple to show that \eqref{0key} holds true
in the following elementary cases:
\begin{itemize}\item $S\subset\mathbb{H}^n$ is any vertical hyperplane; \item
$S=\{\esp(x\cc,t)\in\mathbb{H}^n: t={\rm
const.}\}$.\end{itemize}\end{es}

We have to stress that the identity \eqref{0kinawa} in the
previous example, it is not  a mere coincidence, and it turns out
to be true for every (smooth enough) homogeneous distance
 on a Carnot group.

  Actually, we can prove the
following:

\begin{lemma}\label{cardcaz}Let $\GG$ be a $k$-step Carnot group and let
$\varrho:\GG\times\GG\longrightarrow\R_+$ be any $\cont^1$-smooth
homogeneous norm. Then
$$\frac{1}{\varrho_x}\langle Z_x, \grad\,\varrho_x\rangle=1$$for
every $x\in\GG$.

\end{lemma}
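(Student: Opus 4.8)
The plan is to recognise the asserted identity as an instance of Euler's relation for functions that are positively homogeneous of degree one, adapted to the anisotropic Carnot dilations. The first step is to replace the inner product by a directional derivative: since $\grad\,\varrho_x$ is the Riemannian gradient of $\varrho_x$ with respect to the left-invariant metric $g$, one has $\langle Z_x,\grad\,\varrho_x\rangle = d\varrho_x(Z_x) = Z_x\varrho_x$. Hence it suffices to prove the scalar identity $Z_x\varrho_x = \varrho_x$ at every point $y\neq x$, where $\varrho_x$ is $\cont^1$ by the smoothness of the homogeneous norm off the diagonal (and where $\varrho_x(y)>0$ by property (iii) of a homogeneous norm together with left-invariance).

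Next I would establish the exact scaling behaviour of $\varrho_x$ along the homothety flow $t\mapsto\vartheta^x_t(y)=x\bullet\delta_t(x^{-1}\bullet y)$. Combining the left-invariance (property (i) of a homogeneous distance) with the dilation-homogeneity (property (ii), together with $\delta_t 0=0$) gives, for every $t>0$,
\begin{equation*}
\varrho_x\big(\vartheta^x_t(y)\big)=\varrho\big(0,\delta_t(x^{-1}\bullet y)\big)=t\,\varrho\big(0,x^{-1}\bullet y\big)=t\,\varrho_x(y).
\end{equation*}
Here the first equality applies property (i) with $z=x^{-1}$, the second applies property (ii) (using $\delta_t 0=0$), and the third applies property (i) once more. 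This is the key computation; it is purely formal once one unwinds the two defining properties of $\varrho$, and it is the Carnot analogue of $\|t\,v\|=t\|v\|$.

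The final step is to differentiate this relation in $t$ at $t=1$. Because $\delta_1$ is the identity we have $\vartheta^x_1(y)=y$, so the curve $t\mapsto\vartheta^x_t(y)$ passes through $y$ at time $1$ with velocity exactly $Z_x(y)=\frac{\partial}{\partial t}\vartheta^x_t(y)\big|_{t=1}$, by Definition \ref{berlu}. Applying the chain rule to the left-hand side and differentiating the right-hand side yields $d\varrho_x|_y(Z_x(y))=\varrho_x(y)$, that is $Z_x\varrho_x=\varrho_x$; dividing by $\varrho_x(y)>0$ gives the claim. I expect no serious obstacle here: the argument is essentially one line of Euler's theorem, and the only care needed is the bookkeeping with left translations in the scaling identity and the observation that $Z_x$ is the $t=1$ velocity of a flow whose time-one map is the identity, so that the directional derivative is evaluated precisely at the base point $y$ rather than at a dilated point.
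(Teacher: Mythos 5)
Your proof is correct and follows essentially the same route as the paper: you establish the scaling identity $\varrho_x(\vartheta^x_t(y))=t\,\varrho_x(y)$ from left-invariance and dilation-homogeneity, then differentiate at $t=1$ where the velocity of the homothety flow is exactly $Z_x(y)$, which is precisely the paper's argument (an Euler-type relation). The only difference is presentational — you spell out the bookkeeping with left translations and the positivity/regularity of $\varrho_x$ off the diagonal, which the paper leaves implicit.
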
\begin{proof}By homogeneity and left-invariance of $\varrho$. More
precisely, let us begin by noting that
$t\varrho(z)=\varrho(\delta_t z)$ for all $t>0$ and for every
$z\in\GG$. Setting $z:=x^{-1}\bullet y$, we get that
$$t\varrho(x, y)=t\varrho(z)=\varrho(\delta_t z)=\varrho(x, x\bullet\delta_t
z)$$for every $x, y \in\GG$ and for all $t>0$. Hence
\begin{eqnarray*}\varrho(x,y)=\frac{d}{dt}\varrho(x,
 x\bullet\delta_t x^{-1}\bullet y)\big|_{t=1}=\langle\grad\,\varrho_x(y),Z_x(y)
\rangle,
\end{eqnarray*}and this implies the claim.\end{proof}

Nevertheless, the assumption \eqref{0key} could be too strong in
some cases. To circumvent this inconvenience, by arguing locally,
we can prove the following

\begin{lemma}\label{00key} Let ${S}\subset\GG$ be a $\cont^2$-smooth
hypersurface and let $x\in S\setminus C_S$. Then there exists
$R(x)>0$ such that
\begin{equation}\label{000key}\int_{S_R}\frac{1}{\varrho_x}|\langle Z_x,
\grad\ts\varrho_x\rangle|\,\per\leq \per(S_R)
\end{equation}for every $R\leq R(x)$.
\end{lemma}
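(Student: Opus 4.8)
The plan is to turn the statement into an integrated version of Lemma \ref{cardcaz}, and then to control in an averaged (never pointwise) way the defect produced by passing from the full Riemannian gradient $\grad\,\varrho_x$ to the tangential one $\grad\ts\varrho_x$. First I would use left-invariance of $\per$, of $Z_x$ and of $\varrho$ to reduce to $x=0\in\GG$, so that $Z_0=y\cc+2y\cd+\dots+ky\ck$ is the generator of the dilations $\delta_t$. Writing $\grad\ts\varrho_0=\grad\,\varrho_0-\langle\grad\,\varrho_0,\nu\rangle\,\nu$ and invoking Lemma \ref{cardcaz} (which gives $\langle Z_0,\grad\,\varrho_0\rangle=\varrho_0$), one obtains the exact pointwise identity
\[\frac{1}{\varrho_0}\langle Z_0,\grad\ts\varrho_0\rangle=1-T,\qquad T:=\frac{\langle Z_0,\nu\rangle\,\langle\grad\,\varrho_0,\nu\rangle}{\varrho_0}.\]
Thus \eqref{000key} is exactly $\int_{S_R}|1-T|\,\per\le\per(S_R)$, i.e. $\int_{S_R}(|1-T|-1)\,\per\le0$.

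Second, I would exploit $0\in S\setminus C_S$. Near $0$ the surface is an exponential graph over the vertical hyperplane $\mathcal{I}(\nn(0))=\exp(\nn(0)^\perp)$, with vanishing horizontal tangential derivatives at $0$; moreover $\mathcal{I}(\nn(0))$ is $\delta_t$-invariant, so $Z_0$ is tangent to it. This tangency is the crucial point: it forces $\langle Z_0,\nu\rangle$ to vanish to second anisotropic order along $S$, which exactly cancels the blow-up of the vertical part of $\grad\,\varrho_0$ (this part is only $O(\varrho_0^{-1})$). Combining the graph expansion with the structural bounds $|y\cc|\le\varrho_0$, $|y\ci|\le c_i\varrho_0^{i}$ and $|\grad\cc\varrho|\le1$ of Remark \ref{2iponhomnor1}, one checks that $T$ stays bounded on $S_R$, so $|1-T|$ is integrable and no singularity survives. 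I would stress here that the naive pointwise bound $|1-T|\le1$ is \emph{false} in general: explicit $\cont^2$-graphs in $\mathbb{H}^n$ produce points of $S$ with $T<0$, hence with $|1-T|>1$. Consequently the inequality cannot be obtained fibrewise and the integral cancellation is indispensable.

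Third, the core is a blow-up. Applying $\delta_{1/R}$ and using the non-characteristic blow-up Theorem \ref{BUP}, Case (a), one has $\per(S_R)=R^{Q-1}\big(\kappa_\varrho(\nn(0))+o(1)\big)$, while the boundedness of $T$ together with the $\cont^1$-convergence of the rescaled graphs to $\mathcal{I}(\nn(0))$ yields $L^1$-convergence of the rescaled integrand $|1-T|$ against the rescaled perimeter to $|1-\widetilde T|$ on the model hyperplane, where $\widetilde T$ is the (nontrivial, bounded) blow-up limit of $T$. This reduces \eqref{000key} to the single model inequality $\int_{\mathcal{I}(\nn(0))\cap B_\varrho(0,1)}\big(|1-\widetilde T|-1\big)\,\per\le0$; using the elementary identity $|1-\widetilde T|-1=-\widetilde T+2(\widetilde T-1)^{+}$, this is equivalent to $2\int(\widetilde T-1)^{+}\per\le\int\widetilde T\,\per$ on the unit model. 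A direct computation on $\mathbb{H}^n$ (for instance $\widetilde T=4t^2/(|x\cc|^4+16t^2)\in[0,1]$ for the planar model) shows $\widetilde T\ge0$ and $\int|1-\widetilde T|\,\per<\per$, the inequality being strict because $\widetilde T>0$ on a set of positive measure.

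Finally, since the limiting ratio $\per(S_R)^{-1}\int_{S_R}|1-T|\,\per$ is then strictly below $1$, the finite-$R$ inequality $\int_{S_R}|1-T|\,\per\le\per(S_R)$ holds for all $R\le R(x)$ once $R(x)$ is chosen small enough, which supplies the desired radius. I expect the genuine obstacle to be precisely the model inequality of the third step: establishing the correct sign of $\int\widetilde T\,\per$ and dominating the (possibly non-empty) region $\{\widetilde T\notin[0,2]\}$ by the gain coming from $\{\widetilde T\in(0,2)\}$. It is exactly here that the tangency of $Z_0$ to the vertical hyperplane and the homogeneity hypotheses on $\varrho$ must be used decisively, whereas the reductions and the blow-up convergence are routine in comparison.
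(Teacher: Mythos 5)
Your reduction and your diagnosis of the difficulty are essentially the same blow-up scheme the paper uses, but carried out more carefully, and your criticism of the fibrewise step is well founded. The paper's own proof rescales by $\vartheta^x_R$, asserts that $\langle Z_x,\nu\rangle$ and $\langle\grad\,\varrho_x,\nu\rangle$ converge to their horizontal pairings, and concludes the \emph{pointwise} bound $\phi\circ\vartheta^x_R\le 1$ on $\vartheta^x_{1/R}S\cap B_\varrho(x,1)$ for small $R$, where $\phi=\frac{1}{\varrho_x}|\langle Z_x,\grad\ts\varrho_x\rangle|$. As you observe, writing $\phi=|1-T|$ with $T=\frac{1}{\varrho_x}\langle Z_x,\nu\rangle\langle\grad\,\varrho_x,\nu\rangle$ (Lemma \ref{cardcaz}), the pullback of $T$ under $\vartheta^x_R$ is a product of a factor vanishing to second anisotropic order with a factor blowing up (in step two, like $\varrho_x^{-1}$); its limit $\widetilde T$ along the surface is generically \emph{not} the value of $T$ on the limit hyperplane (which is $0$), it depends on $\P\vv\nu(x)$ and on second-order data of $S$ at $x$, and once curvature terms enter it can indeed be negative on part of the model, so $|1-T|>1$ does occur arbitrarily close to $x$. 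In this respect your analysis exposes a real weakness of the paper's argument rather than merely deviating from it.

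The genuine gap is in your concluding step. Your mechanism only works if the limiting ratio $\per(S_R)^{-1}\int_{S_R}|1-T|\,\per$ is \emph{strictly} below $1$, and strictness fails exactly when $\widetilde T\equiv0$. This is not an exotic case: by your own formula $\widetilde T$ carries the vertical component of the normal as a factor, so $\widetilde T\equiv0$ whenever $\P\vv\nu(x)=0$, e.g.\ for any $\cont^2$-surface whose normal at $x$ is horizontal. There the ratio tends to $1$, convergence alone gives nothing at finite $R$, and the inequality is decided at the next order: one has $|1-T|-1=-T$ with $T=O(R)$ of indefinite sign, so one must prove $\int_{S_R}T\,\per\ge0$ directly (for $S=\{x_1=\gamma x_2^2\}\subset\mathbb{H}^1$ this holds only because the odd-in-$t$ part of $T$ is killed by the $t\mapsto -t$ symmetry of the Korany ball); your proposal contains no such argument. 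Two further steps are unsupported even when strictness is available: the model inequality itself (the sign of $\int\widetilde T\,\per$ and the domination of the region $\{\widetilde T\notin[0,2]\}$), which you verify only for a single plane in $\mathbb{H}^n$ and yourself flag as the crux; and the boundedness of $T$ needed for your $L^1$-convergence claim, which you derive from $|\P\vv\grad\,\varrho_x|=O(\varrho_x^{-1})$ --- true in step two, but in a step-$k$ group the top-layer derivatives of $\varrho$ grow like $\varrho_x^{1-k}$ (see Remark \ref{2iponhomnor1} and Theorem \ref{BUP}), and the $O(\varrho_x^2)$ tangency gain no longer compensates, so the argument does not cover the general Carnot groups to which the lemma applies.
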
\begin{proof}We may argue exactly as in Section \ref{blow-up}, Case (a),
for the case of non-characteristic points, by ``blowing-up'' the
hypersurface $S$ around $x\in S\setminus C_S$. By setting
$$\phi(y):=\frac{1}{\varrho_x(y)}|\langle Z_x(y),
\grad\ts\varrho_x(y)\rangle|$$and using the homogeneity of $\per$
under Carnot homothety $\vartheta^x_{R}(\cdot)$, we get that
\begin{eqnarray*}\int_{S_R}\phi\,\per=
R^{Q-1}\int_{\vartheta^x_{{1}/{R}}S\cap B_\varrho(x,
1)}\phi\circ\vartheta^x_R\,\per.
\end{eqnarray*}Now we remind that, at each
non-characteristic point $x\in S\setminus C_S$, one has
  $$\vartheta^x_{{1}/{R}}S\longrightarrow
\mathcal{I}(\nn(x))$$if $R\rightarrow 0^+$; see Section
\ref{blow-up}, Case (a). This implies that
\begin{itemize}\item $\langle Z_x,
\nu\rangle\longrightarrow \langle \PH(Z_x), \nn(x)\rangle$ \item
$\langle \grad\,\varrho_x, \nu\rangle\longrightarrow \langle
\PH(\grad\,\varrho_x), \nn(x)\rangle$
\end{itemize}at each point of
$\vartheta^x_{{1}/{R}}S\cap B_\varrho(x, 1)$ for $R\rightarrow
0^+.$ Therefore, it follows that $\frac{1}{\varrho_x}\langle Z_x,
\nu\rangle\leq 1$ at each point of $\vartheta^x_{{1}/{R}}S\cap
B_\varrho(x, 1)$ for $R\rightarrow 0^+$. Furthermore, by applying
Remark \ref{2iponhomnor1}, it follows that $\langle
\grad\,\varrho_x, \nu\rangle\leq 1$ at each point of
$\vartheta^x_{{1}/{R}}S\cap B_\varrho(x, 1)$ for $R\rightarrow
0^+$. Since, by Lemma \ref{cardcaz}, one has $\langle Z_x,
\grad\,\varrho_x \rangle=1$ everywhere, it follows that there
exists $R(x)>0$ such that $\phi\circ\vartheta^x_R\leq 1$ along
$\vartheta^x_{{1}/{R}}S\cap B_\varrho(x, 1)$ for every $R\leq
R(x)$. Hence \begin{eqnarray*}\int_{S_R}\phi\,\per=
R^{Q-1}\int_{\vartheta^x_{{1}/{R}}S\cap B_\varrho(x,
1)}\phi\circ\vartheta^x_R\,\per\leq
R^{Q-1}\int_{\vartheta^x_{{1}/{R}}S\cap B_\varrho(x,
1)}\per=\per(S_R)
\end{eqnarray*}for every $R\leq
R(x)$.
\end{proof}

 At this point, by means of
\eqref{1isopkklin} and of the previous Lemma \ref{00key}, we are
able to state and prove the claimed monotonicity formula for the
$\HH$-perimeter $\per$.

Henceforth, we shall set set ${S}_t:={S}\cap
{B_\varrho}(x,t)\,\,(t>0)$.

\begin{Prop}[Monotonicity of $\per$]\label{rmonin}  Let ${S}\subset\GG$ be a $\cont^2$-smooth
hypersurface. Then for every $x\in {\rm Int} S\setminus C_S$
there exists $R(x)>0$ such that the following differential
inequality
holds\begin{eqnarray}\label{rmytn}-\frac{d}{dt}\frac{\per({S}_t)}{t^{Q-1}}
\leq
\frac{1}{t^{Q-1}}\big\{\mathcal{A}_\infty(t)+\mathcal{B}_\infty(t)\big\}
\end{eqnarray}for $\mathcal{L}^1$-a.e. $t\in]0,R(x)[$.
\end{Prop}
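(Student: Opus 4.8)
The plan is to run the classical derivation of the monotonicity inequality (compare Proposition \ref{in}), but to feed in the sharp linear estimate \eqref{1isopkklin} of Proposition \ref{correctdimin} in place of the crude \eqref{1pisoplin}; this is precisely what upgrades the wrong exponent $\DH-1$ to the correct homogeneous exponent $Q-1$. First I would fix $x\in{\rm Int}\,S\setminus C_S$ and let $R(x)>0$ be the radius furnished by Lemma \ref{00key}. By Sard's Theorem, for $\mathcal{L}^1$-a.e. $t\in\,]0,R(x)[$ the sphere $\partial B_\varrho(x,t)$ meets $S$ transversally, so $S_t:=S\cap B_\varrho(x,t)$ is a $\cont^2$-smooth hypersurface with piecewise $\cont^1$-smooth boundary, circumscribed by the $\varrho$-ball $B_\varrho(x,t)$. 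Applying \eqref{1isopkklin} with circumscribing radius $t$ then gives
\[\per(S_t)\leq \frac{t}{Q-1}\Big\{\mathcal{A}_\infty(t)+\int_{\partial S_t}\frac{1}{\varrho_x}\Big|\Big\langle Z_x,\frac{\eta}{|\P\ss\eta|}\Big\rangle\Big|\,\nis\Big\}.\]

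Next I would split the boundary term along $\partial S_t=\{\partial S\cap B_\varrho(x,t)\}\cup\{S\cap\partial B_\varrho(x,t)\}$ and estimate the two pieces separately. Over the ``old'' boundary $\partial S\cap B_\varrho(x,t)$ the Cauchy--Schwarz bounds derived just before Proposition \ref{correctdimin}, namely $\frac{1}{\varrho_x}|\langle Z_x,\eta/|\P\ss\eta|\rangle|\leq 1+\sum_{i=2}^k i\,c_i\varrho_x^{i-1}|\chi\ciss|$, control this contribution by $\mathcal{B}_\infty(t)$. Over the ``spherical cap'' $S\cap\partial B_\varrho(x,t)$, where the outward unit normal of $S_t$ inside $S$ is $\eta=\grad\ts\varrho_x/|\grad\ts\varrho_x|$, I denote the contribution by $\mathcal{C}(t)$; the crux is to prove $\mathcal{C}(t)\leq\frac{d}{dt}\per(S_t)$ for a.e. $t$.

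This cap estimate is the main obstacle, and it is exactly where $x\notin C_S$ and the local scale $R(x)$ are needed. Using $\nis=|\PH\nu|\,|\P\ss\eta|\,\sigma^{n-2}\rr$ together with $\eta=\grad\ts\varrho_x/|\grad\ts\varrho_x|$, one rewrites $\mathcal{C}(t)$ as the integral over $\partial B_\varrho(x,t)\cap S$ of $\phi:=\frac{1}{\varrho_x}|\langle Z_x,\grad\ts\varrho_x\rangle|$ against $|\PH\nu|\,\sigma^{n-2}\rr$. Since $\per=|\PH\nu|\,\sigma^{n-1}\rr$, the Riemannian Coarea Formula \eqref{Riecoar} applied to $\varrho_x$ on $S$ yields $\int_t^{t_1}\mathcal{C}(s)\,ds=\int_{S_{t_1}\setminus S_t}\phi\,\per$. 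By Lemma \ref{cardcaz} one has $\langle Z_x,\grad\,\varrho_x\rangle=\varrho_x$ identically, and the blow-up analysis inside the proof of Lemma \ref{00key} promotes this to the pointwise bound $\phi\leq 1$ throughout $S_{R(x)}$; hence $\int_t^{t_1}\mathcal{C}(s)\,ds\leq\per(S_{t_1})-\per(S_t)$ for $t<t_1\leq R(x)$. A routine measure-theoretic step then closes the claim: $t\mapsto\int_{S_t}\phi\,\per$ is absolutely continuous with a.e. derivative $\mathcal{C}(t)$, while $t\mapsto\per(S_t)$ is monotone, so their difference is monotone nondecreasing with nonnegative derivative a.e., giving $\mathcal{C}(t)\leq\frac{d}{dt}\per(S_t)$ for a.e. $t$.

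Finally I would assemble the three estimates into
\[\per(S_t)\leq\frac{t}{Q-1}\Big\{\mathcal{A}_\infty(t)+\mathcal{B}_\infty(t)+\frac{d}{dt}\per(S_t)\Big\},\]
that is, $(Q-1)\per(S_t)-t\frac{d}{dt}\per(S_t)\leq t\{\mathcal{A}_\infty(t)+\mathcal{B}_\infty(t)\}$. Dividing by $t^Q$ and observing that the left-hand side equals $t^Q\big({-}\frac{d}{dt}\frac{\per(S_t)}{t^{Q-1}}\big)$ produces exactly \eqref{rmytn}, valid for $\mathcal{L}^1$-a.e. $t\in\,]0,R(x)[$. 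Beyond the cap estimate, the only points demanding care are the a.e.-in-$t$ smoothness of $S_t$ (Sard) and the fact that, since $\nis(C_{\varrho_x^{-1}[t]})=0$, characteristic points on the fibres contribute nothing to the coarea identities used above.
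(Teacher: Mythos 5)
Your proposal is correct and follows essentially the same route as the paper: apply the strong linear inequality \eqref{1isopkklin} to $S_t$, split $\partial S_t$ into the portion on $\partial S$ (bounded by $\mathcal{B}_\infty(t)$) and the spherical cap, and absorb the cap term into $\frac{d}{dt}\per(S_t)$ via a coarea identity together with the bound of Lemma \ref{00key}. Your rendering of the cap estimate — using the Riemannian Coarea Formula with the weight $|\PH\nu|$ and noting explicitly that one needs the pointwise bound $\phi\leq 1$ on $S_{R(x)}$ established inside the proof of Lemma \ref{00key} (not merely its integral statement), plus the monotone/absolutely-continuous comparison — is just a more carefully justified version of the steps the paper carries out.
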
\begin{proof}We begin by arguing exactly as in the first
part of the proof of Proposition \ref{in}. More precisely, by
applying the classical Sard's Theorem we get that ${S}_t$ is a
$\cont^2$-smooth manifold with boundary for $\mathcal{L}^1$-a.e.
$t>0$. By using \eqref{1isopkklin} we obtain
\[(Q-1)\,\per({S}_t)\leq
t\Bigg\{\int_{{S}_t}|\MS|\bigg(1+\sum_{i=2}^k
i\,c_i\varrho_{x}^{i-1}|\varpi\ci|\bigg)\per+\int_{
\partial{S}_t} \frac{1}{\varrho_{x}}\Big|\Big\langle Z_{x}, \frac{\eta}{|\P\ss\eta|}\Big\rangle\Big|
 \nis\Bigg\}\]for $\mathcal{L}^1$-a.e.
$t>0$, where $t$ is the radius of a $\varrho$-ball centered at
$x\in {\rm Int}{S\setminus C_S}$. Since
$$\partial{S}_t=\{\partial{S}\cap B_\varrho(x,t)\}\cup\{\partial
B_\varrho(x,t)\cap {S}\}$$ we get that
\begin{eqnarray*}(Q-1)\per({S}_t)&\leq&
t\,\Bigg\{\underbrace{\int_{{S}_t}|\MS|\bigg(1+\sum_{i=2}^k
i\,c_i\varrho_{x}^{i-1}|\varpi\ci|\bigg)\per}_{:=\mathcal{A}_\infty(t)}
+ \underbrace{\int_{\partial{S}\cap B_\varrho(x,t)
}\frac{1}{\varrho_{x}}\Big|\Big\langle Z_{x},
\frac{\eta}{|\P\ss\eta|}\Big\rangle\Big|\nis
 }_{:=\mathcal{B}(t)}\\ \nonumber&&+\underbrace{\int_{\partial
B_\varrho(x,t)\cap {S}}\frac{1}{\varrho_{x}}\Big|\Big\langle
Z_{x},
\frac{\eta}{|\P\ss\eta|}\Big\rangle\Big|\nis}_{:=\widetilde{\mathcal{B}}(t)}\Bigg\}.\end{eqnarray*}Notice
that the term $\mathcal{B}(t)$ can be estimated as in the case of
the boundary term of the inequality \eqref{1a}; see Proposition
\ref{correctdimin}. In particular, one has $\mathcal{B}(t)\leq
\mathcal{B}_\infty(t)$. Now let us estimate the third integral
$\widetilde{\mathcal{B}}(t)$ by using Coarea Formula and  Lemma
\ref{00key}. We have

\begin{eqnarray*}\per({S}_{{t+h}})-\per({S}_{t})&\geq&
\int_{{S}_{{t+h}}\setminus{S}_t}\frac{1}{\varrho_{x}}|\langle
Z_{x},
\grad\ts\varrho_{x}\rangle|\per\\&=&\int_{t}^{{t+h}}ds\int_{\partial
B_\varrho(x,s)\cap {S}}\frac{1}{\varrho_{x}}\Big|\Big\langle
Z_{x},
\frac{\eta}{|\P\ss\eta|}\Big\rangle\Big|\nis,\end{eqnarray*}for
all (small enough) $h>0$. From the last inequality we infer that
$$\frac{d}{dt}\per({S}_t)\geq\int_{\partial
B_\varrho(x,s)\cap {S}}\frac{1}{\varrho_{x}}\Big|\Big\langle
Z_{x}, \frac{\eta}{|\P\ss\eta|}\Big\rangle\Big|\nis$$for
$\mathcal{L}^1$-a.e. $t\in]0, R(x)[$.  Hence
\[(Q-1)\,\per({S}_t)\leq t\Big(
\mathcal{A}_\infty(t)+\mathcal{B}_\infty(t)+\frac{d}{dt}\per({S}_t)\Big)\]that
is equivalent to the thesis.

\end{proof}

Both Lemma \ref{00key} and Proposition \ref{rmonin} could be
generalized also to the case where $x\in C_S$, i.e. $x$
 is a characteristic point. This could be done by using the same approach
 used for studying characteristic points at Section \ref{blow-up}, Case(b).
  Nevertheless, for brevity, we will not consider this problem in its general
 formulation, but only
for the case of the Heisenberg group endowed with the Korany norm.

\begin{lemma}\label{condcar77}Let $\mathbb{H}^n$ be the $2n+1$-dimensional Heisenberg group and let
$\varrho(x)=\sqrt[4]{|x\cc|^4 +16 t^2}$ be the Korany norm. Let
$S\subset\mathbb{H}^n$ be  a  $\cont^2$-smooth hypersurface having
non-empty characteristic set $C_S$. Then \eqref{000key} holds
locally around every $x\in {\rm Int} S\cap C_S$. Consequently, the
monotonicity inequality \eqref{rmytn} holds true.
\end{lemma}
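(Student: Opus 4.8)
The plan is to imitate the proof of Lemma \ref{00key}, replacing the non-characteristic blow-up (Case (a) of Theorem \ref{BUP}) by the characteristic blow-up (Case (b)). By left-invariance of $\per$ and of all the quantities involved, I would first translate so that $x=0\in C_S$. Setting $\phi(y):=\frac{1}{\varrho_0(y)}|\langle Z_0(y),\grad\ts\varrho_0(y)\rangle|$ and using the homogeneity $\delta_R^\ast\per=R^{Q-1}\per$, the target inequality \eqref{000key} becomes, after rescaling, $\int_{(\delta_{1/R}S)\cap B_\varrho(0,1)}(\phi\circ\delta_R)\,\per\leq\per\big((\delta_{1/R}S)\cap B_\varrho(0,1)\big)$, so it suffices to establish $\phi\circ\delta_R\leq 1$ on $(\delta_{1/R}S)\cap B_\varrho(0,1)$ for all sufficiently small $R$. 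In $\mathbb{H}^n$ the only vertical direction is $X_{2n+1}=T$, which has $\mathrm{ord}=2$; hence Case (b) applies with $i=2$, and $\delta_{1/R}S$ converges, as $R\rightarrow 0^+$, to the limit graph $\Psi_\infty=\{(\zeta,\widetilde{\psi}(\zeta)):\zeta\in\ee_{2n+1}^\perp\}$, where $\widetilde{\psi}$ is a homogeneous quadratic polynomial in the horizontal variables.

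The decisive algebraic identity is Lemma \ref{cardcaz}, which gives $\langle Z_0,\grad\,\varrho_0\rangle=\varrho_0$ everywhere. Writing $\grad\ts\varrho_0=\grad\,\varrho_0-\langle\grad\,\varrho_0,\nu\rangle\nu$ for the tangential part ($\nu$ the Riemannian unit normal of the blown-up surface), this identity yields
$$\frac{1}{\varrho_0}\langle Z_0,\grad\ts\varrho_0\rangle=1-\frac{1}{\varrho_0}\langle\grad\,\varrho_0,\nu\rangle\,\langle Z_0,\nu\rangle,$$
so the task reduces to bounding the correction term and showing $0\leq\frac{1}{\varrho_0}\langle\grad\,\varrho_0,\nu\rangle\langle Z_0,\nu\rangle\leq 2$ on $\Psi_\infty$. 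Here I would use the explicit Korany gradient \eqref{0kinawa} together with $Z_0=x\cc+2tT$. As a sanity check at the characteristic point itself, where $\nu=T$, one computes $\langle\grad\,\varrho_0,T\rangle=8t/\varrho^3$ and $\langle Z_0,T\rangle=2t$, whence the correction equals $16t^2/\varrho^4=16t^2/(|x\cc|^4+16t^2)\in[0,1]$ and $\phi=|x\cc|^4/\varrho^4\leq 1$.

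Extending this to the full quadratic limit $\Psi_\infty$, where the normal $\nu$ tilts away from $T$, is the main obstacle: I would compute the left-invariant normal to the paraboloid $\{x_{2n+1}=\widetilde{\psi}(\zeta)\}$, insert it into \eqref{0kinawa} and into $\langle Z_0,\nu\rangle$, and verify by direct estimation that the product of the two inner products stays within $[0,2\varrho_0]$; the homogeneity of $\widetilde{\psi}$ and the precise $16t^2$-weighting in the Korany norm are exactly what keep the bound close. Granting this, the limiting integrand $\phi_\infty$ satisfies $\phi_\infty\leq 1$, and since $\phi\circ\delta_R\rightarrow\phi_\infty$ uniformly on the relevant compact portion of the blow-up, there is $R(0)>0$ with $\phi\circ\delta_R\leq 1$ for $R\leq R(0)$; this gives \eqref{000key} locally around $x$. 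Finally, the monotonicity inequality \eqref{rmytn} follows by repeating the proof of Proposition \ref{rmonin} essentially unchanged, the only ingredient there restricted to non-characteristic points being the estimate \eqref{000key}, now secured at characteristic points of $S\subset\mathbb{H}^n$.
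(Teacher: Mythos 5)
Your strategy is exactly the paper's: left-translate to $0$, blow up the $\cont^2$ graph at the characteristic point (in $\mathbb{H}^n$ condition \eqref{0dercond} with $i=2$ is automatic there, because characteristicity of $0$ means precisely that the horizontal first derivatives of the graph function vanish), invoke Lemma \ref{cardcaz} to write
$$\frac{1}{\varrho_0}\langle Z_0,\grad\ts\varrho_0\rangle=1-\frac{1}{\varrho_0}\langle Z_0,\nu\rangle\,\langle\grad\,\varrho_0,\nu\rangle,$$
bound the correction term on the limit set $\Psi_\infty$, and conclude as in Lemma \ref{00key}. However, as written your proof is incomplete at exactly the point where all the content of the lemma sits: you flag the estimate $0\leq\frac{1}{\varrho_0}\langle\grad\,\varrho_0,\nu\rangle\langle Z_0,\nu\rangle\leq 2$ on $\Psi_\infty$ as ``the main obstacle'' and then grant it. Your sanity check on the planes $\{t=\mathrm{const}\}$ (where $\nu=T$) never engages the Hessian, i.e.\ the tilt of $\nu$ away from $T$, so nothing in your text excludes that for a general quadratic limit graph the correction term becomes negative or exceeds $2$.

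The missing computation is short, and it is what the paper does. Write $\Psi_\infty=\exp\{(x\cc,\langle Ax\cc,x\cc\rangle)\}$ for a symmetric matrix $A$ coming from the Taylor expansion of the graph function at $0$; its Riemannian unit normal in the left-invariant frame is
$$\nu=\frac{\big(-Ax\cc+\tfrac12 x^\perp\cc,\,1\big)}{\sqrt{1+|Ax\cc|^2+\tfrac14|x\cc|^2}}.$$
Since $\langle x\cc,x^\perp\cc\rangle=0$ and $t=\langle Ax\cc,x\cc\rangle$ on $\Psi_\infty$, one gets $\langle Z_0,\nu\rangle=(-\langle Ax\cc,x\cc\rangle+2t)/\sqrt{\cdots}=\langle Ax\cc,x\cc\rangle/\sqrt{\cdots}$, while \eqref{0kinawa} gives $\langle\grad\,\varrho_0,\nu\rangle=\langle Ax\cc,x\cc\rangle\,(8+o(|x\cc|))/(\varrho^3\sqrt{\cdots})$. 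The structural fact you were missing is that \emph{both} inner products carry the same scalar factor $\langle Ax\cc,x\cc\rangle$, so the correction term is asymptotically
$$\frac{8\langle Ax\cc,x\cc\rangle^2}{\varrho^4}=\frac{8\langle A\xi\cc,\xi\cc\rangle^2}{1+16\langle A\xi\cc,\xi\cc\rangle^2},\qquad \xi\cc:=\frac{x\cc}{|x\cc|},$$
where $\varrho^4=|x\cc|^4+16t^2$ on $\Psi_\infty$: it is nonnegative because it is a square, and it is $\leq\tfrac12$ since $8s^2\leq\tfrac12(1+16s^2)$. Hence the limit of $\phi\circ\delta_R$ equals $1-c$ with $c\in[0,\tfrac12]$, in particular it is $\leq 1$, and your rescaling argument (identical to Lemma \ref{00key}) gives \eqref{000key} near $x$, whence \eqref{rmytn} as in Proposition \ref{rmonin}. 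With this computation inserted, your proposal coincides with the paper's proof.
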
\begin{proof}Because of the invariance under left-translations, and without loss of generality,
 we may assume that $0\in  {\rm Int}
S\cap C_S$. Thus, locally around the identity $0\in\mathbb{H}^n$,
one easily infers that
\[S=\big\{x=\exp(x\cc,\psi(x\cc)): x\cc\in\R^{2n},\,\psi\in\cont^2(U_0),
\, \psi(0_{2n})=0,
\,\grad_{\R^{2n}}\psi(0_{2n})=0_{2n}\big\},\]for some open
neighborhood $U_0$ of $0_{2n}\in\R^{2n}$. Setting $A:={\rm
Hess}_{\R^{2n}}\psi(0_{2n})$ and using an ordinary 2nd order
Taylor's expansion for $\psi$ near $0_{2n}$, one gets, after
routine calculations, that $$\psi(x\cc)=\langle Ax\cc, x\cc\rangle
+ o(|x\cc|^2),\qquad(|x\cc|\rightarrow 0^+).$$ Therefore, using
the same notation and terminology as in Section \ref{blow-up},
Case(b), we may blow-up $S$ around $0\in\mathbb{H}^n$. It turns
out that the limit set $\Psi_\infty$ is given by
$$\Psi_\infty=\exp\{(x\cc, \langle Ax\cc, x\cc\rangle): x\cc\in
U_0\}.$$Hence, if we denote by $\nu$ the Riemannian unit normal
along $\Psi_\infty$, we find that
$$\nu(x)=\frac{\big(-Ax\cc+\frac{x^\perp\cc}{2}, 1\big)}{\sqrt{1+ |Ax\cc|^2 +
\frac{|x\cc|^2  }{4}}}.$$So we get that\[\langle Z_0,
\nu\rangle=\frac{\langle Ax\cc, x\cc\rangle}{\sqrt{1+ |Ax\cc|^2 +
\frac{|x\cc|^2  }{4}}}\]and that
\[\langle \grad\,\varrho, \nu\rangle=\frac{\langle Ax\cc,
x\cc\rangle(8+ o(|x\cc|))}{\varrho^3{\sqrt{1+ |Ax\cc|^2 +
\frac{|x\cc|^2  }{4}}}}\sim\frac{8}{\varrho^3}\langle Ax\cc,
x\cc\rangle\]for $|x\cc|\rightarrow 0^+$. From what above one
infers that
$$\frac{1}{\varrho_x}\langle Z_x, \grad\ts\varrho_x\rangle\big|_{\Psi_\infty}=1-\frac{8\langle A\xi\cc,
\xi\cc\rangle^2}{(1+16\langle A\xi\cc, \xi\cc\rangle^2)(1+
o(|x\cc|))}\leq1$$for $|x\cc|\rightarrow 0^+$, where we have set
$\xi\cc:=\frac{x\cc}{|x\cc|}$. By arguing as in Lemma \ref{00key},
the claim follows.
\end{proof}

\subsection{Proof of Theorem \ref{isopineq}}\label{isopineq1}

\begin{lemma}\label{lem}Let $x\in {\rm Int}\,S\setminus C_S$ and set \footnote{The quantity $R(x)$ has been introduced at Lemma \ref{rmonin}.}
$R_0(x):=\min\Big\{R(x),
2\Big\{\frac{\per({S})}{k_\varrho(\nn(x))}\Big\}^{{1}/{Q-1}}\Big\}$.
Then, for every $\lambda\geq 2$ there exists $R\in ]0, R_0(x)[$
such that\begin{eqnarray*}\label{condlem}\per({S}_{\lambda R})\leq
\lambda^{Q-1}R_0(x)\big\{\mathcal{A}_\infty(R)+\mathcal{B}_\infty(R)\big\}.\end{eqnarray*}
\end{lemma}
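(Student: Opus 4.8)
The plan is to argue by contradiction along the classical lines of \cite{BuZa}, feeding the monotonicity formula of Proposition \ref{rmonin} into the normalized perimeter and using the exact blow-up value supplied by Theorem \ref{BUP}, Case (a). Throughout set
\[
u(t):=\frac{\per(S_t)}{t^{Q-1}},\qquad g(t):=\mathcal{A}_\infty(t)+\mathcal{B}_\infty(t),
\]
and note that $g$ is nonnegative and nondecreasing, while $\per(S_t)\le\per(S)$ for every $t$. Since $x\in{\rm Int}\,S\setminus C_S$ is non-characteristic, Theorem \ref{BUP}, Case (a), yields $\lim_{t\to0^+}u(t)=k_\varrho(\nn(x))$; moreover, as $x\in{\rm Int}\,S$, the boundary contribution $\mathcal{B}_\infty(t)$ vanishes for $t$ small, and $|\MS|$ is locally bounded, so $g(t)/t^{Q-1}$ stays bounded near $0$ and the weighted integrals below converge.

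First I would suppose, towards a contradiction, that the assertion fails for \emph{every} $R\in\,]0,R_0(x)[$, that is
\[
\mathcal{A}_\infty(R)+\mathcal{B}_\infty(R)=g(R)<\frac{\per(S_{\lambda R})}{\lambda^{Q-1}R_0(x)}\qquad\text{for all }R\in\,]0,R_0(x)[ .
\]
Writing $\per(S_{\lambda R})=(\lambda R)^{Q-1}u(\lambda R)$ and dividing by $R^{Q-1}$ turns this into $g(R)/R^{Q-1}<u(\lambda R)/R_0(x)$. Since $]0,R_0(x)[\,\subseteq\,]0,R(x)[$, the monotonicity inequality $-u'(t)\le g(t)/t^{Q-1}$ of Proposition \ref{rmonin} is available on this range, and substituting gives the self-referential differential inequality
\[
-u'(t)<\frac{u(\lambda t)}{R_0(x)}\qquad\text{for a.e. }t\in\,]0,R_0(x)[ .
\]

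Next I would integrate this inequality on $]\varepsilon,R_0(x)[$, let $\varepsilon\to0^+$, use $u(0^+)=k_\varrho(\nn(x))$, and apply the substitution $s=\lambda t$, obtaining a relation of the form
\[
k_\varrho(\nn(x))-u(R_0(x))\le\frac{1}{\lambda R_0(x)}\int_0^{\lambda R_0(x)}u(s)\,ds .
\]
The contradiction is then to be extracted from this relation together with the two pieces of quantitative information built into the problem: the bound $\per(S_s)\le\per(S)$, which controls $u$ on the outer range $]R_0(x),\lambda R_0(x)[$, and the calibration $R_0(x)\le2\{\per(S)/k_\varrho(\nn(x))\}^{1/(Q-1)}$, equivalently $k_\varrho(\nn(x))\,R_0(x)^{Q-1}\le2^{Q-1}\per(S)$, which ties the outer density to $k_\varrho(\nn(x))$. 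Once the contradiction is reached, some $R\in\,]0,R_0(x)[$ must satisfy $\per(S_{\lambda R})\le\lambda^{Q-1}R_0(x)\,g(R)$, which is the claim (and which is exactly the form needed to run the subsequent Vitali covering toward Theorem \ref{isopineq}).

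The hard part will be closing this last estimate quantitatively. The monotonicity formula controls only the \emph{decrease} of the density ratio $u$, so it does not by itself bound $u$ from above; the two competing bounds — the normalization coming from the blow-up value $k_\varrho(\nn(x))$ at small scales and the upper bound on $g$ coming from the contradiction hypothesis — must therefore be balanced with genuine care, and this is precisely where \emph{both} terms defining $R_0(x)=\min\{R(x),\,2\{\per(S)/k_\varrho(\nn(x))\}^{1/(Q-1)}\}$ are used: $R(x)$ guarantees that the monotonicity formula is valid all the way out to the outer radius, while the second term calibrates the outer density against $k_\varrho(\nn(x))$ so that the integral relation above becomes untenable for $\lambda\ge2$. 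A final point to keep in mind is that the whole argument is confined to non-characteristic interior points, so that both Proposition \ref{rmonin} and Theorem \ref{BUP}, Case (a), apply verbatim; the characteristic analogue would instead have to rest on the local analysis carried out in Lemma \ref{condcar77}.
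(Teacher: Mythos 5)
Your skeleton coincides with the paper's: argue by contradiction, feed the negated conclusion into the monotonicity inequality \eqref{rmytn}, integrate, substitute $s=\lambda t$, split the resulting integral at $R_0(x)$, bound the outer piece via $\per(S_s)\le\per(S)$, and close with the calibration built into $R_0(x)$. But there is a genuine gap at exactly the step you flag as ``the hard part'', and it is caused by your decision to integrate on $]\varepsilon,R_0(x)[$ and let $\varepsilon\to0^+$ so as to use the blow-up value. Writing, as you do, $u(t)=\per(S_t)/t^{Q-1}$ and $g(t)=\mathcal{A}_\infty(t)+\mathcal{B}_\infty(t)$, your integrated relation is
\begin{equation*}
k_\varrho(\nn(x))-u(R_0(x))\;\le\;\frac{1}{\lambda R_0(x)}\int_0^{R_0(x)}u(s)\,ds\;+\;\frac{1}{\lambda R_0(x)}\int_{R_0(x)}^{\lambda R_0(x)}u(s)\,ds,
\end{equation*}
and the first (inner) integral is controlled by neither of your two quantitative ingredients: the bound $\per(S_s)\le\per(S)$ gives only $u(s)\le\per(S)/s^{Q-1}$, which is not integrable near $s=0$ (since $Q-1\ge3$), while the blow-up value controls $u$ only in the limit $s\to0^+$. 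The monotonicity formula bounds only the \emph{decrease} of $u$, so under the contradiction hypothesis $u$ may still be much larger than $k_\varrho(\nn(x))$ at intermediate scales ($\per(S_t)$ can jump upward without constraint). Setting $\beta:=\sup_{]0,R_0(x)]}u$, the best your relation yields is $k_\varrho(\nn(x))\le u(R_0(x))+\beta/\lambda+\frac{\lambda-1}{\lambda}\frac{\per(S)}{R_0(x)^{Q-1}}$, which is merely a \emph{lower} bound on $\beta$; since the only a priori link is $k_\varrho(\nn(x))\le\beta$, the term $\beta/\lambda$ cannot be absorbed and no contradiction follows.

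The paper's proof repairs precisely this point by putting $\beta$ itself on the left-hand side: one integrates the monotonicity inequality from an \emph{arbitrary} $R\in]0,R_0(x)[$ to $R_0(x)$ (not from $0^+$), obtaining \eqref{ppp1}, i.e. $u(R)\le u(R_0(x))+\int_0^{R_0(x)}g(t)\,t^{-(Q-1)}dt$ for every such $R$, hence the same bound for $\beta$. The negated conclusion then bounds $\int_0^{R_0(x)}g(t)\,t^{-(Q-1)}dt$ by $\frac{\beta}{\lambda}+\frac{\lambda-1}{\lambda}\frac{\per(S)}{R_0(x)^{Q-1}}$ exactly as in your computation, and now $\beta/\lambda$ \emph{can} be absorbed into the left (this is where $\lambda\ge2$ enters), giving $\frac{\lambda-1}{\lambda}\beta\le\frac{2\lambda-1}{\lambda}\frac{\per(S)}{R_0(x)^{Q-1}}$. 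Only at this last stage is the blow-up information used, through $k_\varrho(\nn(x))\le\beta$, together with the calibration $\per(S)/R_0(x)^{Q-1}=k_\varrho(\nn(x))/2^{Q-1}$ and $Q-1\ge3$, to reach $\lambda-1\le(2\lambda-1)/8$, i.e. $\lambda\le7/6<2$. Two smaller remarks: your integration of $-u'$ tacitly assumes enough regularity of $u$, and needs the paper's observation that $u$ splits into a nondecreasing part plus an absolutely continuous part, so that the a.e.\ differential inequality integrates in the needed direction; and note that the calibration step (in your sketch as in the paper) uses $\per(S)/R_0(x)^{Q-1}=k_\varrho(\nn(x))/2^{Q-1}$, which is an equality only when the minimum defining $R_0(x)$ is attained by its second entry, a point worth handling explicitly.
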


\begin{oss}\label{boundonmetricfactor}We remind that the metric-factor $k_\varrho(\nn)$
turns out to be constant, for instance, by assuming that $\varrho$
be symmetric on all layers; see, for instance, \cite{Mag3}.
Anyway, the metric factor is uniformly bounded by two positive
constants $k_1$ and $k_2$. This can be easily  deduced by making
use of the so-called {\bf ball-box metric}\footnote{By definition
one has
$${\rm Box}(x,R)=\big\{y=\exp\big(\sum_{i=1,...,k} y\ci\big)\in\GG\,|\,
\|y\ci-x\ci\|_\infty\leq R^i\big\},$$where $y\ci=\sum_{j_i\in
I\ci}y_{j_i}\ee_{j_i}$ and $\|y\ci\|_\infty$ is the sup-norm on
the $i$-th layer of $\gg$; see, for instance, \cite{Gr1},
\cite{Montgomery}.} and by a homogeneity argument. Indeed, for any
given $\varrho$-ball $B_\varrho(x,R)$, there exist two boxes ${\rm
Box}(x,R_1),\,{\rm Box}(x,R_2)\,\,\,(R_1\leq R\leq R_2)$ such that
$${\rm Box}(x,R_1)\subseteq B_\varrho(x,R)\subseteq{\rm
Box}(x,R_2).$$Remind that
$$k_\varrho(\nn(x))=\per(\mathcal{I}(\nn(x))\cap B_\varrho(x,1))=\Ar(\mathcal{I}(\nn(x))\cap B_\varrho(x,1)).$$
Now let us choose $R_1, R_2$ such that $0<R_1\leq 1\leq R_2$ and
${\rm Box}(x,R_1)\subseteq B_\varrho(x,1)\subseteq{\rm
Box}(x,R_2).$ Since $\delta_{t}{\rm Box}(x,1/2)={\rm Box}(x,t/2)$
for every $t\geq 0$, by a simple computation\footnote{The unit box
${\rm Box}(x,1/2)$ is the left-translated at the point $x$ of that
centered at the identity of $\GG$, i.e. ${\rm Box}(0,1/2)$. So, by
the left-invariance of the measure $\per$, the computation can be
done at $0\in\GG$. By observing that ${\rm Box}(0,1/2)$ (or, more
precisely, $\llog({\rm Box}(0,1/2))$) can be regarded just as the
unit hypercube of $\R^n\cong\gg$, it remains to show how we can
estimate the $\per$-measure (remind that on vertical hyperplanes,
one has $\per=\Ar$)  of the intersection of the unit box ${\rm
Box}(0,1/2)$ with a ``generic'' vertical hyperplane through the
origin $0\in\R^n\cong\gg$.  But this is a very simple matter.
Indeed, if $\mathcal{I}(X)$ denotes the vertical hyperplane
through the origin of $\Rn$ and orthogonal to the (unit)
horizontal vector $X\in\HH$, we get that
$$1\leq\Ar({\rm Box}(0,1/2)\cap\mathcal{I}(X))\leq\sqrt{n-1},$$where we notice that $\sqrt{n-1}$ is just the diameter of any
face of the unit hypercube of $\Rn$. Therefore
\begin{eqnarray*}\big(\delta_{2R_1}{\rm
Box}(0,1/2)\cap\mathcal{I}(X)\big)\subseteq
\big(B_\varrho(0,1)\cap\mathcal{I}(X)\big)\subseteq
\big(\delta_{2R_2}{\rm Box}(x,1/2)\cap\mathcal{I}(X)\big)
\end{eqnarray*}and so\begin{eqnarray*}{(2R_1)}^{Q-1}&\leq&{(2R_1)}^{Q-1}\Ar({\rm Box}(0,1/2)\cap\mathcal{I}(X))
\leq
\Ar(B_\varrho(0,1)\cap\mathcal{I}(X))\\&=&\kappa_\varrho(X)\leq{(2R_2)}^{Q-1}\Ar({\rm
Box}(0,1/2)\cap\mathcal{I}(X))\leq\sqrt{n-1}{(2R_2)}^{Q-1}.\end{eqnarray*}}we
get that
$$(2R_1)^{Q-1}\leq k_\varrho(\nn(x))\leq
\sqrt{n-1}\,(2R_2)^{Q-1}.$$In particular, we may put
$k_1=(2R_1)^{Q-1}$ and $k_2=\sqrt{n-1}\,{(2R_2)}^{Q-1}$.\end{oss}

We stress that, as an application of Remark
\ref{boundonmetricfactor}, for every homogeneous metric $\varrho$
, one can choose two positive constants $k_1,\,k_2$ such that
\begin{eqnarray}\label{emfac}k_1\leq \kappa_\varrho(\nn(x))\leq
k_2\end{eqnarray}for every $x\in {S}\setminus C_{S}$.

\begin{no}Henceforth, we shall set
$R_0({S}):=\sup_{x\in {S}\setminus C_{S}}R_0(x).$
\end{no}

\begin{proof}[Proof of Lemma \ref{lem}]Let $R\in]0,R_0(x)[$. Note that $\per({S}_t)$ is a monotone non-decreasing function of $t$ on
$[R,R_0]$. So let us write the identity
$$\per({S}_t)/t^{Q-1}=\big\{\per({S}_t)/t^{Q-1}-\per({S}_{R_0(x)})\big\}/t^{Q-1}+\per({S}_{R_0(x)})/t^{Q-1}.$$The first addend is an  increasing  function of $t$, while
the second one is an absolutely continuous function of $t$.
Therefore, by integrating the differential inequality
\eqref{rmytn}, we get that

\begin{equation}\label{ppp1}\frac{\per({S}_{R})}{R^{Q-1}}
\leq\frac{\per({S}_{R_0(x)})}{R_0(x)^{Q-1}}+\int_0^{R_0(x)}{\big\{\mathcal{A}_\infty(t)+\mathcal{B}_\infty(t)
\big\}}{t^{-(Q-1)}}dt.\end{equation}Therefore

\begin{eqnarray}\label{ppp2}\beta&:=&\sup_{R\in[0,R_0(x)]}\frac{\per({S}_{R})}{R^{Q-1}}
\leq\frac{\per({S}_{R_0(x)})}{R_0(x)^{Q-1}}+\int_0^{R_0(x)}{\big\{\big(\mathcal{A}_\infty(t)+\mathcal{B}_\infty(t)
\big\}}{t^{-(Q-1)}}dt.\end{eqnarray} Now we argue by
contradiction. If the lemma is false, it follows that for every
$R\in]0,R_0(x)[$\begin{equation}\label{negcondlem}\per({S}_{\lambda
R})>\lambda^{Q-1}R_0(x)\big\{\mathcal{A}_\infty(R)+\mathcal{B}_\infty(R)\big\}.\end{equation}From
the last inequality we infer that
\begin{eqnarray*}&&\int_0^{R_0(x)}\big\{\mathcal{A}_\infty(t)+\mathcal{B}_\infty(t)\big\}{t^{-(Q-1)}}dt\\&\leq&\frac{1}{\lambda^{Q-1}R_0(x)}\int_{0}^{R_0(x)}\per({S}_{\lambda
t})t^{-(Q-1)}dt\\&=&\frac{1}{\lambda R_0(x)}\int_{0}^{\lambda
R_0(x)}\per({S}_{s})s^{-(Q-1)}ds\\&=&\frac{1}{\lambda
R_0(x)}\int_{0}^{\lambda R_0(x)}\per({S}_{s})s^{-(Q-1)}ds+
\frac{1}{\lambda R_0(x)}\int_{R_0(x)}^{\lambda
R_0(x)}\per({S}_{s})s^{-(Q-1)}ds\\&\leq
&\frac{\beta}{\lambda}+\frac{\lambda-1}{\lambda}\frac{\per({S})}{R_0(x)^{Q-1}}.\end{eqnarray*}By
using \eqref{ppp1} we get that
$$\beta\leq\frac{\per({S}_{R_0(x)})}{R_0(x)^{Q-1}}+
 \frac{\beta}{\lambda}+\frac{\lambda-1}{\lambda}\frac{\per({S})}{R_0(x)^{Q-1}}$$and so
$$\frac{\lambda-1}{\lambda}\beta\leq\frac{2\lambda-1}{\lambda}\frac{\per({S})}{R_0(x)^{Q-1}}=\frac{2\lambda-1}
{\lambda}\frac{k_\varrho(\nn(x))}{2^{Q-1}}.$$By its own
definition, one has
$$k_\varrho(\nn(x))=\lim_{R\searrow
0^+}\frac{\per({S}_R)}{R^{Q-1}}\leq\beta.$$Furthermore,
since\footnote{Indeed, the first non-abelian Carnot group is the
Heisenberg group $\mathbb{H}^1$ for which one has $Q=4$. Moreover,
note that, since the theory of Carnot groups also contains, as a
special case, the theory of Euclidean spaces, in the above
argument we can use the estimate $Q-1\geq 2$ which is relative to
the case of a 2-dimensional surface in $\R^3$; indeed, in such a
case $Q=3$, since the homogeneous dimension coincides with the
topological one.} $Q-1\geq 3$, we get that$$\lambda-1\leq
\frac{2\lambda-1}{8},$$or equivalently $\lambda\leq\frac{7}{6}$,
which contradicts the hypothesis $\lambda\geq 2$.

\end{proof}

 The next covering lemma is well-known and can be found in \cite{BuZa}; see also \cite{FE}.
\begin{lemma}[Vitali's Covering Lemma]\label{cov}Let $(X,d)$ be a compact metric space and
let $A\subset X$. Moreover, let $\mathcal{C}$ be a covering of $A$
by closed $d$-balls with centers in $A$. We also assume that each
point $x$ of $A$ is the center of at least one closed $d$-ball
belonging to $\mathcal{C}$ and that the radii of the balls of the
covering are uniformly bounded by some positive constant. Then,
for every $\lambda> 2$ there exists a no more than countable
subset $\mathcal{C}'\subset\mathcal{C}$ of pairwise
non-intersecting closed balls $\overline{B}(x_k,R_k)$ such that
$$A\subset\bigcup_k{B}(x_k,\lambda R_k).$$

\end{lemma}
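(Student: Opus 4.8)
The plan is to prove this by the classical greedy maximal-disjoint-subfamily construction, organised by radius bands — exactly as in the Vitali/$5r$-covering argument — but with the band ratio tuned to the given $\lambda$. First I would write $M:=\sup_{B\in\mathcal{C}}r(B)$, which is finite by the uniform bound on the radii. I would then fix once and for all a number $\theta\in(0,1)$ with $\theta\geq\frac{1}{\lambda-1}$; such a $\theta$ exists precisely because $\lambda>2$ forces $\frac{1}{\lambda-1}<1$. Next I would sort $\mathcal{C}$ into generations $\mathcal{C}_n:=\{B\in\mathcal{C}:\theta^nM<r(B)\leq\theta^{n-1}M\}$ for $n\geq1$; since every ball has positive radius $\leq M$, each $B\in\mathcal{C}$ lands in exactly one $\mathcal{C}_n$.

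I would then select the disjoint subfamily inductively. Let $\mathcal{C}'_1$ be a maximal pairwise-disjoint subfamily of $\mathcal{C}_1$, and, having chosen $\mathcal{C}'_1,\dots,\mathcal{C}'_{n-1}$, let $\mathcal{C}'_n$ be a maximal pairwise-disjoint subfamily of those $B\in\mathcal{C}_n$ that meet none of the balls already chosen. Such maximal families exist without invoking Zorn's lemma: by compactness $X$ is totally bounded, so any pairwise-disjoint family of balls of radius $>\theta^nM$ has cardinality at most the packing number of $X$ at scale $\theta^nM$, which is finite; hence the greedy selection terminates after finitely many steps and each $\mathcal{C}'_n$ is finite. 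Setting $\mathcal{C}':=\bigcup_n\mathcal{C}'_n=\{\overline{B}(x_k,R_k)\}_k$, the family is pairwise disjoint by construction and at most countable, being a countable union of finite families.

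It remains to verify the covering property, and this is where the hypothesis that every $a\in A$ is the centre of some ball of $\mathcal{C}$ is used. Fix $a\in A$ and let $B_a=\overline{B}(a,r_a)\in\mathcal{C}$, say $B_a\in\mathcal{C}_n$. By maximality of $\mathcal{C}'_n$, either $B_a\in\mathcal{C}'$ — in which case $a$ is one of the centres $x_k$ and trivially $a\in B(x_k,\lambda R_k)$ — or $B_a$ meets some already chosen $B'=\overline{B}(x',r')\in\mathcal{C}'_1\cup\dots\cup\mathcal{C}'_n$. In the latter case $r'>\theta^nM\geq\theta r_a$, because $r_a\leq\theta^{n-1}M$, and since $B_a\cap B'\neq\emptyset$ we have $d(a,x')\leq r_a+r'<\bigl(1+\tfrac1\theta\bigr)r'\leq\lambda r'$, using $\tfrac1\theta\leq\lambda-1$. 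Hence $a\in B(x',\lambda r')$, and therefore $A\subset\bigcup_k B(x_k,\lambda R_k)$.

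The conceptual point — and the only place where the bookkeeping must be done with care — is that one must cover the \emph{centres} of the witnessing balls rather than the whole balls: demanding that an entire ball $B_a$ sit inside a $\lambda$-dilate of a chosen ball of comparable radius would force $\lambda>3$, whereas covering only the centre $a$ reduces the requirement to $r_a+r'\leq\lambda r'$ and thus accommodates any $\lambda>2$. The remaining technical points — finiteness of each generation via total boundedness and the resulting countability of $\mathcal{C}'$ — are routine, and the sharp form of the statement is the one recorded in \cite{BuZa}.
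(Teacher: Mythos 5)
Your proof is correct. Note that the paper itself offers no proof of this lemma: it is stated as a known result, quoted from \cite{BuZa} (see also \cite{FE}), so there is no internal argument to compare against. What you give is, in essence, the standard argument behind those references, written out in full: the banding of radii by a ratio $\theta$ chosen with $\theta\geq 1/(\lambda-1)$ (which is exactly the point where $\lambda>2$ enters), greedy selection of maximal disjoint subfamilies generation by generation, and the estimate $d(a,x')\leq r_a+r'<(1+1/\theta)\,r'\leq\lambda r'$ for the covering step. The two delicate points are handled correctly: finiteness of each generation follows from total boundedness of the compact space $X$, since pairwise disjoint closed balls of radius $>r$ have centers at pairwise distance $>r$ and such a configuration is finite; and your closing observation — that one only needs to cover the \emph{centers}, which is legitimate precisely because every point of $A$ is assumed to be the center of some ball of $\mathcal{C}$ — is exactly what lowers the threshold from the $\lambda>3$ (or classical $5r$) regime to the stated $\lambda>2$.
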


We are now in a position to prove our main result.

\begin{proof}[Proof of Theorem \ref{isopineq}] Fist we shall apply Lemma \ref{lem}. To this aim,  let
$\lambda>2$ be fixed and, for every $x\in{\rm Int}{S}\setminus
C_{{S}}$, let $R(x)\in]0, R_0(S)[$ be such
that$$\per({S}_{R(x)})\leq\lambda^{Q-1}R_0({S})
\big\{\mathcal{A}_\infty(R(x))+\mathcal{B}_\infty(R(x))\big\}.$$Now
let us consider the
 covering $\mathcal{C}=\{\overline{B_\varrho}(x,R(x))\}$ of the (relatively compact) set
 ${S}\setminus C_{{S}}\subset\GG$. By Lemma \ref{cov}, there
 exists a sub-covering $\mathcal{C}'\subset\mathcal{C}$ of pairwise
non-intersecting closed balls $\overline{B}_\varrho(x_k,R(x_k))$,
such that
$${S}\setminus C_{S}\subset\bigcup_k{B}_\varrho(x_k,\lambda R(x_k)).$$ This
implies that \begin{eqnarray*}\per({S})&\leq&\sum_{k}\per({S}\cap
B_\varrho(x_k,\lambda R(x_k)))\\&\leq&
\lambda^{Q-1}R_0({S})\sum_k\big\{\mathcal{A}_\infty(R(x_k))+
\mathcal{B}_\infty(R(x_k))\big\}\\&\leq&\lambda^{Q-1}R_0({S})\big\{\mathcal{A}_\infty({S})+\mathcal{B}_\infty({S})\big\}.
\end{eqnarray*}By letting $\lambda\searrow 2$ we get that
\begin{eqnarray}\label{gha}\per({S})\leq
2^{Q-1}R_0({S})\big\{\mathcal{A}_\infty({S})+\mathcal{B}_\infty({S})\big\}.\end{eqnarray}
Since$$2^{Q-1}R_0({S})\leq 2^{Q-1}\sup_{x\in {S}\setminus C_{S}}2
\Big\{\frac{\per({S})}{k_\varrho(\nn(x))}\Big\}^{\frac{1}{Q-1}}=2^Q\sup_{x\in
{S}\setminus
C_{S}}\frac{\{\per({S})\}^{\frac{1}{Q-1}}}{\{k_\varrho(\nn(x))\}^{\frac{1}{Q-1}}},$$
by making use of \eqref{emfac} one gets that
$$2^{Q-1}R_0({S})\leq 2^Q
\frac{\{\per({S})\}^{\frac{1}{Q-1}}}{k_1^{\frac{1}{Q-1}}},$$and so
\begin{eqnarray}\label{gjha}\{\per({S})\}^{\frac{Q-2}{Q-1}}\leq
\frac{2^{Q}}{k_1^{\frac{1}{Q-1}}}\big\{\mathcal{A}_\infty({S})+\mathcal{B}_\infty({S})\big\}.\end{eqnarray}Obviously,
the last inequality is equivalent to \eqref{isq} by setting
$C_I:=2^{\frac{Q\cdot Q-1}{Q-2}}\cdot k_1^{\frac{1}{2-Q}},$ where
$k_1$ has been defined in Remark \ref{boundonmetricfactor}. Note
that, by construction, $k_1$ only depends on the homogeneous
metric $\varrho$; see Remark \ref{boundonmetricfactor}. Moreover,
for a constant metric factor $\kappa_\varrho(\nn)$, one may choose
$k_1=\kappa_\varrho(\nn)$. Anyway the constant $C_I$ only depends
on the group $\GG$ and on the homogeneous metric $\varrho$. This
completes the proof of \eqref{isq0}.

\end{proof}

\[\]

\subsection{An application of the monotonicity formula: asymptotic
behavior of $\per$}\label{asintper}  The monotonicity formula
\eqref{rmytn} can equivalently be formulated as
follows:$$\frac{d}{dt}\Bigg\{\frac{\per({S}_t)}{t^{Q-1}}\,\exsp
\bigg(\int_0^t\frac{\mathcal{A}_\infty(s)+\mathcal{B}_\infty(s)}{\per({S}_s)}ds\bigg)\Bigg\}\geq
0$$ for $\mathcal{L}^1$-a.e. $t\in]0,R(x)[$ and for every $x\in
S\setminus C_S$. By Theorem \ref{BUP}, Case (a), we may pass to
the limit as $t\searrow 0^+$ in the previous inequality; see
Section \ref{blow-up}. Hence
\begin{eqnarray*}&&\per({S}_t)\geq
\kappa_{\varrho}(\nn(x))\,t^{Q-1}\exsp
\bigg(-\int_0^t\frac{\mathcal{A}_\infty(s)+\mathcal{B}_\infty(s)}{\per({S}_s)}ds\bigg),\end{eqnarray*}for
every $x\in{\rm Int}({S})$ satisfying the hypotheses of Theorem
\ref{BUP}.

\begin{corollario}\label{asynt}Under the above hypotheses and notation, let us assume that $$\partial{S}\cap
B_\varrho(x,t)={\emptyset}$$ and that $|\MS|\leq\MS^0<+\infty$.
Then, for every $x\in{S}\setminus C_{S}$, one has
\begin{eqnarray}\label{asintotica}
\per({S}_t)\geq \kappa_{\varrho}(\nn(x))\,t^{Q-1} e^{-t
\{\MS^0(1+O(t))\}}.\end{eqnarray}
\end{corollario}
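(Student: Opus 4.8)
The plan is to start from the integrated form of the monotonicity inequality displayed immediately before the statement, namely
$$\per({S}_t)\geq \kappa_{\varrho}(\nn(x))\,t^{Q-1}\exsp \bigg(-\int_0^t\frac{\mathcal{A}_\infty(s)+\mathcal{B}_\infty(s)}{\per({S}_s)}ds\bigg),$$
which holds for every non-characteristic $x\in S\setminus C_S$ by Proposition \ref{rmonin} combined with the blow-up asymptotics of Theorem \ref{BUP}, Case (a). The whole task then reduces to producing a pointwise-in-$s$ upper bound of the form $\frac{\mathcal{A}_\infty(s)+\mathcal{B}_\infty(s)}{\per({S}_s)}\leq \MS^0(1+O(s))$ and integrating it, so that the exponent above becomes $-t\{\MS^0(1+O(t))\}$.

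First I would exploit the hypothesis $\partial S\cap B_\varrho(x,t)=\emptyset$: since $\mathcal{B}_\infty(s)$ is by definition an integral over $\partial S\cap B_\varrho(x,s)$, it vanishes identically for all $s\leq t$, so only $\mathcal{A}_\infty$ survives. Next I would estimate $\mathcal{A}_\infty(s)$ on $S_s=S\cap B_\varrho(x,s)$. Using $|\MS|\leq\MS^0$ together with the elementary bound $\varrho_x(y)=\varrho(x,y)\leq s$ for $y\in B_\varrho(x,s)$, one gets
$$\mathcal{A}_\infty(s)\leq \MS^0\int_{S_s}\Big(1+\sum_{i=2}^k i\,c_i\,s^{i-1}|\varpi\ci|\Big)\per.$$
The key observation is that, because $x$ is non-characteristic, $|\PH\nu|$ is bounded away from zero in a neighborhood of $x$, so $\varpi=\frac{\P\vv\nu}{|\PH\nu|}$ and each of its layer components $\varpi\ci$ are continuous and uniformly bounded near $x$. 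Hence the correction factor $1+\sum_{i=2}^k i\,c_i\,s^{i-1}|\varpi\ci|$ equals $1+O(s)$ uniformly on $S_s$ (the lowest power appearing being $s^{i-1}$ with $i\geq 2$), and therefore $\mathcal{A}_\infty(s)\leq \MS^0(1+O(s))\,\per(S_s)$.

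Dividing by $\per(S_s)$ — which is strictly positive for small $s$, since $\per(S_s)/s^{Q-1}\to\kappa_\varrho(\nn(x))\geq k_1>0$ by \eqref{emfac} — I would obtain $\frac{\mathcal{A}_\infty(s)}{\per(S_s)}\leq\MS^0(1+O(s))$. Integrating from $0$ to $t$ then gives $\int_0^t\frac{\mathcal{A}_\infty(s)}{\per(S_s)}ds\leq \MS^0 t+\MS^0\,O(t^2)=t\,\{\MS^0(1+O(t))\}$. Inserting this into the exponential in the integrated monotonicity inequality and using monotonicity of $\exsp$ immediately yields
$$\per(S_t)\geq\kappa_\varrho(\nn(x))\,t^{Q-1}\,e^{-t\{\MS^0(1+O(t))\}},$$
which is exactly \eqref{asintotica}.

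The only genuinely delicate point — the remainder being bookkeeping — is the uniform control of the $O(s)$ term: one must check that the bound on $|\varpi\ci|$ can be chosen independently of the point in $S_s$ and that the constant implied in $O(s)$ depends only on the local geometry of $S$ near $x$ (through $\sup|\varpi\ci|$, the constants $c_i$, and the step $k$), not on $s$. This is precisely where non-characteristicity of $x$ enters, and it is also what makes the estimate degenerate as $x\to C_S$; for a characteristic $x$ a separate analysis in the spirit of Lemma \ref{condcar77} would be needed, which is why the statement is restricted to $x\in{S}\setminus C_{S}$.
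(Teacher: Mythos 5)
Your proposal is correct and follows essentially the same route as the paper: vanishing of $\mathcal{B}_\infty$ from the empty-boundary hypothesis, a local uniform bound $|\varpi\ci|\leq\varepsilon_i$ near the non-characteristic point (which the paper asserts and you correctly justify via $|\PH\nu|$ being bounded away from zero), the bound $\varrho_x\leq s$ on $S_s$, and integration of $\MS^0\big(1+\sum_i \varepsilon_i s^{i-1}\big)$ to produce the exponent $-t\{\MS^0(1+O(t))\}$. You have merely filled in the details that the paper's terse proof leaves implicit.
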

\begin{proof}Since, by hypothesis, $\mathcal{B}_\infty(s)=0,$ we
just have to bound $\int_0^t
\frac{\mathcal{A}_\infty(s)}{\per({S}_s)}\,ds$ from above. To this
aim, let us note that, locally around $x\in S\setminus C_S$, one
has $|\varpi\ci|\leq\varepsilon_i$ for some positive constants
$\varepsilon_i\,(i=2,...,k)$. Moreover, one has
$\varrho_S(S_s)=s$. Therefore
$$\mathcal{A}_\infty(s)\leq \MS^0\bigg\{1+\sum_{i=2}^k
\varepsilon_i s^{i-1}\bigg\}\per(S_s)$$and the claim easily follows.\\
 \end{proof}

More generally, the previous result could be generalized, for
instance, to those characteristic points satisfying the hypotheses
 of Theorem \ref{BUP}. Later on, we shall analyze in detail only the case of the Heisenberg group
$\mathbb{H}^n$, because in such a case the monotonicity formula
also holds near characteristic points; see Lemma \ref{condcar77}.

Before turning our attention to the Heisenberg group case, let us
discuss the following:

\begin{oss}\label{r1sd}Let $S\subset\GG$ be a $\cont^2$-smooth hypersurface
and let $x\in {\rm Int} S$. As above, let us set $S_s=S\cap
B_{\varrho}(x, s)\, (s>0)$. Then, we claim that
\begin{eqnarray}\label{carosio}
\frac{\int_{{S}_s}|\varpi\ci|\per}{\per({S}_s)}\leq \varepsilon_i
\frac{s^{Q-i}}{{s^{Q-1}}},\end{eqnarray}for some constants
$\varepsilon_i>0,\,\,i=2,...,k.$ To see this, let us note that
$X_\alpha\LL\sigma^n\rr=\ast\omega_\alpha$, where $\ast$ denotes
the Hodge star operation on $\TT^\ast\GG$. Furthermore, one has
$\delta_t^\ast(\ast\omega_\alpha)=t^{Q-{\rm
ord}(\alpha)}(\ast\omega_\alpha)$ for every $t>0$. Therefore,
\eqref{carosio} easily follows since
$${\int_{{S}_s}|\varpi\ci|\per}\leq \sum_{{\rm ord}(\alpha)=i}\int_{S_s} |X_\alpha\LL\sigma^n\rr|=
\sum_{{\rm ord}(\alpha)=i}s^{Q-i}\int_{\vartheta^x_{{1}/{s}}S\cap
B_\varrho(x, 1)}(\ast\omega_\alpha)\circ\vartheta^x_s. $$
\end{oss}

\begin{corollario}\label{hasynt}Let $\mathbb{H}^n$ be the $2n+1$-dimensional Heisenberg group and let
$\varrho(x)=\sqrt[4]{|x\cc|^4 +16 t^2}$ be the Korany norm. Let
$S\subset\mathbb{H}^n$ be  a  $\cont^2$-smooth hypersurface having
non-empty characteristic set. Furthermore, let us assume that
$\partial{S}\cap B_\varrho(x,t)={\emptyset}$ and that
$|\MS|\leq\MS^0<+\infty$. Then, for every $x\in{\rm Int}{S}\cap
C_S$, there exists $\varepsilon_0>0$ such that
\begin{eqnarray}\label{2asintotica}
\perh({S}_t)\geq \kappa_{\varrho}(C_S(x))\,t^{Q-1} e^{-t \MS^0
\varepsilon_0},\end{eqnarray}where $\kappa_{\varrho}(C_S(x))$ has
been defined in Theorem \ref{BUP}, Case (b).\end{corollario}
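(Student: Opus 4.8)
The plan is to mirror the argument given for Corollary \ref{asynt} at non-characteristic points, replacing the two ingredients that break down at a characteristic point by their Heisenberg-specific substitutes. I would start from the reformulated monotonicity identity recorded just before Corollary \ref{asynt}, namely
$$\frac{d}{dt}\Bigg\{\frac{\per({S}_t)}{t^{Q-1}}\,\exsp\bigg(\int_0^t\frac{\mathcal{A}_\infty(s)+\mathcal{B}_\infty(s)}{\per({S}_s)}\,ds\bigg)\Bigg\}\geq 0,$$
which is an equivalent form of \eqref{rmytn}. Although Proposition \ref{rmonin} only furnishes this for $x\in{\rm Int}\,S\setminus C_S$, the first key point is that Lemma \ref{condcar77} guarantees that in $\mathbb{H}^n$ equipped with the Korany norm the inequality \eqref{rmytn} persists locally around every characteristic interior point $x\in{\rm Int}\,S\cap C_S$, so that the displayed identity is available at the point $x$ in the statement. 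Since $\partial S\cap B_\varrho(x,t)=\emptyset$ by hypothesis, the boundary term drops, $\mathcal{B}_\infty(s)=0$, and integrating the identity from $0^+$ to $t$ produces
$$\per({S}_t)\geq\Big(\lim_{s\searrow 0^+}\frac{\per({S}_s)}{s^{Q-1}}\Big)\,t^{Q-1}\,\exsp\bigg(-\int_0^t\frac{\mathcal{A}_\infty(s)}{\per({S}_s)}\,ds\bigg).$$
Here I would identify the limit with $\kappa_\varrho(C_S(x))$ via Theorem \ref{BUP}, Case (b): at a characteristic point of a $\cont^2$ hypersurface one has $\psi(0)=0$ and $\partial\psi/\partial\zeta_j(0)=0$ for the horizontal indices, so with $i={\rm ord}(\alpha)=2$ the derivative condition \eqref{0dercond} is automatically satisfied, and the blow-up yields the metric factor attached to the limit paraboloid $\Psi_\infty$.

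It then remains to estimate the exponent, and this is where the characteristic case genuinely departs from Corollary \ref{asynt}: the pointwise bound $|\varpi\cd|\leq\varepsilon$ used there is unavailable, since $\varpi$ blows up as one approaches $C_S$, and I would replace it by the averaged estimate \eqref{carosio} of Remark \ref{r1sd}. Using $|\MS|\leq\MS^0$, that $\mathbb{H}^n$ is $2$-step (so only the layer $i=2$ occurs in $\mathcal{A}_\infty$), and that $\varrho_x\leq s$ on $S_s$, one obtains
$$\mathcal{A}_\infty(s)\leq\MS^0\Big\{\per({S}_s)+2c_2\,s\int_{S_s}|\varpi\cd|\,\per\Big\}\leq\MS^0\big(1+2c_2\varepsilon_2\big)\,\per({S}_s),$$
where the last inequality applies \eqref{carosio} with $i=2$, i.e. $\int_{S_s}|\varpi\cd|\,\per\leq\varepsilon_2\,s^{-1}\per({S}_s)$. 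Setting $\varepsilon_0:=1+2c_2\varepsilon_2$ gives $\mathcal{A}_\infty(s)/\per({S}_s)\leq\MS^0\varepsilon_0$, hence $\int_0^t\mathcal{A}_\infty(s)/\per({S}_s)\,ds\leq\MS^0\varepsilon_0\,t$, and substituting into the previous display yields exactly \eqref{2asintotica}.

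The main obstacle is not this final integral estimate, which is routine once \eqref{carosio} is in hand, but securing the monotonicity formula at the characteristic point in the first place: the entire argument rests on Lemma \ref{condcar77}, whose validity depends on the explicit blow-up of $S$ around $0\in\mathbb{H}^n$ together with the special form \eqref{0kinawa} of $\grad\varrho$ for the Korany norm. This is precisely why the corollary is confined to the Heisenberg group with the Korany norm, rather than stated for a general Carnot group, and why Proposition \ref{rmonin} cannot simply be invoked at $x\in C_S$.
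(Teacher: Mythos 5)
Your proposal is correct and follows essentially the same route as the paper: the paper's own (very terse) proof likewise rests on Lemma \ref{condcar77} to secure the monotonicity inequality \eqref{rmytn} at the characteristic point, the integrated form of that inequality with $\mathcal{B}_\infty\equiv 0$, Theorem \ref{BUP}, Case (b) to identify the limit density $\kappa_\varrho(C_S(x))$, and the averaged estimate \eqref{carosio} of Remark \ref{r1sd} to bound $\mathcal{A}_\infty(s)/\per(S_s)$ by $\MS^0\varepsilon_0$. The only cosmetic difference is bookkeeping of constants (your $\varepsilon_0=1+2c_2\varepsilon_2$ versus the paper's $1+\sum_i\varepsilon_i$, which simply absorbs the factors $i\,c_i$).
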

\begin{proof}By applying Remark \ref{r1sd} we easily get that
\begin{eqnarray*}\frac{\mathcal{A}_\infty(s)}{\perh({S}_s)}\leq
\MS^0\Big\{1+\sum_{i=2}^k\varepsilon_i\Big\}.
\end{eqnarray*}Setting
$\varepsilon_0:=1+\sum_{i=2}^k\varepsilon_i$ now yields the
desired result.

\end{proof}

\section{ Sobolev-type inequalities on hypersurfaces}\label{sobineqg}

The isoperimetric inequality \eqref{isq0} is actually equivalent
to an inequality of Sobolev-type. The proof of this fact is
completely analogous to that of the well-known classical
equivalence between the Euclidean Isoperimetric Inequality and the
Sobolev one; see \cite{BuZa}, \cite{Ch2}.
\begin{teo}\label{sobolev}Let $S\subset\GG$ be a
$\cont^2$-smooth hypersurface without boundary. Then for every
$\psi\in\cont^1_0(S)$ the following inequality
holds\begin{eqnarray*}\bigg\{\int_S|\psi|^{\frac{Q-1}{Q-2}}\per\bigg\}^{\frac{Q-2}{Q-1}}&\leq&
C_I\bigg\{\int_{S}|\psi||\MS|\Big(1+\sum_{i=2}^k
i\,c_i\varrho_{S}^{i-1}|\varpi\ci|\Big)\per\\&&+\int_{S}\Big(|\qq\psi|+\sum_{i=2}^k
i\,c_i\varrho_{S}^{i-1}|\grad\ciss\psi|\Big)\per\bigg\}
\end{eqnarray*} where $C_I$ is a dimensional constant independent
of $S$.
\end{teo}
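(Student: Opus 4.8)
The plan is to deduce the Sobolev-type inequality from the isoperimetric inequality of Theorem \ref{isopineq} by the classical Federer--Fleming ``truncation and coarea'' argument, adapted to the $\HS$-gradient. The key point is that Theorem \ref{isopineq} is an inequality for \emph{sets} (compact hypersurfaces $S$ with boundary), while the statement to be proved is an inequality for \emph{functions} $\psi\in\cont_0^1(S)$. The bridge between the two is the smooth Coarea Formula for the $\HS$-gradient, i.e. Theorem \ref{TCOAR}, applied to the function $|\psi|$. The superlinear exponent $\frac{Q-1}{Q-2}$ on the left-hand side is exactly the one dictated by the homogeneous dimension $Q-1$ of the hypersurface, matching the power $\frac{Q-1}{Q-2}$ appearing in \eqref{isq0}.

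First I would reduce to the case $\psi\geq 0$ by replacing $\psi$ with $|\psi|$ and using $|\qq|\psi||\leq|\qq\psi|$, exactly as in the proofs of Theorem \ref{0celafo} and Theorem \ref{1celafo}. Then, for each regular value $s>0$ of $\psi$, I would consider the superlevel sets $\Omega_s:=\{x\in S:\psi(x)>s\}$, whose topological boundary is $\partial\Omega_s=\psi^{-1}[s]$ for $\mathcal{L}^1$-a.e.\ $s$ by Sard's Theorem (see Remark \ref{prea}). The idea is to apply the isoperimetric inequality \eqref{isq0} to each $\Omega_s$ (a compact hypersurface-with-boundary sitting inside $S$), noting that along $\partial\Omega_s=\psi^{-1}[s]$ one has $\eta\ss=\frac{\qq\psi}{|\qq\psi|}$, so that the boundary term $\mathcal{B}_\infty$ for $\Omega_s$ is governed by $\nis(\psi^{-1}[s])$ together with the factors $|\chi\ciss|$. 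Writing $F(s):=\{\per(\Omega_s)\}^{\frac{Q-2}{Q-1}}$, the isoperimetric inequality gives, schematically,
\[
\{\per(\Omega_s)\}^{\frac{Q-2}{Q-1}}\leq C_I\Big\{\mathcal{A}_\infty(\Omega_s)+\int_{\psi^{-1}[s]}\Big(1+\sum_{i=2}^k i\,c_i\varrho_S^{i-1}|\chi\ciss|\Big)\nis\Big\}.
\]
The plan is then to integrate this inequality in $s$ over $(0,\infty)$, use Cavalieri's principle (as in the footnote to Theorem \ref{zaz}) to recognize $\int_0^\infty\{\per(\Omega_s)\}^{\frac{Q-2}{Q-1}}\,ds$ as controlling $\big\{\int_S|\psi|^{\frac{Q-1}{Q-2}}\per\big\}^{\frac{Q-2}{Q-1}}$, and convert the integrated right-hand side back into integrals over $S$ against $\per$ via the Coarea Formula \eqref{2coar}. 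Concretely, $\int_0^\infty\int_{\psi^{-1}[s]}(\cdots)\nis\,ds=\int_S(\cdots)|\qq\psi|\per$, which produces the terms $|\qq\psi|$ and $\sum_i i\,c_i\varrho_S^{i-1}|\grad\ciss\psi|$ on the right, while the bulk term $\int_0^\infty\mathcal{A}_\infty(\Omega_s)\,ds$ converts, again by Cavalieri, into $\int_S|\psi||\MS|\big(1+\sum_i i\,c_i\varrho_S^{i-1}|\varpi\ci|\big)\per$.

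The hard part will be the interface between the exponent manipulation and the layer-cake integration: the map $s\mapsto\per(\Omega_s)$ is monotone decreasing, and one must carefully justify that integrating the power $\{\per(\Omega_s)\}^{\frac{Q-2}{Q-1}}$ against $ds$ reproduces the $L^{\frac{Q-1}{Q-2}}$-norm of $\psi$ up to the correct constant; this is precisely the step where the standard Federer--Fleming argument invokes the one-dimensional Minkowski/Hardy-type inequality
\[
\Big(\int_0^\infty F(s)\,ds\Big)\ \text{versus}\ \Big\{\int_0^\infty \big(-F'(s)\big)\,s\,ds\Big\},
\]
and one must check it goes through verbatim with $\per$ in place of Lebesgue measure. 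A secondary technical obstacle is the treatment of the characteristic set: one must ensure that for $\mathcal{L}^1$-a.e.\ $s$ the level set $\psi^{-1}[s]$ has negligible characteristic set, so that $\nis$ and the projections $\chi\ciss$ are well defined $\nis$-a.e.\ along $\psi^{-1}[s]$; this is exactly guaranteed by Remark \ref{prea}, which gives $\nis(C_{\psi^{-1}[s]})=0$, together with the hypothesis controlling $\dim C_N$ that underlies the simplified form \eqref{isq}. Once these points are settled, collecting the constants and absorbing the dimensional factors into a single $C_I$ independent of $S$ yields the stated inequality.
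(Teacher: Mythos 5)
Your proposal is correct and follows essentially the same route as the paper's own proof: reduction to $\psi\geq 0$, truncation into superlevel sets $S_t=\{\psi>t\}$ (smooth for a.e.\ $t$ by Sard), application of the isoperimetric inequality \eqref{isq0} to each $S_t$, the one-dimensional inequality $\alpha\int_0^{\infty}t^{\alpha-1}\varphi^{\alpha}\,dt\leq\big(\int_0^{\infty}\varphi\,dt\big)^{\alpha}$ for decreasing $\varphi$ combined with Cavalieri's principle to recover the $L^{\frac{Q-1}{Q-2}}$-norm, and the Coarea Formula \eqref{2coar} (with $\chi\ciss$ expressed through $\grad\ciss\psi/|\qq\psi|$ along level sets) to convert the boundary terms back into integrals over $S$. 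The only inessential deviation is your appeal to the hypothesis $\dim C_N<n-2$ underlying \eqref{isq}, which is not needed here since the theorem uses the full inequality \eqref{isq0}, and Remark \ref{prea} alone handles the characteristic sets of the level sets.
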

The constant $C_I$ is the same as in Theorem \ref{isopineq}.
\begin{proof}The proof follows a classical argument;
see \cite{FedererFleming}, \cite{MAZ}. Let us begin by noting
that, since $|\qq\psi|\leq|\qq|\psi||$, without loss of
generality, we may assume that $\psi\geq 0$. Moreover let us set
$$S_t:=\{x\in S: \psi(x)>t\}.$$Since $\psi$ has compact support,
the set $S_t$ is a bounded open subset of $S$ and, by applying
Sard's Lemma, one sees that its boundary $\partial S_t$ is smooth
for $\mathcal{L}^1$-a.e. $t\in\R_+\cup\{0\}$. Furthermore,
$S_t={\emptyset}$ for each (large enough) $t\in\R_+$. The main
tools that we are going to use are Cavalieri's principle and the
Coarea Formula; see Footnote \ref{CavPrin} and Theorem
\ref{1coar}. In particular, we stress that
$$\int_S|\psi|^{\frac{Q-1}{Q-2}}\per=\frac{Q-1}{Q-2}\int_0^{+\infty}t^{\frac{1}{Q-2}}\per\{S_t\}\,dt.$$We also
remind that, if
$\varphi:\R_+\cup\{0\}\longrightarrow\R_+\cup\{0\}$ is a positive
{\it decreasing} function and $\alpha\geq 1$, then the following
inequality
holds:$$\alpha\int_0^{+\infty}t^{\alpha-1}\varphi^\alpha\,dt\leq\Big(\int_0^{+\infty}\varphi(t)\,dt\Big)^\alpha.$$
So let us use this inequality with $\alpha=\frac{Q-1}{Q-2}$ and
$\varphi(t)=(\per\{S_t\})^{\frac{Q-2}{Q-1}}$. One has
\begin{eqnarray*}\int_S\psi^{\frac{Q-1}{Q-2}}\per&=&\frac{Q-1}{Q-2}\int_0^{+\infty}t^{\frac{Q-1}{Q-2}}(\per\{S_t\})
\,dt\\ &\leq&
\Big(\int_0^{+\infty}(\per\{S_t\})^{\frac{Q-2}{Q-1}}\,dt\Big)^{\frac{Q-1}{Q-2}}\\&\leq&
\bigg(\int_0^{+\infty}C_I^{\frac{Q-2}{Q-1}}\big\{\mathcal{A}_\infty(S_t)+\mathcal{B}_\infty(S_t)\big\}\,dt\bigg)
^{\frac{Q-1}{Q-2}}\\&=& C_I
\bigg(\int_0^{+\infty}\big\{\mathcal{A}_\infty(S_t)+\mathcal{B}_\infty(S_t)\big\}\,dt\bigg)
^{\frac{Q-1}{Q-2}}.\end{eqnarray*}At this point let us note
that\begin{eqnarray*}\int_0^{+\infty}\mathcal{A}_\infty(S_t)\,dt&=&\int_0^{+\infty}\bigg\{\int_{S_t}|\MS|\bigg(1+\sum_{i=2}^k
i\,c_i\varrho_{S_t}^{i-1}|\varpi\ci|\bigg)\per\bigg\}\,dt\\&\leq&\int_{S}|\psi||\MS|\bigg(1+\sum_{i=2}^k
i\,c_i\varrho_{S}^{i-1}|\varpi\ci|\bigg)\per.\end{eqnarray*}
Furthermore, one
has\begin{eqnarray*}\int_0^{+\infty}\mathcal{B}_\infty(S_t)\,dt&\leq&\int_0^{+\infty}\int_{\partial
S_t}\Big(1+\sum_{i=2}^k
i\,c_i\varrho_{S_t}^{i-1}|\chi\ci|\Big)\nis\,dt\\&\leq&\int_0^{+\infty}\int_{\partial
S_t}\Big(1+\sum_{i=2}^k
i\,c_i\varrho_{S}^{i-1}|\chi\ci|\Big)\nis\,dt\\&=&
\int_{S}|\qq\psi|\bigg(1+\sum_{i=2}^k
i\,c_i\varrho_{S}^{i-1}\frac{|\grad\ciss\psi|}{|\qq\psi|}\bigg)\per\qquad\mbox{(by
Coarea Formula)}
\\&=&
\int_{S}\Big(|\qq\psi|+\sum_{i=2}^k
i\,c_i\varrho_{S}^{i-1}|\grad\ciss\psi|\Big)\per\end{eqnarray*}and
the thesis easily follows.
 \end{proof}

 Now we are going to make a further assumption on the geometry
of $S$.
 The simplest hypothesis which can be made is, of course, that
  of the {\it global boundedness of the horizontal mean curvature}. The argument below is quite similar to the one used
  throughout the proof of Theorem \ref{1celafo}. As in Remark \ref{cpoints}, let ${S}_\epsilon\,(\epsilon>0)$ be a
family of open subsets of $S$ with  piecewise $\cont^2$-smooth
boundary such that:\begin{itemize}\item[{\rm(i)}]
$C_S\subset\UU_\epsilon$ for every $\epsilon>0$; \item[{\rm(ii)}]
$\sigma^{n-1}\rr(\UU_\epsilon)\longrightarrow 0$ for
$\epsilon\rightarrow
0^+$;\item[{\rm(iii)}]$\int_{\UU_\epsilon}|\PH\nu|\sigma^{n-2}\rr\longrightarrow
0$ for $\epsilon\rightarrow 0^+$.\end{itemize}Now let us fix
$\epsilon_0>0$. For every $\epsilon\leq \epsilon_0$ one has
$$\int_{\UU_{\epsilon}}|\psi||\varpi\ci|\per\leq\|\psi\|_{L^{\!\infty}(\UU_{\epsilon_0})}
\sigma^{n-1}\rr(\UU_\epsilon).$$ Then, {\rm(ii)} implies that for
every $\delta>0$ there exists $\epsilon_\delta>0$ such that
$\sigma^{n-1}\rr(\UU_\epsilon)<\delta$ whenever
$\epsilon<\epsilon_\delta$. By choosing $\widetilde{\delta}\leq
\frac{\int_{S}|\psi|\per}{\|\psi\|_{L^{\!\infty}
(\UU_{\epsilon_{0}}) }}$ one gets that
$$\int_{\UU_{\epsilon}}|\psi||\varpi\ci|\per\leq
\int_{S}|\psi|\per$$ for every $\epsilon\leq \min\{
\epsilon_{\widetilde{\delta}}, \epsilon_0\}$. Furthermore, one has
 $$\int_{S\setminus\UU_{\epsilon}}|\psi||\varpi\ci|\per\leq \|\varpi\|_{L^{\!\infty}
(S\setminus\UU_{\epsilon})}\int_{S}|\psi|\per$$for every
$\epsilon\leq \min\{ \epsilon_{\widetilde{\delta}}, \epsilon_0\}$.
Since, by hypothesis, the horizontal mean curvature $\MS$ is
globally bounded, one infers that:
\begin{itemize} \item {\it there exists $C_0>0$ such that}
$$\int_S|\psi||\MS|\bigg(1+\sum_{i=2}^k i\,c_i\varrho_S^{i-1}|\varpi\ci|\bigg)\per\leq
C_0\int_S|\psi|\per.$$\end{itemize}

By applying the above argument we get the following:
\begin{corollario}\label{issss4}Let $S\subset\GG$ be a
$\cont^2$-smooth hypersurface without boundary and assume that
$\MS$ be globally bounded along $S$. Then there exists $C_1>0$
such that
\begin{eqnarray}\label{isq4}\bigg\{\int_S|\psi|^{\frac{Q-1}{Q-2}}\per\bigg\}^{\frac{Q-2}{Q-1}}\leq
C_1\,\int_{S}\Big\{|\psi|+ |\qq\psi|+\sum_{i=2}^k
i\,c_i\varrho_{S}^{i-1}|\grad\ciss\psi|\Big\}\per.
\end{eqnarray}
for every $\psi\in\cont_0^1(S)$.
\end{corollario}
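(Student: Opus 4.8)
The plan is to start from the Sobolev-type inequality of Theorem \ref{sobolev} and to absorb its mean-curvature term into the $L^1$-norm of $\psi$, using the global bound on $\MS$ together with a careful treatment of the characteristic set $C_S$. First I would recall that, by Theorem \ref{sobolev}, for every $\psi\in\cont_0^1(S)$ one has
\begin{equation*}
\bigg\{\int_S|\psi|^{\frac{Q-1}{Q-2}}\per\bigg\}^{\frac{Q-2}{Q-1}}\leq C_I\bigg\{\int_{S}|\psi||\MS|\Big(1+\sum_{i=2}^k i\,c_i\varrho_{S}^{i-1}|\varpi\ci|\Big)\per+\int_{S}\Big(|\qq\psi|+\sum_{i=2}^k i\,c_i\varrho_{S}^{i-1}|\grad\ciss\psi|\Big)\per\bigg\}.
\end{equation*}
Since the second integral already has exactly the form appearing in \eqref{isq4}, it suffices to dominate the first integral by a constant multiple of $\int_S|\psi|\per$.

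The key difficulty is that the weights $|\varpi\ci|$ are singular along $C_S$: indeed $|\varpi|=|\P\vv\nu|/|\PH\nu|$ blows up where $|\PH\nu|\to 0$, so one cannot bound them in $L^\infty(S)$. To handle this I would invoke the family $\UU_\epsilon$ of open neighborhoods of $C_S$ from Remark \ref{cpoints}, for which $\sigma^{n-1}\rr(\UU_\epsilon)\to 0$, and split $S=\UU_\epsilon\cup(S\setminus\UU_\epsilon)$. On $\UU_\epsilon$ the crucial identity $\per=|\PH\nu|\sigma^{n-1}\rr$ combined with $\varpi_\alpha=\nu_\alpha/|\PH\nu|$ yields $|\varpi\ci|\per=|\P\ci\nu|\sigma^{n-1}\rr\leq\sigma^{n-1}\rr$, so that $\int_{\UU_\epsilon}|\psi||\varpi\ci|\per\leq\|\psi\|_{L^\infty(\UU_{\epsilon_0})}\,\sigma^{n-1}\rr(\UU_\epsilon)$, which can be made smaller than $\int_S|\psi|\per$ by taking $\epsilon$ small; on the complement $S\setminus\UU_\epsilon$ the normalized field $\varpi$ is bounded, whence $\int_{S\setminus\UU_\epsilon}|\psi||\varpi\ci|\per\leq\|\varpi\|_{L^\infty(S\setminus\UU_\epsilon)}\int_S|\psi|\per$. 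Adding the two estimates and using the global bound $\|\MS\|_{L^\infty(S)}<+\infty$ produces a constant $C_0>0$ with
\begin{equation*}
\int_S|\psi||\MS|\Big(1+\sum_{i=2}^k i\,c_i\varrho_S^{i-1}|\varpi\ci|\Big)\per\leq C_0\int_S|\psi|\per.
\end{equation*}

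Finally I would substitute this bound back into the Sobolev inequality, obtaining
\begin{equation*}
\bigg\{\int_S|\psi|^{\frac{Q-1}{Q-2}}\per\bigg\}^{\frac{Q-2}{Q-1}}\leq C_I\bigg\{C_0\int_S|\psi|\per+\int_{S}\Big(|\qq\psi|+\sum_{i=2}^k i\,c_i\varrho_{S}^{i-1}|\grad\ciss\psi|\Big)\per\bigg\},
\end{equation*}
so that the choice $C_1:=C_I\max\{C_0,1\}$ immediately gives \eqref{isq4}. The only genuinely delicate point is the $\UU_\epsilon$ argument near $C_S$, namely verifying that the singular weight $|\varpi\ci|$ becomes harmless once paired with the perimeter measure through $|\varpi\ci|\per\leq\sigma^{n-1}\rr$; everything else is a routine absorption of constants, exactly as in the discussion preceding the statement.
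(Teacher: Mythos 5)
Your proposal is correct and follows essentially the same route as the paper: the paper also starts from Theorem \ref{sobolev} and absorbs the mean-curvature term via the splitting $S=\UU_\epsilon\cup(S\setminus\UU_\epsilon)$ from Remark \ref{cpoints}, using precisely the observation $|\varpi\ci|\,\per=|\P\ci\nu|\,\sigma^{n-1}\rr\leq\sigma^{n-1}\rr$ near $C_S$, the boundedness of $\varpi$ on $S\setminus\UU_\epsilon$, and the global bound on $\MS$ to produce the constant $C_0$. Even your choice of making $\int_{\UU_\epsilon}|\psi||\varpi\ci|\per$ smaller than $\int_S|\psi|\per$ by shrinking $\epsilon$ (for the given $\psi$) mirrors the paper's argument verbatim.
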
\begin{proof}Immediate by the previous discussion.\end{proof}

\begin{itemize} \item[${\bf (H_1)}$] {\it Henceforth, let us assume that $\MS$ is globally bounded along
$S$.}\end{itemize}

\begin{no}As in the standard theory of Sobolev spaces, for any $p>0$, we shall set
 $$\frac{1}{p^\ast}=\frac{1}{p}-\frac{1}{Q-1}.$$Moreover, $p'$ will
 denote the H\"{o}lder conjugate of $p$, i.e. $\frac{1}{p}+\frac{1}{p'}=1$. In the sequel, any $L^p$
 norm  will be understood w.r.t. the $\per$-measure. \end{no}

\begin{corollario}\label{corsob1}Under the previous assumptions,
there exists $C_2>0$ such that
$$\|\psi\|_{L^{p^\ast}(S)}\leq C_2\bigg\{ \|\psi\|_{L^p(S)}+\|\qq\psi\|_{L^p(S)}+\sum_{i=2}^k
\|\grad\ciss\psi\|_{L^p(S)}\bigg\}$$ for every
$\psi\in\cont_0^1(S)$.
\end{corollario}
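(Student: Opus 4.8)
The plan is to derive the $L^p$ statement from the $L^1$-type inequality \eqref{isq4} of Corollary \ref{issss4} by the classical Gagliardo--Nirenberg substitution: apply \eqref{isq4} not to $\psi$ but to a suitable power $|\psi|^\gamma$ and then redistribute with H\"older's inequality. Throughout I assume $1\le p<Q-1$, so that $p^\ast$ is finite and positive; the case $p=1$ is exactly \eqref{isq4}, so I may take $p>1$. The exponent to use is
\[\gamma:=\frac{p(Q-2)}{Q-1-p}\,(>1),\]
chosen precisely so that $\gamma\cdot\frac{Q-1}{Q-2}=p^\ast$ and $(\gamma-1)p'=p^\ast$, where $p'$ is the H\"older conjugate of $p$. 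Since $\gamma>1$ the function $|\psi|^\gamma$ belongs to $\cont_0^1(S)$, so \eqref{isq4} applies to it directly (to avoid the mild non-smoothness of $t\mapsto|t|^\gamma$ one may instead run the argument with $(\varepsilon^2+\psi^2)^{\gamma/2}$ and let $\varepsilon\to0^+$, exactly as in the proofs of Theorem \ref{0celafo} and Theorem \ref{1celafo}).

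First I would substitute $|\psi|^\gamma$ into \eqref{isq4}. Using the chain rule together with the pointwise bounds $|\qq|\psi||\le|\qq\psi|$ and $|\grad\ciss|\psi||\le|\grad\ciss\psi|$, one has $|\qq(|\psi|^\gamma)|\le\gamma|\psi|^{\gamma-1}|\qq\psi|$ and $|\grad\ciss(|\psi|^\gamma)|\le\gamma|\psi|^{\gamma-1}|\grad\ciss\psi|$, so \eqref{isq4} yields
\[\Big(\int_S|\psi|^{p^\ast}\per\Big)^{\frac{Q-2}{Q-1}}\le C_1\int_S\Big\{|\psi|^\gamma+\gamma|\psi|^{\gamma-1}|\qq\psi|+\sum_{i=2}^k i\,c_i\varrho_S^{i-1}\gamma|\psi|^{\gamma-1}|\grad\ciss\psi|\Big\}\per,\]
where I have used $\gamma\frac{Q-1}{Q-2}=p^\ast$ on the left-hand side.

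Next I would bound each term on the right by H\"older's inequality with exponents $p'$ and $p$ (for the first term one writes $|\psi|^\gamma=|\psi|^{\gamma-1}\cdot|\psi|$). Since $(\gamma-1)p'=p^\ast$, each of the three integrals is controlled by the common factor $\big(\int_S|\psi|^{p^\ast}\per\big)^{1/p'}$ times, respectively, $\|\psi\|_{L^p(S)}$, $\|\qq\psi\|_{L^p(S)}$ and $\|\grad\ciss\psi\|_{L^p(S)}$. Collecting the constants $C_1,\gamma$ and the finitely many bounded factors $i\,c_i\varrho_S^{i-1}$ into a single constant $C'$ (depending on $S$, as does $C_1$), I would arrive at
\[\Big(\int_S|\psi|^{p^\ast}\per\Big)^{\frac{Q-2}{Q-1}}\le C'\Big(\int_S|\psi|^{p^\ast}\per\Big)^{\frac1{p'}}\Big\{\|\psi\|_{L^p(S)}+\|\qq\psi\|_{L^p(S)}+\sum_{i=2}^k\|\grad\ciss\psi\|_{L^p(S)}\Big\}.\]

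Because $\psi$ is continuous with compact support, $\int_S|\psi|^{p^\ast}\per$ is finite; if it vanishes the claim is trivial, otherwise I divide through by $\big(\int_S|\psi|^{p^\ast}\per\big)^{1/p'}$. The decisive arithmetic is that the residual exponent is exactly $\frac1{p^\ast}$:
\[\frac{Q-2}{Q-1}-\frac1{p'}=\frac{Q-2}{Q-1}-\Big(1-\frac1p\Big)=\frac1p-\frac1{Q-1}=\frac1{p^\ast},\]
so the left-hand side becomes $\|\psi\|_{L^{p^\ast}(S)}$ and the corollary follows with $C_2:=C'$. I expect no serious obstacle, since the argument is entirely formal once \eqref{isq4} is available; the only points requiring a word of care are the admissibility of $|\psi|^\gamma$ as a test function (handled by the $\cont^1$ regularity of $|t|^\gamma$ for $\gamma>1$, or by the $\varepsilon$-smoothing above) and the standing restriction $p<Q-1$ needed for $p^\ast>0$. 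The role usually played by the dimension $n$ in the Euclidean Sobolev embedding is here played by the homogeneous dimension $Q-1$ of the hypersurface, which is exactly why the substitution exponent and the final $1/p^\ast$ come out correctly.
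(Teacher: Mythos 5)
Your proof is correct and follows essentially the same route as the paper: substituting $|\psi|^{\gamma}$ (the paper uses $\psi|\psi|^{t-1}$ with $t$ determined by $t\,\tfrac{Q-1}{Q-2}=p'(t-1)$, which gives exactly your $\gamma=\tfrac{p(Q-2)}{Q-1-p}$) into inequality \eqref{isq4}, applying H\"older with exponents $p,p'$, and dividing by $\big(\int_S|\psi|^{p^\ast}\per\big)^{1/p'}$. Your write-up is in fact slightly more careful than the paper's, spelling out the admissibility of the test function, the trivial/finite cases of $\int_S|\psi|^{p^\ast}\per$, and the exponent arithmetic $\tfrac{Q-2}{Q-1}-\tfrac{1}{p'}=\tfrac{1}{p^\ast}$ that the paper leaves implicit.
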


\begin{proof}Let us apply \eqref{isq4} with
$\psi$ replaced by $\psi|\psi|^{t-1}$, for some $t>0$. One has
\begin{eqnarray}\label{wfiha}\bigg\{\int_S|\psi|^{t\,\frac{Q-1}{Q-2}}\per\bigg\}^{\frac{Q-2}{Q-1}}\leq
C_1\int_{S}\bigg\{|\psi|^t+
t|\psi|^{t-1}\bigg[|\qq\psi|+\sum_{i=2}^k
|\grad\ciss\psi|\bigg]\bigg\}\per.
\end{eqnarray}If we put $t\,\frac{Q-1}{Q-2}=p'(t-1)$, it follows
that $p^\ast=t\,\frac{Q-2}{Q-1}$. Thus, by H\"{o}lder inequality
one gets
\begin{eqnarray*}\bigg\{\int_S|\psi|^{p^\ast}\per\bigg\}^{\frac{Q-2}{Q-1}}\leq
C_1\Big(\int_S|\psi|^{p\ast}\per\Big)^{\frac{1}{p'}}\bigg\{\|\psi\|_{L^p(S)}
+ t\Big[ \|\qq\psi\|_{L^p(S)}+\sum_{i=2}^k
\|\grad\ciss\psi\|_{L^p(S)}\Big]\bigg\},
\end{eqnarray*}which is equivalent to the thesis.
\end{proof}

\begin{corollario}\label{corsob12}Under the previous assumptions and notation, let
$p\in[1, Q-1[$. Then there exists $C_3>0$ such
that$$\|\psi\|_{L^{q}(S)}\leq C_3\bigg\{
\|\psi\|_{L^p(S)}+\|\qq\psi\|_{L^p(S)}+\sum_{i=2}^k
\|\grad\ciss\psi\|_{L^p(S)}\bigg\}\qquad \forall\,\,q\in[p,
p^\ast]$$ for every $\psi\in\cont_0^1(S)$.

\end{corollario}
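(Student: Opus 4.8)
The plan is to deduce the full range of exponents $q\in[p,p^\ast]$ from the single critical-exponent estimate $q=p^\ast$, which is already available as Corollary \ref{corsob1}, via a standard interpolation argument. Throughout, fix $\psi\in\cont_0^1(S)$; since $\psi$ is continuous with compact support on the smooth hypersurface $S$, each of the quantities $|\psi|$, $|\qq\psi|$, $|\grad\ciss\psi|$ is continuous with compact support, so every $L^r$-norm appearing below is finite. Note also that the hypothesis $p\in[1,Q-1[$ forces $\frac{1}{p^\ast}=\frac{1}{p}-\frac{1}{Q-1}>0$ with $p^\ast>p$, so the interpolation interval $[p,p^\ast]$ is non-degenerate.

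First I would record the two endpoints. The case $q=p$ is trivial, the left-hand side being literally one of the terms on the right. The case $q=p^\ast$ is exactly Corollary \ref{corsob1}, which gives
\[
\|\psi\|_{L^{p^\ast}(S)}\leq C_2\bigg\{\|\psi\|_{L^p(S)}+\|\qq\psi\|_{L^p(S)}+\sum_{i=2}^k\|\grad\ciss\psi\|_{L^p(S)}\bigg\}.
\]
Next, for an intermediate value $q\in\,]p,p^\ast[$, I would choose the unique $\theta\in\,]0,1[$ with $\frac{1}{q}=\frac{\theta}{p}+\frac{1-\theta}{p^\ast}$. Splitting $|\psi|^q=|\psi|^{q\theta}\cdot|\psi|^{q(1-\theta)}$ and applying H\"older's inequality with the conjugate exponents $\frac{p}{q\theta}$ and $\frac{p^\ast}{q(1-\theta)}$ (which are conjugate precisely by the choice of $\theta$) yields the log-convexity estimate
\[
\|\psi\|_{L^q(S)}\leq\|\psi\|_{L^p(S)}^{\theta}\,\|\psi\|_{L^{p^\ast}(S)}^{1-\theta}.
\]

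Finally I would combine the two. By Young's inequality in the form $a^{\theta}b^{1-\theta}\leq\theta a+(1-\theta)b\leq a+b$, applied with $a=\|\psi\|_{L^p(S)}$ and $b=\|\psi\|_{L^{p^\ast}(S)}$, one gets $\|\psi\|_{L^q(S)}\leq\|\psi\|_{L^p(S)}+\|\psi\|_{L^{p^\ast}(S)}$; substituting the Sobolev bound for $\|\psi\|_{L^{p^\ast}(S)}$ from Corollary \ref{corsob1} produces the asserted inequality with, for instance, $C_3:=1+C_2$. Crucially, this constant is independent of $\theta$ and hence of $q$, so the estimate holds uniformly for all $q\in[p,p^\ast]$.

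There is essentially no substantial obstacle here: the argument is purely functional-analytic and uses the measure $\per$ only through H\"older's inequality, which is valid for an arbitrary measure. The only points that deserve a line of care are the finiteness of all norms, guaranteed by $\psi\in\cont_0^1(S)$, and the strict inequality $p^\ast>p$ under the hypothesis $p<Q-1$, ensuring the interpolation interval is non-trivial; the genuine analytic content — the geometric Sobolev inequality at the critical exponent — has already been absorbed into Corollary \ref{corsob1}.
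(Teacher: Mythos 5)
Your argument is correct and is exactly the paper's proof: the interpolation (log-convexity) inequality $\|\psi\|_{L^q}\leq\|\psi\|_{L^p}^{\theta}\|\psi\|_{L^{p^\ast}}^{1-\theta}$, followed by Young's inequality and the critical-exponent bound of Corollary \ref{corsob1}. The only difference is that you spell out the H\"older derivation of the interpolation inequality, which the paper simply cites as standard.
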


\begin{proof}For any given $q\in[p, p^\ast]$ there exists $\alpha\in[0,1]$ such that
 $\frac{1}{q}=\frac{\alpha}{p}+\frac{1-\alpha}{p^\ast}.$ Hence$$\|\psi\|_{L^q(S)}\leq
 \|\psi\|^\alpha_{L^p(S)}\|\psi\|^{1-\alpha}_{L^{p^\ast}(S)}
 \leq\|\psi\|_{L^p(S)}+\|\psi\|_{L^{p^\ast}(S)},$$where we have
 used the usual {\it interpolation inequality} and Young's inequality.
 The thesis follows from Corollary \ref{corsob1}.\end{proof}

\begin{corollario}[Limit case: $p=Q-1$]\label{corsob13}Under the previous assumptions and notation, let
$p=Q-1$. Then there exists $C_4>0$ such
that$$\|\psi\|_{L^{q}(S)}\leq C_4
\bigg\{\|\psi\|_{L^p(S)}+\|\qq\psi\|_{L^p(S)}+\sum_{i=2}^k
\|\grad\ciss\psi\|_{L^p(S)}\bigg\}\qquad \forall\,\,q\in[Q-1,
+\infty[$$ for every $\psi\in\cont_0^1(S)$.
\end{corollario}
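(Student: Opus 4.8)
The plan is to obtain the endpoint case $p=Q-1$, where the subcritical exponent formally becomes $p^\ast=+\infty$, from the genuinely subcritical Sobolev inequality of Corollary \ref{corsob1} by approaching the critical exponent \emph{from below} and then compensating with a H\"older estimate on the (compact) support of $\psi$. The guiding principle is that at the critical exponent there is no continuous embedding into $L^\infty$, so one can only reach every finite $q$, and only at the cost of a constant depending on $q$.

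First I would fix $q\in[Q-1,+\infty[$ and set $$p_0:=\frac{q\,(Q-1)}{q+Q-1},$$ so that $\frac{1}{p_0}=\frac{1}{Q-1}+\frac{1}{q}$. A direct check (using $Q\geq 3$) shows that $p_0\in[1,Q-1[$ and that $p_0^\ast=q$. Since $p_0$ is a subcritical exponent and the standing assumption $(\mathbf{H_1})$ that $\MS$ is globally bounded is in force, Corollary \ref{corsob1} applies with $p=p_0$ and gives $$\|\psi\|_{L^{q}(S)}=\|\psi\|_{L^{p_0^\ast}(S)}\le C_2\Big\{\|\psi\|_{L^{p_0}(S)}+\|\qq\psi\|_{L^{p_0}(S)}+\sum_{i=2}^k\|\grad\ciss\psi\|_{L^{p_0}(S)}\Big\};$$ in the degenerate sub-case $p_0=1$ (which occurs only for $Q=3$, $q=2$, i.e. the $\R^3$ surface case) one invokes Corollary \ref{issss4} directly instead. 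One could equivalently route the estimate through Corollary \ref{corsob12} with a subcritical $s\uparrow Q-1$, since then $q\in[s,s^\ast]$; the choice above is simply the one that matches $p_0^\ast=q$ exactly.

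Next I would pass from the $L^{p_0}$-norms on the right-hand side to $L^{Q-1}$-norms. Because $\psi\in\cont_0^1(S)$, each of the functions $\psi,\ \qq\psi,\ \grad\ciss\psi$ is supported in the fixed compact set $K:=\mathrm{supp}\,\psi$, and $p_0<Q-1$. Hence H\"older's inequality with conjugate exponents $\frac{Q-1}{p_0}$ and $\frac{Q-1}{\,Q-1-p_0\,}$ yields, for every $f$ supported in $K$, $$\|f\|_{L^{p_0}(S)}\le \per(K)^{\frac{1}{p_0}-\frac{1}{Q-1}}\,\|f\|_{L^{Q-1}(S)}=\per(K)^{\frac{1}{q}}\,\|f\|_{L^{Q-1}(S)}.$$ Combining the two displays proves the asserted inequality with $p=Q-1$ and constant $$C_4:=C_2\,\per(K)^{1/q}.$$

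The step I expect to be the genuine issue is not any single estimate but the behaviour of the constant: $C_4$ depends on $q$ and on $\per(\mathrm{supp}\,\psi)$, and it must degenerate as $q\to+\infty$. Indeed, tracing $C_2=C_2(p_0)$ back through the proof of Corollary \ref{corsob1}, the admissible exponent $t$ (and hence $C_2$) blows up as $p_0^\ast=q\to\infty$, which is precisely the analytic reflection of the failure of the limiting embedding $W^{1,Q-1}\hookrightarrow L^\infty$. Thus there is no obstruction of principle: for each finite $q$ the argument produces a finite $C_4$, and only the $q$-uniformity is lost, exactly as in the classical Euclidean endpoint case.
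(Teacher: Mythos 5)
Your argument is genuinely different from the paper's, and it has a real gap: the constant you produce depends on $\psi$. After applying Corollary \ref{corsob1} with $p=p_0$, you convert the $L^{p_0}$-norms on the right-hand side into $L^{Q-1}$-norms by H\"older's inequality on $K=\mathrm{supp}\,\psi$, and this costs the factor $\per(K)^{1/q}$, so that your final constant is $C_4=C_2(p_0)\,\per(\mathrm{supp}\,\psi)^{1/q}$. But the corollary asserts a constant valid \emph{for every} $\psi\in\cont_0^1(S)$, and the standing assumptions (Corollary \ref{issss4}, $\bf(H_1)$: $S$ a $\cont^2$-smooth hypersurface without boundary, $\MS$ globally bounded) do not make $S$ compact, nor of finite $\HH$-perimeter. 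On a non-compact $S$ --- the main case of interest for inequalities over compactly supported test functions, e.g.\ an unbounded minimal hypersurface --- one has $\sup_{\psi}\per(\mathrm{supp}\,\psi)=+\infty$, so what you prove is strictly weaker than the statement. Moreover the defect cannot be repaired within your scheme: an $L^{p_0}$-norm with $p_0<Q-1$ simply cannot be dominated by an $L^{Q-1}$-norm on a set of infinite measure, so any route that lowers the exponent on the right-hand side below $Q-1$ must pay a support-dependent price. Note also that in your closing discussion you identified the $q$-dependence of the constant as ``the genuine issue''; the $\psi$-dependence is the one that actually breaks the proof.

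The paper avoids this entirely by never leaving the exponent $p=Q-1$: it applies the critical inequality \eqref{isq4} to the powers $\psi|\psi|^{t-1}$ (this is \eqref{wfiha}), uses H\"older with the \emph{fixed} conjugate pair $\big(Q-1,\tfrac{Q-1}{Q-2}\big)$ on the term $t\,|\psi|^{t-1}\big[|\qq\psi|+\sum_i|\grad\ciss\psi|\big]$, and then Young's inequality; every quantity that arises is a norm of $\psi$ itself (at the exponents $t$ and $(t-1)\tfrac{Q-1}{Q-2}$) or an $L^{Q-1}$-norm of the gradient terms --- the measure of the support never enters. Starting from $t=Q-1$, which produces the exponent $\tfrac{(Q-1)^2}{Q-2}$, and iterating over increasing $t$ bootstraps the estimate to every finite $q$, with interpolation filling the intermediate exponents. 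On one point you are in good company: your constant degenerates as $q\to+\infty$, and in truth the constants produced by the paper's iteration also grow with $q$, so the literal $q$-uniformity asserted in the statement is not really established there either (a $q$-uniform constant would yield an $L^\infty$ bound, which fails at the critical exponent); the honest conclusion in both cases is $C_4=C_4(q)$. That defect, however, is shared with the paper, whereas the dependence on $\per(\mathrm{supp}\,\psi)$ is specific to your argument.
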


\begin{proof}Use \eqref{wfiha} and put $t\geq 1$.
By H\"{o}lder inequality we get
\begin{eqnarray*}&&\bigg\{\int_S|\psi|^{t\,\frac{Q-1}{Q-2}}\per\bigg\}^{\frac{Q-2}{Q-1}}\\&\leq&
C_1\bigg\{\|\psi\|^t_{L^t(S)}+ t\,\Big(
\int_S|\psi|^{p'\frac{Q-1}{Q-2}}\per\Big)^{\frac{1}{p'}}\Big[
\|\qq\psi\|_{L^p(S)}+ \sum_{i=2}^k
\|\grad\ciss\psi\|_{L^p(S)}\Big]\bigg\}
\end{eqnarray*}for every $\psi\in\cont_0^1(S)$. From this and Young's inequality
we easily get that
\begin{eqnarray*}\|\psi\|_{L^{t\,\frac{Q-1}{Q-2}}}&\leq&
\widetilde{C_1}\bigg\{[\|\psi\|^{1-\frac{1}{t}}_{L^{p'\frac{Q-1}{Q-2}}(S)}
\Big[ \|\qq\psi\|_{L^p(S)}+ \sum_{i=2}^k
\|\grad\ciss\psi\|_{L^p(S)}\Big]^{\frac{1}{t}}+\|\psi\|_{L^t(S)}\bigg\}\\&\leq&
\widetilde{C_2}\bigg\{\|\psi\|_{L^t(S)}+
\|\psi\|_{L^{p'\frac{Q-1}{Q-2}}(S)}+ \Big[ \|\qq\psi\|_{L^p(S)}+
\sum_{i=2}^k \|\grad\ciss\psi\|_{L^p(S)}\Big]\bigg\}.
\end{eqnarray*}By setting $p=Q-1$ in the last inequality one gets
\begin{eqnarray*}\|\psi\|_{L^{\frac{(Q-1)^2}{Q-2}}(S)}\leq
\widetilde{C_3}\bigg\{\|\psi\|_{L^{Q-1}(S)}+
\|\qq\psi\|_{L^{Q-1}(S)}+\sum_{i=2}^k
\|\grad\ciss\psi\|_{L^{Q-1}(S)}\bigg\}.
\end{eqnarray*}By reiterating this procedure for $t=Q, Q+1,...$
one can show that there exists $C_4$ such
that$$\|\psi\|_{L^q(S)}\leq C_4\bigg\{\|\psi\|_{L^{Q-1}(S)}+
\|\qq\psi\|_{L^{Q-1}(S)}+\sum_{i=2}^k
\|\grad\ciss\psi\|_{L^{Q-1}(S)}\bigg\}\qquad\forall\,\,q\geq
Q-1$$for every $\psi\in\cont_0^1(S)$.
\end{proof}

At this point, we would like to discuss another geometric
hypothesis and its consequences in getting ``horizontal''
Sobolev-type inequalities, i.e. only depending on the
$\HS$-component of the tangential gradient. As above, let
$S\subset\GG$ be a $\cont^2$-smooth hypersurface without boundary.
Moreover, we shall assume that:
\begin{itemize}\item[${\bf (H_2)}$]{\it For every $x\in S\setminus C_S$ and for
$\mathcal{L}^1$-a.e. $t\in ]0, R(x)[$ one has \[\dim\, C_{\partial
S_t}<n-2,\] where $S_t=S\cap B_\varrho(x, t)$.}\\
\end{itemize}

\begin{oss}Let $S\subset\GG$ be $\cont^2$-smooth and
non-characteristic and let us assume that the horizontal tangent
bundle $\HS$ be generic and horizontal, i.e. $(S, \HS)$ is a
CC-space. Then, by arguing exactly as in Remark \ref{pcar}, it
follows that ${\bf (H_2)}$ is automatically satisfied. More
generally, the validity of ${\bf(H_2)}$ follows by assuming, just
as in Remark \ref{xaz}, that:
\begin{itemize}\item[${\bf (H)}$]For every smooth
$(n-2)$-dimensional submanifold $N\subset S$ one has \[\dim\,
C_{N}<n-2.\]
\end{itemize}
\end{oss}

Let $\UU\subset S$ be any relatively compact open set with
$\cont^1$-smooth boundary $\partial \UU$, oriented by its unit
normal $\eta$.  By assuming either ${\bf (H_2)}$ or  ${\bf (H)}$,
one gets $\sigma^{n-2}\rr(C_{\partial \UU})=0$. So let
$\mathcal{V}_\epsilon\subset\partial\UU$ be an open neighborhood
of $C_{\partial\UU}$ such that
$\sigma^{n-2}\rr(\mathcal{V}_\epsilon)\longrightarrow 0$ for
$\epsilon\rightarrow 0^+$. Since
\[\chi=\sum_{i=2}^k\chi\ciss\in L^1(\partial \UU, \nis),\]  it
follows that
$$\int_{\mathcal{V}_{\epsilon}}|\chi\cidu|\nis\leq
\nis(\partial\UU)$$ for every (small enough)
$\epsilon>0\,\,(i=2,...,k)$. Furthermore, one has
 $$\int_{\partial\UU\setminus\mathcal{V}_{\epsilon}}|\chi\cidu|\nis\leq \|\chi\|_{L^{\!\infty}
(\partial\UU\setminus\mathcal{V}_{\epsilon})}\,\nis(\partial\UU)$$for
every $\epsilon>0\,\,(i=2,...,k)$. Therefore, we have shown that:
\begin{itemize} \item {\it there exist $C'_0>0$ such that}
$$\mathcal{B}_\infty(\UU)=\int_{\partial\UU}\Big(1+\sum_{i=2}^k i\,c_i\varrho_\UU^{i-1}|\chi\cidu|\Big)\nis\leq
C'_0\,\nis({\partial\UU}).$$\end{itemize}

The previous argument yields the next:

\begin{teo}\label{gsobin}Let $S\subset\GG$ be a
$\cont^2$-smooth hypersurface without boundary and let $\MS$ be
globally bounded along $S$. Moreover, let us assume either
$\bf(H_2)$ or $\bf(H)$ hold true. Then there exists $C'_1>0$  such
that
\begin{eqnarray}\label{isq56}\bigg\{\int_S|\psi|^{\frac{Q-1}{Q-2}}\per\bigg\}^{\frac{Q-2}{Q-1}}\leq
C'_1\,\int_{S}\big\{|\psi|+ |\qq\psi|\big\}\per.
\end{eqnarray}
for every $\psi\in\cont_0^1(S)$. Moreover, there exists $C'_2>0$
such that
\begin{eqnarray}\label{isq569}\|\psi\|_{L^{p^\ast}(S)}
\leq C'_2\big\{
\|\psi\|_{L^p(S)}+\|\qq\psi\|_{L^p(S)}\big\}\end{eqnarray}for
every $\psi\in\cont_0^1(S)$. If $p\in[1, Q-1[$, then there exists
$C'_3>0$ such
that\begin{eqnarray}\label{isq56s}\|\psi\|_{L^{q}(S)}\leq
C'_3\big\{ \|\psi\|_{L^p(S)}+\|\qq\psi\|_{L^p(S)}\big\}\qquad
\forall\,\,q\in[p, p^\ast]\end{eqnarray} for every
$\psi\in\cont_0^1(S)$. Finally, let $p=Q-1$. Then there exists
$C'_4>0$ such
that\begin{eqnarray}\label{isqsa69}\|\psi\|_{L^{q}(S)}\leq C'_4
\big\{\|\psi\|_{L^p(S)}+\|\qq\psi\|_{L^p(S)}\big\}\qquad
\forall\,\,q\in[Q-1, +\infty[\end{eqnarray}for every
$\psi\in\cont_0^1(S)$.
\end{teo}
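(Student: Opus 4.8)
The plan is to re-run the Federer--Fleming slicing argument of Theorem \ref{sobolev}, but now to exploit the two standing hypotheses ${\bf(H_1)}$ (global boundedness of $\MS$) and ${\bf(H_2)}$ (equivalently ${\bf(H)}$) in order to \emph{eliminate} the vertical gradient contributions $\sum_{i=2}^k i\,c_i\varrho_S^{i-1}|\grad\ciss\psi|$ that still survive in Corollary \ref{issss4}. First I would prove the basic inequality \eqref{isq56}. Since $|\qq\psi|\leq|\qq|\psi||$ one may assume $\psi\geq 0$ and introduce the superlevel sets $S_t:=\{x\in S:\psi(x)>t\}$; by Sard's Lemma, $\partial S_t$ is $\cont^1$-smooth for $\mathcal{L}^1$-a.e.\ $t>0$, and $S_t$ is empty for large $t$. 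Applying the isoperimetric inequality in the sharper form of Corollary \ref{isopineqcaso2} to each $S_t$, together with Cavalieri's principle and the elementary inequality $\alpha\int_0^{+\infty}t^{\alpha-1}\varphi^\alpha\,dt\leq\big(\int_0^{+\infty}\varphi\,dt\big)^\alpha$ (with $\alpha=\frac{Q-1}{Q-2}$ and $\varphi(t)=(\per\{S_t\})^{\frac{Q-2}{Q-1}}$), the problem reduces to bounding $\int_0^{+\infty}\{\mathcal{A}_\infty(S_t)+\nis(\partial S_t)\}\,dt$.

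The crucial step, and the one where the new hypotheses enter, is the treatment of the boundary term. Under ${\bf(H_2)}$ the characteristic set $C_{\partial S_t}$ satisfies $\dim C_{\partial S_t}<n-2$, hence $\sigma^{n-2}\rr(C_{\partial S_t})=0$; this is exactly what is needed to absorb, precisely as in the discussion preceding the statement, the $\chi\ciss$-contributions and obtain $\mathcal{B}_\infty(S_t)\leq C'_0\,\nis(\partial S_t)$ for $\mathcal{L}^1$-a.e.\ $t$. The Coarea Formula (Theorem \ref{TCOAR}) then yields $\int_0^{+\infty}\nis(\partial S_t)\,dt=\int_S|\qq\psi|\per$, which produces \emph{only} the horizontal gradient term, with no vertical corrections. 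For the volume term I would invoke ${\bf(H_1)}$ and repeat the truncation argument of Corollary \ref{issss4}: covering $C_S$ by the family $\UU_\epsilon$ of Remark \ref{cpoints} and letting $\epsilon\searrow 0^+$ gives $\int_0^{+\infty}\mathcal{A}_\infty(S_t)\,dt\leq C_0\int_S|\psi|\per$. Combining the two bounds establishes \eqref{isq56}.

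The remaining estimates \eqref{isq569}, \eqref{isq56s} and \eqref{isqsa69} are then purely functional-analytic consequences of \eqref{isq56} and follow by repeating the proofs of Corollaries \ref{corsob1}, \ref{corsob12} and \ref{corsob13}. One substitutes $\psi\mapsto\psi|\psi|^{t-1}$ into \eqref{isq56} and applies H\"older's inequality with the exponent relation $t\frac{Q-1}{Q-2}=p'(t-1)$ (so that $p^\ast=t\frac{Q-2}{Q-1}$) to get \eqref{isq569}; the range $q\in[p,p^\ast]$ comes from interpolation and Young's inequality, and the limit case $p=Q-1$ is handled by the iteration on $t=Q,Q+1,\dots$ of Corollary \ref{corsob13}. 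Because the input inequality \eqref{isq56} no longer carries the $\grad\ciss$ terms, the outputs involve only $\|\psi\|_{L^p(S)}$ and $\|\qq\psi\|_{L^p(S)}$, as claimed.

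I expect the main obstacle to be the rigorous justification of the boundary step for almost every level $t$: one must simultaneously guarantee (via Sard) that $\partial S_t$ is a genuine $\cont^1$ hypersurface and (via ${\bf(H_2)}$) that its characteristic set is $\nis$-negligible, so that the passage from $\mathcal{B}_\infty(S_t)$ to $C'_0\,\nis(\partial S_t)$ holds uniformly enough to survive the $t$-integration. The interchange of that integration with the limiting procedures (the Coarea formula and the $\epsilon\to 0^+$ truncation near $C_S$) also requires some care, but should be routine given the integrability $\chi\in L^1(\partial S_t,\nis)$ together with $\MS\in L^\infty$.
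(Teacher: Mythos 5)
Your proposal is correct and follows essentially the same route as the paper: the paper's proof also re-runs the Federer--Fleming slicing argument of Theorem \ref{sobolev}, with the single modification that under $\bf(H_2)$ or $\bf(H)$ the boundary term satisfies $\mathcal{B}_\infty(S_t)\leq C'_0\,\nis(\partial S_t)$, so that the Coarea Formula yields $\int_0^{+\infty}\nis(\partial S_t)\,dt=\int_S|\qq\psi|\,\per$, the volume term is absorbed as in Corollary \ref{issss4}, and the remaining estimates \eqref{isq569}, \eqref{isq56s}, \eqref{isqsa69} are obtained by repeating the proofs of Corollaries \ref{corsob1}, \ref{corsob12} and \ref{corsob13}. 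Your invocation of Corollary \ref{isopineqcaso2} on each superlevel set is just a repackaging of the same two ingredients (Theorem \ref{isopineq} plus the $C_{\partial S_t}$-negligibility argument), so there is no substantive divergence from the paper's proof.
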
\begin{proof}The proof of \eqref{isq56} is almost identical to that of
Theorem \ref{sobolev}. The only difference is in the following
estimate:
\begin{eqnarray*}\int_0^{+\infty}\mathcal{B}_\infty(S_t)\,dt&\leq& C'_0\int_0^{+\infty}\nis(\partial S_t)\,dt
\\&=& C'_0\int_{S}|\qq\psi|\per\qquad\mbox{(by
Coarea Formula).}\end{eqnarray*}Then, inequality \eqref{isq56}
follows by arguing as in Corollary \ref{issss4}. The other
statements can be proved by arguing exactly as we have done
throughout  the proofs of Corollary \ref{corsob1}, Corollary
\ref{corsob12} and Corollary \ref{corsob13}.

\end{proof}

{\footnotesize \noindent Francescopaolo Montefalcone:
\\Dipartimento di Matematica, Universit\`a degli Studi di Trento,\\
  Povo (Trento)- Via Sommarive, 14 Italia
\\Dipartimento di Matematica, Universit\`a degli Studi di Bologna,\\
  Piazza di P.ta S.Donato, 5, 40126  Bologna, Italia\,
 \\ {\it E-mail address}:  {\textsf montefal@dm.unibo.it}}}

\end{document}